\renewcommand{\epsilon}{\varepsilon}
\newtheorem{thm}{Theorem}
\newtheorem{lem}[thm]{Lemma}
\newtheorem{rem}[thm]{Remark}
\newtheorem{proposition}[thm]{Proposition}
\newtheorem{definition}[thm]{Definition}
\newtheorem{Example}[thm]{{Example} }
\def\XXint#1#2#3{{\setbox0=\hbox{$#1{#2#3}{\int}$}
     \vcenter{\hbox{$#2#3$}}\kern-.5\wd0}}
\numberwithin{subsection}{section}
\numberwithin{subsubsection}{subsection}
\newcommand\restr[2]{{
  \left.\kern-\nulldelimiterspace 
  #1 
  \vphantom{\big|} 
  \right|_{#2} 
  }}
\def\WW{{\mathbb W}}
\def\EE{{\mathbb E}}
\def\RR{{\mathbb R}}
\def\LL{{\mathbb L}}
\def\PP{{\mathbb P}}
\def\P{{\mathbb P}}
\def\cL{{\mathcal L}}
\def\cC{{\mathcal C}}
\def\cP{{\mathcal P}}
\newcommand\indep{\protect\mathpalette{\protect\independenT}{\perp}}
\def\independenT#1#2{\mathrel{\rlap{$#1#2$}\mkern2mu{#1#2}}}
\def\independenT#1#2{\mathrel{\rlap{$#1#2$}\mkern2mu{#1#2}}}
\DeclareFontFamily{OMX}{MnSymbolE}{}
\DeclareSymbolFont{MnLargeSymbols}{OMX}{MnSymbolE}{m}{n}
\DeclareFontShape{OMX}{MnSymbolE}{m}{n}{
    <-6>  MnSymbolE5
   <6-7>  MnSymbolE6
   <7-8>  MnSymbolE7
   <8-9>  MnSymbolE8
   <9-10> MnSymbolE9
  <10-12> MnSymbolE10
  <12->   MnSymbolE12
}{}
\DeclareFontShape{OMX}{MnSymbolE}{b}{n}{
    <-6>  MnSymbolE-Bold5
   <6-7>  MnSymbolE-Bold6
   <7-8>  MnSymbolE-Bold7
   <8-9>  MnSymbolE-Bold8
   <9-10> MnSymbolE-Bold9
  <10-12> MnSymbolE-Bold10
  <12->   MnSymbolE-Bold12
}{}
\let\llangle\@undefined
\let\rrangle\@undefined
\DeclareMathDelimiter{\llangle}{\mathopen}%
                     {MnLargeSymbols}{'164}{MnLargeSymbols}{'164}
\DeclareMathDelimiter{\rrangle}{\mathclose}%
                     {MnLargeSymbols}{'171}{MnLargeSymbols}{'171}
\numberwithin{thm}{section}
\numberwithin{equation}{section}
\begin{document}
\begin{frontmatter}

\title{Propagation of chaos for mean field rough differential equations}
\runtitle{Propagation of chaos for mean field rough equations}

 \author{\fnms{Isma\"el} \snm{Bailleul} \ead[label=e1]{ismael.bailleul@univ-rennes1.fr}\thanksref{t1}}
\thankstext{t1}{I.Bailleul thanks the Centre Henri Lebesgue ANR-11-LABX-0020-01 for its stimulating mathematical research programs, and the U.B.O. for their hospitality, part of this work was written there. Partial support from the ANR-16-CE40-0020-01.} 
 \address{Universit\'e Rennes  1
 \\
 IRMAR - UMR 6625, \\ 35000 Rennes, France \\ \printead{e1}}
 \affiliation{Universit\'e Rennes 1, CNRS, IRMAR}

 \author{\fnms{R\'emi} \snm{Catellier} \ead[label=e2]{remi.catellier@unice.fr}}
 \address{Universit\'e C\^ote d'Azur, 
 \\
 Laboratoire J.A.Dieudonn\'e, \\ 06108 Nice, France
  \\ \printead{e2}}
 \affiliation{Universit\'e C\^ote d'Azur, CNRS, LJAD}

 \author{\fnms{Fran\c{c}ois} \snm{Delarue}\corref{}\ead[label=e3]{francois.delarue@unice.fr} \thanksref{t3}}
\thankstext{t3}{F. Delarue thanks the Institut Universitaire de France.}
 \address{Universit\'e C\^ote d'Azur, \\ Laboratoire J.A.Dieudonn\'e, \\ 06108 Nice, France
  \\ \printead{e3}}
 \affiliation{Universit\'e C\^ote d'Azur, CNRS, LJAD}

\runauthor{I. Bailleul, R. Catellier, F. Delarue}

\begin{abstract}
We address propagation of chaos for large systems of rough differential equations associated with random rough differential equations of mean field type
$$
dX_t = \textrm{V}\big( X_t,\cL(X_t)\big)dt + \textrm{F}\bigl( X_t,\cL(X_t)\bigr) dW_t,
$$
where $W$ is a random rough path and $\cL(X_t)$ is the law of $X_t$. We prove propagation of chaos, and provide also an explicit optimal convergence rate. {The analysis is based upon the tools we developed in our companion paper \cite{BCD1} for solving mean field rough differential equations and in particular upon a corresponding version of the It\^o-Lyons continuity theorem. The rate of convergence is obtained by a  coupling argument developed first by Sznitman for particle systems with Brownian inputs.   }
\end{abstract}

\begin{keyword}[class=MSC]
\kwd[Primary ]{60H10}
\kwd{60G99}
\end{keyword}

\begin{keyword}
\kwd{random rough differential equations}
\kwd{particle system}
\kwd{mean field interaction}
\kwd{propagation of chaos}
\kwd{convergence rate}
\end{keyword}

\end{frontmatter}

\section{Introduction}	

The study of mean field stochastic dynamics and interacting diffusions / Markov processes finds its roots in Kac's simplified approach to kinetic theory \cite{Kac} and McKean's work \cite{McKean} on nonlinear parabolic equations. It provides the description of evolutions $(\mu_t)_{t\geq 0}$ in the space of probability measures  under the form of a pathspace random dynamics 
\begin{equation}
\label{eq:1:1}
\begin{split}
dX_t(\omega) =& \textrm{V}\bigl(X_t(\omega),\mu_t\bigr)dt + \textrm{F}\bigl(X_t(\omega), \mu_t\bigr) dW_t(\omega),   
\\
\mu_t :=& \cL(X_t),
\end{split}
\end{equation}
where $\cL(A)$ stands for the law of a random variable $A$ over a probability space $(\Omega,{\mathcal F},\P)$ containing $\omega$ and relates it to the empirical behaviour of large systems of interacting dynamics. The main emphasis of subsequent works has been on proving propagation of chaos and other limit theorems, and giving stochastic representations of solutions to nonlinear parabolic equations under more and more general settings; see for instance \cite{Sznitman,Tanaka,Gartner,DawsonGartner,DawsonVaillancourt,meleard,jourdain-meleard,BudhirajaDupuisFischer,BudhirajaWu}. Classical stochastic calculus makes sense of equation \eqref{eq:1:1} only when the process $W$ is a semi-martingale under $\PP$, for some filtration, and the integrand is predictable. However, this setting happens to be too restrictive in a number of situations, especially when the diffusivity is random. 
This prompted several authors to address equation \eqref{eq:1:1} by means of rough paths theory. Indeed, one may understand rough paths theory as a natural framework for providing probabilistic models of interacting populations, beyond the realm of It\^o calculus. Cass and Lyons \cite{CassLyons} did the first study of mean field random rough differential equations and proved the well-posed character of equation \eqref{eq:1:1}, and propagation of chaos for an associated system of interacting particles, under the crucial assumption that there is no mean field interaction in the diffusivity, \textit{i.e.} $\textrm{F}(x,\mu) = \textrm{F}(x)$, and that the drift depends linearly on the mean field interaction.  Bailleul extended partly these results in \cite{BailleulRMI} by proving well-posedness of the mean field rough differential equation \eqref{eq:1:1} in the case where the drift depends nonlinearly on the interaction term and the diffusivity is still independent of the interaction, and by proving an existence result when the diffusivity depends on the interaction. 
Another breakthrough 
came with 
our 
earlier 
arXiv deposit   
\cite{BCDArxiv}, 
in which we explained how to handle the case when 
$\textrm{F}$ truly depends on the interaction term by making a systematic use of  
Lions' approach to differential calculus on Wasserstein space. 
To make the content more accessible, we eventually decided to split \cite{BCDArxiv} into two parts: 
While 
the current work is mainly inspired from 
the second half of \cite{BCDArxiv}, 
our companion article
\cite{BCD1} corresponds to the first half of \cite{BCDArxiv}; Therein, we address the well-posedness of the mean field rough equation  
\eqref{eq:1:1} for a genuinely nonlinear F.

In fact, as explained in \cite{BCD1}, the general case may be easily reduced to the study of the simpler equation 
\begin{equation}
\label{EqRDE}
dX_t(\omega) = \textrm{F} \bigl( X_t(\omega),\cL(X_t)\bigr) dW_t(\omega), 
\end{equation}
which is precisely the version we address in this paper. To make it clear, the purpose of the present article is to prove that, under suitable assumptions, the solution of \eqref{EqRDE} coincides with the limit (in a convenient sense), as $n$ tends to $\infty$, of the $n$-particle system 
\begin{equation}
\label{eq:particle:system}
\begin{split}
X_{t}^i(\omega) = X_{0}^i(\omega) + \int_{0}^t \textrm{\rm F}\biggl(X_{s}^{{i}}(\omega),\frac1{n} \sum_{j=1}^n \delta_{X_{s}^{{j}}(\omega)}\biggr) 
dW_{s}^i(\omega), \quad t \geq 0,
\end{split}
\end{equation}
for $1 \leq i \leq n$, where $\bigl(X_{0}^i(\cdot),W^i(\cdot)\bigr)_{1 \leq i \leq n}$ is a collection of independent and identically distributed variables with the same distribution as $(X_{0}(\cdot),W(\cdot))$, the first component being regarded as a random variable with values in ${\mathbb R}^d$ and the second one as a random variable with values in the space of continuous functions. Of course, equation \eqref{eq:particle:system} must be understood as a rough differential equation driven by the signal $(W^1(\omega),\cdots,W^n(\omega))$ with $(X^1(\omega),\cdots,X^n(\omega))$ as output. As it is well-known, this requires to lift $(W^1(\omega),\cdots,W^n(\omega))$ into an enhanced rough path ${\boldsymbol W}^{(n)}(\omega)$ and henceforth to define the various iterated integrals. Asking the paths $W(\omega)$, $\omega \in \Omega$, to have a finite $p$-variation for $2 \leq p < 3$, this prompts us to assume that, instead of $\bigl((X_{0}^1(\cdot),W^1(\cdot)),\cdots,(X_{0}^n,W^n(\cdot))\bigr)$, we have in fact $n$ independent copies $(X_{0}^i(\cdot),W^i(\cdot),{\mathbb W}^i(\cdot))_{1 \leq i \leq n}$ of the triple $(X_{0}(\cdot),W(\cdot),{\mathbb W}(\cdot))$, where ${\mathbb W}(\omega)$ is the iterated integral of $W(\omega)$ and ${\mathbb W}^i(\omega)$ is the iterated integral of $W^i(\omega)$. Of course,  it is also needed to define the iterated integrals of $W^j(\omega)$ with respect to $W^i(\omega)$, for $j \not =i$. Not only we assume below that such iterated integrals do indeed exist, but we make the additional assumption that there is a measurable map ${\mathcal I}$ giving ${\mathbb W}^{i,j}(\omega)$ from $W^i(\omega)$ and $W^j(\omega)$, that is
\begin{equation}
\label{eq:intro:W:i:j}
{\mathbb W}^{i,j}(\omega) = {\mathcal I}\bigl(W^i(\omega),W^j(\omega)\bigr), \quad i \not = j.
\end{equation}
In words, \eqref{eq:intro:W:i:j} says that there exists a measurable way to construct the iterated integral of two independent copies of 
the signal in the limiting equation \eqref{EqRDE}.  Hence, \eqref{eq:intro:W:i:j} should be really regarded as an intrinsic 
property of \eqref{EqRDE} and not as a specific feature of the particle system \eqref{eq:particle:system}. 

\smallskip

More generally, it is in fact a key point in the subsequent analysis to draw a parallel between the underlying rough path used to give a meaning to 
\eqref{eq:particle:system} and the notion of \textit{extended}\footnote{In fact, the term \textit{extended} does not appear 
in \cite{BCD1}, but it is here of a convenient use to distinguish from the standard rough set-up used to solve the particle system \eqref{eq:particle:system}.} rough set-up used in \cite{BCD1} to address \eqref{EqRDE}. 
We provide a reminder of the latter notion in Section \ref{SectionRoughStructure}.
Basically, it says that, in order to solve \eqref{EqRDE}, we must not only lift, for a given $\omega \in \Omega$, the trajectory $W(\omega)$ into an enhanced rough path $(W(\omega),{\mathbb W}(\omega))$, but we must in fact lift the whole trajectory $(W(\omega),W(\cdot))$, the second component being seen as a path with values in some $\LL^q(\Omega,{\mathcal F},\PP;\RR^m)$ space, where $m$ is the dimension of the signal. 
{Then, we call \textit{extended} rough path set-up the enhancement of $(W(\omega),W(\cdot))$.}

\smallskip

The striking fact of our analysis  is then based upon an observation noticed {first by {Tanaka} in his seminal work \cite{TanakaTrick} on limit theorems for mean field type diffusions, and used crucially} by Cass and Lyons in their seminal work \cite{CassLyons}. {We refer to it as Tanaka's trick}. {It says that, for a given $\omega \in \Omega$, the particle system  
\eqref{eq:particle:system} itself may be interpreted as a mean field equation, but with respect to the empirical measure of the driving noise.
Adapted to the rough paths theory, it says that,  
for any fixed $\omega \in \Omega$, the path} 
\begin{equation*}\begin{split}
{\boldsymbol W}^{(n)}(\omega) &= \Bigl( \big(W^i(\omega)\big)_{1 \leq i \leq n}, \big({\mathbb W}^{i,j}(\omega)\big)_{1 \leq i,j \leq n} \Bigr)   
=: \Big(W^{(n)}(\omega), {\mathbb W}^{(n)}(\omega)\Big),
\end{split}\end{equation*}
which underpins the rough structure used to solve \eqref{eq:particle:system}, may be seen as an extended rough set-up on its own -- below, we just say a rough set-up) but defined on the finite probability space
\begin{equation*}
\biggl( \bigl\{1,\cdots,n\bigr\},{\mathcal P}\bigl(\bigl\{1,\cdots,n\bigr\}\bigr), \frac1n \sum_{i=1}^n \delta_{i} \biggr),
\end{equation*}
where ${\mathcal P}(\{1,\cdots,n\})$ denotes the collection of subsets of $\{1,\cdots,n\}$, instead of the former probability space $(\Omega,{\mathcal F},\PP)$. We call this set-up the \textsf{\textbf{empirical rough set-up}}, and we make its construction entirely clear in the sequel of the paper.  For sure, given the iterated integrals of the signal $(W^1(\omega),\cdots,W^n(\omega))$, the rough integral \eqref{eq:particle:system} should be interpreted in the usual sense, as given by standard Lyons' rough paths theory. In short, this requires to expand locally the integrand in \eqref{eq:particle:system}, which in turns requires to have a convenient notion of derivative with respect to the measure argument. In this regard, a crucial fact in \cite{BCD1} is to use Lions' approach \cite{Lions,LionsCardialiaguet,CarmonaDelarue_book_I} to differential calculus on the space $\cP_{2}(\RR^d)$ of probability measures on $\RR^d$ with a finite square moment, the so-called $d$-dimensional Wasserstein space, $d$ denoting here and throughout the dimension of the output in \eqref{EqRDE} ({we refer to Subsection \ref{SubsectionStability} for a longer discussion on 
the connection with other notions of derivatives on the Wasserstein space}). The core of our analysis in  Section  \ref{SectionParticleSystem} is that, whenever Wasserstein derivatives on $\cP_{2}(\RR^d)$ are projected, through empirical measures, into classical derivatives on $(\RR^d)^n$, as it is needed to differentiate the integrand in \eqref{eq:particle:system}, the resulting solution for \eqref{eq:particle:system}, as given by standard rough paths theory, coincides with the solution obtained by interpreting \eqref{eq:particle:system} as a  mean field rough equation driven by the aforementioned empirical rough set-up -- see Section \ref{SectionSolving} for reminders on solvability results for mean field rough  equations. In this way, the convergence of solutions of \eqref{eq:particle:system} to solutions of \eqref{EqRDE} as $n$ tends to $\infty$ is reduced to a form of continuity of the solutions to mean field rough differential equations with respect to the underlying rough set-up. We called the latter \textit{continuity of the It\^o-Lyons solution map}, see Theorem 5.4 of our companion work \cite{BCD1}. Our first main result, Theorem \ref{theorem:prop:of:chaos}, shows that, for a sufficiently large class of input signals, propagation of chaos is in fact a consequence of the \textit{continuity of the It\^o-Lyons solution map} for mean field rough differential equations. At this stage, it is worth mentioning that  it is precisely in the requirements of the \textit{continuity of the It\^o-Lyons map} that the structure condition \eqref{eq:intro:W:i:j} about the cross-iterated integrals comes in. In \cite{BCD1}, a \textit{rough set-up}  that satisfies \eqref{eq:intro:W:i:j} is said to be \textit{strong}.

\smallskip

While the proofs of both our first main result and the underlying \textit{continuity property of the It\^o-Lyons solution map} are mostly based on compactness arguments, our second main result is to elucidate, under slightly stronger assumptions the convergence rate in the propagation of chaos; see Theorem \ref{theorem:rate:cv} in Section \ref{SubsectionConvergenceRate}. The strategy is directly inspired from original Sznitman's coupling argument for mean field systems driven by Brownian signals, see \cite{Sznitman}. Although the proof is much more involved than in the Brownian setting, we recover the same rate of convergence: It coincides with the rate of convergence (in Wasserstein metric) of the empirical measure of an $n$-sample of (sufficiently integrable) i.i.d. variables to their common distribution. In particular, the speed decays with the dimension. 

As in \cite{BCD1}, our analysis holds for continuous rough paths whose $p$-variation, for  some $p \in [2,3)$, 
is finite and has sub-exponential tails and for which the so-called \textit{local accumulated variation} --that counts the increments of the signal of a given size over a bounded interval-- has super-exponential tails, see
 \cite[Theorem 1.1]{BCD1}. Among others, our results apply to  continuous centred Gaussian signals defined over some time interval $[0,T]$ that have independent components and whose covariance function has finite $\rho$-two dimensional variation, for some $\rho\in [1,3/2)$.

\medskip

{For the sake of completeness, it is worth adding that 
there has been a number of works recently using probabilistic tools to investigate mean field problems, or connecting probabilistic and analytical approaches. We emphasize Barbu and R\"ockner's study \cite{BarbuRockner} of the solvability of McKean-Vlasov equations with local interactions by 
means of the superposition principle, and Coghi and Gess' work \cite{CoghiGess} on stochastic nonlinear non-local Fokker-Planck equations that arise in the mean field limit of weakly interacting diffusions with a common noise. Even though it is purely analytic, Jabin and Wang's work \cite{JabinWang} on quantitative estimates of propagation of chaos for particle systems with a singular interaction kernel should be read at the light of a probabilistic intuition. The literature at the intersection of rough paths theory and mean field dynamics is sparser. Cass and Lyons launched the subject in \cite{CassLyons} by proving a well-posedness result for a mean field rough differential equation with no interaction in the diffusivity and a linear interaction in the drift, and a propagation of chaos result. The well-posedness result was generalised by Bailleul under a different set of assumptions in \cite{BailleulRMI} for mean field rough differential equations with no interaction in the diffusivity and a nonlinear interaction in the drift. The setting happens to be much simpler for equations with an additive noise, as no rough paths are needed and propagation of chaos, central limit theorem and large deviation results can be proved, as shown by Coghi, Deuschel, Friz and Maurelli in \cite{CoghiDeuschelFrizMaurelli}. Coghi and Nilssen developed in \cite{CoghiNilssen} a variant of Bailleul and Riedel's approach of rough flows \cite{BailleulRiedel} to study rough nonlocal mean field type Fokker-Planck equations {subjected to a rough common noise (meaning that, in \cite{CoghiNilssen}, 
the process $(\mu_{t})_{0 \le t \le T}$
in 
\eqref{eq:1:1} becomes random under the 
action of an additional noise that is precisely assumed to be a rough path; obviously, there is no common noise in our model)
 but to an idiosyncratic Brownian noise (to make it clear, the idiosyncratic noise should be regarded as $W$ in 
 \eqref{eq:1:1}; of course, a  substantial difference is that, in our framework, $W$ may not be a Brownian motion)}. Last, Cass, dos Reis and Salkeld \cite{CassdosReisSalkeld} used rough paths theory to investigate in depth support theorems for solutions of McKean Vlasov equations driven by a Brownian motion. }

\smallskip

The present work leaves wide open the question of refining the strong law of large numbers given by the propagation of chaos result stated in Theorem \ref{theorem:prop:of:chaos}. A central limit theorem for the fluctuations of the empirical measure of the particle system is expected to hold under reasonable conditions on the common law of the rough drivers. Large and moderate deviation results would also be most welcome. In a different direction, it would be interesting to investigate the propagation of chaos phenomenon for systems of interacting rough dynamics subject to a common noise. Very interesting things happen in the It\^o setting in relation with mean field games \cite{CarmonaDelarue,KolokoltsovTroeva}. Also, one would get a more realistic model of natural phenomena by working with systems of particles driven by non-independent noises. Individuals with close initial conditions could have drivers strongly correlated while individuals started far apart could have (almost-)independent drivers. Limit mean field dynamics are likely to be different from the results obtained here -- see \cite{CoghiFlandoli} for a result in this direction in the It\^o setting. We invite the reader to make her/his own mind about these problems.

\medskip

The paper is organized as follows. We recall in Section \ref{SectionRoughStructure} the construction of a rough set-up, as introduced in \cite{BCD1}. 
We provide in Section \ref{SectionSolving} a sketchy presentation of related solvability results for equation \eqref{EqRDE}, including a review of the main assumptions that we need on the diffusivity F. Convergence of the particle system \eqref{eq:particle:system} is established in Section \ref{SectionParticleSystem}. The convergence rate is addressed in Section \ref{SubsectionConvergenceRate}, under additional regularity assumptions on F and integrability assumptions on the signal. Proofs of some technical results are given in Appendix \ref{AppendixIntegrability} and \ref{AppendixAuxiliary}.

\medskip

\noindent \textbf{\textsf{Notations.}} We gather here a number of notations that will be used throughout the text.

\textcolor{gray}{$\bullet$} We set ${\mathcal S}_{2} := \big\{ (s,t) \in [0,\infty)^2 : s \leq t\big\}$, 
and 
$
{\mathcal S}_{2}^T := \big\{(s,t) \in [0,T]^2 : s \leq t \big\}.
$

\textcolor{gray}{$\bullet$} We denote by $(\Omega,{\mathcal F},\PP)$ an atomless Polish probability space, \textcolor{black}{${\mathcal F}$ standing for the completion of the Borel $\sigma$-field under $\PP$}, and denote by $\langle \cdot \rangle$ the expectation operator, by $\langle \cdot \rangle_{r}$, for $r \in [1,+\infty]$, the $\LL^r$-norm on $(\Omega,{\mathcal F},\PP)$ and by $\llangle \cdot \rrangle$ and $\llangle \cdot \rrangle_{r}$ the expectation operator and the $\LL^r$-norm on 
$
\big(\Omega^2,{\mathcal F}^{\otimes 2},\PP^{\otimes 2}\big).$
When $r$ is finite, $\LL^r(\Omega,{\mathcal F},\PP;\RR)$ is separable as $\Omega$ is Polish.

\textcolor{gray}{$\bullet$}
As for processes $X_{\bullet}=(X_{t})_{t \in I}$, defined on a time interval $I$, we often write $X$ for $X_{\bullet}$.

\section{From Probabilistic Rough Structures to Rough Integrals}
\label{SectionRoughStructure}

\subsection{Overview on Probabilistic Rough Structures}
\label{SubsectionRoughStructure}

We here provide a brief reminder of the content of Section 2 in \cite{BCD1}. We refer the reader to the paper for a complete
review.  Throughout the section, we work on a finite time horizon $[0,T]$, for a given $T>0$.

\smallskip

The first level of the rough path structure used to give a meaning to \eqref{EqRDE} is defined as an $\omega$-indexed pair of paths
\begin{equation}
\label{EqLevelOne}
\bigl( W_t(\omega),W_t(\cdot) \bigr)_{0\leq t\leq T},
\end{equation}
where $\big(W_t(\cdot)\big)_{0\leq t\leq T}$ 
\textcolor{black}{is a collection of $q$-integrable $\RR^m$-valued random variables on
$(\Omega,{\mathcal F},\PP)$, 
which we regard as a deterministic $\LL^q(\Omega,{\mathcal F},\PP;\RR^m)$-valued path}, for some exponent $q\geq 8$, and
$\bigl(W_{t}(\omega)\bigr)_{0 \le t \le T}$ stands for the realizations of these random variables along the outcome $\omega \in \Omega$; so the pair \eqref{EqLevelOne} takes values in $\RR^m\times \LL^q(\Omega,{\mathcal F},\PP;\RR^m)$. The second level has the form of an $\omega$-dependent two-index path with values in $\big(\RR^m \times \LL^q(\Omega,{\mathcal F},\PP;\RR^m)\big)^{\otimes 2}$ \textcolor{black}{and is} encoded in matrix form as
\begin{equation}
\label{eq:W:2}
\left(
\begin{array}{cc}
\WW_{s,t}(\omega) &\WW_{s,t}^{\indep}(\omega,\cdot)
\\
\WW_{s,t}^{\indep}(\cdot,\omega) &\WW_{s,t}^{\indep}(\cdot,\cdot)
\end{array}
\right)_{0 \leq s \leq t\leq T},
\end{equation}
where 
\begin{itemize}
   \item $\WW_{s,t}(\omega)$ is in $(\RR^{m})^{\otimes 2} \simeq \RR^{m \times m}$,  
   
   \item $\WW_{s,t}^{\indep}(\omega,\cdot)$ is in $\RR^m \otimes \LL^q\big(\Omega,{\mathcal F},\PP;\RR^m\big)\simeq \LL^q\big(\Omega,\textcolor{black}{\mathcal F},\PP;\RR^{m \times m}\big)$,   
   
   \item $\WW_{s,t}^{\indep}(\cdot, \omega)$ is in $\LL^q\big(\Omega,{\mathcal F},\PP;\RR^m\big) \otimes \RR^m \simeq \LL^q\big(\Omega,\textcolor{black}{\mathcal F},\PP;\RR^{m \times m}\big)$,
   
 \item $\WW_{s,t}^{\indep}(\cdot,\cdot)$ is in $\LL^q\bigl(\Omega^{\otimes 2},{\mathcal F}^{\otimes 2},\PP^{\otimes 2};
 \RR^{m \times m} \bigr)$, the realizations of which read in the form $\Omega^2 \ni (\omega,\omega') \mapsto \WW_{s,t}^{\indep}(\omega,\omega') \in \RR^{m \times m}$ and the two sections of which are precisely given by $\WW_{s,t}^{\indep}(\omega,\cdot) : \Omega \ni \omega' \mapsto \WW_{s,t}^{\indep}(\omega,\omega')$, and $\WW_{s,t}^{\indep}(\cdot,\omega) \ni \omega' \mapsto \WW_{s,t}^{\indep}(\omega',\omega)$, for $\omega \in \Omega$.  
\end{itemize}
A convenient form of Chen's relations is required, {for any $\omega \in \Omega$,}
\begin{equation}
\label{eq:chen}
\begin{split}
&\WW_{r,t}(\omega) = \WW_{r,s}(\omega) + \WW_{s,t}(\omega) + W_{r,s}(\omega) \otimes W_{s,t}(\omega),   \\
&\WW_{r,t}^{\indep}(\cdot,\omega) = \WW_{r,s}^{\indep}(\cdot,\omega) + \WW_{s,t}^{\indep}(\cdot,\omega) + W_{r,s}(\cdot) \otimes W_{s,t}(\omega),   \\
&\WW_{r,t}^{\indep}(\omega,\cdot) = \WW_{r,s}^{\indep}(\omega,\cdot) + \WW_{s,t}^{\indep}(\omega,\cdot) + W_{r,s}(\omega) \otimes W_{s,t}(\cdot),   \\
&\WW_{r,t}^{\indep}(\cdot,\cdot) = \WW_{r,s}^{\indep}(\cdot,\cdot) + \WW_{s,t}^{\indep}(\cdot,\cdot) + W_{r,s}(\cdot) \otimes W_{s,t}(\cdot),
\end{split}
\end{equation}
for any $0\leq r \le s \le t\leq T$, with notation 
$
f_{r,s} := f_{s} - f_{r}$, 
for a function $f$ from $[0,\infty)$ into a vector space. In \eqref{eq:chen}, we denoted  by $X(\cdot) \otimes Y(\cdot)$, 
for any two $X$ and $Y$ in $\LL^q(\Omega,{\mathcal F},\PP;\RR^m)$, the random variable
$
\bigl(\omega,\omega') \mapsto \big(X_{i}(\omega) Y_{j}(\omega')\bigr)_{1 \leq i,j \leq m}$
defined on $\Omega^2$. It is in $\LL^q\big(\Omega^2,\textcolor{black}{\mathcal F}^{\otimes 2},\PP^{\otimes 2};\RR^{m \times m}\big)$. The notation $\indep$ in $\WW^{\indep}$ is used to indicate that $\WW_{s,t}^{\indep}(\cdot,\cdot)$ should be thought of as the random variable
\begin{equation*}
(\omega,\omega') \mapsto \int_{s}^t \Bigl( W_{r}(\omega) - W_{s}(\omega) \Bigr) \otimes dW_{r}(\omega'). 
\end{equation*}
Since $\Omega^2 \ni (\omega,\omega') \mapsto (W_{t}(\omega))_{t \geq 0}$ and $\Omega^2 \ni (\omega,\omega') \mapsto (W_{t}(\omega'))_{t \geq 0}$ are independent under $\PP^{\otimes 2}$, we then understand $\WW_{s,t}^{\indep}$ as an iterated integral for two independent copies of the noise. We refer to Examples 2.1 and 2.2 in 
\cite{BCD1}. In the end, 
we denote by ${\boldsymbol W}(\omega)$ the so-called {rough set-up} specified by the $\omega$-dependent collection of maps given by \eqref{EqLevelOne} and \eqref{eq:W:2}.

\subsection{Regularity of the Rough Set-Up}
\label{SubsectionRegularityRoughSetup}

Following \cite{BCD1}, we use the notion of $p$-variation to handle the regularity of the various trajectories in hand. 
\textcolor{black}{Throughout, the exponent $p$ is taken} in the interval $[2,3)$. For a continuous function ${\mathbb G}$ from the simplex ${\mathcal S}_{2}^T$ into some $\RR^\ell$, we set, for any $p'\geq 1$, 
\begin{equation*}
\begin{split}
&\| {\mathbb G} \|_{[0,T],p'-\textrm{\rm v}}^{p'} := \sup_{0 = t_{0}<t_{1} \cdots < t_{n}=T } \, \sum_{i=1}^n \vert {\mathbb G}_{t_{i-1},t_{i}}\vert^{p'},
\end{split}
\end{equation*}
and define for any function $g$ from $[0,T]$ into $\RR^\ell$, 
$
\| g \|_{[0,T],p-\textrm{\rm v}}^{p} := \| {\mathbb G} \|_{[0,T],p-\textrm{\rm v}}^{p}$
as the $p$-variation \textcolor{black}{semi-}norm of its associated two index function ${\mathbb G}_{s,t} := g_t - g_s$. Similarly, for a random variable ${\mathbb G}(\cdot)$ on $\Omega$ with values in ${\mathcal C}({\mathcal S}_{2}^T;\RR^{\ell})$, and $p'\geq 1$, we define its $p'$-variation in $\LL^q$ as 
\begin{equation}
\label{eq:q:p-var}
\begin{split}
&\langle {\mathbb G}(\cdot) \rangle_{q; [0,T],p'-\textrm{\rm v}}^{p'} := \sup_{0 = t_{0}<t_{1} \cdots < t_{n}=T } \, \sum_{i=1}^n \big\langle  {\mathbb G}_{t_{i-1},t_{i}}(\cdot) \big\rangle_{q}^{p'},
\end{split}
\end{equation}
and define for a random variable $G(\cdot)$ on $\Omega$, with values in ${\mathcal C}([0,T];\RR^{\ell})$,
$$
\big\langle G(\cdot) \big\rangle_{q ; [0,T],p'-\textrm{\rm v}}^{p'} := \big\langle {\mathbb G}(\cdot) \big\rangle_{q; [0,T],p'-\textrm{\rm v}}^{p'},
$$
as the $p'$-variation semi-norm in $\LL^q$ of its associated two-index function ${\mathcal S}_{2}^T \ni (s,t) \mapsto {\mathbb G}_{s,t}(\cdot) = G_{t}(\cdot) - G_{s}(\cdot)$. Lastly, for a random variable ${\mathbb G}(\cdot,\cdot)$ from $(\Omega^2,{\mathcal F}^{\otimes 2})$ into ${\mathcal C}({\mathcal S}_{2}^T;\RR^{\ell})$, we set 
\begin{equation}
\label{eq:q:q:p-var}
\begin{split}
&\llangle {\mathbb G}(\cdot,\cdot) \rrangle_{q ; [0,T],{p'}-\textrm{\rm v}}^{{p'}} := \sup_{0 = t_{0}<t_{1} \cdots < t_{n}=T } \sum_{i=1}^n \left\llangle {\mathbb G}_{t_{i-1},t_{i}}(\cdot,\cdot) \right\rrangle_{q}^{{p'}}.
\end{split}
\end{equation}
Given these definitions, we require from the rough set-up ${\boldsymbol W}$ that \vspace{0.1cm}

\begin{itemize}
   \item For any $\omega \in \Omega$, the path $W(\omega)$ is in the space ${\mathcal C}([0,T];\RR^m)$, and the map $W : \Omega \ni \omega \mapsto W(\omega) \in {\mathcal C}([0,T];\RR^m)$ is Borel-measurable and $q$-integrable. 
  \vspace{0.1cm}
   
   \item For any $\omega \in \Omega$, the two-index path $\WW(\omega)$ is in ${\mathcal C}({\mathcal S}_{2}^T;\RR^{m \times m})$, and the map $\WW : \Omega \ni \omega \mapsto \WW(\omega) \in {\mathcal C}({\mathcal S}_{2}^T;\RR^{m \times m})$ is Borel-measurable and $q$-integrable.   
   \vspace{0.1cm}

   \item For any $(\omega,\omega') \in \Omega^2$, the two-index path $\WW^{\indep}(\omega,\omega')$ is an element of ${\mathcal C}({\mathcal S}_{2}^T;\RR^{m \times m})$, and the map $\WW^{\indep} : \Omega^2 \ni (\omega,\omega') \mapsto \WW^{\indep}(\omega,\omega') \in {\mathcal C}({\mathcal S}_{2}^T;\RR^{m \times m})$ is Borel-measurable and $q$-integrable. 
\end{itemize}
Moreover, we may set, for some fixed $p \in [2,3)$ and for all $0\leq s\leq t \leq T$ and $\omega\in\Omega$,
\begin{equation}
\label{eq:v}
\begin{split}
v(s,t,\omega) &:= \big\| W(\omega) \big\|_{[s,t],p-\textrm{\rm v}}^p + \big\langle W(\cdot) \big\rangle_{q ; [s,t],p-\textrm{\rm v}}^p   \\
&\hspace{15pt} + \big\| \WW(\omega) \big\|_{[s,t],p/2-\textrm{\rm v}}^{p/2} + \big\langle \WW^{\indep}(\omega,\cdot) \big\rangle_{q ; [s,t],p/2-\textrm{\rm v}}^{p/2}   \\
&\hspace{15pt} + \big\langle \WW^{\indep}(\cdot,\omega) \big\rangle_{q ; [s,t],p/2-\textrm{\rm v}}^{p/2} + \big\llangle \WW^{\indep}(\cdot,\cdot) \big\rrangle_{q ; [s,t],p/2-\textrm{\rm v}}^{p/2},
\end{split}
\end{equation}
and we assume that, for any positive finite time $T$ and any $\omega \in \Omega$, the quantity $v(0,T,\omega)$ is finite. Importantly,
$\omega \mapsto (v(s,t,\omega))_{(s,t)\in {\mathcal S}_{2}^T}$ is a random variable with values in ${\mathcal C}({\mathcal S}_{2}^T;\RR_{+})$ and is super-additive, namely, for any $0 \leq r \leq s \leq t \leq T$, and $\omega\in\Omega$,
\begin{equation*}
v(r,t,\omega) \geq v(r,s,\omega) + v(s,t,\omega). 
\end{equation*}
 We then assume 
$\langle v(0,T, \cdot) \rangle_{q} < \infty$, which implies, by Lebesgue's dominated convergence theorem, that the function 
$
{\mathcal S}_{2}^T \ni (s,t) \mapsto \langle v(s,t,\cdot) \rangle_q
$ 
is continuous. We assume that it is of bounded variation on $[0,T]$, \textit{i.e.} 
\begin{equation}
\label{eq:v:1-var}
\langle v(\cdot) \rangle_{q;[s,t],1-\textrm{\rm v}} := \sup_{0 \leq t_{1} < \cdots < t_{K} \leq T} \sum_{i=1}^{K} \langle v(t_{i-1},t_{i},\cdot) \rangle_{q} < \infty. 
\end{equation}
We then call a control any family
of random variables $(\omega \mapsto w(s,t,\omega))_{(s,t) \in {\mathcal S}_{2}^T}$ 
 that 
is jointly continuous in $(s,t)$ and that satisfies, 
\begin{equation}
\label{eq:w:s:t:omega:ineq}
w (s,t,\omega) \geq v(s,t,\omega) + \langle v(\cdot) \rangle_{q;[s,t],1-\textrm{\rm v}},
\end{equation}
together with
\begin{equation}
\begin{split}
\label{eq:useful:inequality:wT}
&\langle w(s,t,\cdot) \rangle_{q} \leq 2 \, w(s,t,\omega),   \\
&w(r,t,\omega) \geq w(r,s,\omega) + w(s,t,\omega), \quad r \leq s \leq t.
\end{split}
\end{equation}
A typical choice to get 
\eqref{eq:w:s:t:omega:ineq}
and 
\eqref{eq:useful:inequality:wT} is to choose
\begin{equation}
\label{eq:w:s:t:omega}
w (s,t,\omega) := v(s,t,\omega) + \langle v(\cdot) \rangle_{q;[s,t],1-\textrm{\rm v}}.
\end{equation}

\subsection{Controlled Trajectories}
\label{SubsectionControlledTraj}

With a rough set-up at hands on a given finite time interval $[0,T]$, we define an associated notion of controlled path and rough integral in the spirit of Gubinelli \cite{Gubinelli}. Again, we refer to \cite{BCD1} for details, see Definition 3.1 therein.

\begin{definition}
\label{definition:omega:controlled:trajectory}
An $\omega$-dependent continuous $\RR^d$-valued path $(X_{t}(\omega))_{0 \le t \le T}$ is called an {{$\omega$-controlled path}} on $[0,T]$ if its increments can be decomposed as 
\begin{equation}
\label{eq:omega:controlled:eq}
X_{s,t}(\omega) = \delta_{x} X_{s}(\omega) W_{s,t}(\omega) + \EE \bigl[ \delta_{\mu} X_{s}(\omega,\cdot) W_{s,t}(\cdot) \bigr] + R^X_{s,t}(\omega),
\end{equation}
where 
$\big(\delta_{x} X_{t}(\omega)\big)_{0 \leq t \leq T}$ belongs to $\cC\big([0,T];\RR^{d \times m}\big)$ and 
$\big(\delta_{\mu} X_{t}(\omega,\cdot)\big)_{0 \leq t \leq T}$ to  ${\mathcal C}\big([0,T];\LL^{4/3}(\Omega,{\mathcal F},\PP;\RR^{d \times m})\big)$,
$\big(R_{s,t}^X(\omega)\big)_{s,t \in {\mathcal S}_{2}^T}$ is in the space $\cC({\mathcal S}_{2}^T;\RR^d)$,
and 
\begin{equation*}
\begin{split}
\vvvert X(\omega) \vvvert_{\star,[0,T],w,p} 
&:= \vert X_{0}(\omega) \vert + \big\vert \delta_{x} X_{0}(\omega) \big\vert + \big\langle  \delta_{\mu} X_{0}(\omega,\cdot) \big\rangle_{4/3} 
\\
&\hspace{15pt}+ \vvvert X(\omega) \vvvert_{[0,T],w,p} < \infty,
\end{split}
\end{equation*}
where 
\begin{equation*}
\begin{split}
\vvvert X(\omega) \vvvert_{[0,T],w,p} &:= \| X(\omega) \|_{[0,T],w,p} + \| \delta_{x} X(\omega) \|_{[0,T],w,p} 
\\
&+ \big\langle  \delta_{\mu} X(\omega,\cdot)  \big\rangle_{[0,T],w,p,4/3}   
+ \| R^X(\omega) \|_{[0,T],w,p/2},
\end{split}
\end{equation*}
with
\begin{equation*}
\begin{split}
&\| X(\omega) \|_{[0,T],w,p} := \sup_{{\emptyset \not = (s,t)} \subset [0,T]} \frac{\big\vert   X_{{s,t}}(\omega) \big\vert}{w(s,t,\omega)^{1/p}},   \quad {\textrm{\rm and similarly for} \ {\delta}_{x} X}
\\
&\big\langle \delta_{\mu} X(\omega,\cdot) \big\rangle_{[0,T],w,p,4/3} := \sup_{\emptyset \not = (s,t) \subset [0,T]} \frac{\big\langle  \delta_{\mu}X_{{s,t}}(\omega,\cdot)  \big\rangle_{4/3}}{w(s,t,\omega)^{1/p}},
\\
&\| R^X(\omega) \|_{[0,T],w,p/2} := \sup_{\emptyset \not = (s,t) \subset [0,T]} \frac{\big\vert R_{{s,t}}^X(\omega) \big\vert}{w(s,t,\omega)^{2/p}}.
\end{split}
\end{equation*}
We call \textcolor{black}{$\delta_x X(\omega)$} and \textcolor{black}{$\delta_\mu X(\omega,\cdot)$} in  \eqref{eq:omega:controlled:eq} the  {{derivatives of}} \textcolor{black}{$X(\omega)$}.
\end{definition}

We then define the notion of random controlled trajectory, which consists of a collection of $\omega$-controlled trajectories indexed by the elements of $\Omega$.

\begin{definition}
\label{definition:random:controlled:trajectory}
{A family of $\omega$-controlled paths $(X(\omega))_{\omega \in \Omega}$ such that 
$X$, $\delta_{x} X$,
$\delta_\mu X$
and 
$R^X$
are measurable from $\Omega$ into  
$\cC\big([0,T];\RR^d\big)$,
$\cC\big([0,T];\RR^{d \times m}\big)$,
$\cC\big([0,T];\LL^{4/3}(\Omega,{\mathcal F},\PP;\RR^{d \times m})\big)$
and 
$\cC\big({\mathcal S}_{2}^T;\RR^d\big)$}, and \textcolor{black}{satisfy} 
\begin{equation}
\label{eq:Lq:vvvert}
\big\langle X_{0}(\cdot) \big\rangle_{2} + \bigl\langle \vvvert X(\cdot) \vvvert_{[0,T],w,p}\bigr\rangle_{8} < \infty
\end{equation}
is called a {random controlled path} on $[0,T]$.
\end{definition}
It is proven in \cite[Lemma 3.3]{BCD1} that a random controlled trajectory induces a continuous path $t \mapsto X_{t}(\cdot)$ from $[0,T]$ to $\LL^2(\Omega,{\mathcal F},\PP;\RR^d)$.
 {On another matter, the reader may observe that we do not require any integrability property on the initial conditions $\delta_{x} X_{0}$ and $\delta_{\mu} X_{0}$ of the two derivative processes. In fact, it must be understood that, when dealing with solutions to 
\eqref{EqRDE}, both $\delta_{x} X_{0}$ and $\delta_{\mu} X_{0}$ are automatically prescribed: $\delta_{x} X_{0}(\omega)=
\text{F}(X_{0}(\omega),{\mathcal L}(X_{0}))$ and $\delta_{\mu} X_{0}(\omega,\cdot)=0$, 
see the forthcoming 
Theorem 
\ref{theorem:integral}
for more details. Simple bounds for both 
$\delta_{x} X_{0}$ and $\delta_{\mu} X_{0}$ then easily follow.
Lastly, as it is explained in our companion paper \cite{BCD1}, see Definition 3.1 therein, 
the values of $4/3$ and $8$ in Definition \ref{definition:random:controlled:trajectory} are somewhat arbitrary. In particular, the analysis could be managed with other exponents provided that a certain trade-off between the two of them (here $4/3$ and $8$) still holds. To make it clear, 
$4/3$ should be here regarded as the conjugate exponent of $4$; and the reader may easily guess that $8$ shows up in the computations when squaring some fourth moments. }

\subsection{Rough Integral}
\label{SubsectionIntegral}

As for the construction, of the rough integral, we recall the following statement from \cite[Theorem 3.4]{BCD1}.

\begin{thm}
\label{theorem:integral}
\textcolor{black}{There exists a universal constant $c_0$ and}, for any $\omega \in \Omega$, there exists a continuous linear map 
\begin{equation*}
\bigl(X_{t}(\omega)\bigr)_{0 \leq t \leq T} \mapsto \biggl( \int_{s}^t X_{s,u}(\omega) \otimes d {\boldsymbol W}_{u}(\omega) \biggr)_{\textcolor{black}{(s,t)} \in {\mathcal S}_{2}^T}
\end{equation*}
from the space of $\omega$-controlled trajectories equipped with the norm $\vvvert \cdot \vvvert_{\star,[0,T],p}$, onto the space of continuous functions from ${\mathcal S}_{2}^T$ into $\RR^d \otimes \RR^m$ with finite norm $\| \cdot \|_{[0,T],w,p/2}$, with $w$ in the latter norm being evaluated along the realization $\omega$, that satisfies for any $0\leq r\leq s\leq t\leq T$ the identity
\begin{equation*}
\begin{split}
&\int_{r}^t X_{r,u}(\omega) \otimes d {\boldsymbol W}_{u}(\omega) 
\\
&\hspace{15pt}= \int_{r}^s X_{r,u}(\omega) \otimes d {\boldsymbol W}_{u}(\omega) + \int_{s}^t X_{s,u}(\omega) \otimes d {\boldsymbol W}_{u}(\omega) + X_{r,s}(\omega)
\otimes W_{s,t}(\omega),
\end{split}
\end{equation*}
together with the estimate
\begin{align}
&\biggl\vert \int_{s}^t X_{s,u}(\omega) \otimes d {\boldsymbol W}_{u}(\omega) 
 -  \delta_{x} X_{s}(\omega) {\mathbb W}_{s,t}(\omega) - {\mathbb E} \bigl[ \delta_{\mu} X_{s}(\omega,\cdot) {\mathbb W}_{s,t}^{\indep}(\cdot,\omega) \bigr]  \biggr\vert \nonumber 
  \\
&\hspace{5pt}\leq c_0\, \vvvert X(\omega) \vvvert_{\textcolor{black}{[0,T],w,p}}\, w(s,t,\omega)^{3/p}. 
\label{eq:remainder:integral}
\end{align}
\end{thm}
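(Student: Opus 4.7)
The plan is to follow the classical Gubinelli-type strategy based on the sewing lemma, adapted to incorporate the measure-derivative contribution $\delta_\mu X$ and the independent iterated integral $\WW^{\indep}$. For each fixed $\omega \in \Omega$, I would first introduce the \emph{germ}
\begin{equation*}
\mu_{s,t}(\omega) := X_s(\omega) \otimes W_{s,t}(\omega) + \delta_x X_s(\omega)\, \WW_{s,t}(\omega) + \EE\bigl[\delta_\mu X_s(\omega,\cdot)\, \WW_{s,t}^{\indep}(\cdot,\omega)\bigr],
\end{equation*}
which is the natural second-order expansion of a would-be path $Y(\omega) \approx \int_0^{\cdot} X_u(\omega) \otimes d{\boldsymbol W}_u(\omega)$ on $[s,t]$, the last term encoding the Lions-derivative contribution tested against the independent iterated integral.

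Next, I would compute the coboundary $\delta\mu_{r,s,t}(\omega) := \mu_{r,t}(\omega) - \mu_{r,s}(\omega) - \mu_{s,t}(\omega)$ for $0 \le r \le s \le t \le T$. Applying Chen's relations \eqref{eq:chen} to $W$, $\WW$, and $\WW^{\indep}$ (the latter in its first argument, keeping $\omega$ fixed in the second), and then substituting the controlled-path decomposition \eqref{eq:omega:controlled:eq} for the increment $X_{r,s}(\omega)$, the first-order cross terms $\delta_x X_r(\omega) W_{r,s}(\omega) \otimes W_{s,t}(\omega)$ and $\EE[\delta_\mu X_r(\omega,\cdot) W_{r,s}(\cdot)] \otimes W_{s,t}(\omega)$ cancel exactly, and I expect to arrive at
\begin{equation*}
\delta\mu_{r,s,t}(\omega) = -R^X_{r,s}(\omega) \otimes W_{s,t}(\omega) - \delta_x X_{r,s}(\omega)\, \WW_{s,t}(\omega) - \EE\bigl[\delta_\mu X_{r,s}(\omega,\cdot)\, \WW_{s,t}^{\indep}(\cdot,\omega)\bigr].
\end{equation*}
Each of the three remaining pieces factorises into a product of an $X$-regularity factor of order $w(\cdot,\cdot,\omega)^{1/p}$ or $w(\cdot,\cdot,\omega)^{2/p}$ and a rough-signal factor of the complementary order. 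Bounding them through the semi-norms collected in $\vvvert X(\omega)\vvvert_{[0,T],w,p}$, the definition of $v$ in \eqref{eq:v}, and, for the measure term, H\"older's inequality with conjugate exponents $4/3$ and $4$ (legitimate since $q \ge 8 \ge 4$), should yield
\begin{equation*}
\bigl|\delta\mu_{r,s,t}(\omega)\bigr| \le C\, \vvvert X(\omega)\vvvert_{[0,T],w,p}\, w(r,t,\omega)^{3/p}.
\end{equation*}

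Since $p<3$ gives $3/p > 1$, the sewing lemma produces, for each $\omega$, a unique continuous additive two-index map $Y(\omega)$ with $|Y_{s,t}(\omega) - \mu_{s,t}(\omega)| \le c_0\, \vvvert X(\omega)\vvvert_{[0,T],w,p}\, w(s,t,\omega)^{3/p}$ for a universal constant $c_0$. I would then define
\begin{equation*}
\int_s^t X_{s,u}(\omega) \otimes d{\boldsymbol W}_u(\omega) := Y_{s,t}(\omega) - X_s(\omega) \otimes W_{s,t}(\omega),
\end{equation*}
which produces both the Chen-type additivity stated in the theorem (as a consequence of the additivity of $Y$) and the remainder estimate \eqref{eq:remainder:integral}. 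Linearity and continuity in $\vvvert \cdot \vvvert_{\star,[0,T],w,p}$ follow from the linearity of $\mu$ in $X$ and from the sewing bound.

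The step I expect to be the main obstacle is the coboundary computation in the measure direction: one must invoke Chen's identity for $\WW_{r,t}^{\indep}(\omega',\omega)$ in the first variable $\omega'$ (thereby producing a factor $\EE[\delta_\mu X_r(\omega,\cdot) W_{r,s}(\cdot)] \otimes W_{s,t}(\omega)$) and then recognise that this factor is exactly what is needed to cancel the Lions-derivative contribution in the expansion of $X_{r,s}(\omega)$. Once this algebraic identity is in place, the $4/3$-versus-$4$ H\"older pairing handles the only genuinely measure-theoretic analytic estimate, and the remaining steps — sewing, additivity, and continuity — proceed in the classical fashion.
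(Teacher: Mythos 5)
Your proposal is correct and is essentially the argument behind \cite[Theorem 3.4]{BCD1}, which the present paper simply quotes: one forms the germ $\mu_{s,t}(\omega)$ containing the $\delta_x X\, \WW$ and $\EE[\delta_\mu X\, \WW^{\indep}]$ corrections, observes (via Chen's relation for $\WW^{\indep}_{\cdot,\cdot}(\cdot,\omega)$ in its first argument combined with the controlled-path decomposition \eqref{eq:omega:controlled:eq}) that the first-order cross terms cancel and the coboundary collapses to $-R^X_{r,s}\otimes W_{s,t} - \delta_x X_{r,s}\,\WW_{s,t} - \EE[\delta_\mu X_{r,s}(\omega,\cdot)\,\WW^{\indep}_{s,t}(\cdot,\omega)]$, and then sews with the $4/3$-vs-$4$ H\"older pairing handling the measure term (using $q\ge 8\ge 4$). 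The additivity, linearity, continuity, and the remainder estimate \eqref{eq:remainder:integral} then all follow from the sewing lemma exactly as you describe.
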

Above, $\delta_{x} X_{s}(\omega) \, {\mathbb W}_{s,t}(\omega)$ is the product of a $d \times m$ matrix and an $m \times m$ matrix, so it gives back a $d \times m$ matrix, with components 
$
\bigl( \delta_{x} X_{s}(\omega) {\mathbb W}_{s,t}(\omega) \bigr)_{i,j} = \sum_{k=1}^m \bigl( \delta_{x} X_{s}^{i}(\omega) \bigr)_{k} \bigl( {\mathbb W}_{s,t}(\omega) \bigr)_{k,j}$, 
for $i \in \{1,\cdots,d\}$ and $j \in \{1,\cdots,m\}$, and similarly 
for
$
{\mathbb E}[\delta_{\mu} X_{s}(\omega,\cdot) {\mathbb W}_{s,t}^{\indep}(\cdot,\omega)]
$.
 As usual, the above construction allows us to define an additive process setting
\begin{equation*}
\int_{s}^t X_{u}(\omega) \otimes d {\boldsymbol W}_{u}(\omega) := \int_{s}^t X_{s,u}(\omega) \otimes d {\boldsymbol W}_{u}(\omega) + X_{s}(\omega) \otimes \textcolor{black}{W_{s,t}}(\omega), 
\end{equation*}
for $0\leq s \leq t\leq T$. 
We can thus consider the integral process $\big(\int_{0}^t X_{s}(\omega) \otimes d{\boldsymbol W}_{s}(\omega)\big)_{0 \leq t \leq T}$ as an $\omega$-controlled trajectory with values in $\RR^{d \times m}$, with 
\begin{equation*}
\biggl( \delta_{x} \biggl[ \int_{0}^{\cdot} X_{s}(\omega) \otimes d{\boldsymbol W}_{s}(\omega) \biggr]_{t} \biggr)_{(i,j),k}= \textcolor{black}{\bigl( X_{t}(\omega)
\bigr)_{i} \delta_{j,k}},
\end{equation*}
for $i \in \{1,\cdots,d\}$ and $j,k \in \{1,\cdots,m\}$, where $\delta_{j,k}$ stands for the usual Kronecker symbol, and
with null $\mu$-derivative.

When the trajectory $X(\omega)$ takes in values in $\RR^{d} \otimes \RR^m$ rather than $\RR^d$, the integral 
$\int_{0}^t X_{s}(\omega) \otimes d {\boldsymbol W}_{s}(\omega)$ belongs to 
$\RR^{d} \otimes \RR^m \otimes \RR^m$. 
We then set for $i \in \{1,\cdots,d\}$
\begin{equation*}
\biggl( \int_{0}^t X_{s}(\omega) d {\boldsymbol W}_{s}(\omega) \biggr)_{i} := \sum_{j=1}^m \biggl( \int_{0}^t X_{s}(\omega)\otimes d {\boldsymbol W}_{s}(\omega) \biggr)_{i,j,j},
\end{equation*}
and consider $\int_{0}^t X_{s}(\omega) d {\boldsymbol W}_{s}(\omega)$ as an element of $\RR^d$.

\bigskip

\subsection{Stability of Controlled Paths under Nonlinear Maps}
\label{SubsectionStability}

A key fact in \cite{BCD1} is to use regularity properties of functions defined on Wasserstein space through a lifting procedure to an $L^2$ space standing above the probability space. We refer the reader to Lions' lectures \cite{Lions}, to the lecture notes \cite{LionsCardialiaguet} of Cardaliaguet or to Carmona and Delarue's monograph \cite[Chapter 5]{CarmonaDelarue_book_I} for basics on the subject. {Among others, the latter addresses the connection with other forms of derivatives on the Wasserstein space, such as flat or intrinsic derivatives.
Flat derivatives are
 inherited from the linear structure of the space of (hence non-normalized)
signed measures and are also called, depending on the definition, extrinsic or convexity extrinsic derivatives, see for instance  \cite{ORS,RenWang}. 
Intrinsic derivatives
were 
described by \cite{AmbrosioGigliSavare,Lott,Villani} after the pioneering works 
\cite{JKO,Otto} on gradient flows on the space of probability measures; 
a fine comparison with Lions' approach is carried out in  the recent work 
\cite{GangboTudorascu}. The typical example for a lift in $L^2$ is to consider an ordinary differential equation 
driven by some vector field $b$ starting from 
some random variable; it provides a lift of the solution to a Fokker-Planck equation 
together with a systematic way to identify derivatives of functionals of a measure argument in the direction $b$.
To the best of our knowledge, this latter idea goes back to the earlier works 
\cite{AKR1,AKR2}.}
\smallskip

\textcolor{gray}{$\bullet$} Recall $(\Omega,\mathcal{F},{\mathbb P})$ stands for an atomless probability space, with $\Omega$ a Polish space and $\mathcal{F}$ its Borel $\sigma$-algebra. Fix a finite dimensional space $E=\RR^k$ and denote,  {for $r \geq 1$}, by $\LL^{ {r}}:\textcolor{black}{=\LL^{ {r}}(\Omega,{\mathcal F},\PP;E)}$ the space of $E$-valued random variables on $\Omega$ with finite  {$r$} moment. We equip the space $\mathcal{P}_{{r}}(E) := \big\{\mathcal{L}(Z)\,;\,Z\in \LL^{{r}}\big\}$ with the ${r}$-Wasserstein distance 
\begin{equation}
\label{eq:wasserstein:distance}
{\mathbf d}_{{r}}(\mu_1,\mu_2) := \inf \Big\{\|Z_1-Z_2\|_{{r}}\,;\, \mathcal{L}(Z_1) = \mu_1,\, \mathcal{L}(Z_2)=\mu_2\Big\}.
\end{equation}
{When $r=2$}, an $\RR^k$-valued function $u$ defined on $\mathcal{P}_2(E)$ is canonically extended to $\LL^2$ by setting, for any $Z\in \LL^2$,
$
U(Z) := u\big(\mathcal{L}(Z)\big).$
\smallskip  

\textcolor{gray}{$\bullet$} The function $u$ is then said to be differentiable at $\mu\in\mathcal{P}_2(E)$ if its canonical lift is Fr\'echet differentiable at some point $Z$ such that $\mathcal{L}(Z)=\mu$; \textcolor{black}{we} denote by $\nabla_ZU\in (\LL^2)^k$ the gradient of $U$ at $Z$. The function $U$ is then differentiable at any other point $Z'\in \LL^2$ such that $\mathcal{L}(Z')=\mu$, and the laws of $\nabla_ZU$ and $\nabla_{Z'}U$ are equal, for any such $Z'$. 
\smallskip  
   
 \textcolor{gray}{$\bullet$} The function $u$ is said to be of class $C^1$ if its canonical lift is of class $C^1$. If $u$ is of class $C^1$ on ${\mathcal P}_{2}(E)$, then $\nabla_ZU$ is $\sigma(Z)$-measurable and given by an $\mathcal{L}(Z)$-dependent function $Du$ from $E$ to 	\textcolor{black}{$E^k$} such that 
\begin{equation}
\label{EqdefinitionDu}
\nabla_ZU = (Du)(Z).
\end{equation}
In order to emphasize the fact that $Du$ depends upon 
${\mathcal L}(Z)$, we shall write $D_{\mu} u\big({\mathcal L}(Z)\big)(\cdot)$ instead of $Du(\cdot)$. Importantly, this representation is independent of the choice of the probability space $(\Omega,{\mathcal F},{\mathbb P})$ and can be easily transported from one probability space to another. \vskip 1pt

Throughout the paper, we regard the function F in \eqref{EqRDE} as a map from $\RR^d\times \LL^2(\Omega,{\mathcal F},\PP;\RR^d)$ into the space $L(\RR^m,\RR^d) \cong \RR^d \otimes \RR^m$ of linear mappings from $\RR^m$ to $\RR^d$. Intuitively, we identify the coefficient driving equation \eqref{EqRDE} with its lift $\widehat {\textrm{\rm F}}$. Following \cite[Subsection 3.3]{BCD1}, we require $\text{F}$ to satisfy the following regularity assumptions.

\medskip

\noindent \textbf{\textsf{Regularity assumptions  1 -- }} \textit{Assume that \emph{F} is continuously differentiable in the joint variable $(x,Z)$, that 
$\partial_{x} F$ is also continuously differentiable in $(x,Z)$ and that there is some positive finite constant $\Lambda$ such that $\vert \textrm{\emph{F}}(x,\mu) \vert $, $\vert \partial_{x} \textrm{\emph{F}}(x,\mu) \vert$, $\vert \partial_{x}^2 \textrm{\emph{F}}(x,\mu) \vert$, $\| \nabla_{Z}\textrm{\emph{F}}(x,Z) \|_{2}$ and $\|\partial_x \nabla_{Z}\textrm{\emph{F}}(x,Z)\|_2$ are bounded by $\Lambda$, for any $x \in \RR^d$, $\mu \in {\mathcal P}_{2}(\RR^d)$ and $Z \in \LL^2(\Omega,{\mathcal F},\PP;\RR^d)$. Assume moreover that, for any $x \in \RR^d$, the mapping $Z \mapsto \nabla_{\textcolor{black}{Z}}\textrm{\emph{F}}(x,Z)$ is \textcolor{black}{a $\Lambda$}-Lipschitz function of $\textcolor{black}{Z} \in \LL^2(\Omega,{\mathcal F},\PP;\RR^d)$.} 

\medskip

We recall below that, for an $\omega$-controlled path $X(\omega)$  and for an $\RR^d$-valued random controlled path $Y(\cdot)$, $\textrm{F}(X(\omega),Y(\cdot)):=\bigl( \textrm{F}(X_t(\omega),Y_t(\cdot))\big)_{0\leq t\leq T}$ may be also expanded in the form of an $\omega$-controlled trajectory. As explained in \cite[(3.8)]{BCD1}, it suffices for our purpose to provide the form of the expansion when $\delta_\mu X(\omega) \equiv 0$ and $\delta_\mu Y(\cdot) \equiv 0$.

\begin{proposition}
\label{prop:chaining}
Let $X(\omega)$ be an $\omega$-controlled path and $Y(\cdot)$ be an $\RR^d$-valued random controlled path. Assume that
$\delta_\mu X(\omega) \equiv 0$ and $\delta_\mu Y(\cdot) \equiv 0$ and that 
 {$\sup_{0 \le t \le T} \big( \vert \delta_{x} X_{t}(\omega) \vert \vee \langle \delta_{x} Y_{t}(\cdot)\rangle_{\infty}\big) < \infty$}. Then, $\textrm{\emph{F}}\big(X(\omega),Y(\cdot)\big)$ is an $\omega$-controlled path with 
\begin{equation*}
\delta_{x} \Bigl( \textrm{\emph{F}}\bigl(X(\omega),Y(\cdot)\bigr) \Bigr)_{t} = \partial_x\textrm{\emph{F}}\bigl(X_{t}(\omega),Y_{t}(\cdot)\bigr) \, \delta_{x} X_{t}(\omega),
\end{equation*}
which is understood as $\bigl( \sum_{\ell=1}^d \partial_{x_{\ell}}
\textrm{\emph{F}}^{i,j}\bigl(X_{t}(\omega),Y_{t}(\cdot)\bigr)
\bigl(\delta_{x} X_{t}^{\ell}(\omega) \bigr)_{k} \bigr)_{i,j,k}$, with $i,k \in \{1,\cdots,d\}$
and $j \in \{1,\cdots,m\}$, and (with a similar interpretation for the product)
\begin{equation*}
\begin{split}
\delta_{\mu} \Bigl(\textrm{\emph{F}}\bigl(X(\omega),Y(\cdot)\bigr) \Bigr)_{t} &=  D_{\mu} \textrm{\rm F}\bigl(X_{t}(\omega),\cL(Y_{t})\bigr)\bigl(X_{t}(\cdot)\bigr) \delta_{x} Y_{t}(\cdot).
\end{split}
\end{equation*}
\end{proposition}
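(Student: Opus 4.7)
The strategy is to expand the increment $\textrm{F}(X_{t}(\omega),Y_{t}(\cdot)) - \textrm{F}(X_{s}(\omega),Y_{s}(\cdot))$ to first order in both variables, exploiting the $C^1$ regularity of $\textrm{F}$ on $\RR^d \times \cP_{2}(\RR^d)$ through its Lions lift $\widehat{\textrm{F}}(x,Z) := \textrm{F}(x,\cL(Z))$ defined on $\RR^d \times \LL^2(\Omega,{\mathcal F},\PP;\RR^d)$, and then to substitute the controlled-path decompositions of $X(\omega)$ and $Y(\cdot)$. Because $\delta_\mu X \equiv 0$ and $\delta_\mu Y \equiv 0$, these decompositions simplify to $X_{s,t}(\omega) = \delta_x X_s(\omega)\, W_{s,t}(\omega) + R^X_{s,t}(\omega)$ and $Y_{s,t}(\cdot) = \delta_x Y_s(\cdot)\, W_{s,t}(\cdot) + R^Y_{s,t}(\cdot)$.

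I would split the increment as $\bigl[\textrm{F}(X_t(\omega),Y_t(\cdot)) - \textrm{F}(X_s(\omega),Y_t(\cdot))\bigr] + \bigl[\textrm{F}(X_s(\omega),Y_t(\cdot)) - \textrm{F}(X_s(\omega),Y_s(\cdot))\bigr]$. For the first bracket, Taylor's formula and the boundedness of $\partial_x^2 \textrm{F}$ and $\partial_x \nabla_Z \textrm{F}$ produce a leading term $\partial_x \textrm{F}(X_s(\omega),Y_s(\cdot))\, X_{s,t}(\omega)$ with a remainder of order $|X_{s,t}(\omega)|^2 + |X_{s,t}(\omega)|\, \langle |Y_{s,t}(\cdot)|^2\rangle^{1/2}$. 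For the second bracket, Fr\'echet differentiability of $\widehat{\textrm{F}}$ in $Z$---with gradient represented by $D_\mu \textrm{F}(X_s(\omega),\cL(Y_s))(Y_s(\cdot))$---combined with the $\LL^2$-Lipschitz continuity of $\nabla_Z \textrm{F}$, yields the leading term $\bigl\langle D_\mu \textrm{F}(X_s(\omega),\cL(Y_s))(Y_s(\cdot))\, Y_{s,t}(\cdot)\bigr\rangle$ and a remainder of order $\langle |Y_{s,t}(\cdot)|^2\rangle$. Substituting the controlled-path expansions for $X_{s,t}$ and $Y_{s,t}$, the coefficient of $W_{s,t}(\omega)$ becomes $\partial_x \textrm{F}(X_s(\omega),Y_s(\cdot))\, \delta_x X_s(\omega)$---the claimed formula for $\delta_x(\textrm{F}(X,Y))_s$---while the coefficient of $W_{s,t}(\cdot)$ inside the expectation becomes $D_\mu \textrm{F}(X_s(\omega),\cL(Y_s))(Y_s(\cdot))\, \delta_x Y_s(\cdot)$, which is the claimed form of $\delta_\mu(\textrm{F}(X,Y))_s$. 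The leftover pieces---$\partial_x \textrm{F}\cdot R^X_{s,t}(\omega)$, $\langle D_\mu \textrm{F}\cdot R^Y_{s,t}(\cdot)\rangle$, and the two Taylor remainders---are packaged into $R^{\textrm{F}}_{s,t}(\omega)$.

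It remains to verify the norm conditions from Definition \ref{definition:omega:controlled:trajectory}. Time-pointwise bounds on $\delta_x(\textrm{F}(X,Y))$ follow directly from boundedness of $\partial_x \textrm{F}$ and the hypothesis on $\delta_x X$, while those on $\delta_\mu(\textrm{F}(X,Y))$ come from the $\LL^2$-bound on $D_\mu \textrm{F}$, upgraded to the required $\LL^{4/3}$-integrability by H\"older's inequality against the $\LL^\infty$-bound on $\delta_x Y$. Their $w^{1/p}$-regularity is obtained by combining the $p$-variations of $X(\omega)$ and $Y(\cdot)$ with the joint Lipschitz continuity of $\partial_x \textrm{F}$ and $\nabla_Z \textrm{F}$. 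For the remainder, the $R^X$- and $R^Y$-contributions are directly of order $w(s,t,\omega)^{2/p}$ by the controlled-path definitions, while the Taylor pieces $|X_{s,t}(\omega)|^2$ and $\langle |Y_{s,t}(\cdot)|^2\rangle$ each yield $O(w(s,t,\omega)^{2/p})$ upon squaring the $w^{1/p}$-estimates on $X$ and $Y$. The principal technical obstacle is managing the interaction between the pointwise-in-$\omega$ estimates controlling $X$ and the $\LL^q$-integrated estimates controlling $Y$, in particular for the cross Taylor term $|X_{s,t}(\omega)|\, \langle |Y_{s,t}(\cdot)|^2\rangle^{1/2}$; there, one combines Young's inequality with the control $\langle w(s,t,\cdot)\rangle_q \leq 2\, w(s,t,\omega)$ built into the definition of $w$ to recover the required factor $w(s,t,\omega)^{2/p}$.
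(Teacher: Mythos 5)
The paper does not prove Proposition \ref{prop:chaining}; it is recalled without proof from the companion paper \cite{BCD1} (see the reference to \cite[(3.8)]{BCD1} just above the statement), so there is no in-text argument to compare against. On its own merits, your proposed proof is correct and follows what the standard ``chain rule for controlled paths'' argument, adapted to the Lions-lift setting, must look like: split the increment into an $x$-increment and a $Z$-increment, Taylor-expand using the bounded and Lipschitz second derivatives from \textbf{\textsf{Regularity assumptions 1}}, substitute the controlled decompositions of $X(\omega)$ and $Y(\cdot)$ with $\delta_\mu X\equiv 0$ and $\delta_\mu Y\equiv 0$, and absorb $\partial_x\mathrm{F}\,R^X_{s,t}$, $\langle D_\mu\mathrm{F}\, R^Y_{s,t}\rangle$ and the quadratic Taylor remainders into $R^{\mathrm{F}}$. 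Your handling of the cross term $|X_{s,t}(\omega)|\,\langle|Y_{s,t}(\cdot)|^2\rangle^{1/2}$ via Young's inequality and the built-in control $\langle w(s,t,\cdot)\rangle_q \leq 2\,w(s,t,\omega)$ from \eqref{eq:useful:inequality:wT} is exactly what is needed to land in $w(s,t,\omega)^{2/p}$, and the $\LL^{4/3}$-integrability of $\delta_\mu(\mathrm{F}(X,Y))$ follows from $\|\nabla_Z\mathrm{F}\|_2\le\Lambda$ together with the $\LL^\infty$-bound on $\delta_x Y$ as you say.

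One remark worth recording: your derivation produces $D_\mu\mathrm{F}\bigl(X_s(\omega),\cL(Y_s)\bigr)\bigl(Y_s(\cdot)\bigr)\,\delta_x Y_s(\cdot)$ for the $\mu$-derivative, with $Y_s(\cdot)$ as the argument of $D_\mu\mathrm{F}$, whereas the printed statement shows $X_t(\cdot)$. Your form is the one that follows from Lions differentiation of the lift $\widehat{\mathrm{F}}(x,Z)=\mathrm{F}(x,\cL(Z))$, since $\nabla_Z\widehat{\mathrm{F}}(x,Z)=D_\mu\mathrm{F}(x,\cL(Z))(Z(\cdot))$ evaluated at $Z=Y_s(\cdot)$, and it is also the form actually used downstream in \eqref{eq:deltamu:empirical} and \eqref{eq:delta:barFti}, where the process playing the role of $Y(\cdot)$ appears as the argument. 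The printed $X_t(\cdot)$ coincides with $Y_t(\cdot)$ in those applications only because there both roles are filled by the same family; in the generic statement it should be read as $Y_t(\cdot)$, so your answer is the right one.
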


\subsection{Local Accumulation}

In order to proceed with the analysis of \eqref{eq:particle:system}, we make use of the notion of local accumulation. Following
\cite{BCD1}, we define it as follows. Given a  {nondecreasing}\footnote{In the sense that 
$\varpi(a,b) \geq \varpi(a',b')$ if $(a',b') \subset (a,b)$.} continuous positive valued function $\varpi$ on ${\mathcal S}_{2}$, a non-negative parameter $s$ and a positive threshold $\alpha$, we define inductively a sequence of times setting $\tau_{0}(s,\alpha) := s$, and 
\begin{equation}
\label{eq:stopping:times}
\tau_{n+1}^{\varpi}(s,\alpha) := \inf\Bigl\{ u \geq \tau_{n}^{\varpi}(s,\alpha) \, : \, \varpi\bigl(\tau_{n}^{\varpi}(s,\alpha),u\bigr) \geq \alpha \Bigr\}, 
\end{equation}
with the understanding that $\inf \emptyset := + \infty$. For $t \geq s$, set
\begin{equation}
\label{eq:N:s:t:omega}
N_{\varpi}\bigl([s,t],\alpha\bigr) := \sup \Bigl\{ n \in {\mathbb N} \ : \ \tau_{n}^{\varpi}(s,\alpha) \leq t \Bigr\}. 
\end{equation}
We call $N_{\varpi}$ the local accumulation of $\varpi$ (of size $\alpha$ if we specify the value of the threshold): 
{$N_{\varpi}([s,t],\alpha)$ is the largest number of disjoint open sub-intervals $(a,b)$ of  
$[s,t]$ on which $\varpi(a,b)$ is greater than or equal to $\alpha$.}
When $\varpi(s,t) = w(s,t,\omega)^{1/p}$ with $w$  a control satisfying \eqref{eq:w:s:t:omega:ineq} and \eqref{eq:useful:inequality:wT} and when the framework makes it clear, we just write $N([s,t],\omega,\alpha)$ for $N_{\varpi}([s,t],\alpha)$. Similarly, we also write $\tau_{n}(s,\omega,\alpha)$ for $\tau_{n}^{\varpi}(s,\alpha)$ when $\varpi(s,t) = w(s,t,\omega)$. We will also use the convenient notation
\begin{equation*}
\tau_{n}^{\varpi}(s,t,\alpha) := \tau_{n}^{\varpi}(s,\alpha) \wedge t. 
\end{equation*}

\section{Analysis of the Mean Field Rough Differential Equation}
\label{SectionSolving}

\subsection{Solving the Equation}

The following notion of solution to \eqref{EqRDE} is taken from \cite[Definition 4.1]{BCD1}.

\begin{definition}
\label{definitionGamma}
Let $W$ together with its enhancement ${\boldsymbol W}$ satisfy the assumption of Section \emph{\ref{SubsectionRegularityRoughSetup}} on a finite interval $[0,T]$.
%
%
{A solution to \eqref{EqRDE} on the time interval $[0,T]$, with initial condition}
$
X_{0}(\cdot) \in \LL^2(\Omega,{\mathcal F},{\mathbb P};\RR^d)
$, 
is a random controlled path $X(\cdot)$ such that for ${\mathbb P}$-a.e. $\omega$ {the paths $X(\omega)$ and $X_{0}(\omega) + \int_{0}^{\cdot} \textrm{\emph{F}}\bigl(X_{s}(\omega),{X}_{s}(\cdot)\bigr) d{\boldsymbol W}_{s}(\omega)$ coincide}.
\end{definition}

We formulate here the regularity assumptions on $\textrm{F}(x,\mu)$ used in \cite{BCD1},  {in addition to \textsf{\textbf{Regularity assumptions 1}}}, to show the well-posed character of Equation \eqref{EqRDE}. 
{Below}, we denote by $\big(\widetilde{\Omega},\widetilde{\mathcal F},\widetilde{\PP}\big)$ a copy of $(\Omega,{\mathcal F},\PP)$, and given a random variable $Z$ on $(\Omega,{\mathcal F},\PP)$, write $\widetilde{Z}$ for its copy on $\big(\widetilde\Omega,\widetilde{\mathcal F},\widetilde\PP\big)$. 

\bigskip

\noindent \textbf{\textsf{Regularity assumptions  2.}}   \vspace{0.15cm} 

\textcolor{gray}{$\bullet$} \textit{The function $\partial_{x} \textrm{\rm F}$ is differentiable in $(x,\mu)$.}  \vspace{0.15cm}

\textcolor{gray}{$\bullet$} \textit{For each $(x,\mu) \in \RR^d \times \cP_{2}(\RR^d)$, there exists a version of $D_{\mu}\textrm{\emph{F}}(x,\mu)(\cdot) \in L^2_{\mu}(\RR^d;\RR^d \otimes \RR^m)$ such that the map
$
(x,\mu,z) \mapsto D_{\mu}\textrm{\emph{F}}(x,\mu)(z)
$
from $\RR^d\times\mathcal{P}_2(\RR^d)\times\RR^d$ to $\textcolor{black}{\RR^{d} \otimes \RR^m \otimes \RR^d}$ is of class $C^1$, the derivative in the direction $\mu$ being understood as before.}      \vspace{0.15cm}

\textcolor{gray}{$\bullet$}
\textit{The function 
$\big(x,Z\big) \mapsto \partial_{x}^2 \textrm{\emph{F}}\bigl(x,\cL(Z)\bigr)$ from $\RR^d \times \LL^2(\Omega,{\mathcal F},\PP;\RR^d)$ to 
$
\RR^{d} \otimes \RR^{m} \otimes \RR^d \otimes \RR^d$ 
is bounded by $\Lambda$ and $\Lambda$-Lipschitz continuous. }   \vspace{0.15cm}

\textcolor{gray}{$\bullet$} \textit{The two derivative functions
$(x,Z) \mapsto \partial_x D_{\mu}\textrm{\emph{F}}\bigl(x,\cL(Z)\bigr)(Z(\cdot))$
(which is the same as 
$(x,Z) \mapsto D_{\mu} \partial_x \textrm{\emph{F}}\bigl(x,\cL(Z)\bigr)(Z(\cdot))$
by Schwarz' theorem)
and
$(x,Z) \mapsto \partial_z D_{\mu}\textrm{\emph{F}}\bigl(x,\cL(Z)\bigr)(Z(\cdot))$
are bounded by $\Lambda$ and $\Lambda$-Lipschitz continuous
from $\RR^d \times \LL^2\big(\Omega,{\mathcal F},\PP;\RR^d\big)$ to 
$
\LL^2\bigl(\Omega,{\mathcal F},\PP;\RR^{d} \otimes \RR^{m} \otimes \RR^d \otimes \RR^d\bigr)$.}
 
     \vspace{0.15cm}

\textcolor{gray}{$\bullet$} \textit{For each $\mu \in {\mathcal P}_{2}(\RR^d)$, we denote by $D^2_{\mu}\textrm{\emph{F}}(x,\mu)(z,\cdot)$,
the derivative of $D_{\mu}\textrm{\emph{F}}(x,\mu)(z)$ with respect to $\mu$ -- which is indeed given by a function. For $z' \in \RR^d$, 
$D_{\mu}^2\textrm{\emph{F}}(x,\mu)(z,z')$ is an element of $\RR^d \otimes \RR^m \otimes \RR^d \otimes \RR^d$. We assume that 
$$
(x,Z) \mapsto D^2_{\mu} \textrm{\emph{F}}\bigl(x,\cL(Z)\bigr)\big(Z(\cdot),\widetilde{Z}(\cdot)\big),
$$ 
from $\RR^d \times \LL^2(\Omega,{\mathcal F},\PP;\RR^d)$ to 
$
\LL^2\Bigl(\Omega \times \widetilde{\Omega},{\mathcal F} \otimes \widetilde{\mathcal F},\PP \otimes \widetilde{\mathbb P};\RR^{d} \otimes \RR^{m} \otimes \RR^d \otimes \RR^d\Bigr),$ 
is bounded by $\Lambda$ and $\Lambda$-Lipschitz continuous.}   

\medskip

{The two functions
$
\textrm{F}(x,\mu) = \int f(x,y)\mu(dy)$ and $\textrm{F}(x,\mu) = g\big(x,\int y\mu(dy)\big)$, 
for functions $f,g\in C^3_b$ (meaning that $f$ and $g$ are bounded and have bounded derivatives of order 1, 2 and 3), satisfy the \textsf{\textbf{Regularity assumptions 1 and 2}}.}
The following {property} is taken from 
\cite[Proposition 4.3 and (4.21)]{BCD1}. 

\begin{proposition}
\label{theorem:fixed:1}
Let \emph{F} satisfy \textbf{\textsf{Regularity assumptions 1}}  and \textbf{\textsf{2}} and $w$ be a control 
satisfying \eqref{eq:w:s:t:omega:ineq}
and \eqref{eq:useful:inequality:wT}. Consider two $\omega$-controlled paths $X(\omega)$ and $X'(\omega)$ {with possibly different initial conditions $(X_{0}(\omega),\delta_{x} X_{0}(\omega))$
and $(X_{0}'(\omega),\delta_{x} X_{0}'(\omega))$}, defined on a time interval $[0,T]$, together with two random controlled paths $Y(\cdot)$ and $Y'(\cdot)$,
{with possibly different initial conditions $(Y_{0}(\omega),\delta_{x} Y_{0}(\omega))$
and $(Y_{0}'(\omega),\delta_{x} Y_{0}'(\omega))$, 
all of them 
satisfying
$\delta_{\mu} X(\omega) \equiv \delta_{\mu} X'(\omega) \equiv 0$
and 
$\delta_{\mu} Y(\cdot) \equiv \delta_{\mu} Y'(\cdot) \equiv 0$} together with
\begin{equation}
\label{eq:fixed:00}
\bigl|\delta_{x} X(\omega) \bigr| \vee \bigl| \delta_{x} X'(\omega)\bigr| \vee \big\langle \delta_{x} Y(\cdot)\big\rangle_{\infty} \vee  \big\langle \delta_{x} Y'(\cdot) \big\rangle_{\infty} \leq \textcolor{black}{\Lambda},
\end{equation}
and the size estimates
\begin{align}
\label{eq:fixed:1}
&\bigl\langle \vvvert Y(\cdot) \vvvert_{[0,T],w,p} \bigr\rangle_{8}^2 \leq L_{0},   \   \bigl\langle \vvvert Y'(\cdot) \vvvert_{[0,T],w,p} \bigr\rangle_{8}^2 \leq L_{0}, 
\\
\label{eq:fixed:2}
&\big\vvvert X(\omega) \big\vvvert_{[t_{i}^0,t_{i+1}^0],w,p}^2 \leq {L_{0}}, \qquad \big\vvvert X'(\omega) \big\vvvert_{[t_{i}^0,t_{i+1}^0],w,p}^2 \leq {L_{0}},  
\end{align}
for $i \in \{0,\cdots,N^{0}\}$, for some $L_0 \geq 1$, and $N^0=N\bigl(\textcolor{black}{[0,T]},\omega,1/(4L_{0})\bigr)$ given by \eqref{eq:N:s:t:omega}, and for the sequence $\bigl(t_{i}^0=\tau_{i}(0,T,\omega,1/(4L_{0}))\bigr)_{i=0,\cdots,N^0+1}$ given by \eqref{eq:stopping:times}. 

Then, we can find a constant $\gamma$ depending on $L_{0}$ and $\Lambda$ such that, for any partition $(t_{i})_{\textcolor{black}{i=0,\cdots,N}}$ included in $(t_i^0)_{\textcolor{black}{i=0,\cdots,N^0}}$ and satisfying 
$
w(t_{i},t_{i+1},\omega)^{1/p} \leq 1/(4L)$ 
for some \textcolor{black}{$L\geq L_0$}, we have
\begin{equation}
\label{eq:3.16:BCD1}
\begin{split}
&\biggl\vvvert \int_{t_{i}}^{\cdot} \Bigl( \textrm{F}\bigl(X_{r}(\omega),Y_{r}(\cdot)\bigr) - 
\textrm{F}\bigl(X_{r}'(\omega),Y_{r}'(\cdot)\bigr) \Bigr) d {\boldsymbol W}_{r}(\omega) \biggr\vvvert_{[t_{i},t_{i+1}],w,p}   
\\
&\leq \gamma \, \Bigl(  \big\vert \Delta X_{0}(\omega)\big\vert +
\big\vert \delta_{x} \Delta  X_{0}(\omega)\big\vert \Bigr)
+ 
 \bigl\langle \Delta Y_{0}(\cdot)  \bigr\rangle_{4}
 +
 \bigl\langle  \delta_{x} \Delta Y_{0}(\cdot)  \bigr\rangle_{4}
 \\ 
& \hspace{5pt} + \gamma\, w(0,t_{i},\omega)^{1/p}\,\Big(\big\vvvert \Delta X(\omega) \big\vvvert_{[0,t_{i}],w,p} + \big\langle \vvvert \Delta Y(\cdot) \vvvert_{[0,T],w,p} \big\rangle_{8} \Big)   \\
&\hspace{5pt} + \frac{\gamma}{4L}  \Big(\big\vvvert \Delta X(\omega)\big\vvvert_{[t_{i},t_{i+1}],w,p} + \big\langle \vvvert \Delta Y(\cdot) \vvvert_{[0,T],w,p}\big\rangle_{8} \Big),
\end{split}
\end{equation}\textcolor{black}{where
$
\Delta X_{t}(\omega) := X_{t}(\omega) - X_{t}'(\omega)$, $\Delta Y_{t}(\cdot) := Y_{t}(\cdot) - Y_{t}'(\cdot), \quad t \in [0,T].$
}
\end{proposition}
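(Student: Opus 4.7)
The strategy is to combine Proposition \ref{prop:chaining}, which supplies the controlled-path decomposition of $\textrm{F}(X(\omega),Y(\cdot))$ and $\textrm{F}(X'(\omega),Y'(\cdot))$ as $\omega$-controlled trajectories with explicit derivatives, with the continuity estimate of Theorem \ref{theorem:integral}, which bounds the $\vvvert \cdot \vvvert_{[t_i,t_{i+1}],w,p}$-norm of a rough integral by a universal constant times the $\vvvert \cdot \vvvert_{\star,[t_i,t_{i+1}],w,p}$-norm of its integrand. Writing $Z := \textrm{F}(X,Y) - \textrm{F}(X',Y')$, the proof reduces to showing that $\vvvert Z(\omega) \vvvert_{\star,[t_i,t_{i+1}],w,p}$ is bounded by the right-hand side of \eqref{eq:3.16:BCD1}. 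The factor $1/(4L)$ on the last line will come from the smallness condition $w(t_i,t_{i+1},\omega)^{1/p} \leq 1/(4L)$ applied to the $p$-variation pieces of that norm.

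First I would estimate the initial data $Z_{t_i}(\omega)$, $\delta_x Z_{t_i}(\omega)$ and $\delta_\mu Z_{t_i}(\omega,\cdot)$, which by Proposition \ref{prop:chaining} are explicit nonlinear functions of $(X_{t_i},\delta_x X_{t_i},Y_{t_i},\delta_x Y_{t_i})$ and their primed analogues. Using the boundedness and Lipschitz continuity of $\textrm{F}$, $\partial_x \textrm{F}$ and $D_\mu \textrm{F}$ from \textbf{\textsf{Regularity assumptions 1 and 2}} together with the uniform bound \eqref{eq:fixed:00} on the $x$-derivatives, each of these three quantities is controlled by $|\Delta X_{t_i}(\omega)| + |\delta_x \Delta X_{t_i}(\omega)|$ together with $\LL^{4}$-norms of $\Delta Y_{t_i}(\cdot)$ and $\delta_x \Delta Y_{t_i}(\cdot)$. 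Propagating these bounds from time $0$ via the controlled-path decomposition, e.g.
$$
|\Delta X_{t_i}(\omega)| \leq |\Delta X_0(\omega)| + w(0,t_i,\omega)^{1/p} \vvvert \Delta X(\omega) \vvvert_{[0,t_i],w,p},
$$
and analogous inequalities for $\delta_x \Delta X$, $\Delta Y$ and $\delta_x \Delta Y$ (the latter controlled in $\LL^4$ by H\"older from the $\LL^8$ norm in \eqref{eq:Lq:vvvert}), produces the first two lines of the right-hand side of \eqref{eq:3.16:BCD1}.

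For the oscillation seminorms $\| Z(\omega)\|_{[t_i,t_{i+1}],w,p}$, $\|\delta_x Z(\omega)\|_{[t_i,t_{i+1}],w,p}$, $\langle \delta_\mu Z(\omega,\cdot)\rangle_{[t_i,t_{i+1}],w,p,4/3}$ and for the remainder $\|R^Z(\omega)\|_{[t_i,t_{i+1}],w,p/2}$, I would perform first- and second-order Taylor expansions of $\textrm{F}$ around $(X_{t_i}(\omega), Y_{t_i}(\cdot))$ and its primed analogue. The second-derivative bounds of \textbf{\textsf{Regularity assumptions 2}} together with the size estimates \eqref{eq:fixed:1}--\eqref{eq:fixed:2} (which give $\vvvert X(\omega)\vvvert, \vvvert X'(\omega)\vvvert \leq L_0^{1/2}$ on $[t_i^0,t_{i+1}^0]$ and the corresponding $\LL^8$ control on $Y,Y'$) yield bounds proportional to $w(t_i,t_{i+1},\omega)^{1/p} \bigl( \vvvert \Delta X(\omega)\vvvert_{[t_i,t_{i+1}],w,p} + \langle \vvvert \Delta Y(\cdot)\vvvert_{[0,T],w,p} \rangle_8 \bigr)$, and the smallness $w(t_i,t_{i+1},\omega)^{1/p} \leq 1/(4L)$ converts this into the last line of \eqref{eq:3.16:BCD1}.

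The main obstacle will be the handling of the $\mu$-derivative term $\delta_\mu Z$ and the remainder $R^Z$. Comparing $D_\mu \textrm{F}(X_t, \cL(Y_t))(X_t(\cdot))\, \delta_x Y_t(\cdot)$ with its primed counterpart requires telescoping over four distinct variations (in the state argument $X$ versus $X'$, in the measure argument $\cL(Y)$ versus $\cL(Y')$, in the lifted point of evaluation on $\Omega$, and in $\delta_x Y - \delta_x Y'$), each controlled by the appropriate Lipschitz estimate from \textbf{\textsf{Regularity assumptions 2}}. Passing between the $\LL^{4/3}$-norm of Definition \ref{definition:omega:controlled:trajectory} and the $\LL^8$-norm appearing in \eqref{eq:Lq:vvvert} and in \eqref{eq:3.16:BCD1} requires systematic H\"older interpolation, and the remainder $R^Z$ must be estimated by subtracting the Taylor residues associated with $\textrm{F}(X,Y)$ and $\textrm{F}(X',Y')$ and then applying both the second-order bounds from \textbf{\textsf{Regularity assumptions 2}} and the residue estimate \eqref{eq:remainder:integral}, all uniformly in $\omega$ thanks to the $\omega$-pointwise size bounds on the paths $X(\omega),X'(\omega)$.
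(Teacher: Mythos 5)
The paper does not actually prove this proposition; it is imported verbatim from the companion work \cite[Proposition 4.3 and (4.21)]{BCD1} (as the text announces immediately before the statement), so there is no in-paper proof against which to check your argument. That said, your sketch does follow the natural strategy that underlies the companion-paper result: expand the integrand $Z := \textrm{F}(X,Y)-\textrm{F}(X',Y')$ as an $\omega$-controlled path via Proposition~\ref{prop:chaining}, push it through the rough-integral continuity of Theorem~\ref{theorem:integral}, and separate the resulting bound into an initial-value contribution (propagated from time $0$ to $t_i$ via the controlled-path expansion) and a local-oscillation contribution that picks up the small factor $w(t_i,t_{i+1},\omega)^{1/p}\le 1/(4L)$. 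This is consistent with the framework used throughout \cite{BCD1}.

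Two points of your sketch deserve correction or more care. First, your use of \eqref{eq:remainder:integral} as a tool for bounding $R^Z$ is misplaced: \eqref{eq:remainder:integral} controls the remainder of the rough \emph{integral} $\int Z\,d\boldsymbol{W}$ in terms of $\vvvert Z\vvvert$, not the remainder $R^Z$ of the \emph{integrand} $Z$ itself. The latter is a pure second-order Taylor residue for the composition $F(X,Y)-F(X',Y')$ and is bounded using only the second-derivative properties listed in \textbf{\textsf{Regularity assumptions 2}}. Both ingredients are of course needed, but in different places, and your phrasing conflates them. Second, the $1/(4L)$ factor on the last line of \eqref{eq:3.16:BCD1} does not emerge from a single smallness; the norm $\vvvert\int_{t_i}^{\cdot} Z_r\,d\boldsymbol{W}_r\vvvert_{[t_i,t_{i+1}],w,p}$ decomposes into the $p$-variation of the integral itself, the $p$-variation of its Gubinelli derivative (which is $Z$), and the $p/2$-variation of the remainder $R^{\int}$, and each piece mixes sup-type evaluations of $Z,\,\delta_x Z,\,\delta_\mu Z$ over $[t_i,t_{i+1}]$ (traced back to the value at $t_i$ plus an oscillation times $w(t_i,t_{i+1},\omega)^{1/p}$) with local oscillation seminorms that carry the $1/(4L)$. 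Tracking each of these four pieces, telescoping the $\mu$-derivative difference, and performing the $\LL^{4/3}$--$\LL^{4}$--$\LL^{8}$ H\"older interpolations consistently (using $\langle w(s,t,\cdot)\rangle_{q}\le 2\,w(s,t,\omega)$ from \eqref{eq:useful:inequality:wT} to bring the random $w(\cdot)$ back to its value at $\omega$) is where the bulk of the companion-paper proof lies, and your sketch identifies, but does not close, precisely those steps.
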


In \cite{BCD1}, Proposition \ref{theorem:fixed:1} is used to prove the following existence and uniqueness result, see Theorems 1.1 and 4.4 therein, 
to which we add the final estimate in the statement.  

\begin{thm}
\label{main:theorem:existence:small:time}
Let \emph{F} satisfy \textbf{\textsf{Regularity assumptions 1}}  and \textbf{\textsf{2}} and $w$ be a control 
satisfying \eqref{eq:w:s:t:omega:ineq} and \eqref{eq:useful:inequality:wT}. Assume there exists a positive time horizon $T$ such that the random variables $w(0,T,\textcolor{black}{\cdot})$ and $\bigl(N\big([0,T],\textcolor{black}{\cdot},\alpha\big)\bigr)_{\alpha >0}$ have sub and super exponential tails respectively, {in the sense that}
\begin{equation}
\label{EqTailAssumptions}
\begin{split}
&\PP \bigl( w(0,T,\cdot) \geq t \bigr) \leq c_{1} \exp \bigl( - t^{\varepsilon_1} \bigr),   
\\
&\PP \bigl( N([0,T],\cdot,\alpha) \geq t \bigr) \leq c_{2}(\alpha) \exp \bigl( - t^{1+ \varepsilon_2(\alpha)} \bigr),
\end{split}
\end{equation}
for some positive \textcolor{black}{constants $c_{1}$} and $\epsilon_1$, and possibly $\alpha$-dependent positive constants $c_{2}(\alpha)$ and $\epsilon_2(\alpha)$. Then, for any $d$-dimensional square-integrable random variable $X_{0}$, the mean field rough differential equation 
\eqref{EqRDE} has a unique solution defined on the whole interval $[0,T]$. Moreover, there exist four positive real numbers $\gamma_{0}$, $L_{0}$, $L$ and $\eta_{0}$ (with $\gamma_{0},\eta_{0} >1$), only depending on  $\Lambda$ and $T$, such that, for any 
subinterval 
$[S_{1},S_{2}] \subset [0,T]$ for which 
\begin{equation*}
{\Bigl\langle N\bigl([S_{1},S_{2}],\cdot,1/(4L_{0})\bigr)   \Bigr\rangle_{8}}  \leq {1},
\end{equation*}
and 
\begin{equation*}
\left\langle \Bigl[ \gamma \Bigl( 1 + w(0,T,\cdot)^{1/p} \Bigr)\Bigr]^{N([S_{1},S_{2}],\cdot,1/(4L))} \right\rangle_{32} \textcolor{black}{\leq \eta_{0}},
\end{equation*}
it holds, for any $\omega \in \Omega$
\begin{equation*}
\begin{split}
&\vvvert X(\omega) \vvvert_{[S_{1},S_{2}],w,p}  
  \leq \Bigl[ C 
 \Bigl( 1 + w(0,T,\omega)^{1/p} \Bigr)
 \Bigr]^{2 N([0,T],\omega,1/(4L))}, 
\end{split}
\end{equation*} 
 for a constant $C$ depending only on $\Lambda$ and $T$.  
\end{thm}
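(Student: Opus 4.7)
The existence and uniqueness assertion is essentially Theorem 4.4 of \cite{BCD1}, so I would simply quote it. Its proof runs a Picard fixed-point scheme on each of the sub-intervals $[t_i^0,t_{i+1}^0]$ generated by the stopping times $\tau_i^{w^{1/p}}(0,1/(4L_{0}))$: the contraction factor $\gamma/(4L)$ appearing in the last line of \eqref{eq:3.16:BCD1} drives the iteration, after which the local solutions are concatenated by super-additivity of $w$. The tail bounds \eqref{EqTailAssumptions} ensure that $N\bigl([0,T],\omega,1/(4L_{0})\bigr)$ is almost surely finite and that the $\LL^8$-integrability condition \eqref{eq:Lq:vvvert} of Definition \ref{definition:random:controlled:trajectory} is preserved throughout the gluing procedure.

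The new content is the quantitative estimate. The plan is to iterate Proposition \ref{theorem:fixed:1} along the partition $(t_i)_{i=0,\ldots,N}$ of $[S_{1},S_{2}]$ generated by the stopping times $\tau_i^{w^{1/p}}(S_{1},1/(4L))$, where $L\ge L_{0}$ is chosen so that $\gamma/(4L)\le 1/2$ (with $\gamma$ the constant given by Proposition \ref{theorem:fixed:1}). On each sub-interval $[t_i,t_{i+1}]$ I would apply \eqref{eq:3.16:BCD1} with $X$ the solution and $Y(\cdot)=X(\cdot)$, compared against the \emph{frozen} paths $X'(\omega)\equiv X_{t_i}(\omega)$ and $Y'(\cdot)\equiv X_{t_i}(\cdot)$, each endowed with the derivative $\delta_{x}X'_{t_i}(\omega)=\textrm{F}\bigl(X_{t_i}(\omega),\mathcal{L}(X_{t_i})\bigr)$ inherited from the integral equation. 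By \textbf{\textsf{Regularity assumptions 1}}, these derivatives are bounded by $\Lambda$, so $\Delta X_{0}$ and $\Delta Y_{0}$ vanish at $t_i$ while $\delta_{x}\Delta X_{0}$ and $\delta_{x}\Delta Y_{0}$ are uniformly controlled by $2\Lambda$. Absorbing the last self-referential term of \eqref{eq:3.16:BCD1} into the left-hand side yields a recursion of the schematic form
\begin{equation*}
M_{i+1}(\omega)\;\le\; A\;+\; B\bigl(1+w(0,T,\omega)^{1/p}\bigr)\Bigl( M_{i}(\omega)\;+\;\bigl\langle \vvvert X(\cdot)\vvvert_{[0,T],w,p}\bigr\rangle_{8}\Bigr),
\end{equation*}
where $M_i(\omega):=\vvvert X(\omega)\vvvert_{[S_{1},t_i],w,p}$ and $A,B$ depend only on $\Lambda$ and $T$.

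The main obstacle is that \eqref{eq:3.16:BCD1} is of mixed $\omega$-wise/$\LL^{8}$-expectation type, so the pointwise recursion above is not autonomous: it involves the global $\LL^{8}$-moment of the solution. The remedy, and the reason for the two integrability hypotheses in the statement, is to first establish a global bound on $\bigl\langle \vvvert X(\cdot)\vvvert_{[0,T],w,p}\bigr\rangle_{8}$. To that end I would raise the analogous $\LL^{8}$ recursion to the $8$th power, take $\PP$-expectation, and close the induction using the assumption $\bigl\langle N([S_{1},S_{2}],\cdot,1/(4L_{0}))\bigr\rangle_{8}\le 1$, which ensures that only a controlled number of sub-intervals is accumulated in $\LL^{8}$; the bound on the $\LL^{32}$ norm of $\bigl[\gamma(1+w(0,T,\cdot)^{1/p})\bigr]^{N([S_{1},S_{2}],\cdot,1/(4L))}$ by $\eta_{0}$ then provides, via Cauchy--Schwarz, the room needed to decouple the product of the telescoping factors from the initial data. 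Substituting the resulting $\LL^{8}$-control back into the $\omega$-wise recursion and iterating over $i\in\{0,\ldots,N\}$ produces the announced estimate
\begin{equation*}
\vvvert X(\omega)\vvvert_{[S_{1},S_{2}],w,p}\;\le\; \Bigl[C\bigl(1+w(0,T,\omega)^{1/p}\bigr)\Bigr]^{2N([0,T],\omega,1/(4L))},
\end{equation*}
the factor $2$ in the exponent reflecting precisely the loss incurred when transferring the $\LL^{8}$-expectation bound into the $\omega$-wise bound through Cauchy--Schwarz inside the induction.
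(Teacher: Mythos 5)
The high-level plan -- iterate Proposition \ref{theorem:fixed:1} along the stopping-time partition, absorb the self-referential term via $\gamma/(4L)\le 1/2$, and use the two moment hypotheses to close a mixed $\omega$-wise/$\LL^8$ recursion -- is in the right spirit, but this is not the paper's route. The paper's proof is essentially a pointer: the quantitative estimate is obtained by summing the bound of \cite[(4.30)]{BCD1}, which controls the successive Picard iterates on $[S_1,S_2]$, over the iteration index; the two moment hypotheses are exactly the conditions (\cite[(4.23) and (4.29)]{BCD1}) under which that series is summable with a ratio less than one, and the $2N$ in the exponent is inherited from (4.30). You instead try to rebuild an a priori estimate directly from Proposition \ref{theorem:fixed:1} by comparing the solution with piecewise-frozen paths.

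Two things in your argument are genuinely incomplete. First, to apply \eqref{eq:3.16:BCD1} on each $[t_i,t_{i+1}]$ you must \emph{a priori} verify the size hypotheses \eqref{eq:fixed:1}--\eqref{eq:fixed:2} on the solution $X$ itself (the bounds by $L_0$ on the $\LL^8$- and local $\vvvert\cdot\vvvert$-norms). These are not free: in the companion paper they come out of the iterative construction (\cite[Proposition 4.2 and Theorem 4.4]{BCD1}), which is precisely what your argument wishes to avoid. Without an independent derivation of these bounds, the recursion $M_{i+1}\le A + B(1+w^{1/p})(M_i+\langle\vvvert X\vvvert\rangle_8)$ is floating. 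Second, your closure of the $\LL^8$-moment bound -- raise the recursion to the $8$th power, take expectation, invoke the $\langle N\rangle_8\le 1$ bound and Cauchy--Schwarz against the $\LL^{32}$ condition -- is too vague to check: it is not spelled out how the expectation of a product of $N$-many random telescoping factors is decoupled, nor why the outcome is of the form $[C(1+w(0,T,\omega)^{1/p})]^{2N([0,T],\omega,1/(4L))}$ rather than, say, with $N([S_1,S_2])$ in the exponent; the passage from the $\LL^8$-estimate back to a pointwise one is precisely where the proof in \cite{BCD1} invests most of its effort. As written, the proposal contains the correct ingredients but leaves open the two steps that the paper delegates to the detailed estimates (4.23), (4.29), (4.30) of \cite{BCD1}.
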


\begin{proof}
{We just address the derivation of the last inequality since the latter is not given in 
\cite[Theorem 4.4]{BCD1}. The key point is to sum over $n \geq 1$ in \cite[(4.30)]{BCD1}, replacing 
$[0,S]$ therein by $[S_{1},S_{2}]$, which is indeed licit provided that 
$\bigl\langle N\bigl([S_{1},S_{2}],\cdot,1/(4L_{0})\bigr)   \bigr\rangle_{8}  \leq 1$, 
see for instance \cite[(4.23)]{BCD1}, 
and
$\left\langle \bigl[ \gamma_{0} \bigl( 1 + w(0,T,\cdot)^{1/p} \bigr)\bigr]^{N([S_{1},S_{2}],\cdot,1/(4L))} \right\rangle_{32} \textcolor{black}{\leq \eta_{0}}$ for $\eta_{0}$ small enough, see \cite[(4.29)]{BCD1}. 
}
\end{proof}

\subsection{Strong Rough Set-Ups and Continuity of the It\^o-Lyons solution Map}
\label{subse:geom}

Uniqueness in law of the solutions to \eqref{EqRDE} is proven in \cite[Theorem 5.3]{BCD1} under the additional assumption that the set-up satisfies the following definition.

\begin{definition}
\label{def:strong}
A  {{rough set-up}} is called  {{strong}} if there exists a measurable mapping ${\mathcal I}$ from ${\mathcal C}\big([0,T];\RR^m\big)^{2}$ into ${\mathcal C}\big({\mathcal S}_{2}^T;\RR^m \otimes \RR^m\big)$ such that
\begin{equation}
\label{eq:representation}
\PP^{\otimes 2} \Bigl( \bigl\{ (\omega,\omega') \in \Omega^2 : {\mathbb W}^{\indep}(\omega,\omega')={\mathcal I}\bigl(W(\omega),W(\omega')\bigr) \bigr\} \Bigr) = 1.
\end{equation}
\end{definition}

For our prospect, the following \textit{continuity theorem} is of crucial interest; see\footnote{\label{foo:concatenation} Here, we feel useful to say a word about the proof of Theorem \cite[Theorem 5.4]{BCD1}. The proof of Step 2b therein is a bit short. The reader may indeed wonder why $K$ therein may be chosen independently of $n$. In fact, it suffices to observe, with the same notations as therein, that we can render 
${\mathbb P}(N^{n}([S_{j},S_{j+1}],\cdot,1/(4L_{0})) \geq 1)$ as small as needed, uniformly in $n$. 
This follows from the fact that 
${\mathbb P}(N^{n}([S_{j},S_{j+1}],\cdot,1/(4L_{0})) \geq 1) = {\mathbb P}(w^{n}(S_{j},S_{j+1},\cdot) \geq 1/(4L_{0})) \leq 4L_{0} \langle w^{n}(S_{j},S_{j+1},\cdot)  \rangle$. By the second item in the assumption of 
Theorem \ref{theoremContinuity}, 
the last term is less than $C (S_{j}-S_{j+1})$, for $C$ independent of $n$.} \cite[Theorem 5.4]{BCD1}.

\begin{thm}
\label{theoremContinuity}
Let \emph{F} satisfy the same assumptions as in \emph{Theorem \ref{main:theorem:existence:small:time}}. 
Given a time interval $[0,T]$ and a sequence of probability spaces $(\Omega_{n},{\mathcal F}_{n},\PP_{n})$, indexed by $n \in {\mathbb N}$, 
let, for any $n$, 
$X^n_{0}(\cdot) := (X^n_{0}(\omega_{n}))_{\omega_{n} \in \Omega_{n}}$
be an $\RR^d$-valued square-integrable initial condition
and
$$
{\boldsymbol W}^n(\cdot) := \Big(W^n(\omega_{n}),\WW^n(\omega_{n}),\WW^{n,\indep}(\omega_{n},\omega_{n}')\Big)_{\omega_{n},\omega_{n}' \in \Omega_{n}}
$$
be an $m$-dimensional rough set-up with corresponding control $w^n$, as given by 
\eqref{eq:w:s:t:omega}, and local accumulated variation $N^n$, for fixed values of $p \in [2,3)$ and $q > 8$. Assume that
\begin{itemize}
\item[\textcolor{gray}{$\bullet$}] the collection 
$\bigl(\PP_{n} \circ (\vert X^n_{0}(\cdot) \vert^2)^{-1}\bigr)_{n \geq 0}$ is uniformly integrable;
\item[\textcolor{gray}{$\bullet$}] for positive constants $\varepsilon_{1},c_{1}$ and $(\varepsilon_{2}(\alpha),c_{2}(\alpha))_{\alpha >0}$,  the tail assumption \eqref{EqTailAssumptions} hold for $w^n$ and $N^n$, for all $n\geq 0$;
\item[\textcolor{gray}{$\bullet$}] associating $v^n$ with each ${\boldsymbol W}^n(\cdot)$ as in \eqref{eq:v}, the functions 
   $
   \bigl({\mathcal S}_{2}^T \ni (s,t) \mapsto \langle v^n(s,t,\cdot) \rangle_{2q}\bigr)_{n \geq 0}$ 
   are uniformly Lipschitz continuous.  
 \end{itemize}
 Assume also that 
\begin{itemize}
\item[\textcolor{gray}{$\bullet$}] 
 there exist, on another probability space
$(\Omega,{\mathcal F},\PP)$, 
a square integrable initial condition $X_{0}(\cdot)$
with values in $\RR^d$ and 
 a strong rough set-up
$$
{\boldsymbol W}(\cdot) := \Big(W(\omega),\WW(\omega),\WW^{\indep}(\omega,\omega')\Big)_{\omega,\omega' \in \Omega}
$$
with values in $\RR^m$, such that
the law under the probability measure $\PP_{n}^{\otimes 2}$ 
of the random variable
$$
\Omega_{n}^2 \ni (\omega_{n},\omega_{n}') \mapsto \bigl(X_{0}^n(\omega_{n}), W^n(\omega_{n}),{\mathbb W}_{n}(\omega_{n}),{\mathbb W}^{\indep}_{n}(\omega_{n},\omega_{n}')\bigr),
$$
seen as a random variable with values in the space ${\RR^d \times} {\mathcal C}([0,T];\RR^m) \times \bigl\{ {\mathcal C}({\mathcal S}_{2}^T;\RR^m \otimes \RR^m) \bigr\}^2$, converges in the weak sense to the law of 
$$
\Omega^2 \ni (\omega,\omega') \mapsto \bigl(X_{0}(\omega), W(\omega),{\mathbb W}(\omega_{n}),{\mathbb W}^{\indep}(\omega,\omega')\bigr).
$$
\end{itemize}
Then, ${\boldsymbol W}(\cdot)$ satisfies the requirements of \emph{Theorem \ref{main:theorem:existence:small:time}}
for some $p' \in (p,3)$ and $q' \in [8,q)$, with control $w$ therein given by \eqref{eq:w:s:t:omega}. Moreover, if $X^n(\cdot)$, resp. $X(\cdot)$, is the solution of the mean field rough differential equation driven by ${\boldsymbol W}^n(\cdot)$, resp. ${\boldsymbol W}(\cdot)$, then $X^n(\cdot)$ converges in law to $X(\cdot)$ on ${\mathcal C}([0,T];\RR^d)$.
\end{thm}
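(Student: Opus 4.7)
The plan is to proceed in three phases: transfer the tail and regularity assumptions to the limit set-up, establish tightness of the prelimit solutions, and identify any subsequential limit with the unique solution of the limit equation. I would first show that ${\boldsymbol W}(\cdot)$ satisfies the assumptions of Theorem \ref{main:theorem:existence:small:time} for slightly weakened exponents $p' \in (p,3)$ and $q' \in [8,q)$. Working on a Skorokhod realization where ${\boldsymbol W}^n \to {\boldsymbol W}$ is almost sure, I would exploit the lower semicontinuity of the $p$-variation seminorms in \eqref{eq:v} and of the local accumulated variation from \eqref{eq:N:s:t:omega} under uniform convergence; a Fatou argument then transfers the tail bounds \eqref{EqTailAssumptions} from ${\boldsymbol W}^n$ to ${\boldsymbol W}$ (with a standard loss $p \to p'$, $q \to q'$), and the uniform Lipschitz bound on $\langle v^n(s,t,\cdot) \rangle_{2q}$ passes to $\langle v(s,t,\cdot) \rangle_{q'}$. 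Theorem \ref{main:theorem:existence:small:time} then produces the required solution $X(\cdot)$.

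Next I would prove tightness of $\{\mathcal{L}(X^n(\cdot))\}_n$ on $\mathcal{C}([0,T]; \mathbb{R}^d)$. The final estimate in Theorem \ref{main:theorem:existence:small:time} controls $\vvvert X^n(\omega_n) \vvvert_{[0,T],w^n,p}$ pathwise by an exponential-type functional of $w^n(0,T,\omega_n)$ and $N^n([0,T],\omega_n,1/(4L))$, with constants independent of $n$. Combined with the uniform sub- and super-exponential tails, this yields uniform $L^r$ bounds on the $p$-variation of $X^n$ for every $r \geq 1$, hence tightness of the laws on $\mathcal{C}([0,T];\mathbb{R}^d)$ via the standard compact embedding of bounded $p$-variation balls into $p'$-variation for $p' > p$.

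For identification, given any subsequence along which the joint law of $(X_0^n, W^n, {\mathbb W}^n, {\mathbb W}^{n,\indep}, X^n)$ converges, I would pass to a Skorokhod realization where convergence is almost sure. Recalling from Theorem \ref{theorem:integral} that $X^n$ is an $\omega_n$-controlled path with $\delta_x X^n = \textrm{F}(X^n, X^n(\cdot))$ and null $\mu$-derivative, I would pass to the limit in $X^n = X_0^n + \int_0^\cdot \textrm{F}(X_s^n, X_s^n(\cdot)) d{\boldsymbol W}_s^n$ using the continuity of the rough integral (Theorem \ref{theorem:integral}) together with the stability estimate \eqref{eq:3.16:BCD1}. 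Any limit $\bar{X}$ is then a solution of \eqref{EqRDE} driven by ${\boldsymbol W}(\cdot)$; since ${\boldsymbol W}(\cdot)$ is strong, uniqueness in law (\cite[Theorem 5.3]{BCD1}) gives $\mathcal{L}(\bar{X}) = \mathcal{L}(X)$, so that the full sequence converges in law.

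The hard part will be this identification step. The mean field argument $\mathcal{L}(X^n_s)$ varies with $n$, so passing to the limit in the rough integral requires synchronizing Wasserstein convergence of the law with pathwise convergence of the controlled path and the rough set-up; the strong set-up property \eqref{eq:representation} is essential here, since it ensures ${\mathbb W}^{\indep}(\omega,\omega')$ is a measurable function of $(W(\omega),W(\omega'))$ and hence that weak convergence of the cross-integrals becomes almost sure on the Skorokhod space. A further subtlety, flagged in footnote \ref{foo:concatenation}, is to choose the partition mesh for applying \eqref{eq:3.16:BCD1} uniformly in $n$; this rests on the uniform Lipschitz regularity of $\langle v^n(s,t,\cdot) \rangle_{2q}$ together with a Chebyshev argument to keep $\mathbb{P}(N^n([S_j,S_{j+1}],\cdot,1/(4L_0)) \geq 1)$ small uniformly in $n$.
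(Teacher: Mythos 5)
The paper does not actually prove Theorem \ref{theoremContinuity}; it cites \cite[Theorem 5.4]{BCD1} and adds only footnote \ref{foo:concatenation}, which clarifies one step (Step 2b of \cite{BCD1}) about choosing a subdivision parameter $K$ uniformly in $n$. So there is no in-paper proof against which to compare line by line. That said, your three-phase outline --- transfer the tail and regularity assumptions to the limit set-up with the expected loss of exponents $p \to p'$, $q \to q'$; obtain tightness from the a priori bound in Theorem \ref{main:theorem:existence:small:time} together with the uniform tail control; identify subsequential limits via the stability estimate \eqref{eq:3.16:BCD1} and uniqueness in law for strong set-ups --- is the natural structure and is consistent with everything this paper reveals. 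In particular, you correctly anticipate the precise point addressed in the footnote: keeping $\mathbb{P}\bigl(N^n([S_j,S_{j+1}],\cdot,1/(4L_0)) \geq 1\bigr)$ small uniformly in $n$ reduces, via Chebyshev, to the identity $\mathbb{P}(N^n \geq 1) = \mathbb{P}\bigl(w^n(S_j,S_{j+1},\cdot) \geq 1/(4L_0)\bigr) \leq 4L_0 \langle w^n(S_j,S_{j+1},\cdot) \rangle$ and to the uniform Lipschitz bound on $(s,t)\mapsto\langle v^n(s,t,\cdot)\rangle_{2q}$. One minor imprecision: your remark that the strong set-up property is ``essential'' because it makes weak convergence of the cross-integrals almost sure on the Skorokhod space overstates its role in the convergence step; the strong property \eqref{eq:representation} is what underpins \emph{uniqueness in law} (\cite[Theorem 5.3]{BCD1}), which is the thing you actually need to upgrade subsequential convergence to convergence of the full sequence, and you do invoke that correctly a line later.
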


\section{Particle System and Propagation of Chaos}
\label{SectionParticleSystem}

We now have all the ingredients to write down the limiting mean field rough differential equation \eqref{EqRDE} as the limit of a system of particles driven by rough signals \eqref{eq:particle:system}.

\subsection{Empirical Rough Set-Up}
\label{subse:empirical}

We recall the framework used to address \eqref{eq:particle:system}. 
The initial conditions 
 $(X_{0}^i(\cdot))_{1 \leq i \leq n}$ are $\RR^d$-valued variables with the same distribution as $X_{0}$ (in the statement of Theorem \ref{main:theorem:existence:small:time}) and the enhanced signals 
 $\big(W^i(\cdot),{\mathbb W}^i(\cdot)\big)_{1 \leq i \leq n}$
 are $\RR^m \oplus \RR^{m} \otimes \RR^m$-valued with the same distribution 
 as $(W(\cdot),{\mathbb W}(\cdot))$
 on the space of continuous functions. 
Moreover, the variables  $(X_{0}^i(\cdot),W^i(\cdot),{\mathbb W}^i(\cdot)\big)_{1 \leq i \leq n}$
are independent and identically distributed. 
 All of them are constructed on a single probability space, {still denoted by} $(\Omega,{\mathcal F},{\mathbb P})$. 
Assuming 
the rough set-up in 
 Theorem \ref{main:theorem:existence:small:time}
 to be strong, see Definition 
\ref{def:strong}, we let
\begin{equation*}
{\mathbb W}^{i,j}(\omega) = {\mathcal I}\bigl(W^i(\omega),W^j(\omega)\bigr), \quad i \not = j, \quad 1 \leq i,j \leq n.
\end{equation*}
 
 Obviously, equation \eqref{eq:particle:system} must be understood as a rough differential equation driven by an $(n\times m)$-dimensional signal $\big(W^1(\omega),\cdots,W^n(\omega)\big)$, and with $\big(X^1(\omega),\cdots,X^n(\omega)\big)$ as $(n \times d)$-dimensional output. 
 Our first task is to prove that \eqref{eq:particle:system} may be also understood as 
 a mean field rough differential equation on a suitable rough set-up
and that the two interpretations coincide.  
 If
 we require 
$\PP^{\otimes 2} \bigl( \{ (\omega,\omega') : 
\|
{\mathbb W}^{\indep}(\omega,\omega') \|_{[0,T],p/2-\textrm{\rm v}}
< \infty \} \bigr)=1$
in 
Definition 
\ref{def:strong}, then it is pretty clear that, for almost every $\omega \in \Omega$, 
\begin{equation*}
\begin{split}
{\boldsymbol W}^{(n)}(\omega) &= \Bigl( \big(W^i(\omega)\big)_{1 \leq i \leq n}, \big({\mathbb W}^{i,j}(\omega)\big)_{1 \leq i,j \leq n} \Bigr)   
=: \Big(W^{(n)}(\omega), {\mathbb W}^{(n)}(\omega)\Big),
\end{split}
\end{equation*}
is a rough path {of finite $p$-variation}, with the convention that ${\mathbb W}^{i,i}(\omega) = {\mathbb W}^i(\omega)$, for $i \in \{1,\cdots,n\}$. As explained in \cite[Proposition 2.3]{BCD1}, we may change the definition of $\big((W^i(\omega))_{1 \leq i \leq n},({\mathbb W}^{i,j}(\omega))_{1 \leq i,j \leq n}\big)$ on a $\PP$-null set so that ${\boldsymbol W}^{(n)}(\omega)$ is in fact a rough path for any $\omega \in \Omega$. 
\medskip

 {As mentioned in Introduction,} the striking fact of the analysis {was first introduced by Tanaka in \cite{TanakaTrick} and used} by Cass and Lyons in their seminal work \cite{CassLyons}. The quantity ${\mathbb W}^{(n)}(\omega)$ may be seen as a rough set-up defined on a finite probability space for any fixed $\omega \in \Omega$; we call it the \textsf{\textbf{empirical rough set-up}}. To make it clear, observe that, throughout Section \ref{SectionRoughStructure}, the rough structure is supported by the probability space $(\Omega,{\mathcal F},{\mathbb P})$ itself. Here, $\omega$ is fixed, and we see the probability space as
\begin{equation}
\label{eq:empirical:space:probability}
\biggl( \bigl\{1,\cdots,n\bigr\},{\mathcal P}\bigl(\bigl\{1,\cdots,n\bigr\}\bigr), \frac1n \sum_{i=1}^n \delta_{i} \biggr),
\end{equation}
where ${\mathcal P}(\{1,\cdots,n\})$ denotes the collection of subsets of $\{1,\cdots,n\}$. The reader may object that such a probability space is not atomless whilst we explicitly assumed $(\Omega,{\mathcal F},{\mathbb P})$ to be atomless in the introduction (see also \cite[Section 2]{BCD1}); actually, the reader must realize that, in \cite{BCD1}, the atomless property is just used to guarantee that, for any probability measure $\mu$ on a given Polish space $S$, the probability space $(\Omega,{\mathcal F},{\mathbb P})$ carries an $S$-valued random variable with $\mu$ as distribution. 
{In other words, we could instead say that, in \cite{BCD1}, the probability space $(\Omega,{\mathcal F},{\mathbb P})$ has to be rich enough, which is indeed guaranteed under the assumptions used therein.
So, it is here not a hindrance that $\{1,\cdots,n\}$ is finite: We must restrict ourselves to random variables taking at most 
$n$-values, but this is exactly what we need for our purposes since all the relevant probability distributions showing up from the particle system are $n$-empirical distributions.}

Hence, in {order to draw a parallel with \eqref{eq:W:2}}, the role played by $\omega \in \Omega$ is {here} played by $i \in \{1,\cdots,n\}$ and the matrix \eqref{eq:W:2} must read  
\begin{equation}
\label{eq:W:2:empirical}
\left(
\begin{array}{cc}
\WW_{s,t}^{i,i}(\omega) &\WW_{s,t}^{i,\bullet}(\omega)
\\
\WW_{s,t}^{\bullet,i}(\omega) &\WW_{s,t}^{\bullet,\bullet}(\omega)
\end{array}
\right)_{0 \leq s \leq t\leq T},
\end{equation}
where $\WW_{s,t}^{i,\bullet}(\omega)$ is seen as $\{1,\cdots,n\} \ni j \mapsto \WW_{s,t}^{i,j}(\omega)$, $\WW_{s,t}^{\bullet,i}(\omega)$ as $\{1,\cdots,n\} \ni j \mapsto \WW_{s,t}^{j,i}(\omega)$ and $\WW_{s,t}^{\bullet,\bullet}(\omega)$ as $\{1,\cdots,n\} \ni (i,j) \mapsto \WW_{s,t}^{i,j}(\omega)$.

\smallskip

In the same spirit, the variation function $v$ in \eqref{eq:v} is ({we put a subscript $p$ in the variation function below to emphasize the dependence upon $p$})
\begin{equation}
\label{eq:v:N}
\begin{split}
v^{i,n}_{{p}}(s,t,\omega) &:= \big\| W^i(\omega) \big\|_{[s,t],p-\textrm{\rm v}}^p + 
{{}^{(n)} \hspace{-1pt} \big\lgroup} {W}^{\bullet}(\omega) \big\rgroup_{q ; [s,t],p-\textrm{\rm v}}^p   
\\
&\hspace{15pt} + \big\| \WW^i(\omega) \big\|_{[s,t],p/2-\textrm{\rm v}}^{p/2} + 
{{}^{(n)} \hspace{-1pt} \big\lgroup} \WW^{i,\bullet}(\omega) \big\rgroup_{q ; [s,t],p/2-\textrm{\rm v}}^{p/2}   \\
&\hspace{15pt} + 
{{}^{(n)} \hspace{-1pt} \big\lgroup}
 \WW^{\bullet,i}(\omega) \big\rgroup_{q ; [s,t],p/2-\textrm{\rm v}}^{p/2} + 
 {}^{(n)} \hspace{-1pt}\big\lgroup \hspace{-2pt}\big\lgroup
  \WW^{\bullet,\bullet}(\omega) \big\rgroup \hspace{-2pt} \big\rgroup_{q ; [s,t],p/2-\textrm{\rm v}}^{p/2},
\end{split}
\end{equation}
where we used the notations
\begin{equation*}
{}^{(n)} \hspace{-.7pt} \lgroup X^{\bullet} \rgroup_{q}= 
\biggl( \frac1n
\sum_{j=1}^n \vert X^j \vert^q \biggr)^{1/q}, \quad
{}^{(n)} \hspace{-.7pt} \lgroup \hspace{-1.3pt} \lgroup X^{\bullet,\bullet} 
\rgroup \hspace{-1.3pt}
\rgroup_{q} = \biggl( \frac1{n^2}\sum_{j,k=1}^n \vert X^{j,k} \vert^q \biggr)^{1/q},
\end{equation*}
the corresponding $p$-variation being defined as in \eqref{eq:q:p-var} and \eqref{eq:q:q:p-var}. 
{Obviously, $v_{{p}}^{i,n}(0,T,\omega)$ is almost surely finite. Hence,}
in order to check that ${\boldsymbol W}^{(n)}(\omega)$ defines a rough set-up, it remains to check that it satisfies \eqref{eq:v:1-var}. 
To do so, we 
strengthen the assumptions on the signal and assume that, for the same parameter $q$ as in Section 
\ref{SectionRoughStructure}, it holds
\begin{equation}
\label{eq:integrability:Holder:norm}
\begin{split}
&{\mathbb E}
\Bigl[
\bigl\|
W(\cdot)
\bigr\|_{[0,T],(1/p)-\textrm{\rm H}}^{pq}
+
\bigl\|{\mathbb W}(\cdot) \bigr\|_{[0,T],(2/p)-\textrm{\rm H}}^{pq/2}
\Bigr]
\\
&\hspace{15pt}
+
{\mathbb E}^{\otimes 2}
\Bigl[
\bigl\|
{\mathbb W}^{\indep}(\cdot,\cdot) \bigr\|_{[0,T],(2/p)-\textrm{\rm H}}^{pq/2}
\Bigr]
 < \infty,
 \end{split}
 \end{equation}
where
\begin{equation*}
\begin{split}
\bigl\|
W(\omega) \bigr\|_{[s,t],(1/p)-\textrm{\rm H}}
&= \sup_{\emptyset \not = (s',t') \subset [s,t]}
\frac{\vert W_{t'}(\omega) - W_{s'}(\omega) \vert}{\vert t'-s' \vert^{1/p}}
\\
\bigl\| {\mathbb W}(\omega) \bigr\|_{[s,t],(2/p)-\textrm{\rm H}}
&=
\sup_{\emptyset \not = (s',t') \subset [s,t]}
\frac{\vert {\mathbb W}_{s',t'}(\omega) \vert}{\vert t'-s' \vert^{2/p}},
\end{split}
\end{equation*}
{and similarly for 
$\bigl\| {\mathbb W}^{\indep}(\omega,{\omega'}) \bigr\|_{[s,t],(2/p)-\textrm{\rm H}}$},
stand for the standard H\"older semi-norms of the rough path. 
Then, back 
to 
\eqref{eq:v:N}, 
we can find a universal positive constant $c$ such that 
\begin{equation}
\label{eq:v:p:i:N}
\begin{split}
&v_{p}^{i,n}(s,t,\omega) \leq  c\,  \Big\{ 
\bigl\|
W^i(\omega) \bigr\|_{[s,t],(1/p)-\textrm{\rm H}}^p
+
\bigl\|
{\mathbb W}^i(\omega) \bigr\|_{[s,t],(2/p)-\textrm{\rm H}}^{p/2}
\\
&\hspace{2pt} 
 + 
{}^{(n)} \hspace{-1pt} \bigl\lgroup 
\bigl\|
W^{\bullet}(\omega) \bigr\|_{[s,t],(1/p)-\textrm{\rm H}}^p
 \bigr\rgroup_{q}  
 + 
{}^{(n)} \hspace{-1pt} \bigl\lgroup 
\bigl\|
 {\mathbb W}^{i,\bullet}(\omega) \bigr\|_{[s,t],(2/p)-\textrm{\rm H}}^{p/2}
 \bigr\rgroup_{q} 
\\
&\hspace{2pt} 
 + 
{}^{(n)} \hspace{-1pt} \bigl\lgroup 
\bigl\|
 {\mathbb W}^{\bullet,i}(\omega) \bigr\|_{[s,t],(2/p)-\textrm{\rm H}}^{p/2}
 \bigr\rgroup_{q} 
+
 {}^{(n)} \hspace{-1pt}\big\lgroup \hspace{-2pt}\big\lgroup
 \bigl\|
 {\mathbb W}^{\bullet,\bullet}(\omega) \bigr\|_{[s,t],(2/p)-\textrm{\rm H}}^{p/2}   
   \big\rgroup \hspace{-2pt} \big\rgroup_{q}
\Big\} (t-s).
\end{split}
\end{equation}
Taking the mean over $i \in \{1,\cdots,n\}$ and invoking the law of large numbers
({see Lemma \ref{lem:LLN:2nd order} in Appendix \ref{subse:lln} for a version of the law of large numbers with 
second order interactions}),
we deduce that, for almost every $\omega \in \Omega$, 
\begin{align}
\nonumber
&\limsup_{n \geq 1}
\sup_{0 \leq s < t \leq T}
\frac{{}^{(n)} \hspace{-1pt} \bigl\lgroup v_{p}^{\bullet,n}(s,t,\omega)
\bigr\rgroup_{q}}{t-s} 
\\
&\hspace{5pt} \leq c \, 
\Bigl(
\Bigl\langle
\bigl\|
W(\cdot)
\bigr\|_{[0,T],(1/p)-\textrm{\rm H}}^{pq}
+
\bigl\|{\mathbb W}(\cdot) \bigr\|_{[0,T],(2/p)-\textrm{\rm H}}^{pq/2}
\Bigr\rangle^{1/q} \label{eq:lln:holder}
\\
&\hspace{30pt}
+
\Bigl\llangle
\bigl\|
{\mathbb W}^{\indep}(\cdot,\cdot) \bigr\|_{[0,T],(2/p)-\textrm{\rm H}}^{pq/2}
\Bigr\rrangle^{1/q} \Bigr),
\nonumber
\end{align}
for a new value of the constant $c$. Observe that, in order to derive \eqref{eq:lln:holder}, the law of large numbers can be directly applied to each of the {first three terms} in the right-hand side of \eqref{eq:v:p:i:N}, since each of them 
{lead to empirical means over terms of the form}
 ${\mathcal J}\bigl(W^{i}(\omega)\bigr)$, for a suitable {functional ${\mathcal J}$ (which has nothing to do with the mapping ${\mathcal I}$ used in Definition \ref{def:strong})}. {Differently, the last three terms in \eqref{eq:v:p:i:N} require a modicum of care as they lead to empirical means of the form}
\begin{equation*}
\begin{split}
&\frac1{n^2} \sum_{j,k=1, j \not = k}^n 
\bigl\| {\mathcal I}\bigl(W^j(\omega),W^k(\omega)\bigr)
\bigr\|_{[0,T],(2/p)-\textrm{\rm H}}^{pq/2}
 + \frac1{n^2} \sum_{j=1}^n \bigl\| {\mathbb W}^j(\omega) \bigr\|_{[0,T],(2/p)-\textrm{\rm H}}^{pq/2},
\end{split}
\end{equation*}
with 
${\mathcal I}$ as in 
\eqref{eq:representation}. 
Still, if the summands in the two sums are integrable, 
the limit is $\bigl\langle
\bigl\| {\mathcal I}\bigl(W^1(\cdot),W^2(\cdot)\bigr)
\bigr\|_{[0,T],(2/p)-\textrm{\rm H}}^{pq/2}
\bigr\rangle$, {see {once again} Lemma 
\ref{lem:LLN:2nd order} in Appendix \ref{subse:lln}}. Hence 
\eqref{eq:lln:holder}. Now, 
the fact that the right-hand side of 
\eqref{eq:lln:holder} is finite guarantees that the 1-variation in the mean in \eqref{eq:v:1-var} is uniformly controlled in $n \geq 1$, the mean in \eqref{eq:v:1-var} being understood 
as the mean on the probability space $\big(\{1,\cdots,n \},{\mathcal P} (\{1,\cdots,n \} ), \frac1n \sum_{i=1}^n \delta_{i}\big)$. Here are two examples under which \eqref{eq:v:p:i:N} holds true. 

\begin{Example}
Assume that the regularity index $q$ used in \eqref{eq:v} satisfies the inequality
$q > 1/(1-p/3)$,
and that, for some constant $K_{T} \geq 0$, $\langle v(s,t,\cdot) \rangle_{q} \leq K_{T} (t-s)$ for $(s,t) \in {\mathcal S}_{2}^T$. Then, we get the bounds
\begin{equation*}
\begin{split}
&\bigl\langle \vert (W_{t} - W_{s})(\cdot) \vert^{pq} \bigr\rangle \leq K_{T}^q \,\vert t-s \vert^{q},
\\
&\bigl\langle  \vert {\mathbb W}_{s,t}(\cdot) \vert^{pq/2} \bigr\rangle \leq K_{T}^q \,\vert t-s \vert^{q},
\ \bigl\llangle \vert {\mathbb W}_{s,t}^{\indep}(\cdot,\cdot) \vert^{pq/2} \bigr\rrangle \leq K_{T}^q \, \vert t-s \vert^{q}.
\end{split}
\end{equation*}
By Kolmogorov's criterion for rough paths, {see} Theorem 3.1 in \cite{FrizHairer}, we deduce that $W$ has paths that are ${1/p'}:=(1-1/q)/p > 1/3$-H\"older continuous. Similarly, ${\mathbb W}$
and ${\mathbb W}^{\indep}$ have paths that are $2p'=2(1-1/q)/p > 2/3$-H\"older continuous
and 
\eqref{eq:integrability:Holder:norm}
holds true {with $p'$ instead of $p$}. 
So, the empirical rough set-up satisfies the required conditions provided we replace $p$ by $p'$.
\end{Example}

\begin{Example}
\label{Gaussian}
{Assume that  $W := (W^1,\cdots,W^m)$ is a tuple of independent and centred continuous Gaussian processes, defined on $[0,T]$, for which there exist 
{an exponent 
$\varrho \in [1,3/2)$
and
a constant $K$} such that, for any subinterval $[s,t]\subset [0,T]$ and any $k=1,\cdots,m$, it holds
\begin{equation}
\label{eq:FV:rho:variation:covariance}
\begin{split}
\sup \, \sum_{i,j} \, \Bigl\vert {\mathbb E} \Bigl[ \bigl(W^k_{t_{i+1}} - W^{k}_{t_{i}} \bigr) \bigl( W^k_{s_{j+1}} - W^k_{s_{j}} \bigr) \Bigr] \Bigr\vert^\rho \leq K \vert t-s\vert,
\end{split}
\end{equation} 
the \textrm{\rm sup} being over divisions $(t_{i})_{i}$ and $(s_{j})_{j}$ of $[s,t]$.}
Then, $\|W(\cdot) \|_{[0,T],(1/p)-\textrm{\rm H}}$ has Gaussian tail and $
\|{\mathbb W}(\cdot) \bigr\|_{[0,T],(2/p)-\textrm{\rm H}}$ and  $\|{\mathbb W}^{\indep}(\cdot,\cdot) \bigr\|_{[0,T],(2/p)-\textrm{\rm H}}$ 
have exponential tails, {for any $p \in (2 \varrho,3)$}; see Theorem 11.9 in \cite{FrizHairer}. 
\end{Example}

Now that we have defined the empirical rough set-up, we must make clear the meaning given to the rough differential equation \eqref{EqRDE} in Definition \ref{definitionGamma} when the rough set-up therein is precisely the empirical rough set-up. We call the corresponding rough differential equation the \emph{empirical rough differential equation}.

\medskip

For a given $\omega \in \Omega$, the probability space that carries the empirical rough-set up is 
{given by \eqref{eq:empirical:space:probability}}.
Despite the fact it is not atomless, whilst $(\Omega,{\mathcal F},{\mathbb P})$ is, Theorem \ref{main:theorem:existence:small:time} applies and guarantees existence and uniqueness of a solution to the empirical rough differential equation. In this regard, observe that the square integrability requirement on the initial condition here writes
$\frac1n \sum_{i=1}^n \big\vert X_{0}^i(\omega)\big\vert^2 < \infty$,
which is indeed satisfied  for $\omega$ in a full event. The solution reads in the form of a $n$-tuple $X^{(n)}(\omega)=(X^i(\omega))_{1 \leq i \leq n}$ in ${\mathcal C}([0,T];\RR^d)^n$. 
The coefficient driving the equation for $X^i(\omega)$ reads
\begin{equation*}
\textrm{\rm F}\left(X^i_{t}(\omega),X_{t}^{\theta_{n}(\cdot)}(\omega) \right), \quad t \in [0,T],
\end{equation*}
where {$\theta_{n}(\cdot) : \{1,\cdots,n\} \ni j \mapsto j$ is the canonical random variable on $\{1,\cdots,n\}$}. Here the dot in the notation $X^{\theta_{n}(\cdot)}_{t}(\omega)$ refers to the current element in $\{1,\cdots,n\}$.
{With this notation}, the law of $X^{\theta_{n}(\cdot)}_{t}(\omega)$
({on $\{1,\cdots,n\}$})
 must be understood as the empirical distribution $\mu^{n}_{t}(\omega)$.
{Moreover,}
each $X^i(\omega)$ is controlled,  in standard Gubinelli's sense, by the enhanced rough path $\big(W^i(\omega),{\mathbb W}^i(\omega)\big)$ (the remainder in the expansion being controlled by $v^{i,n}$).
{In particular, $X^i(\omega)$ may be seen as an $i$-controlled path on the  
empirical rough set-up: If we use $\delta_{x}^{(n)}$ and
 $\delta_{\mu}^{(n)}$ as symbols for the Gubinelli derivatives 
 in 
Definition \ref{definition:omega:controlled:trajectory}
  but on the empirical rough set-up, then 
  $\delta_{x}^{(n)} X^i(\omega)$
  identifies with the standard Gubinelli 
  derivative
 in the expansion of 
 $X^i(\omega)$ along the variations of $\big(W^i(\omega),{\mathbb W}^i(\omega)\big)$
 and 
 $\delta_{\mu}^{(n)} X^\cdot(\omega) \equiv 0$.}
\smallskip

The key fact in our analysis lies in the interpretation of the two {derivatives} 
$$
{\delta_{x}^{(n)}} \Big[ \textrm{\rm F}(X^i(\omega),X^{\theta_{n}(\cdot)}(\omega) )\Big]
\quad \textrm{\rm and}
\quad
{\delta_{\mu}^{(n)} }\Big[\textrm{\rm F}(X^i(\omega),X^{\theta_{n}(\cdot)}(\omega) )\Big]
$$ 
in Proposition \ref{prop:chaining}. First, it is elementary to check that 
\begin{equation}
\label{eq:deltax:empirical}
\begin{split}
{\delta_{x}^{(n)}} \Bigl( \textrm{\rm F}\bigl(X^i(\omega),X^{\theta_{n}(\cdot)}(\omega)\bigr) \Bigr)_{t} &= \partial_x \textrm{\rm F}\bigl(X_{t}^i(\omega),X^{\theta_{n}(\cdot)}_{t}(\omega)\bigr)  
{\delta_{x}^{(n)}} X_{t}^i(\omega)   
\\
&= \partial_x \textrm{\rm F}\bigl(X_{t}^i(\omega),\mu^{n}_{t}(\omega)\bigr)  
{ \delta_{x}^{(n)}} X_{t}^i(\omega).
\end{split}
\end{equation}
More interestingly, we have
\begin{equation}
\label{eq:deltamu:empirical}
\begin{split}
&{\delta_{\mu}^{(n)}} \Bigl(\textrm{\rm F}\bigl(X^i(\omega),X^{\theta_{n}(\cdot)}(\omega)\bigr) \Bigr)_{t} 
\\
&\hspace{15pt} =
D_{\mu} \textrm{\rm F}\bigl(X_{t}^i(\omega),\mu^{n}_{t}(\omega)\bigr)\big(X_{t}^{\theta_{n}(\cdot)}(\omega)\big)\,  
{\delta_{x}^{(n)}} X_{t}^{\theta_{n}(\cdot)}(\omega),
\end{split}
\end{equation}
{both the left- and the right-hand sides being seen as random variables on $\{1,\cdots,n\}$. 
The realizations of the random variable in the right-hand side may be computed by replacing
the symbol
 $\cdot$ by 
$j \in \{1,\cdots,n\}$. }

{So, 
applying
\eqref{eq:remainder:integral}
with $\textrm{F}(X^i(\omega),\mu^n(\omega))$
as integrand}, the third term on the first line of 
\eqref{eq:remainder:integral}
here reads
\begin{equation*}
\begin{split}
\frac1n
\sum_{j=1}^n
D_{\mu} \textrm{\rm F}\bigl(X_{t}^{i}(\omega),\mu^n_{t}\bigr)\big(X_{t}^{j}(\omega)\big) \, {\delta_{x}^{(n)}} X_{t}^{j}(\omega)
{\mathbb W}^{j,i}_{t}(\omega).
\end{split}
\end{equation*}
This shows that the integral
$\int_{0}^t \textrm{\rm F} \Bigl(X^i_{s}(\omega),X_{s}^{\theta_{n}(\cdot)}(\omega) \Bigr) d{\boldsymbol W}_{s}^{(n)}(\omega)$, as defined by Theorem 
\ref{theorem:integral},
is the limit of the compensated Riemann sums
\begin{equation}
\label{eq:riemann:n}
\begin{split}
&\sum_{k=0}^{K-1} 
\biggl(
\textrm{\rm F} \bigl(X^i_{t_{k}}(\omega),X_{t_{k}}^{\theta_{n}(\cdot)}(\omega) \bigr)
W_{t_{k},t_{k+1}}^i(\omega)
\\
&\hspace{1pt}
+
\partial_x
\textrm{\rm F}\bigl(X_{t_{k}}^i(\omega),X^{\theta_{n}(\cdot)}_{t_{k}}(\omega)\bigr)  
\textrm{\rm F}\bigl(X_{t_{k}}^i(\omega),X^{\theta_{n}(\cdot)}_{t_{k}}(\omega)\bigr) 
{\mathbb W}_{t_{k},t_{k+1}}^i(\omega) 
\\
&\hspace{1pt}
+\frac1n
\sum_{j=1}^n
D_{\mu} \textrm{\rm F}\bigl(X_{t_{k}}^{{i}}(\omega),\mu^n_{t}(\omega)\bigr)(X_{t_{k}}^{j}(\omega)) 
\textrm{\rm F}\bigl(X_{t_{k}}^j(\omega),X^{\theta_{n}(\cdot)}_{t_{k}}(\omega)\bigr) 
{\mathbb W}_{t_{k},t_{k+1}}^{j,i}(\omega)
\biggr),
\end{split}
\end{equation}
as the mesh of the dissection $0=t_{0}<\cdots<t_{K}=t$ tends to $0$\footnote{In the second line,   
$\partial_{x} \textrm{\rm F}\bigl(X_{s}^i(\omega),X_{s}^{\theta_{n}(\cdot)}(\omega)\bigr)
\bigl( \textrm{\rm F}\bigl( X^i_{s}(\omega),X^{\theta_{n}(\cdot)}_{s}(\omega)\bigr) 
{\mathbb W}_{s,t}^i(\omega) \bigr)$
is understood 
as $\bigl(\sum_{\ell=1}^d \sum_{j,k=1}^m \partial_{x_{\ell}}
{\rm F}^{\iota,j}\bigl(X^i_{s}(\omega),X^{\theta_{n}(\cdot)}_{s}(\omega) \bigr) 
\bigl( \textrm{\rm F}^{\ell,k}
\bigl(X^i_{s}(\omega),X^{\theta_{n}(\cdot)}_{s}(\cdot)(\omega) \bigr) \bigl( {\mathbb W}_{s,t}^i \bigr)^{k,j}(\omega)
\bigr)\bigr)_{\iota=1,\cdots,d}$ and similarly for the term on the third line.
}. This allows to compare the latter quantity with \eqref{eq:particle:system} if we intepret the integral with respect to $W^i(\omega)$
therein as a rough integral with respect {to the enhanced setting above $(W^1(\omega),\cdots,W^n(\omega))$}, and consider the leading coefficient $\textrm{\rm F}(X_{t}^i(\omega),\mu^n_{t}(\omega))$ as a \textit{standard} Euclidean function of the tuple $X^{(n)}_{t}(\omega) = \big(X^1_{t}(\omega),\cdots,X^n_{t}(\omega)\big)$. Indeed, under the standing \textbf{\textsf{Regularity assumptions 1}} and  \textbf{\textsf{2}}, the function
\begin{equation*}
f^i : (\RR^d)^n \ni \big(x^1,\cdots,x^n\big) \mapsto \textrm{\rm F}\biggl(x^i,\frac1n \sum_{k=1}^n \delta_{x^k} \biggr)
\end{equation*}
is ${\mathcal C}^2$ with Lipschitz derivatives and
\begin{equation*}
\partial_{x^j} f^i\big(x^1,\cdots,x^n\big) = {\delta_{i,j}} \ \partial_{x} \textrm{\rm F}\biggl(x^i,\frac1n \sum_{k=1}^n \delta_{x^k} \biggr) + \frac1n D_{\mu} \textrm{\rm F}\biggl(x^i,\frac1n \sum_{k=1}^n \delta_{x^k} \biggr)({x^j}),
\end{equation*}
{with $\delta_{i,j}=1$ if $i=j$ and $0$ otherwise,}
see Chapter 5 in \cite{CarmonaDelarue_book_I}. Therefore, \eqref{eq:particle:system} is uniquely solvable in the classical sense and the above formulas for the derivatives show that the rough integral therein may be approximated by the same Riemann sum as in \eqref{eq:riemann:n}. 
{Namely, 
\eqref{eq:particle:system} may be rewritten as 
\begin{equation*}
\begin{split}
&\sum_{k=0}^{K-1} 
\biggl(
f^i \bigl(X^1_{t_{k}}(\omega),\cdots,X^n_{t_{k}}(\omega) \bigr) 
W_{t_{k},t_{k+1}}^i(\omega)
\\
&\hspace{15pt}
+
\sum_{j=1}^n
\partial_{x_{j}} f^i \bigl(X^1_{t_{k}}(\omega),\cdots,X^n_{t_{k}}(\omega) \bigr) 
{\mathbb W}_{t_{k},t_{k+1}}^{j,i}(\omega)
\biggr).
\end{split}
\end{equation*}}This proves that the solution to  
\eqref{eq:particle:system},
when the latter is seen as a rough differential equation driven by the enhanced setting above $(W^1(\omega),\cdots,W^n(\omega))$, 
coincides with 
the solution of the empirical version of  \eqref{EqRDE}, when the latter is understood as a mean field rough differential 
equation driven by the empirical rough set up.

\subsection{Propagation of Chaos}
\label{SubsectionPropagChaos}

We now have all the ingredients to prove that the {empiral {measure} of the} solution to {the particle system} \eqref{eq:particle:system} converges, in some sense, to the solution of the rough mean field equation \eqref{EqRDE}, when the rough set-up {therein} is interpreted as originally explained in Section \ref{SectionRoughStructure}. This {is what we call} \textit{propagation of chaos}. The statement takes the following form.

\begin{thm}
\label{theorem:prop:of:chaos}
We make the following assumptions.

\begin{enumerate}
   \item Let \emph{F} satisfy \textbf{\textsf{Regularity assumptions 1}}  and \textbf{\textsf{2}}.   
   
   \item Let $w$ be a control satisfying \eqref{eq:w:s:t:omega:ineq} and \eqref{eq:useful:inequality:wT} {for the same parameters $p \in [2,3)$ and $q \geq 8$ as in Section 
   \ref{SectionRoughStructure}}. Assume that, for a given positive time horizon $T$, the random variables $w(0,T,\textcolor{black}{\cdot})$ and 
$\bigl(N\big([0,T],\textcolor{black}{\cdot},\alpha\big)\bigr)_{\alpha >0}$, see \eqref{eq:N:s:t:omega}, have sub and super exponential tails, 
{see \eqref{EqTailAssumptions}}.

   \item Assume that the rough set-up ${\boldsymbol W}$ is strong. 
   
   \item Assume also that there exists a positive constant $\varepsilon_{1}$ such that 
\end{enumerate}
\begin{equation}
\label{eq:exp:integrability}
\begin{split}
&{\mathbb E}
\Bigl[
\exp\Bigl( 
 \bigl\|W(\cdot) \|^{\varepsilon_{1}}_{[0,T],(1/p)-\textrm{\rm H}}
 \Bigr)
 \Bigr]
 + 
 {\mathbb E}
\Bigl[
\exp\Bigl( 
 \bigl\|{\mathbb W}(\cdot) \|^{\varepsilon_{1}/2}_{[0,T],(2/p)-\textrm{\rm H}}
 \Bigr)
 \Bigr]
 \\
&\hspace{15pt} + 
 {\mathbb E}^{\otimes 2} 
\Bigl[
\exp\Bigl( 
 \bigl\|{\mathbb W}^{\indep}(\cdot,\cdot) \|^{\varepsilon_{1}/2}_{[0,T],(2/p)-\textrm{\rm H}}
 \Bigr)
 \Bigr] < \infty.
\end{split}
\end{equation} 

Then, for almost every $\omega \in \Omega$,
\begin{equation}
\label{eq:convergence:empirical:measure}
\frac1n \sum_{i=1}^n \delta_{X^{i,(n)}(\omega)} \rightarrow {\mathcal L} \bigl( X(\cdot)\bigr),
\end{equation}
where $X^{(n)}(\omega)= {(X^{i,(n)}(\omega))_{i=1,\cdots,n}}$ is the solution to \eqref{eq:particle:system} and $X(\cdot)$ is the solution to \eqref{EqRDE}, the convergence being the convergence in law on ${\mathcal C}\big([0,T];\RR^d\big)$. Moreover, for any fixed $k \geq 1$, the law of 
$
\big(X^{1,(n)}(\cdot),\cdots,X^{k,(n)}(\cdot)\big) 
$ 
converges to ${\mathcal L}\big(X(\cdot)\big)^{\otimes k}$. 
\end{thm}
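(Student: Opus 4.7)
The strategy is to combine the identification established in Section \ref{SectionParticleSystem}---that the particle system \eqref{eq:particle:system}, viewed as a standard rough differential equation driven by $(W^1(\omega),\dots,W^n(\omega))$, coincides with the mean field rough equation \eqref{EqRDE} driven by the empirical rough set-up ${\boldsymbol W}^{(n)}(\omega)$ on the finite probability space \eqref{eq:empirical:space:probability}---with the continuity theorem (Theorem \ref{theoremContinuity}). Under this identification, the law under $\mathbb{P}_n$ of the solution evaluated along the canonical variable $\theta_n(\cdot):i\mapsto i$ is precisely the empirical measure $\mu^n(\omega):=\frac1n\sum_{i=1}^n\delta_{X^{i,(n)}(\omega)}$. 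The task therefore reduces to checking, for almost every $\omega\in\Omega$, the hypotheses of Theorem \ref{theoremContinuity} with \eqref{eq:empirical:space:probability} playing the role of $(\Omega_n,\mathcal{F}_n,\mathbb{P}_n)$, ${\boldsymbol W}^n:={\boldsymbol W}^{(n)}(\omega)$, initial condition $i\mapsto X_0^i(\omega)$, and the original strong rough set-up ${\boldsymbol W}(\cdot)$ on $(\Omega,\mathcal{F},\PP)$ as the limit.

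The first ingredient is weak convergence of the joint empirical law on $\Omega_n^2=\{1,\dots,n\}^2$,
\[
\frac1{n^2}\sum_{i,j=1}^n\delta_{(X_0^i(\omega),W^i(\omega),\WW^i(\omega),\WW^{i,j}(\omega))}\;\Longrightarrow\;\text{law of }\bigl(X_0,W,\WW,\WW^{\indep}\bigr)\text{ under }\PP^{\otimes 2},
\]
for almost every $\omega$. For marginals in $(X_0^i,W^i,\WW^i)$ this is the ordinary SLLN tested against bounded continuous functions. For the cross term $\WW^{i,j}$ with $i\ne j$, the strong property $\WW^{i,j}={\mathcal I}(W^i,W^j)$ is crucial: it rewrites $\frac1{n^2}\sum_{i\ne j}f(W^i,W^j,\WW^{i,j})$ as a genuine $U$-statistic in the i.i.d.\ sample $(W^i)_i$, to which a second-order SLLN (Lemma \ref{lem:LLN:2nd order}) applies, the diagonal $i=j$ being $O(1/n)$.

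The second ingredient is the uniform verification of the tail, control and moment assumptions of Theorem \ref{theoremContinuity}. Starting from the pointwise bound \eqref{eq:v:p:i:N} on $v_p^{i,n}$ in terms of individual H\"older seminorms of $(W^i,\WW^i)$ and empirical averages of H\"older seminorms of $(W^{\bullet},\WW^{\bullet},\WW^{\bullet,\bullet})$, I combine the exponential integrability \eqref{eq:exp:integrability} with the SLLN to show that, almost surely in $\omega$ and uniformly in $n$ large enough,
\[
\frac1n\sum_{i=1}^n\exp\bigl(c\,w^n(0,T,i,\omega)^{\varepsilon}\bigr)\;\leq\;C
\]
for deterministic constants $c,C,\varepsilon>0$. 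By Markov's inequality this yields the required sub-exponential tail for $w^n(0,T,\cdot)$ under $\mathbb{P}_n$ uniformly in $n$. The super-exponential tail for $N^n([0,T],\cdot,\alpha)$ then follows from super-additivity of $w^n$, the inclusion $\{N^n([0,T],\cdot,\alpha)\geq k\}\subset\{w^n(0,T,\cdot)\geq k\alpha^p\}$ refined by a Hoeffding-type sharpening to upgrade to the exponent $1+\varepsilon_2(\alpha)$, and once more \eqref{eq:exp:integrability}. Uniform Lipschitz continuity of $(s,t)\mapsto\langle v^n(s,t,\cdot)\rangle_{2q}$ is immediate from \eqref{eq:v:p:i:N} and SLLN, and uniform integrability of $|X_0^n(\cdot)|^2$ is automatic since $X_0\in\LL^2$.

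With these verifications, Theorem \ref{theoremContinuity} applies $\omega$-by-$\omega$ on a full-probability event and yields $\mu^n(\omega)\Rightarrow\cL(X(\cdot))$, which is \eqref{eq:convergence:empirical:measure}. The $k$-particle propagation-of-chaos statement then follows from Sznitman's classical equivalence \cite{Sznitman}: exchangeability of the particle system (inherited from the i.i.d.\ nature of $(X_0^i,W^i,\WW^i)$ and the symmetry of the construction) together with almost sure convergence of the empirical measure to the deterministic limit $\cL(X(\cdot))$ forces the law of $\bigl(X^{1,(n)}(\cdot),\dots,X^{k,(n)}(\cdot)\bigr)$ to converge to $\cL(X(\cdot))^{\otimes k}$ for every fixed $k\geq 1$. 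The main technical hurdle is the second step: the quadratic mixing of particles through the cross-iterated integrals $\WW^{i,j}$ makes the relevant empirical quantities $U$-statistics of order two, and their uniform exponential control rests crucially on exponential moments of ${\mathcal I}(W,W')$ under $\PP^{\otimes 2}$---hence the last line of \eqref{eq:exp:integrability}---together with the care needed to exhibit a single full-measure event of $\omega$'s on which all SLLN limits, tail bounds and uniform Lipschitz estimates hold simultaneously.
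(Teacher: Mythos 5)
Your overall strategy coincides with the paper's: use Tanaka's trick to interpret the particle system as a mean field rough equation driven by the empirical rough set-up on $\{1,\dots,n\}$, verify the hypotheses of the continuity theorem (Theorem \ref{theoremContinuity}) for this sequence of set-ups, and conclude via the law of large numbers for the weak convergence of the input laws and Sznitman's exchangeability argument for the $k$-particle statement. Two of your ingredients are carried out essentially as in the paper.

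There is, however, a genuine gap in your second ingredient, precisely where the paper's Step 1 is technically heaviest: the super-exponential tail for the empirical local accumulation $N^n([0,T],\cdot,\alpha)$. Your proposal is to combine super-additivity of $w^n$ with the inclusion $\{N^n \ge k\} \subset \{w^n(0,T,\cdot) \ge k\alpha^p\}$ and then ``refine by a Hoeffding-type sharpening.'' This cannot work as stated: the sub-exponential tail of $w^n$ has a shape parameter $\varepsilon_1 < 1$, so the inclusion alone yields only $\PP(N^n\ge k) \lesssim \exp(-c\,k^{\varepsilon_1})$, which is sub-exponential, whereas Theorem \ref{theoremContinuity} requires exponent $1+\varepsilon_2 > 1$. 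Upgrading to a super-exponential tail is a genuine structural fact about rough controls; it hinges on the disjoint intervals each contributing size $\alpha$ being, in a suitable sense, nearly independent. For the components $W^i, \WW^i$ this is exactly the classical Cass--Litterer--Lyons-type tail bound, which the paper reuses through hypothesis (b). But for the empirical cross-term $\varpi(s,t) = {}^{(n)}\!\lgroup \WW^{i,\bullet}(\omega)\rgroup_{q;[s,t],p'/2-\text{v}}^{1/2}$, the averages over $j$ are \emph{not} independent across disjoint time intervals: they all share the same $W^i$, and the cross-iterated integrals $\WW^{i,j}={\mathcal I}(W^i,W^j)$ are correlated through this shared factor. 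Your Hoeffding handwave ignores this dependence, and this is precisely the obstruction the paper overcomes through the decomposition \eqref{eq:decomposition:WWibullet}, the discretization at scale $T/n$, Chen's relations, Rosenthal's inequality and Borel--Cantelli, culminating in \eqref{eq:conclusion:supplementaire}: it shows that on a full-measure event, for $n$ large, the empirical $p'/2$-variation of $\WW^{i,\bullet}$ is dominated by the $p'/2$-variation of $\langle\WW^{i,\indep}(\omega,\cdot)\rangle_q$ plus a deterministic $(t-s)^{2/p'}$ term; since $(\WW^{i,\indep})_{i\ge 1}$ are genuinely i.i.d., the accumulation of the dominating control inherits the super-exponential tail. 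Without this reduction your verification of the second bullet of Theorem \ref{theoremContinuity} for the empirical set-up does not go through.

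A related, smaller omission: the paper passes to an exponent $p'\in(p,3)$ strictly larger than $p$ before applying Theorem \ref{theoremContinuity}. This trade of regularity for integrability (via Kolmogorov-type arguments and the inequality between $p$- and $p'$-variation) is what lets one absorb the almost-sure fluctuations in the empirical control; your proposal keeps $p$ fixed, which would make the almost-sure bounds of the first step unattainable.
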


\begin{rem}
Before we prove the above statement, we feel useful to make the following comments:
\begin{enumerate}
	\item[$\textcolor{gray}{\bullet}$]  In item $(b)$ of the assumption, we can always assume that $w$ is in fact given by the \emph{natural} control \eqref{eq:w:s:t:omega}.

	\item[$\textcolor{gray}{\bullet}$]  The argument used below in Step 3 of the proof would show that item $(d)$ implies the part related to $w$ in item $(b)$ at least when $w$ is chosen as in \eqref{eq:w:s:t:omega}. Despite this form of redundancy, we feel better to keep the current 
formulation
(of the assumptions) as it is consistent with the rest of the text. 

	\item[$\textcolor{gray}{\bullet}$] 
{Following 
\cite[Theorem 2.4]{BCD1}, the above assumptions hold true
 for Gaussian rough paths subject to the classical conditions of Friz-Victoir \cite{FrizVictoirGaussian}, see Example \ref{Gaussian}, and the related Example 2.2 in \cite{BCD1}. 
}
\end{enumerate}
\end{rem}
\begin{proof}
The key tool for passing to the limit is the continuity Theorem \ref{theoremContinuity}, but with
{$p$ therein replaced by some $p' \in (p,3)$}. 
The main difficulty is in controlling the accumulated local variation of the empirical rough set-up.
To make the notations clear, we write 
$X_{0}^{i,(n)}$
for $X^i$, 
$W^{i,(n)}$ for $W^i$, ${\mathbb W}^{i,(n)}$ for ${\mathbb W}^{i}$ and ${\mathbb W}^{i,j,(n)}$ for ${\mathbb W}^{i,j}$. 
\medskip

\textbf{\textsf{Step 1.}} As a starting point, we want to prove that, for almost every $\omega \in \Omega$, for any $\alpha >0$, there exists a constant $\varepsilon_{2}>0$
such that, for all $n\geq 1$, 
\begin{equation}   \label{EqExpConditionLocalVariation}
\sup_{n \geq 1} \frac1n \sum_{i=1}^n \exp \Bigl( N^{i,n}(0,T,\omega,\alpha)^{1+\varepsilon_{2}}
\Bigr) < \infty,
\end{equation}
where $N^{i,n}(0,T,\omega,\alpha)$ is defined as the local accumulation 
\begin{equation}
\label{eq:N:in:N:varpi}
N^{i,n}([0,T],\omega,\alpha) := N_{\varpi}([0,T],\alpha),
\end{equation}
when $\varpi(s,t) =  v^{i,n}_{p'}(s,t,\omega)^{1/p'}$, see \eqref{eq:N:s:t:omega} {and 
\eqref{eq:v:N}}.   
Following 
\eqref{eq:N1:N2} in appendix (see also the longer discussion in the introduction of the appendix in 
\cite{BCD1}), it suffices to prove \eqref{EqExpConditionLocalVariation}
when $\varpi$ in the definition of $N^{i,n}$ is equal to each of the terms in the right-hand side of 
\eqref{eq:v:N}.

When 
$\varpi(s,t) = \big\| W^i(\omega) \big\|_{[s,t],p'-\textrm{\rm v}}$
or  
$\varpi(s,t) = \big\| \WW^i(\omega) \big\|_{[s,t],p'/2-\textrm{\rm v}}^{1/2}$, the resulting 
variables $\bigl(N^{i,n}([0,T],\omega,\alpha)\bigr)_{i=1,\cdots,n}$ in \eqref{eq:N:in:N:varpi} are independent and identically distributed, their common law being independent of $n$. Then, \eqref{EqExpConditionLocalVariation} follows from assumption (b) in the statement and from the law of large numbers ({using the well-known fact that the 
$p'$-variation is less than or equal to the $p$-variation if $p'>p$}). 

If
$\varpi(s,t) = {{}^{(n)} \hspace{-1pt} \big\lgroup} {W}^{\bullet}(\omega) \big\rgroup_{q ; [s,t],p'-\textrm{\rm v}}$
or
$\varpi(s,t) =   {}^{(n)} \hspace{-1pt}\big\lgroup \hspace{-2pt}\big\lgroup
  \WW^{\bullet,\bullet}(\omega) \big\rgroup \hspace{-2pt} \big\rgroup_{q ; [s,t],p'/2-\textrm{\rm v}}^{1/2}$,
  the resulting 
variables $\bigl(N^{i,n}([0,T],\omega,\alpha)\bigr)_{i=1,\cdots,n}$ in \eqref{eq:N:in:N:varpi} 
only depend on $n$. We may denote them by 
$N^{n}([0,T],\omega,\alpha)$. Then, it suffices to prove that, for any $\alpha >0$, 
$\limsup_{n \rightarrow \infty} N^n([0,T],\omega,\alpha)$ is almost surely finite. 
By 
\eqref{eq:v:p:i:N}, we may easily control $N^n([0,T],\omega,\alpha)$ from above by noticing that 
\begin{equation*}
\begin{split}
&\alpha^p N^n([0,T],\omega,\alpha) 
\\
&\hspace{15pt} 
\leq 
c \Bigl( {}^{(n)} \hspace{-1pt} \bigl\lgroup 
\bigl\|
W^{\bullet}(\omega) \bigr\|_{[0,T],(1/p)-\textrm{\rm H}}^{p}
 \bigr\rgroup_{q}  
 +
  {}^{(n)} \hspace{-1pt}\big\lgroup \hspace{-2pt}\big\lgroup
 \bigl\|
 {\mathbb W}^{\bullet,\bullet}(\omega) \bigr\|_{[0,T],(2/p)-\textrm{\rm H}}^{p/2}   
   \big\rgroup \hspace{-2pt} \big\rgroup_{q}
\Bigr),
\end{split}
\end{equation*}
for a constant $c$ that is independent of $n$ and $\omega$. 
Proceeding as in 
\eqref{eq:lln:holder}, the result follows again from the law of large of numbers and from assumption (b). 

In fact, the most difficult cases are 
$\varpi(s,t) =
{{}^{(n)} \hspace{-1pt} \big\lgroup} \WW^{i,\bullet}(\omega) \big\rgroup_{q ; [s,t],p'/2-\textrm{\rm v}}^{1/2}$
or 
$\varpi(s,t) = {{}^{(n)} \hspace{-1pt} \big\lgroup} \WW^{\bullet,i}(\omega) \big\rgroup_{q ; [s,t],p'/2-\textrm{\rm v}}^{1/2}$. By symmetry, it suffices to treat the first one. 
And, by changing in an obvious manner {the parameter $\alpha$}, we may just focus on 
$\varpi(s,t) =
{{}^{(n)} \hspace{-1pt} \big\lgroup} \WW^{i,\bullet}(\omega) \big\rgroup_{q ; [s,t],p'/2-\textrm{\rm v}}^{q}$.
Then,
\begin{equation}
\label{eq:decomposition:WWibullet}
\begin{split}
&{{}^{(n)} \hspace{-1pt} \big\lgroup} \WW^{i,\bullet}_{s,t}(\omega) \big\rgroup_{q}^q
= \frac1n \sum_{j=1}^n \Bigl( 
\bigl\vert \WW^{i,j}_{s,t}(\omega) \bigr\vert^q
- 
 \bigl\langle \WW_{s,t}^{i,\indep}(\omega,\cdot) \bigr\rangle_{q}^q 
 \Bigr) + 
 \bigl\langle \WW_{s,t}^{i,\indep}(\omega,\cdot) \bigr\rangle_{q}^q.
 \end{split}
\end{equation} 
 Now, 
 Rosenthal's inequality (see \cite{rosenthal1972}) together with 
 \eqref{eq:exp:integrability} say that, 
for any $a \geq 2$ and any $i \in \{1,\cdots,n\}$, 
\begin{equation*}
\int_{\Omega}
 \biggl\vert
 \frac1n \sum_{j=1}^n \Bigl( 
\bigl\vert \WW_{s,t}^{i,j}(\omega) \bigr\vert^q
- 
 \bigl\langle \WW_{s,t}^{i,\indep}(\omega,\cdot) \bigr\rangle_{q}^q 
 \Bigr) \biggr\vert^{a}
 d {\mathbb P}(\omega) \leq C_{a} n^{-a/2} \vert t-s \vert^{2a q/p},
 \end{equation*}
 for a constant $C_{a}$ depending on $a$ and on the upper bound for 
 the left-hand side in 
  \eqref{eq:exp:integrability}, but independent of $i$, $n$ and $(s,t)$.  Letting $(t_{k}^{(n)} = k T/n)_{k=0,\cdots,n}$ and allowing the constant 
  $C_{a}$ to vary from line to line, we deduce that 
\begin{equation*}
\sum_{1 \leq k < \ell \leq n}
\sum_{i=1}^n 
\int_{\Omega}  \biggl\vert
 \frac1n \sum_{j=1}^n \Bigl( 
\bigl\vert \WW_{t_{k}^{(n)},t_{\ell}^{(n)}}^{i,j}(\omega) \bigr\vert^q
- 
 \bigl\langle \WW_{t_{k}^{(n)},t_{\ell}^{(n)}}^{i,\indep}(\omega,\cdot) \bigr\rangle_{q}^q 
 \Bigr) \biggr\vert^{a}
 d {\mathbb P}(\omega) \leq C_{a} n^{3-a/2}.
 \end{equation*}  
We deduce from Markov inequality  that, for any $a \geq 2$ and any 
 $n \geq 1$,
  \begin{equation}
  \begin{split}
&{\mathbb P} \biggl(  \max_{1 \leq i \leq n}
\max_{1 \leq k < \ell \leq n} 
   \biggl\vert
 \frac1n \sum_{j=1}^n \Bigl( 
\bigl\vert \WW_{t_{k}^{(n)},t_{\ell}^{(n)}}^{i,j}(\omega) \bigr\vert^q
- \bigl\langle \WW_{t_{k}^{(n)},t_{\ell}^{(n)}}^{i,\indep}(\omega,\cdot) \bigr\rangle_{q}^q 
 \Bigr) \biggr\vert 
\\
&\hspace{15pt} \geq n^{-1/4} \biggr)
  \leq C_{a} n^{3-a/4}.
  \label{eq:borell:cantelli:1}
 \end{split}
  \end{equation}
By item (d) in the statement, we also have, for any $n \geq 1$, $a \geq 2$ and $\delta >0$,  
 \begin{equation}
   \label{eq:borell:cantelli:2:b}
\begin{split}
& {\mathbb P} \biggl( 
 \max_{1 \leq i,j \leq n}
\Bigl(   \bigl\|W^{i} \|^2_{[0,T],(1/p)-\textrm{\rm H}} 
+\bigl\|{\mathbb W}^{i,j} \|_{[0,T],(2/p)-\textrm{\rm H}} 
\Bigr) 
\geq n^{{\delta/2}}  
 \biggr) 
 \\
 &\leq C_{a} n^{2- a \delta/2},
 \end{split}
\end{equation}
which implies, at least for $n$ large enough ({below, we absorb the constant $T$ that appears in the length of the increments
by changing $n^{\delta/2}$ into $n^{\delta}$, which is indeed possible since $n$ is large, it being understood that the lower threshold for $n$ only depends on $\delta$ and $T$})
 \begin{equation}
   \label{eq:borell:cantelli:2}
\begin{split}
&{\mathbb P} \biggl( \max_{1 \leq i,j \leq n}  \sup_{\vert s-t \vert \leq {T}/n} \bigl|{\mathbb W}_{s,t}^{i,j}(\omega)
\bigr\vert 
\geq  n^{\delta-2/p} \biggr) \leq C_{a} n^{2-a \delta/2}, 
\\
&{\mathbb P} \biggl( \max_{1 \leq i,j \leq n}
\sup_{\min( \vert s-t \vert, \vert s'-t' \vert) \leq {T}/n}
\Bigl( \bigl\vert 
W^{i}_{s,t}(\omega) 
\bigr\vert
\bigl\vert 
W^{j}_{s',t'}(\omega) 
\bigr\vert
\Bigr) 
 \geq n^{\delta-1/p}\biggr)
 \\
 &\hspace{15pt} \leq C_{a} n^{2-a \delta/2}.
\end{split}
\end{equation} 
Similarly, we have (for the same ranges of values for $a$, $\delta$ and $n$)  
 \begin{equation}
   \label{eq:borell:cantelli:3:b}
 \begin{split}
   &
   {\mathbb P}
   \biggl( 
   \max_{1 \leq i  \leq n}  \bigl\| \bigl\langle {\mathbb W}^{i,\indep}(\omega,\cdot) \bigr\rangle_{q} \|_{[0,T],(2/p)-\textrm{\rm H}} 
   \geq n^{{\delta/2}} \biggr) \leq C_{a} n^{1-a \delta/2},
\end{split}
\end{equation} 
which implies 
 \begin{equation}
 \label{eq:borell:cantelli:3}
 \begin{split}
&   {\mathbb P}
   \biggl(  \max_{1 \leq i \leq n} \, 
\sup_{\vert s-t \vert \leq {T}/n}
\bigl\langle \WW^{i,\indep}_{s,t}(\omega,\cdot) \bigr\rangle_{q} 
\geq n^{\delta-2/p} \biggr) \leq C_{a} n^{1-a \delta/2},
\\
& {\mathbb P}
   \biggl( \max_{1 \leq i \leq n} \,
\sup_{\min( \vert s-t \vert, \vert s'-t' \vert) \leq {T}/n}
\Bigl( 
\bigl\vert 
W^{i}_{s,t}(\omega) 
\bigr\vert
\, 
\bigl\langle W_{s',t'}(\cdot) 
\bigr\rangle_{q} 
\Bigr) 
 \geq n^{\delta-1/p} \biggr)
 \\
&\hspace{15pt} \leq C_{a} n^{1-a \delta/2}.
\end{split}
\end{equation}
Using Chen's relations 
\eqref{eq:chen}
to write ${\mathbb W}_{s,t}^{i,j} := - {\mathbb W}_{\{s\},s}^{i,j}+
{\mathbb W}_{\{s\},\{t\}}^{i,j}
+
{\mathbb W}_{\{t\},t}^{i,j} + W_{s,\{t\}}^i \otimes W_{\{t\},t}^j
- W_{\{s\},s}^i \otimes W_{s,\{t\}}^j$
(with $\{s\} :=  \lfloor ns/T \rfloor T/n$), 
we can find a constant $c_{q}$ only depending on $q$ ({but the value of which is allowed to change from line to line}) such that, 
for any $(s,t) \in {\mathcal S}_{2}^T$, 
\begin{equation*}
\begin{split}
&\biggl\vert \bigl\vert \WW_{s,t}^{i,j}(\omega) \bigr\vert^q
-
\bigl\vert \WW_{\{s\},\{t\}}^{i,j}(\omega) \bigr\vert^q
\biggr\vert 
\leq c_{q} \Xi^{i,j} 
\sup_{(s',t') \in {\mathcal S}_{2}^T}
\bigl\vert \WW_{s',t'}^{i,j}(\omega) \bigr\vert^{q-1}
\\
&\textrm{\rm with} 
\\
&\hspace{5pt} \Xi^{i,j}:=
\sup_{\vert s' - t' \vert \leq T/n}
\bigl\vert \WW_{s',t'}^{i,j}(\omega) \bigr\vert
+
\sup_{\min( \vert s'-t' \vert, \vert s''-t'' \vert) \leq {T}/n}
\Bigl( \bigl\vert 
W^{i}_{s',t'}(\omega) 
\bigr\vert 
\bigl\vert 
W^{j}_{s'',t''}(\omega) 
\bigr\vert \Bigr),
\end{split}
\end{equation*}
from which, together with
\eqref{eq:borell:cantelli:2:b}
and   \eqref{eq:borell:cantelli:2}, we deduce that 
\begin{equation*}
\begin{split}
&{\mathbb P}
\biggl( \max_{1 \leq i,j \leq n}
\sup_{(s,t) \in {\mathcal S}_{2}^T}
\Bigl\vert \bigl\vert \WW_{s,t}^{i,j}(\omega) \bigr\vert^q
-
\bigl\vert \WW_{\{s\},\{t\}}^{i,j}(\omega) \bigr\vert^q
\Bigr\vert 
\geq c_{q} n^{\delta q - 1/p} \biggr) \leq C_{a} n^{2-a \delta/2}. 
\end{split}
\end{equation*}
Proceeding similarly with 
$\bigl\langle \WW_{s,t}^{i,\indep}(\omega,\cdot) \bigr\rangle_{q}^q$
and inserting \eqref{eq:borell:cantelli:1}, we finally obtain, for $a \geq 2$, 
$\delta >0$ and $n$ large enough (in terms of $T$ and $\delta$ only) 
and for a possibly new value of $c_{q}$,
  \begin{equation}
  \label{eq:borell:cantelli:4}
  \begin{split}
&{\mathbb P}
\biggl( 
 \max_{1 \leq i \leq n}
\sup_{(s,t) \in {\mathcal S}_{2}^T}
  \biggl\vert
 \frac1n \sum_{j=1}^n \Bigl( 
\bigl\vert \WW_{s,t}^{i,j}(\omega) \bigr\vert^q
- 
 \bigl\langle \WW_{s,t}^{i,\indep}(\omega,\cdot) \bigr\rangle_{q}^q 
 \Bigr) \biggr\vert 
\\
&\hspace{15pt} \geq c_{q}  {\Bigl(  n^{- 1/4} + n^{\delta q-1/p} \Bigr)} \biggr) \leq C_{a} \Bigl( n^{3-a/4} + 
n^{2-a\delta /2} \Bigr).
 \end{split}
  \end{equation}
 Meanwhile, we also have, for ${\mathbb P}$-almost every $\omega \in \Omega$,  
  \begin{equation*}
 \begin{split}
&  \biggl\vert
 \frac1n \sum_{j=1}^n \Bigl( 
\bigl\vert \WW_{s,t}^{i,j}(\omega) \bigr\vert^q
- 
 \bigl\langle \WW_{s,t}^{i,\indep}(\omega,\cdot) \bigr\rangle_{q}^q 
 \Bigr) \biggr\vert 
  \\
  &\leq 
  \max_{1 \leq i,j \leq n} 
  \Bigl(  \bigl\|{\mathbb W}^{i,j}(\omega) \|^q_{[0,T],(2/p)-\textrm{\rm H}} 
    +
     \bigl\| \bigl\langle {\mathbb W}^{i,\indep}(\omega,\cdot) \bigr\rangle_{q} \|_{[0,T],(2/p)-\textrm{\rm H}}^q
  \Bigr) 
  (t-s)^{{2q/p}}. 
  \end{split}
  \end{equation*}
By   \eqref{eq:borell:cantelli:2:b}
and 
   \eqref{eq:borell:cantelli:3:b}, we deduce that 
   (again for the same ranges of values for $a$, $\delta$ and $n$)
  \begin{equation}
  \label{eq:borell:cantelli:4:b}
 \begin{split}
&{\mathbb P} \biggl( \max_{1 \leq i \leq n} \sup_{(s,t) \in {\mathcal S}_{2}^T}  \biggl\vert
 \frac1n \sum_{j=1}^n \Bigl( 
\bigl\vert \WW_{s,t}^{i,j}(\omega) \bigr\vert^q
- 
 \bigl\langle \WW_{s,t}^{i,\indep}(\omega,\cdot) \bigr\rangle_{q}^q 
 \Bigr) \biggr\vert 
 \\
&\hspace{15pt}  \geq n^{\delta q} (t-s)^{{2q/p}} \biggr)
   \leq C_{a} n^{2-a \delta/2}. 
  \end{split}
  \end{equation}   
Taking the power $1-p/p'$ in
the expression underpinning the event in 
    \eqref{eq:borell:cantelli:4}
    and, similarly, the power $p/p'$ in the expression underpinning the event in  \eqref{eq:borell:cantelli:4:b}, cross-multiplying both expressions,
     we get,
for $n$ large enough and for $\delta \in (0,1/4)$,
  \begin{equation*}
 \begin{split}
&{\mathbb P} \biggl( \max_{1 \leq i \leq n} \sup_{(s,t) \in {\mathcal S}_2^{T}} \biggl\vert
 \frac1n \sum_{j=1}^n \Bigl( 
\bigl\vert \WW_{s,t}^{i,j}(\omega) \bigr\vert^q
- 
 \bigl\langle \WW_{s,t}^{i,\indep}(\omega,\cdot) \bigr\rangle_{q}^q 
 \Bigr) \biggr\vert 
\\
&\hspace{15pt}  \geq c_{p,p',q} 
   {\bigl(  n^{- (p'-p)/(4p')+\delta qp/p'} + n^{\delta q-(p'-p)/(pp')} \bigr)}
     (t-s)^{2q/p'} \biggr) 
\\
&\leq C_{a}\Bigl( n^{3-a/4}+ n^{2-a \delta/2} \Bigr), 
  \end{split}
  \end{equation*}   
  where $c_{p,p',q}$ only depends on $p$, $p'$ and $q$. 
  Choosing $\delta$ small enough in terms of $p$, $p'$ and $q$, we deduce that 
for $n$ large enough, 
  \begin{equation*}
 \begin{split}
&{\mathbb P} \biggl( 
\max_{1 \leq i \leq n}
\sup_{(s,t) \in {\mathcal S}_2^{T}}  \biggl\vert
 \frac1n \sum_{j=1}^n \Bigl( 
\bigl\vert \WW_{s,t}^{i,j}(\omega) \bigr\vert^q
- 
 \bigl\langle \WW_{s,t}^{i,\indep}(\omega,\cdot) \bigr\rangle_{q}^q 
 \Bigr) \biggr\vert \geq c_{p,p',q} 
     (t-s)^{2q/p'} \biggr)
     \\
&      \leq C_{a} \Bigl( n^{3-a/4}+ n^{2-a \delta/2} \Bigr). 
  \end{split}
  \end{equation*}   
Back to 
\eqref{eq:decomposition:WWibullet}, we deduce that, 
for
$\delta$ as in the previous inequality (assuming also without any loss of generality that 
$\delta \in (0,1/2)$), 
$a$ large enough (in terms of $\delta$, but $\delta$ depending on $p$, $p'$ and $q$)
and then
 $n$ large enough (in terms of $\delta$ and $T$) and also for a new value of $c_{p,p',q}$, 
  \begin{equation}
  \label{eq:conclusion:supplementaire}
 \begin{split}
&{\mathbb P} \biggl( \forall i \in \{1,\cdots,n\}, \ \forall (s,t) \in {\mathcal S}_{2}^T, \quad {{}^{(n)} \hspace{-1pt} \big\lgroup} \WW^{i,\bullet}(\omega) \big\rgroup_{q;[s,t],p'/2-\textrm{\rm v}}
\\ 
&\hspace{15pt} \leq  c_{p,p',q} (t-s)^{2/p'}
 + \big\langle \WW^{i,\indep}(\omega,\cdot) \big\rangle_{q ; [s,t],{p'}/2-\textrm{\rm v}}\biggr) 
\geq 1 - C_{a} n^{-a \delta/4}.
  \end{split}
  \end{equation}   
By Borel-Cantelli lemma, we deduce that, 
for ${\mathbb P}$-almost every $\omega \in \Omega$, 
for $n$ large enough, 
for any $i \in \{1,\cdots,n\}$ and any  $(s,t) \in {\mathcal S}_{2}^T$, 
\begin{equation*}
\begin{split}
&{{}^{(n)} \hspace{-1pt} \big\lgroup} \WW^{i,\bullet}(\omega) \big\rgroup_{q;[s,t],p'/2-\textrm{\rm v}} 
\leq  c_{p,p',q} (t-s)^{2/p'}
 + \big\langle \WW^{i,\indep}(\omega,\cdot) \big\rangle_{q ; [s,t],{p'}/2-\textrm{\rm v}}.
 \end{split}
\end{equation*}
Since the variables 
$\bigl( \WW^{i,\indep}\bigr)_{i \geq 1}$
are independent, 
the local accumulation associated with the second term in the right-hand side may be handled like the local accumulation associated to 
$\varpi(s,t) = \big\| W^i(\omega) \big\|_{[s,t],{p'}-\textrm{\rm v}}$.
The local accumulation associated with the first term is easily handled. 
\medskip

\textbf{\textsf{Step 2.}}  {Now},   from the law of large numbers (see
Lemma 
\ref{lem:LLN:2nd order} for the law of large numbers with second order interaction terms)
and from \cite[Theorem 2.3 and Problem 3.1]{billing}, we deduce that there exists a full subset $E \subset \Omega$ (the definition of which may vary from line to line in the rest of the proof as long as ${\mathbb P}(E)$ remains equal to $1$) such that, for any $\omega \in E$,  
\begin{equation*}
\begin{split}
\pi_{n}(\omega) = 
\biggl( \frac1{n^2}
\sum_{i,j=1}^n 
\delta_{\big(X_{0}^{i,(n)}(\omega),W^{i,(n)}(\omega),{\mathbb W}^{i,(n)}(\omega),{\mathbb W}^{i,j,(n)}(\omega)\big)}
\biggr)_{n \geq 1}
\end{split}
\end{equation*}
converges in the weak sense to $\big(X_{0}(\cdot),W(\cdot),{\mathbb W}(\cdot),{\mathbb W}^{\indep}(\cdot,\cdot)\big)$ on the space
$\RR^d \times {\mathcal C}\big([0,T];\RR^m\big)\times \big\{{\mathcal C}({\mathcal S}_{2}^T;\RR^m \otimes \RR^m)\big\}^2.$

\medskip

\textbf{\textsf{Step 3.}}  Back to the statement of Theorem \ref{theoremContinuity}, the first item in the statement is a  consequence of the law of large numbers. As for the fourth item, it follows directly from the previous step. {In order to check the check the second and third items}, we now have a look at $v^{i,n}_{p'}(s,t,\omega)$ in \eqref{eq:v:N}. Following \eqref{eq:lln:holder}, we already know that 
\begin{equation*}
\begin{split}
&\limsup_{n \geq 1}
\sup_{0 \leq s < t \leq T}
\frac{{}^{(n)} \hspace{-1pt} \bigl\lgroup v_{p'}^{\bullet,n}(s,t,\omega)
\bigr\rgroup_{2q}}{t-s} < \infty,
\end{split}
\end{equation*}
which proves the third item in the statement of Theorem \ref{theoremContinuity}. We end up with the proof of the second item. 
{Following} \eqref{eq:v:p:i:N}, there exists a constant $c'$ such that, for any $\varepsilon >0$, the quantity 
\begin{equation}
\label{eq:empirical:exp:v}
\sup_{n \geq 1}
{}^{(n)} \hspace{-3pt} \Bigl\lgroup
\exp\left( [v^{\bullet,n}_{p'}(0,T,\omega)]^{\varepsilon} \right)
\hspace{-2pt}
\Bigr\rgroup_{\hspace{-3pt} 1}
\end{equation}
is finite if ({notice that we could work below with $1/p'$ instead of $1/p$--H\"older norms, but $1/p$ obviously suffices and is in fact more adapted to the assumption 
\eqref{eq:exp:integrability}})
\begin{equation}
\label{eq:label:demande:referee}
\begin{split}
&\sup_{n \geq 1}
\frac1n 
\sum_{i=1}^n
\exp\left( 
{c'} \bigl\|
W^i(\omega) \bigr\|_{[0,T],(1/p)-\textrm{\rm H}}^{p' \varepsilon}+  {c'} \, \bigl\|{\mathbb W}^i(\omega) \bigr\|_{[0,T],(2/p)-\textrm{\rm H}}^{p' \varepsilon/2} \right) < \infty,   \\
&\sup_{n \geq 1} \frac1n \sum_{i=1}^n \exp\Bigl(  {c'} \, {}^{(n)} \hspace{-3pt} \Bigl\lgroup \bigl\| {\mathbb W}^{\bullet,i}(\omega) \bigr\|_{[0,T],(2/p)-\textrm{\rm H}}^{p'/2} \hspace{-2pt} \Bigr\rgroup_{\hspace{-3pt} q}^{\hspace{-3pt} \varepsilon } \Bigr) < \infty,   
\end{split}
\end{equation}
 {and similarly on the second line {of 
\eqref{eq:label:demande:referee}}
  with ${\mathbb W}^{\bullet,i}(\omega)$ replaced by
${\mathbb W}^{i,\bullet}(\omega)$} {or ${\mathbb W}^{\bullet,\bullet}(\omega)$ (the last two terms appearing on the last line of \eqref{eq:v:p:i:N})}.
By the law of large numbers, the first line holds true on a full event if ${p' \varepsilon} < \varepsilon_{1}$. As for the second one, we use the following trick. Notice that the function 
\begin{equation}
\label{eq:empirical:exp:v:000}
(0,+\infty) \ni x \mapsto \exp \bigl(  x^{\varepsilon/q} \bigr),
\end{equation}
is convex on $[A_{\varepsilon},\infty)$, for some $A_{\varepsilon} >0$. Therefore, 
Jensen's inequality says that,
in order to check the {second line in \eqref{eq:label:demande:referee}}, 
it suffices to prove that 
\begin{equation}
\label{eq:empirical:exp:v:001}
\begin{split}
&\sup_{n \geq 1}
\frac1{n^2} \sum_{i,j=1}^n
\exp\Bigl[ 
\Bigl(A_{\varepsilon}^{\varepsilon/q}
\vee
 \bigl\|
{\mathbb W}^{i,j}(\omega) \bigr\|^{p' \varepsilon/2}_{[0,T],(2/p)-\textrm{\rm H}}
\Bigr)
\Bigr] < \infty,
\end{split}
\end{equation} 
and similarly for the {two terms appearing on the last line of \eqref{eq:v:p:i:N}}. Obviously, under the standing assumption, the latter holds true with probability 1 provided
${p' \varepsilon} < \varepsilon_{1}$. This proves \eqref{eq:empirical:exp:v}. In the statement of Theorem \ref{theoremContinuity}, this proves the condition related to the tails of $w^n$ by a standard application of Markov inequality. 
\smallskip

{The bound on the local accumulation in the second item of Theorem \ref{theoremContinuity} follows from the first step of the proof.}
\medskip

\textbf{\textsf{Step 4.}} 
By Theorem \ref{theoremContinuity}, we get 
\eqref{eq:convergence:empirical:measure} on a set of full measure.
%
By Proposition 2.2 in \cite{Sznitman}, we deduce that, for any fixed $k \geq 1$, the law of $\big(X^{1,(n)},\cdots,X^{k,(n)}\big)$ converges to ${\mathcal L}\big(X(\cdot)\big)^{\otimes k}$.  
\end{proof}

%
%

\begin{rem}
{Recently, the authors in \cite{CoghiDeuschelFrizMaurelli} obtained a quantified propagation of chaos result for mean field stochastic equations with additive noise
\begin{equation}
\label{EqCoghiEtCo}
dx_t = b\big(x_t,\mathcal{L}(x_t)\big)dt + dw_t, \quad x_0=\zeta,
\end{equation}
for a random path $w\in {\mathcal C}\big([0,T],\RR^d\big)$ subject to mild integrability condition, and random initial condition $\zeta$. There is no need of rough paths theory to make sense of this equation and solve it by elementary means, under proper regularity assumptions on the drift $b$. Its distribution is even a Lipschitz function of the distribution of $(\zeta,w)$, in $p$-Wasserstein metric. Using Tanaka's trick, 
this continuity result entails a propagation of chaos result. The global Lipscthiz continuity of the solution map $\mathcal{L}(w,\zeta)\mapsto\mathcal{L}(x)$ ensures in particular a quantitative convergence rate for the particle system no greater than the corresponding convergence rate for the sample empirical mean of the driving noises, which is optimal. We get back such a sharp estimate in the present, much more complicated, setting in the next section. Note that the global Lipschitz bound satisfied by the natural map $\Phi$ giving the solution to equation \eqref{EqCoghiEtCo} as a fixed point of $\Phi$ actually allows to deal with reflected dynamics, as the bounded variation part needed for the reflection happens to be a Lipschitz function of the non-reflected path, in Skorokhod formulation of the problem. We do not have such a strong continuity result for our solution map; see Theorem \ref{theoremContinuity}. See also the previous work \cite{FrizMaurelli} of the authors.   }
\end{rem}


\section{Rate of Convergence}
\label{SubsectionConvergenceRate}

The goal of this section is to elucidate the rate of convergence in the convergence result stated in Theorem \ref{theorem:prop:of:chaos}. 

The analysis is based upon a variation of Sznitman's original coupling argument, see \cite{Sznitman}. To make its principle clear, we recall that, on the space $(\Omega,{\mathcal F},\PP)$, the triples 
$
\big(X_{0}^1(\cdot),W^1(\cdot),{\mathbb W}^1(\cdot)\bigr),\cdots,\bigl(X_{0}^n(\cdot),W^n(\cdot),{\mathbb W}^n(\cdot)\big)
$ 
are $n$ independent copies of the original triple $\big(X_{0}(\cdot),W(\cdot),{\mathbb W}(\cdot)\big)$. For each $i \in \{1,\cdots,n\}$, the pair $\big(W^i(\cdot),{\mathbb W}^i(\cdot)\big)$ is completed into a rough set-up 
\begin{equation}
\label{eq:rough:set-up:barwi}
\begin{split}
&\overline {\boldsymbol W}^i(\cdot) := \big(W^i(\cdot),{\mathbb W}^i(\cdot),{\mathbb W}^{i,\indep}(\cdot,\cdot)\big), 
\\
&{\mathbb W}^{i,\indep}(\omega,\omega') = {\mathcal I}\bigl( W^i(\omega), W^i(\omega')\bigr), \quad (\omega,\omega') \in \Omega^2.
\end{split}
\end{equation}
Here we put a bar on the symbol $\overline{\boldsymbol W}^i$ in order to distinguish it from the finite-dimensional rough set-up ${\boldsymbol W}^{(n)}(\omega)$ that lies above $\big(W^1(\omega),\cdots,W^n(\omega)\big)$. In comparison, the second-order level of ${\boldsymbol W}^{(n)}$ is made of $({\mathbb W}^i)_{1 \leq i \leq n}$ and of $\big({\mathbb W}^{i,j}={\mathcal I}(W^i,W^j)\big)_{1 \leq i \not = j \leq n}$, see \eqref{eq:W:2:empirical}. To make the notations more homogeneous, we sometimes write {${\mathbb W}^{i,i}(\omega)$ for ${\mathbb W}^i(\omega)$}.

\smallskip

With each $\bigl(X_{0}^i(\cdot),\overline{\boldsymbol W}^i(\cdot)\bigr)$, we associate the corresponding solution $\overline{X}^i(\cdot)$ to the mean field equation \eqref{EqRDE}. The {$5$}-tuples 
$$
\Omega \ni \omega \mapsto  \Big(X_{0}^i(\omega),W^i(\omega),{\mathbb W}^i(\omega),{\mathbb W}^{i,\indep}(\cdot,\omega),\overline X^i(\omega)\Big)_{1 \leq i \leq n}
$$ 
are independent and identically distributed, $\Omega \ni \omega \mapsto \bigl({\mathbb W}_{t}^{i,\indep}(\cdot,\omega)\bigr)_{0 \leq t \leq T}$ being regarded as a process with values in $\LL^q(\Omega,{\mathcal F},\PP;\RR^d)$. 
Recalling that $X^{(n)}(\omega)=
\big(X^{1,(n)}({\omega}),\cdots,X^{{n},(n)}({\omega})\big)$ is the solution to \eqref{eq:particle:system}, we then let
\begin{equation}
\label{eq:empirical:measures}
\mu^n_{t}(\omega) = \frac1n \sum_{i=1}^n \delta_{X_{t}^{i,(n)}(\omega)}, 
\quad 
\overline{\mu}^n_{t}(\omega) = \frac1n \sum_{i=1}^n \delta_{\overline X_{t}^i(\omega)}, \quad t \in [0,T], \quad \omega \in \Omega.
\end{equation}

Here is now the main result. Note the use of the ${\mathbf d}_1$-distance {(see \eqref{eq:wasserstein:distance})} in the assumption required from F in the statement below, 
${\mathbf d}_{1}$-continuity being stronger than ${\mathbf d}_{2}$-continuity.

\begin{thm}
\label{theorem:rate:cv}   \label{theoremConvergenceRate}
We make the following assumptions.   

\begin{enumerate}
   \item Assumptions (a)--(d) in the statement of Theorem 
   \ref{theorem:prop:of:chaos} are satisfied for the same parameters $p \in [2,3)$ and $q \geq 8$ as in Section 
   \ref{SectionRoughStructure}, for some control 
   $w$
satisfying \eqref{eq:w:s:t:omega:ineq} and \eqref{eq:useful:inequality:wT}
    and some time horizon $T>0$. 
   \item The first and second derivatives of $\textrm{\rm F}$,  
   $
   (x,\mu) \mapsto \partial_{x} \textrm{\rm F}(x,\mu)$, $(x,\mu,z) \mapsto {\bigl(} D_{\mu} \textrm{\rm F}(x,\mu)(z), \partial_{x} D_{\mu} \textrm{\rm F}(x,\mu)({z}) {\bigr)}$,
   and $(x,\mu,z,z') \mapsto D_{\mu}^2 \textrm{\rm F}(x,\mu)({z,z'})$, are bounded on the whole space and are Lipschitz continuous with respect to all the variables, the Lipschitz property in the direction $\mu$ being understood with respect to ${\mathbf d}_{1}$.   

   \item Last, for any $\alpha >0$,  there exists a constant $\varepsilon_{2}>0$ such that, for {some} $p' \in 
   {[p,3)}$, and any random variables $\uptau,\uptau' : \Omega \rightarrow [0,T]$, with $\PP(\uptau<\uptau') = 1$, we have
\end{enumerate}
\begin{equation}
\label{eq:third:item}
\sup_{n \geq 1} \sup_{1 \leq i \leq n}  {\mathbb E} \biggl[ \exp \biggl[ \biggl( \frac{\widehat N^{i,n}\bigl([\uptau,\uptau'],\omega,\alpha\bigr)}{\sqrt{\uptau' - \uptau}} \biggr)^{1+\varepsilon_{2}}
\biggr] \biggr] < \infty,
\end{equation}
\begin{enumerate}
\item[]
where $\widehat N^{i,n}\bigl([\uptau,\uptau'],\omega,\alpha\bigr)$ is defined as the accumulation 
$N_{\varpi}\bigl([\uptau,\uptau'],\alpha\bigr)$ 
when $\varpi = (\widehat w_{p'}^{i,n}(\omega))^{1/p'}$ with
\end{enumerate}
\begin{equation}
\label{eq:w:widehat:w:v:widehat:v}
\begin{split}
\widehat{w}^{i,n}_{p'}(s,t,\omega) &:= \bigl( w^{i,n}_{p'} + 
\widehat{v}^{i,n}_{p'} \bigr)(s,t,\omega)
+
 {}^{(n)} \hspace{-1pt}\big\lgroup  
\widehat v^{\bullet,n}_{p'}(\omega)  \big\rgroup_{q ; [s,t],1-\textrm{\rm v}}  
 + (t-s),
\\
w_{p'}^{i,n}(s,t,\omega) &:= v_{p'}^{i,n}(s,t,\omega) +  {}^{(n)} \hspace{-1pt} \big\lgroup  v_{p'}^{\bullet,n}(\omega) \big\rgroup_{q ; [s,t],1-\textrm{\rm v}},  
\\
\widehat{v}_{p'}^{i,n}(s,t,\omega) &:=  \big\langle \WW^{i,\indep}(\omega,\cdot) \big\rangle_{q ; [s,t],p'/2-\textrm{\rm v}}^{p'/2}   
 + \big\langle \WW^{i,\indep}(\cdot,\omega) \big\rangle_{q ; [s,t],p'/2-\textrm{\rm v}}^{p'/2}.  
\end{split}
\end{equation}

Then, for any $r \geq 1$, there exists an exponent $q(r) \geq 8$ such that, 
if $X_{0}(\cdot)$ is in $\LL^{q(r)}$, then, for any $n \geq 1$, 
\begin{equation}   \label{EqConvergenceRate}
\sup_{1 \leq i \leq n} {\mathbb E} \left[ \sup_{0 \leq t \leq T} \big\vert \overline{X}^i_{t} - X^{i,(n)}_{t} \big\vert^r
 \right]^{1/r}
 + {\mathbb E} 
 \left[ \sup_{0 \leq t \leq T} {\mathbf d}_{1} \bigl( \mu^n_{t},\overline \mu^n_{t} \bigr)^r
  \right]^{1/r}
  \leq C \varsigma_{n},
\end{equation}
for a constant $C$ independent of $n$, and $\varsigma_{n} = n^{-1/2}$ if $d=1$, $\varsigma_{n}=n^{-1/2} \ln(1+n)$ if $d=2$ and $\varsigma_{n}=n^{-1/d}$ if $d \geq 3$.
\end{thm}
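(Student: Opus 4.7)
The strategy is Sznitman's coupling argument adapted to the rough setting. The first observation is that, by construction of the empirical measures in \eqref{eq:empirical:measures} and the standard bound ${\mathbf d}_{1}(\mu_{t}^n,\overline{\mu}_{t}^n) \leq \frac1n \sum_{i=1}^n |X_{t}^{i,(n)} - \overline X_{t}^i|$, the second term in \eqref{EqConvergenceRate} is controlled by the first, since the pairs $(X^{i,(n)},\overline X^i)$ are exchangeable in $i$. Hence, it is enough to estimate $\mathbb{E}[\sup_{0 \leq t \leq T}|\overline X_{t}^i - X_{t}^{i,(n)}|^r]^{1/r}$ uniformly in $i$. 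Since $X^{i,(n)}$ is the $i$-th marginal of the empirical-rough-set-up solution (Section \ref{subse:empirical}) while $\overline X^i$ is the solution of \eqref{EqRDE} on the rough set-up $\overline{\boldsymbol W}^i(\cdot)$ of \eqref{eq:rough:set-up:barwi}, the idea is to compare both as mean field rough solutions but with respect to \emph{different} rough set-ups supported on different probability spaces.

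The plan is to apply a pathwise stability estimate in the spirit of Proposition \ref{theorem:fixed:1}, adapted to handle a perturbation of the underlying rough set-up. Writing Chen-like expansions of $\textrm{F}(X^{i,(n)},\mu^n)$ and $\textrm{F}(\overline X^i,{\mathcal L}(\overline X))$ along the same base signal $W^i(\omega)$, and using Proposition \ref{prop:chaining} together with the enhanced regularity assumed in item (b) of the statement (here the ${\mathbf d}_{1}$-Lipschitz continuity is essential to get the sharp rate $\varsigma_{n}$), one gets a local-in-time inequality of the form
\begin{equation*}
\begin{split}
\vvvert \overline X^i - X^{i,(n)} \vvvert_{[t_{k},t_{k+1}],\widehat{w}^{i,n},p'}
&\leq \gamma\, \vvvert \overline X^i - X^{i,(n)} \vvvert_{[0,t_{k}],\widehat{w}^{i,n},p'} \\
&\hspace{5pt}+ \gamma\,\bigl({\mathcal E}^i_{n} + {\mathcal R}^i_{n}\bigr),
\end{split}
\end{equation*}
on intervals $[t_{k},t_{k+1}]$ of size controlled by the local accumulation $\widehat N^{i,n}$ of $\widehat{w}^{i,n}$ defined in \eqref{eq:w:widehat:w:v:widehat:v}. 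Here ${\mathcal E}_{n}^i$ collects the error coming from the substitution of ${\mathcal L}(\overline X)$ by $\mu^n$ (it produces a term controlled by $\sup_{s}{\mathbf d}_{1}(\overline\mu^n_{s},{\mathcal L}(\overline X_{s}))$ after absorbing $\sup_{s}{\mathbf d}_{1}(\mu^n_{s},\overline\mu^n_{s})$ into the left-hand side), while ${\mathcal R}^i_{n}$ collects the discrepancy between the cross iterated integrals ${\mathbb W}^{i,j}(\omega)$, $j \neq i$, and their quenched expectations ${\mathbb E}[{\mathbb W}^{i,\indep}(\omega,\cdot)]$ that arise naturally in the $\delta_{\mu}$-expansion.

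The two error sources are then estimated by standard tools: the Fournier--Guillin theorem applied to the i.i.d.\ sample $(\overline X^i)$ yields $\mathbb{E}[\sup_{t}{\mathbf d}_{1}(\overline\mu^n_{t},{\mathcal L}(\overline X_{t}))^r]^{1/r}\leq C\varsigma_{n}$, for any $r$ provided that moments of $\overline X$ of sufficiently high order $q(r)$ exist (this is where the exponent $q(r)$ in the statement enters); and a Rosenthal/law-of-large-numbers estimate applied pathwise to $\frac1n\sum_{j\neq i}\big({\mathbb W}^{i,j}(\omega)-{\mathbb E}[{\mathbb W}^{i,\indep}(\omega,\cdot)]\big)$, uniformly over $(s,t)$ after splitting on a $1/n$-grid (in the spirit of Step 1 of the proof of Theorem \ref{theorem:prop:of:chaos}, using the exponential integrability in \eqref{eq:exp:integrability}), gives an $L^r$-bound of order $n^{-1/2} \leq \varsigma_{n}$ for the corresponding contribution.

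The final step is to iterate the local inequality across the partition given by the stopping times $\tau_{k}^{\widehat w^{i,n}}(0,T,\omega,1/(4L))$. The key point, and the main technical difficulty, is to close the Gr\"onwall recursion in expectation despite the random number of steps $\widehat N^{i,n}([0,T],\omega,1/(4L))$: at each step one picks a constant of the form $\gamma(1+w(0,T,\omega)^{1/p'})$, so the iterated error looks like $\bigl[\gamma(1+w)^{1/p'}\bigr]^{\widehat N^{i,n}}\cdot(\text{local error of size }\varsigma_{n})$. The super-exponential tail assumption \eqref{eq:third:item} on $\widehat N^{i,n}$, combined with H\"older's inequality at exponents determined by $r$ and with the sub-exponential tails of $w$, ensures that the product has finite $r$-th moment independently of $n$. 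This gives \eqref{EqConvergenceRate} with a constant $C$ independent of $n$; the exponent $q(r)$ is dictated by how many H\"older moments of $X_{0}$ (and thereby of $\overline X$) are needed to absorb these integrations.
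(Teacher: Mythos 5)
Your strategy matches the paper's at the level of architecture (Sznitman coupling, two error sources, Fournier--Guillin plus Rosenthal, Gr\"onwall iteration with tail control), but there are two places where the sketch, as written, would not close.

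\smallskip

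\textbf{Comparing integrals over \emph{different} rough set-ups.} You propose to get the local-in-time inequality by ``a pathwise stability estimate in the spirit of Proposition~\ref{theorem:fixed:1}, adapted to handle a perturbation of the underlying rough set-up''. Proposition~\ref{theorem:fixed:1} compares two families of controlled paths driven by \emph{one and the same} rough driver; it cannot be pushed to compare
\begin{equation*}
\int_{0}^{t}\textrm{F}\bigl(\overline X_{s}^{i},\mathcal{L}(X_{s})\bigr)d\overline{\boldsymbol W}^{i}_{s}
\quad\text{and}\quad
\int_{0}^{t}\textrm{F}\bigl(\overline X_{s}^{i},\overline\mu_{s}^{n}\bigr)d{\boldsymbol W}^{i,(n)}_{s},
\end{equation*}
because the two sides are rough integrals over different rough set-ups (theoretical versus empirical), and the cross iterated integrals ${\mathbb W}^{j,i}$ versus $\EE[{\mathbb W}^{i,\indep}(\cdot,\omega)]$ enter the very \emph{definition} of the integrals, not only as a perturbation of the integrand. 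The paper's Step 1 goes all the way back to the Riemann sums and proves an $\LL^{\varrho}$-moment estimate of each Chen defect of size $C\varsigma_{n}(t-s)^{3/p}$ (Lemma~\ref{le:sewing}), from which the sewing lemma plus Kolmogorov's criterion extract a random bound $\theta^{i,n}(\omega)$ with $\langle\theta^{i,n}(\cdot)\rangle_{\varrho}\leq C\varsigma_{n}$ simultaneously on the increment, the Gubinelli derivative, the remainder, and the integral. Your alternative, a uniform-in-$(s,t)$ pathwise bound for $\frac1n\sum_{j\neq i}({\mathbb W}^{i,j}-\EE[{\mathbb W}^{i,\indep}])$ via the $1/n$-grid argument of Step 1 of Theorem~\ref{theorem:prop:of:chaos}, is the tool used there for a.s.\ qualitative control of local accumulations; it does \emph{not} produce the $(t-s)^{3/p}$-scaled $\LL^{\varrho}$-rate estimate that is needed for the sewing comparison, so the rate $\varsigma_n$ would not propagate through the construction.

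\smallskip

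\textbf{Dependence of the empirical controls across particles.} Once the local estimate is in place, your iteration scheme produces a factor of the form $\bigl[\gamma(1+\widehat w_{p'}^{i,n}(0,T,\omega)^{1/p'})\bigr]^{\widehat N^{i,n}}$. You then claim to close the Gr\"onwall recursion in $\LL^{r}$ by H\"older and the tail condition \eqref{eq:third:item}. The hidden obstruction is that the natural common control $\widehat w^{i,n}_{p'}$ of \eqref{eq:w:widehat:w:v:widehat:v} is \emph{not} independent across $i$ (it contains the empirical $q$-average $\lgroup v^{\bullet,n}_{p'}\rgroup_{q;[s,t],1-\textrm{v}}$ over all particles), and the $8$-th empirical moment $\lgroup\vvvert(X^{\bullet}-\overline X^{\bullet})\vvvert\rgroup_{8}$ feeding back into each particle's estimate is coupled across $i$. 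Bare H\"older does not decouple these. The paper handles this by (i) passing to the dominating control $\widetilde w^{i,n}_{p'}$ built only from the $i$-th sample, whose coordinates \emph{are} i.i.d., (ii) showing $\widehat w^{i,n}_{p'}\leq\widetilde w^{i,n}_{p'}$ on events $A_{1}^{n}\cap A_{2}^{n}\cap A_{3}^{n}$, (iii) choosing a \emph{deterministic} dissection $0=\uptau_{0}<\cdots<\uptau_{M}=T$ with $M$ independent of $n$ so that the moment constraints \eqref{eq:dissection:1stbound:tocheck}--\eqref{eq:dissection:3rdbound:tocheck} hold, and (iv) a separate Borel--Cantelli argument showing $\PP((A^{n})^{\complement})\lesssim n^{-s}$ for all $s$. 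Without the passage to $\widetilde w^{i,n}_{p'}$ and the event conditioning, the iteration does not yield a uniform-in-$n$ constant. Related to this, the ``absorption of $\sup_{s}{\mathbf d}_{1}(\mu^{n}_{s},\overline\mu^{n}_{s})$ into the left-hand side'' must be done at the level of the full controlled-path seminorm $\lgroup\vvvert(X^{\bullet}-\overline X^{\bullet})\vvvert\rgroup_{8}$ (this is the coupled system of \eqref{eq:conclusion:rate:2ndstep}), not merely at the pointwise Wasserstein level, and the $8$-th empirical moments on both sides have to match; otherwise the Gr\"onwall loop is open.
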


\begin{rem}
\label{rem:5:1}
 Let us make a few remarks on this statement before embarking on its proof.

\begin{enumerate}   

   \item[$\textcolor{gray}{\bullet}$] We refer to  \cite[Chapter 5]{CarmonaDelarue_book_I} for examples of a function $\textrm{\rm F}$ satisfying item \textit{(b)} in the assumptions of the statement. Importantly, we recall that a function $G : {\mathcal P}_{2}(\RR^d) \ni \mu \mapsto G(\mu) \in \RR$, whose derivative $D_{\mu} G : {\mathcal P}_{2}(\RR^d) \times \RR^d \ni (\mu,z) \mapsto D_{\mu} G(\mu)(z) \in \RR^d$ is uniformly bounded on the whole ${\mathcal P}_{2}(\RR^d) \times \RR^d$, is Lipschitz continuous with respect to the ${\mathbf d}_{1}$-Wasserstein distance. In particular, under the assumptions of the statement, F itself is Lipschitz continuous on $\RR^d \times {\mathcal P}_{2}(\RR^d)$, the Lipschitz property in the direction $\mu$ being understood with respect to ${\mathbf d}_{1}$.   

	\item[$\textcolor{gray}{\bullet}$]
	Obviously,  
condition
\eqref{eq:third:item}
depends on $p'$. We let the reader check that 
if 
\eqref{eq:third:item}
holds for some $p' \in [p,3)$, then it holds for any other 
$p'' \in [p',3)$.  

   \item[$\textcolor{gray}{\bullet}$] By inspecting the proof of Theorem \ref{theoremConvergenceRate}, we could make explicit the value of $q(r)$ (in the condition $X_{0}(\cdot) \in \LL^{q(r)}$), but we feel that it would not be so useful.

	\item[$\textcolor{gray}{\bullet}$] The {convergence rate $\varsigma_n$ in \eqref{EqConvergenceRate}} corresponds to the usual rate for the convergence in the 1-Wasserstein distance of an empirical sample of independent, identically distributed, random variables toward the limiting common distribution; see \cite{FournierGuillin} together with Lemma \ref{lem:W1:cv:empirical}.   

   \item[$\textcolor{gray}{\bullet}$]  Theorem \ref{theorem:rate:cv} applies when $W$ is a continuous centred Gaussian process defined over $[0,T]$ as {in Example \ref{Gaussian}.}

\end{enumerate}
\end{rem}

\begin{proof}
Observe that, for each $i \in \{1,\cdots,n\}$ and any $\omega \in \Omega$, we can define the integral process
$
\bigl( \int_{0}^t \textrm{F}\bigl(\overline X_{s}^i(\omega),\overline \mu^n_{s}(\omega) \bigr) d {\boldsymbol W}^{i,(n)}_{s}(\omega) \bigr)_{0 \leq t \leq T}$ {using usual rough paths theory}, 
where the label $i$ in the notation ${\boldsymbol W}^{i,(n)}(\omega)$ is here to indicate that the integral only involves $\big(W^i(\omega),({\mathbb W}^{j,i}(\omega))_{1 \leq j \leq n}\big)$. Equivalently, ${\boldsymbol W}^{i,(n)}(\omega)$ must be seen as $\big(W^i(\omega),({\mathbb W}^{j,i}(\omega))_{1 \leq j \leq n}\big)$. The fact that the integral may be defined with respect to $\big(W^i(\omega),({\mathbb W}^{j,i}(\omega))_{1 \leq j \leq n}\big)$ follows from the fact that $\overline X^j(\omega)$, for each $j \in \{1,\cdots,n\}$ and each $\omega \in \Omega$, is controlled by the variations of the sole $W^j(\omega)$.  {So, whenever we expand locally $\textrm{F}\bigl(\overline X_{s}^i(\omega),\overline \mu^n_{s}(\omega) \bigr)$, 
we let appear increments of $\overline X_{s}^i(\omega)$, which are controlled by increments of $W^i(\omega)$, and also increments of 
$\overline X_{s}^j(\omega)$, for $j \not =i$, which are controlled by increments of $W^j(\omega)$: At the end of the day, it suffices to have the iterated integrals $({\mathbb W}^{j,i}(\omega))_{1 \leq j \leq n}$ to define the integral $
\bigl( \int_{0}^t \textrm{F}\bigl(\overline X_{s}^i(\omega),\overline \mu^n_{s}(\omega) \bigr) d {\boldsymbol W}^{i,(n)}_{s}(\omega) \bigr)_{0 \leq t \leq T}$.}

\medskip

\textbf{\textsf{Step 1.}}
The first step is to compare 
\begin{equation}
\label{eq:comparison:integrals}
\begin{split}
\int_{0}^t \textrm{\rm F}\Bigl(\overline X_{s}^i(\omega),{\mathcal L}(X_{s}) \Bigr) d \overline{\boldsymbol W}^{i}_{s}(\omega)
\ \
 \textrm{\rm and}
\ \ \int_{0}^t \textrm{\rm F}\Bigl(\overline X_{s}^i(\omega),\overline \mu^n_{s}(\omega) \Bigr) d {\boldsymbol W}^{i,(n)}_{s}(\omega),
\end{split}
\end{equation}
for $t \in [0,T]$. What makes the proof non-trivial is the fact that the rough set-ups used in the first and the second integrals are not the same. So, in order to compare the two of them, we need to come back to the original constructions of the two integrals. To simplify notations, and for $0 \leq t \leq T$, set 
\begin{equation}
\label{eq:barFti:Ftin}
\begin{split}
&\overline F^i_{t}(\omega) := \textrm{\rm F}\Big(\overline X_{t}^i(\omega),{\mathcal L}(X_{t})\Big),
\quad F_{t}^{i,n}(\omega) := \textrm{\rm F}\Big(\overline X_{t}^i(\omega),\overline \mu^n_{t}(\omega)\Big).
\end{split}
\end{equation}
For sure, $\big(\overline F^i_{t}(\omega)\big)_{0 \leq t \leq T}$ is $\omega$-controlled by $\overline {\boldsymbol W}^i(\omega)$, {see Definition \ref{definition:omega:controlled:trajectory} and Proposition 
\ref{prop:chaining}}, and the collection indexed by $\omega \in \Omega$ is a random path controlled by $\overline {\boldsymbol W}^i$, see Definition \ref{definition:random:controlled:trajectory} for a reminder. The corresponding Gubinelli derivatives are denoted by $\big(\delta_{x} \overline F_{t}^{i}(\omega),\delta_{\mu} \overline F_{t}^{i}(\omega,\cdot)\big)_{0 \leq t \leq T}$, see again Proposition \ref{prop:chaining}. Similarly, $(F^{i,n}_{t}(\omega))_{0 \leq t \leq T}$ is controlled by ${\boldsymbol W}^{i,(n)}(\omega)$ {and ${\boldsymbol W}^{\bullet,(n)}(\omega)$}
and Gubinelli derivatives are encoded in the form of a collection $\big(\delta_{x} F_{t}^{i,n}(\omega), \big(\delta_{\mu}  F_{t}^{i,j,n}(\omega)\big)_{1 \leq j \leq n}\big)_{0 \leq t \leq T}$, {see (\ref{eq:deltax:empirical}--\ref{eq:deltamu:empirical})}. To make it clear, set
\begin{equation}
\label{eq:delta:barFti}
\begin{split}
&\delta_{x} \overline F_{t}^{i}(\omega) := \partial_{x} \textrm{\rm F}\bigl(\overline X_{t}^i(\omega), {\mathcal L}(X_{t})\bigr)  \, \textrm{\rm F}\bigl(\overline X_{t}^i(\omega), {\mathcal L}(X_{t})\big),   
\\
&\delta_{\mu} \overline F_{t}^{i}(\omega,\cdot) := D_{\mu} \textrm{\rm F}\bigl(\overline X_{t}^i(\omega), {\mathcal L}(X_{t})\bigr)
\bigl(\overline X_{t}^i(\cdot) \bigr) \, \textrm{\rm F}\bigl( \overline X_{t}^i(\cdot), {\mathcal L}(X_{t})\bigr),
\end{split}
\end{equation}
where $X(\cdot)$ is the solution to \eqref{EqRDE} with ${\boldsymbol W}(\cdot) = \big(W(\cdot),{\mathbb W}(\cdot),
{\mathbb W}^{\indep}(\cdot,\cdot)\big)$. We also let
\begin{equation}
\label{eq:delta:Ftin}
\begin{split}
&\delta_{x} F_{t}^{i,n}(\omega)
:= \partial_{x} \textrm{\rm F}\bigl(\overline X_{t}^i(\omega),\overline \mu^n_{t}(\omega)\bigr) \, \textrm{\rm F}\bigl(\overline X_{t}^i(\omega),\overline \mu^n_{t}(\omega)\bigr), 
\\
&\delta_{\mu} F_{t}^{i,j,n}(\omega) := 
D_{\mu} \textrm{\rm F}\bigl(\overline X_{t}^i(\omega),
\overline \mu^n_{t}(\omega)
\bigr)
\bigl(\overline X_{t}^j(\omega)
\bigr) \,
\textrm{\rm F}\bigl(\overline X_{t}^j(\omega),
\overline \mu^n_{t}(\omega)
\bigr).
\end{split}
\end{equation}
For a subdivision $\Delta = \{ s = t_{0} < t_{1} < \cdots < t_{K}=t \}$, set
\begin{equation}
\label{eq:I:i,n,Delta}
\begin{split}
\overline {\mathcal I}^{i,\Delta}_{s,t}(\omega) &:= \sum_{k=0}^{K-1}
\Big\{ \overline F_{t_{k}}^i(\omega) W_{t_{k},t_{k+1}}^i(\omega) + \delta_{x} \overline F_{t_{k}}^i(\omega) {\mathbb W}_{t_{k},t_{k+1}}^{i}(\omega)
\\
&\hspace{15pt}+ 
{\mathbb E}
\bigl[ 
\delta_{\mu} \overline F_{t_{k}}^{i}(\omega,\cdot) {\mathbb W}_{t_{k},t_{k+1}}^{i,\indep}(\cdot,\omega)
\bigr]
\Bigr\},
\\
{\mathcal I}^{i,n,\Delta}_{s,t}(\omega) &:= \sum_{k=0}^{K-1}
\Big\{ F_{t_{k}}^{i,n}(\omega) W_{t_{k},t_{k+1}}^i(\omega) + \delta_{x} F_{t_{k}}^{i,n}(\omega) {\mathbb W}_{t_{k},t_{k+1}}^{i}(\omega)
\\
&\hspace{15pt} + 
\frac1n 
\sum_{j=1}^n
\delta_{\mu} F_{t_{k}}^{i,j,n}(\omega) {\mathbb W}_{t_{k},t_{k+1}}^{j,i}(\omega)
\Bigr\}.
\end{split}
\end{equation}
The two integrals in \eqref{eq:comparison:integrals} should be understood as the respective limits of the two Riemann sums right above as $K$ tends to $\infty$. In the sequel, we denote the summand in the first sum by $\overline{\mathcal I}^{i,\partial}_{\{t_{k},t_{k+1}\}}(\omega)$ and the summand in the second sum by ${\mathcal I}^{i,n,\partial}_{\{t_{k},t_{k+1}\}}(\omega)$. By {Lemma \ref{le:sewing} proved in Appendix \ref{AppendixAuxiliary}, we can find, for any $\varrho \geq 8$, an exponent $\varrho' \geq q$, 
 independent of $n$, $K$ and $\Delta$, 
 such that, whenever $X_{0}(\cdot) \in \LL^{\varrho'}$, it holds, 
 for a constant $C$, also independent of $n$, $K$ and $\Delta$ but depending on 
 $\langle X_{0}(\cdot) \rangle_{\varrho'}$, 
 and then}
 for any $k \in \{1,\cdots,K-1\}$ (provided $K \geq 2$),
\begin{equation*}
\Bigl\langle \Big\{{\mathcal I}^{i,n,\Delta}_{s,t}(\cdot) - {\mathcal I}^{i,n,\Delta'}_{s,t}(\cdot)\Big\} - \Big\{ \overline{\mathcal I}^{i,\Delta}_{s,t}(\cdot) - \overline{\mathcal I}^{i,\Delta'}_{s,t}(\cdot)\Big\} \Bigr\rangle_{\varrho} \leq C \varsigma_{n} \bigl\llangle w^+(t_{k-1},t_{k+1},\cdot,\cdot) \bigr\rrangle_{\varrho'}^{3/p},
\end{equation*}
where 
$
\Delta' := \Delta \setminus \{t_{k}\}
$
and 
$
w^+(s,t,\omega,\omega') := w(s,t,\omega) + \| {\mathbb W}^{\indep}(\omega,\omega') \|_{[s,t],p/2-\textrm{\rm v}}^{p/2},
$ 
({$w$ being here given by 
\eqref{eq:w:s:t:omega}}). {In the rest of the proof, it is implicitly understood that
the condition  
$X_{0}(\cdot) \in 
 \LL^{\varrho'}$ is satisfied for $\varrho'$ large enough and that the constant $C$ is allowed to 
 depend on $\langle X_{0}(\cdot)\rangle_{\varrho'}$}.

{Formulating \eqref{eq:v:p:i:N} and \eqref{eq:lln:holder} but for the limit (instead of empirical) rough set-up, we know that the right hand side in the above inequality} is less than 
\begin{equation*}
\begin{split}
&C \varsigma_{n} \Bigl[ \bigl\langle  
\|
W(\cdot)
\|_{[0,T],(1/p)-\textrm{\rm H}} \bigr\rangle_{p \varrho'}
+
\bigl\langle
\|{\mathbb W}(\cdot) \bigr\|_{[0,T],(2/p)-\textrm{\rm H}}
\bigr\rangle_{p \varrho'}^{1/2}
\\
&\hspace{120pt}+
\bigl\llangle
\|
{\mathbb W}^{\indep}(\cdot,\cdot) \bigr\|_{[0,T],(2/p)-\textrm{\rm H}}
\bigr\rrangle_{p \varrho'}^{1/2}
\Bigr]^{3} (t_{k+1}-{t_{k-1}})^{3/p},
\end{split}
\end{equation*}
but by assumption all the expectations are finite. 
Now we can choose ${k}$ such that 
$\vert t_{k+1} - t_{k-1} \vert \leq {4} \vert t-s \vert/K$ (if not, it means that 
${4} ( t-s ) (K-1)/K
< \sum_{k=1}^{K-1} \vert t_{k+1} - t_{k-1} \vert = 
\sum_{k=1}^{K-1} ( t_{k+1} -t_{k} + t_{k} - t_{k-1})
= 2(t-s) - (t_{K} - t_{K-1} + t_{1} - t_{0}) \leq {2( t-s)}$, which is a contradiction {since $K \geq 2$}).  
We get 
\begin{equation*}
\Bigl\langle \Big\{ {\mathcal I}^{i,n,\Delta}_{s,t}(\cdot) - {\mathcal I}^{i,n,\Delta'}_{s,t}(\cdot) \Big\} - \Big\{ \overline{\mathcal I}^{i,\Delta}_{s,t}(\cdot) - \overline{\mathcal I}^{i,\Delta'}_{s,t}(\cdot) \Big\} \Bigr\rangle_{\varrho} \leq C \varsigma_{n} \Bigl( \frac{t-s}{K} \Bigr)^{3/p},
\end{equation*}
the constant $C$ being allowed to increase from line to line as long as it remains independent of $n$, $K$, $\Delta$ and $\Delta'$. 
Letting $t^{(1)}=t_{k}$ and
applying iteratively the above bound to a sequence of meshes of the form $\Delta \setminus \{t^{(1)}\}$, 
$\Delta \setminus \{t^{(1)},t^{(2)}\}$, \dots, and then letting $K$ tend to $\infty$, we deduce that 
\begin{equation}
\label{eq:I:Delta}
\begin{split}
&\left\langle  
\int_{s}^t F_{r}^{i,n}(\cdot) d {\boldsymbol W}_{r}^{i,(n)}(\cdot) - \int_{s}^t \overline F_{r}^{i}(\cdot) d \overline{\boldsymbol W}_{r}^i(\cdot) - \Big\{ {\mathcal I}^{i,n,\partial}_{\{s,t\}} - \overline{\mathcal I}^{i,\partial}_{\{s,t\}} \Big\} \right\rangle_{\varrho}   
\\
&\hspace{15pt}  \leq C \varsigma_{n} (t-s)^{3/p}.
\end{split}
\end{equation}
By Lemma \ref{le:sewing}, we also have
$\bigl\langle 
{\mathcal I}^{i,n,\partial}_{\{s,t\}}
-
\overline{\mathcal I}^{i,\partial}_{\{s,t\}}
\bigr\rangle_{\varrho} \leq C \varsigma_{n} (t-s)^{1/p}$,
from which we deduce that 
\begin{equation*}
\begin{split}
&\Bigl\langle  
\int_{s}^t 
F^{i,n}_{r}(\cdot)
 d {\boldsymbol W}_{r}^{i,(n)}(\cdot)
 -
 \int_{s}^t 
\overline F^i_{r}(\cdot) d \overline{\boldsymbol W}_{r}^i(\cdot)
\Bigr\rangle_{\varrho}
 \leq C \varsigma_{n} (t-s)^{1/p}.
\end{split}
\end{equation*}
Similarly, Lemma \ref{le:sewing} says that
$\bigl\langle 
\bigl[ F^{i,n}(\cdot)
-\overline{F}^i(\cdot)  
\bigr]_{s,t}
\bigr\rangle_{\varrho}
\leq C \varsigma_{n} (t-s)^{1/p}$,
and, noting that 
\begin{equation*}
\begin{split}
R^{\int F^{i,n} d  {\boldsymbol W}^{i,(n)}}_{s,t}(\omega)
& = 
\int_{s}^t 
F_{r}^{i,n}(\omega) d  {\boldsymbol W}_{r}^{i,(n)}(\omega) - {\mathcal I}^{i,n,\partial}_{s,t}(\omega)
\\
&\hspace{15pt}
+ \delta_{x} F_{s}^{i,n}(\omega) {\mathbb W}_{s,t}^i(\omega) 
+ 
\frac1n \sum_{j=1}^n
\delta_{\mu} F_{s}^{i,j,n}(\omega) {\mathbb W}_{s,t}^{j,i}(\omega),
\\
R^{\int \overline F^i d \overline {\boldsymbol W}^i}_{s,t}(\omega)
&= 
\int_{s}^t 
\overline F_{r}^{i}(\omega) d \overline{\boldsymbol W}_{r}^i(\omega)
- \overline{\mathcal I}^{i,\partial}_{s,t}(\omega)
\\
&\hspace{15pt} + 
\delta_{x} \overline F_{s}^i(\omega) {\mathbb W}_{s,t}^i(\omega) 
+ 
\EE \Bigl[
\delta_{\mu} \overline F_{s}^i(\omega,\cdot) {\mathbb W}_{s,t}^{i,\indep}(\cdot,\omega)
\Bigr],
\end{split}
\end{equation*}
we deduce in a similar manner,  {using   
\eqref{eq:I:Delta} and Lemma \ref{le:sewing} once again},
that
\begin{equation*}
\Bigl\langle 
R^{\int  F^{i,n} d {\boldsymbol W}^{i,(n)}}_{s,t}(\cdot)
-
R^{\int \overline F^i d \overline{\boldsymbol W}^i}_{s,t}(\cdot)
\Bigr\rangle_{\varrho}
\leq C \varsigma_{n} (t-s)^{2/p}.
\end{equation*}
So, fixing $i \in \{1,\cdots,n\}$, choosing $\varrho$ large enough and applying a suitable version of Kolmogorov's theorem (see for instance Theorem 3.1 in \cite{FrizHairer}), we can find $p' \in (p,3)$, 
{which may be assumed to satisfy item $(c)$ in the assumption (see for instance 
the second bullet point in Remark 
\ref{rem:5:1}) 
and
the value of which is fixed until the end of the proof}, such that 
\begin{equation}
\label{eq:controls:empirical}
\begin{split}
&\biggl\vert 
\int_{s}^t 
F_{r}^{i,n}(\omega)
 d {\boldsymbol W}_{r}^{i,(n)}
 -
 \int_{s}^t 
\overline F_{r}^i(\omega) 
 d \overline{\boldsymbol W}_{r}^i(\omega)
\biggr\vert \leq \theta^{i,n}(\omega) (t-s)^{1/p'},
\\
&\Bigl\vert \Bigl[
 F^{i,n}(\omega) 
-
\overline F^i(\omega) 
\Bigr]_{s,t} \Bigr\vert \leq \theta^{i,n}(\omega) (t-s)^{1/p'},
\\
&\left\vert
R^{\int  F^{i,n} d {\boldsymbol W}^{i,(n)}}_{s,t}(\omega)
-
R^{\int \overline F^i d \overline{\boldsymbol W}^i}_{s,t}(\omega)
\right\vert \leq \theta^{i,n}(\omega) (t-s)^{2/p'},
\end{split}
\end{equation}
with $\big\langle \theta^{i,n}(\cdot)\big\rangle_{\varrho} \leq C \varsigma_{n}$, for a new value of the constant $C$.  

\smallskip

Observe now that the empirical control associated with our empirical rough set-up and with the exponent $p'$ reads 
(compare with \eqref{eq:w:s:t:omega})
\begin{equation*}
w_{p'}^{i,n}(s,t,\omega)
:= v_{p'}^{i,n}(s,t,\omega) + 
 {}^{(n)} \hspace{-1pt}\big\lgroup  
 v_{p'}^{\bullet,n}(\omega)   \big\rgroup_{q ; [s,t],1-\textrm{\rm v}},
\end{equation*}
where we used the same notation as in \eqref{eq:v:N}. In fact, there is no loss of generality in changing the definition of $w_{p'}^{i,n}$ into
\begin{equation}
\label{eq:w:i:n}
w_{p'}^{i,n}(s,t,\omega)
:= v_{p'}^{i,n}(s,t,\omega) + 
 {}^{(n)} \hspace{-1pt}\big\lgroup  
 v_{p'}^{\bullet,n}(\omega)  \big\rgroup_{q ; [s,t],1-\textrm{\rm v}} + (t-s),
\end{equation}
which permits to replace $(t-s)^{1/p'}$ by $w_{p'}^{i,n}(s,t,\omega)^{1/p'}$ in the inequalities \eqref{eq:controls:empirical}. Hence,
\begin{equation*}
\biggl\vvvert 
\int_{0}^{\cdot} 
F_{r}^{i,n}(\omega)
 d {\boldsymbol W}_{r}^{i,(n)}
 -
 \int_{0}^{\cdot} 
\overline F_{r}^i(\omega) 
 d \overline{\boldsymbol W}_{r}^i(\omega)
 \biggr\vvvert_{[0,T],w_{p'}^{i,n},p'} \leq \theta^{i,n}(\omega). 
\end{equation*}

\textbf{\textsf{Step 2.}} We now make use of Proposition \ref{theorem:fixed:1} to compare 
\begin{equation*}
\begin{split}
\int_{0}^t \textrm{\rm F}\bigl(X_{s}^{i,(n)}(\omega),\mu_{s}^n(\omega) \bigr) d {{\boldsymbol W}^{i,(n)}_{s}}(\omega)
\quad 
 \textrm{\rm and}
\quad \int_{0}^t \textrm{\rm F}\bigl(\overline X_{s}^i(\omega),\overline \mu^n_{s}(\omega) \bigr) d {\boldsymbol W}^{i,(n)}_{s}(\omega),
\end{split}
\end{equation*}
see \eqref{eq:empirical:measures}.
To simplify the notations, we just write $X^i$ for $X^{i,(n)}$
and ${\boldsymbol W}^{i}$ for ${\boldsymbol W}^{i,(n)}$. 
We then apply Proposition \ref{theorem:fixed:1} with 
\begin{equation}
\label{eq:X:Y:Xprime:Yprime}
\big(X(\omega),Y(\cdot)\big)=\big(X^{i}(\omega),X^{\bullet}(\omega)\big), 
\ 
\big(X'(\omega),Y'(\cdot)\big)=\big(\overline X^{i}(\omega),\overline X^{\bullet}(\omega)\big),
\end{equation}
the underlying set-up being understood as the empirical rough set-up for a given realization $\omega$ ({in particular, the various assumptions on the moments 
in 
Proposition \ref{theorem:fixed:1}
must be checked under the empirical distribution}). The difficulty here is that the variations of these two solutions are controlled by two different functionals
$w$, see 
\eqref{eq:omega:controlled:eq}; {in short, there is the control associated with the empirical set-up and the control associated with the theoretical one}. This is the rationale for 
introducing 
$\widehat{w}_{p'}^{i,n}$ in 
\eqref{eq:w:widehat:w:v:widehat:v}. Obviously, $\widehat{w}_{p'}^{i,n}(\cdot,\cdot,\omega)$ is not the \textit{natural} control functional associated with ${\boldsymbol W}^i(\omega)$, but it is greater than 
${w}^{i,n}_{p'}(s,t,\omega)$ and it satisfies 
\begin{equation}
\label{eq:bound:n:hat:w}
{}^{(n)} \hspace{-1pt}\big\lgroup 
\widehat w_{p'}^{\bullet,n}(s,t,\omega)  \big\rgroup_{q}
\leq 2 \widehat{w}_{p'}^{i,n}(s,t,\omega),
\end{equation}
which suffices to apply  
Proposition 
\ref{theorem:fixed:1}, see also \cite[Proposition 4.3]{BCD1}, with $w^{i,n}_{p'}(s,t,\omega)$ replaced by $\widehat{w}^{i,n}_{p'}(s,t,\omega)$. 
The resulting semi-norm that must be used to control the difference $\bigl(X(\omega)-X'(\omega),Y(\cdot) - Y'(\cdot)\bigr)
= \bigl( X^i(\omega) - \overline X^i(\omega),X^{\bullet}(\omega) - \overline X^{\bullet}(\omega) \bigr)$
on a given interval $[s,t]$
is $\vvvert \cdot \vvvert_{[s,t],\widehat w^{i,n},p'}$ ({we feel easier not to put the index $p'$ in 
$\widehat w^{i,n}$ but it is implicitly understood)}.
We use the corresponding local accumulation, which we denote by $\widehat N^{i,n}\bigl([0,T],\omega,\alpha\bigr) $.  
\smallskip

{We first check that the pair
$\big(X(\omega),Y(\cdot)\big)=\big(X^{i}(\omega),X^{\bullet}(\omega)\big)$
in \eqref{eq:X:Y:Xprime:Yprime}
satisfies the assumptions of Proposition \ref{theorem:fixed:1}, assuming that $T$ is upper bounded by $1$ (which may seem rather restrictive but which is in fact consistent with what we do in the sequel since we require, at least for a while, $T$ to be small enough).} 
By construction of the processes $\bigl(X^j(\omega)\bigr)_{j=1,\cdots,n}$ as the solution of the empirical rough equation, 
{we know from Theorem 
\ref{main:theorem:existence:small:time} 
and from \cite[Proposition 4.2]{BCD1} (which guarantees a form of stability in the iterative construction of the solution) that we can find three deterministic constants
$L_{0}$, $\eta_{0}$ and $\gamma_{0}$, all strictly greater than 1 and only depending on $\Lambda$ 
(we may forget the dependence upon $T$ since $T \leq 1$; in particular the three  
constants are independent of $n$ and, for sure, of the index $i$ as well), such that 
$X(\omega)=X^{i}(\omega)$ satisfies 
 \eqref{eq:fixed:2} with 
$w = \widehat{w}^{i,n}_{p'}$ 
(it being understood that the points 
$\bigl(t_{\ell}^0=\tau_{\ell}(0,T,\omega,1/(4L_{0}))\bigr)_{\ell=0,\cdots,N^0+1}$ in the statement of 
Proposition 
\ref{theorem:fixed:1} are constructed with respect to 
$\widehat{w}^{i,n}_{p'}$) 
and 
%
$Y(\cdot)=X^{\bullet}(\omega)$
satisfies 
 condition 
\eqref{eq:fixed:1} 
with respect to 
${}^{(n)} \hspace{-1pt} \bigl\lgroup \,
\cdot \,
\bigr\rgroup_{8}$, provided that $T$ satisfies} 
\begin{align}
\label{eq:NL0:coupling}
 &{}^{(n)} \hspace{-1pt}\Big\lgroup  \widehat N^{\bullet,n}\big([0,T],\omega,1/(4L_{0})\big) 
 \Big\rgroup_{8} \leq 1, 
\\  
 &{}^{(n)} \hspace{-1pt} \biggl\lgroup \Bigl[ \gamma_{0} \Bigl( 1 + \widehat w_{p'}^{\bullet,n}(0,T,\cdot)^{1/p'} \Bigr)\Bigr]^{\widehat N^{\bullet,n}([0,T],\cdot,1/(4L_{0}))} \biggr\rgroup_{32} \textcolor{black}{\leq \eta_{0}}.
\label{eq:NL0:coupling:supple}
\end{align}
\smallskip

{We now check that the pair
$\big(X'(\omega),Y'(\cdot)\big)=\big(\overline X^{i}(\omega),\overline X^{\bullet}(\omega)\big)$
in \eqref{eq:X:Y:Xprime:Yprime}
also satisfies the assumptions of Proposition \ref{theorem:fixed:1}.}
In fact, 
using the H\"older regularity of the paths, see
\eqref{eq:v:p:i:N}
for a similar use,
and using the additional $t-s$ in the definition 
\eqref{eq:w:widehat:w:v:widehat:v}, 
$\widehat{w}^{i,n}_{p'}$
dominates 
(up to a multiplicative constant)
the control $\overline w^i$ associated to $\overline{\boldsymbol W}^i$
(and $p'$)
through \eqref{eq:w:s:t:omega}  (see \eqref{eq:rough:set-up:barwi}
for the definition of 
$\overline{\boldsymbol W}^i$;
in short, the variations of $W^i(\omega)$ and ${\mathbb W}^i(\omega)$
are already included in $v^{i,n}_{p'}(\omega)$, the variations of ${\mathbb W}^{i,\indep}(\omega,\cdot)$
and ${\mathbb W}^{i,\indep}(\cdot,\omega)$ are precisely included in the definition of 
$\widehat{v}_{p'}^{i,n}(\omega)$ and, using the H\"older regularity of the paths, the variations of $W^i(\cdot)$ and ${\mathbb W}^{i,\indep}(\cdot,\cdot)$ (in 
$\LL^q$) are dominated by the additional $t-s$). Moreover, we have
({observe that, since we work here with a copy of the theoretical rough set-up, 
we use theoretical instead of empirical moments to check the various properties of a rough set-up})
\begin{equation*}
\bigl\langle \widehat{w}^{i,n}_{p'}(s,t,\cdot) \bigr\rangle_{q} \leq C(t-s) \leq C \widehat{w}^{i,n}_{p'}(s,t,\omega),
\end{equation*}
for a constant $C$ independent of $i$, $n$, $s$ and $t$. Although $C \geq 2$ ({compare with 
\eqref{eq:w:s:t:omega:ineq}}), this permits to use $\widehat{w}_{p'}^{i,n}(s,t,\cdot)$ as control functional when working with the rough set-up $\overline{\boldsymbol W}^i$ and, in particular, when invoking the solvability Theorem 
\ref{main:theorem:existence:small:time} -- the proof would be the same. This is an important point: the path $\overline{X}^{i}(\omega)$, defined right after \eqref{eq:rough:set-up:barwi}, is the solution of a mean-field rough equation driven by a signal that is controlled by $\widehat{w}_{p'}^{i,n}(\cdot)$. Hence, $X'(\omega)= \overline{X}^{i}(\omega)$ in \eqref{eq:X:Y:Xprime:Yprime} satisfies the second bound in \eqref{eq:fixed:2}
with $w = \widehat{w}^{i,n}_{p'}$, provided 
{\eqref{eq:NL0:coupling}
and
\eqref{eq:NL0:coupling:supple}
hold true but under the theoretical (instead of empirical) moments of order $8$ and $32$ respectively and $T$ therein is deterministic.
Of course, by exchangeability, 
the latter is in fact true automatically 
as soon as 
\eqref{eq:NL0:coupling}
and
\eqref{eq:NL0:coupling:supple}
themselves are satisfied (it suffices to take power $8$ in 
\eqref{eq:NL0:coupling}
and 
power $32$
in 
\eqref{eq:NL0:coupling:supple}
and then to take the theoretical expectation).
By the same argument, 
$\overline X^{i}(\cdot)$ satisfies the second condition in \eqref{eq:fixed:1} with respect to 
$\langle \cdot \rangle_{8}$; in turn, this, together with 
\eqref{eq:NL0:coupling}, make it possible to apply the first line in \cite[Proposition 4.2, (4.4)]{BCD1} for each $i \in \{1,\cdots,n\}$ and to deduce that 
$Y'(\cdot)=\overline X^{\bullet}(\omega)$ satisfies the second condition \eqref{eq:fixed:1} but with respect to ${}^{(n)} \hspace{-1pt} \bigl\lgroup 
\cdot \bigr\rgroup_{8}$. 
Due to the assumption in \cite[Proposition 4.2]{BCD1}, this may require to work with a larger value of the threshold $L_{0}$ in the statement of Proposition \ref{theorem:fixed:1}, but, as made clear in the statement of \cite[Proposition 4.2]{BCD1} itself, this is always possible.} Then, by  Proposition \ref{theorem:fixed:1}, we obtain, for a given $L \geq L_{0}$,
\begin{equation*}
\begin{split}
&\biggl\vvvert \int_{t_{k}}^{\cdot} \textrm{F}\bigl(X_{r}^i(\omega),\mu^n(\omega)
\bigr) d {\boldsymbol W}_{r}^i(\omega) - \int_{t_{k}}^{\cdot} \textrm{F}\bigl(\overline X_{r} ^i(\omega),
\overline \mu^n(\omega)\bigr) d {\boldsymbol W}_{r}^i(\omega) \biggr\vvvert_{[t_{k},t_{k+1}],\widehat w^{i,n},p'}   
\\
&\leq \gamma\, \widehat w^{i,n}_{p'}(0,t_{k},\omega)^{1/p'}\,
\biggl(\big\vvvert  
\bigl( X^i - \overline X^i \bigr)(\omega)
\big\vvvert_{[0,t_{k}],\widehat w^{i,n},p'} 
\\
&\hspace{100pt}+ 
 {}^{(n)} \hspace{-3pt} \Big\lgroup
  \bigl\vvvert \bigl( X^{\bullet} - \overline X^{\bullet} \bigr) (\omega) \bigr\vvvert_{[0,T],\widehat w^{\bullet,n},p'}
   \hspace{-1pt}\Big\rgroup_{8}
 \biggr)   
\\
&\hspace{15pt} + \frac{\gamma}{4L}  \biggl(
\big\vvvert \bigl( X^i - \overline X^i \bigr) (\omega)\big\vvvert_{[t_{k},t_{k+1}],\widehat w^{i,n},p'} 
\\
&\hspace{100pt}
+ 
 {}^{(n)} \hspace{-3pt} \Big\lgroup
  \bigl\vvvert \bigl( X^{\bullet} - \overline X^{\bullet} \bigr) (\omega) \bigr\vvvert_{[t_{k},t_{k+1}],\widehat w^{\bullet,n},p'}
  \hspace{-1pt} \Big\rgroup_{8}
 \biggr),
\end{split}
\end{equation*}
where $\widehat w^{i,n}_{p'}(t_{k},t_{k+1},\omega)^{1/p'} \leq 1/(4L)$ and for $k \leq 2\widehat N^{i,n}([0,T],\omega,1/(4L))$ ({since the sequence 
$(t_{i})_{i}$ must refine the sequence $(t_{j}^0)_{j}$, we may assume that the collection $(t_{i})_{i}$ counts  
$2\widehat N^{i,n}([0,T],\omega,1/(4L))+2$ points, including $t_{0}=0$})
and
{where 
$\gamma$ depends on $L_{0}$ and $\Lambda$}. 
The point now is to insert the conclusion of the first step (replacing for free $w^{i,n}_{p'}$ by $\widehat w^{i,n}_{p'}$ therein). We get
\begin{equation*}
\begin{split}
&\bigl\vvvert  \bigl( X^i - \overline X^i \bigr)(\omega) \bigr\vvvert_{[t_{k},t_{k+1}],\widehat w^{i,n},p'}   
\\
&\leq \gamma\, \widehat w_{p'}^{i,n}(0,t_{k},\omega)^{1/p'}\,
\biggl(\big\vvvert  
\bigl( X^i - \overline X^i \bigr)(\omega)
\big\vvvert_{[0,t_{k}],\widehat w^{i,n},p'} 
\\
&\hspace{100pt}+ 
 {}^{(n)} \hspace{-3pt}\Big\lgroup 
  \bigl\vvvert \bigl( X^{\bullet} - \overline X^{\bullet} \bigr) (\omega) \bigr\vvvert_{[0,T],\widehat w^{\bullet,n},p'}
  \hspace{-1pt}\Big\rgroup_{8}
 \biggr) 
+ \theta^{i,n}(\omega)
\\
&\hspace{15pt} + \frac{\gamma}{4L}  \biggl(
\big\vvvert \bigl( X^i - \overline X^i \bigr) (\omega)\big\vvvert_{[t_{k},t_{k+1}],\widehat w^{i,n},p'} 
\\
&\hspace{100pt}
+ 
 {}^{(n)} \hspace{-3pt}\Big\lgroup  
  \bigl\vvvert \bigl( X^{\bullet} - \overline X^{\bullet} \bigr) (\omega) \bigr\vvvert_{[t_{k},t_{k+1}],\widehat w^{\bullet,n},p'}
  \hspace{-1pt} \Big\rgroup_{8}
 \biggr).
\end{split}
\end{equation*}
If $\gamma/(4L) \leq 1/2$, we get
\begin{equation}
\label{eq:bound:Xi-barXi}
\begin{split}
&\bigl\vvvert  \bigl( X^i - \overline X^i \bigr)(\omega) \bigr\vvvert_{[t_{k},t_{k+1}],\widehat w^{i,n},p'}   
\\
&\leq 
2 \gamma\, \Bigl( \frac1{L}+ \widehat w^{i,n}_{p'}(0,t_{k},\omega)^{1/p'} \Bigr) \,
\biggl(\big\vvvert  
\bigl( X^i - \overline X^i \bigr)(\omega)
\big\vvvert_{[0,t_{k}],\widehat w^{i,n},p'} 
\\
&\hspace{80pt}+ 
 {}^{(n)} \hspace{-3pt}\Big\lgroup  
  \bigl\vvvert \bigl( X^{\bullet} - \overline X^{\bullet} \bigr) (\omega) \bigr\vvvert_{[0,T],\widehat w^{\bullet,n},p'}
  \hspace{-1pt} \Big\rgroup_{8}
 \biggr)  + 2\theta^{i,n}(\omega).
\end{split}
\end{equation}
{We deduce that there exists a constant $c \geq 1$, possibly depending on 
$L_{0}$ but independent of $n$ and $L$ and $T$ and whose value may increase from line to line,
such that} ({see for instance
\cite[footnote 5]{BCD1} for the concatenation of two intervals})
\begin{equation*}
\begin{split}
&\bigl\vvvert  \bigl( X^i - \overline X^i \bigr)(\omega) \bigr\vvvert_{[0,t_{k+1}],\widehat w^{i,n},p'}   
\\
&\leq 
c \Bigl( 
\bigl\vvvert  \bigl( X^i - \overline X^i \bigr)(\omega) \bigr\vvvert_{[0,t_{k}],\widehat w^{i,n},p'}  
+
\bigl\vvvert  \bigl( X^i - \overline X^i \bigr)(\omega) \bigr\vvvert_{[t_{k},t_{k+1}],\widehat w^{i,n},p'}
\Bigr)  
\\
&\leq  c \, \bigl(1+\zeta_{T}^{i,n}(\omega)\bigr) \big\vvvert  
\bigl( X^i - \overline X^i \bigr)(\omega)
\big\vvvert_{[0,t_{k}],\widehat w^{i,n},p'} 
\\
&\hspace{15pt}
+ c \, \zeta^{i,n}_{T}(\omega) \,
 {}^{(n)} \hspace{-3pt}\Big\lgroup  
  \bigl\vvvert \bigl( X^{\bullet} - \overline X^{\bullet} \bigr) (\omega) \bigr\vvvert_{[0,T],\widehat w^{\bullet,n},p'}
  \hspace{-1pt} \Big\rgroup_{8}
  +  c \theta^{i,n}(\omega),
\end{split}
\end{equation*}
with
$\zeta^{i,n}_{T}(\omega) :=  \frac1{L} +{\widehat w}^{i,n}_{p'}(0,T,\omega)^{1/p'}$. 
So, by induction,
\begin{equation*}
\begin{split}
&\bigl\vvvert  \bigl( X^i - \overline X^i \bigr)(\omega) \bigr\vvvert_{[0,t_{k+1}],\widehat w^{i,n},p'}   
\leq c
\Bigl( \sum_{\ell=0}^{k} \bigl[ c \bigl(1+ \zeta^{i,n}_{T}(\omega) \bigr) \bigr]^\ell \Bigr)
 \\
  &\hspace{15pt} \times
 \biggl( 
 \zeta^{i,n}_{T}(\omega)  {}^{(n)} \hspace{-3pt}\Big\lgroup  
   \bigl\vvvert \bigl( X^{\bullet} - \overline X^{\bullet} \bigr) (\omega) \bigr\vvvert_{[0,T],\widehat w^{\bullet,n},p'}
  \hspace{-1pt} \Big\rgroup_{8}  +   \theta^{i,n}(\omega) \biggr).
\end{split}
\end{equation*}
In the end,
\begin{equation}
\label{eq:passer:i:j}
\begin{split}
&\bigl\vvvert  \bigl( X^i - \overline X^i \bigr)(\omega) \bigr\vvvert_{[0,T],\widehat w^{i,n},p'}   
\\
&\leq c \Bigl[ c  \bigl( 1+ \zeta^{i,n}_{T}(\omega) \bigr) \Bigr]^{{2}\widehat N^{i,n}([0,T],\omega,1/(4L))+1}
\\
&\hspace{30pt} \times \biggl(   \zeta^{i,n}_{T}(\omega) \,
 {}^{(n)} \hspace{-3pt}\Big\lgroup  
  \bigl\vvvert \bigl( X^{\bullet} - \overline X^{\bullet} \bigr) (\omega) \bigr\vvvert_{[0,T],\widehat w^{\bullet,n},p'}
  \hspace{-1pt} \Big\rgroup_{8}
  +  \theta^{i,n}(\omega)   \biggr).
\end{split}
\end{equation}
Hence, using the shorten notation 
$\widehat N^{i,n}_{T}(\omega)$ for $\widehat N^{i,n}([0,T],\omega,1/(4L))$ and recalling that $c \geq 1$, we obtain
\begin{equation}
\label{eq:conclusion:rate:2ndstep}
\begin{split}
& {}^{(n)} \hspace{-3pt} \Big\lgroup
  \bigl\vvvert \bigl( X^{\bullet} - \overline X^{\bullet} \bigr) (\omega) \bigr\vvvert_{[0,T],\widehat w^{\bullet,n},p'}
  \hspace{-1pt} \Big\rgroup_{8}
\\
&\leq
 {}^{(n)} \hspace{-3pt} \Big\lgroup
 \bigl[ c^2 \,  \bigl( 1+ \zeta_{T}^{\bullet,n}(\omega) \bigr) \bigr]^{{2} \widehat N^{\bullet,n}_{T}(\omega)+1}
  \zeta_{T}^{\bullet,n}(\omega)  
  \hspace{-1pt}\Big\rgroup_{8}
 \\
&\hspace{100pt} \times  {}^{(n)} \hspace{-3pt}\Big\lgroup
  \bigl\vvvert \bigl( X^{\bullet} - \overline X^{\bullet} \bigr) (\omega) \bigr\vvvert_{[0,T],\widehat w^{\bullet,n},p'}
  \hspace{-1pt} \Big\rgroup_{8}
\\
&\hspace{15pt}
+
 {}^{(n)} \hspace{-3pt} \Big\lgroup
 \bigl[ c^2 \,  \bigl( 1+ \zeta_{T}^{\bullet,n}(\omega) \bigr) \bigr]^{{2} \widehat N^{\bullet,n}_{T}(\omega)+1}
  \theta^{\bullet,n}(\omega)  
  \hspace{-1pt} \Big\rgroup_{8}.
\end{split}
\end{equation}

\smallskip

\textbf{\textsf{Step 3.}} The key quantity of interest in \eqref{eq:conclusion:rate:2ndstep} is the multiplicative factor in the second line, which we denote by
\begin{equation*}
\begin{split}
&\Psi^n_{T}(\omega)
:=
 {}^{(n)} \hspace{-3pt}\Big\lgroup  
 \bigl[ c^2 \,  \bigl( 1+ \zeta_{T}^{\bullet,n}(\omega) \bigr) \bigr]^{{2} \widehat N_{T}^{\bullet,n}(\omega)+1}
  \zeta_{T}^{\bullet,n}(\omega)  
  \hspace{-1pt} \Big\rgroup_{8}.
  \end{split}
  \end{equation*}
In particular, letting 
\begin{equation*}
\begin{split}
&\Theta_{T}^n(\omega)
:=  {}^{(n)} \hspace{-3pt}\Big\lgroup   \bigl[ c^2 \,  \bigl( 1+ \zeta_{T}^{\bullet,n}(\omega) \bigr) \bigr]^{{2} \widehat N_{T}^{\bullet,n}(\omega)+1}
  \theta^{\bullet,n}(\omega)  
  \hspace{-1pt} \Big\rgroup_{8},
\end{split}
\end{equation*}
we rewrite 
\eqref{eq:conclusion:rate:2ndstep} in the form
\begin{equation}
\label{eq:conclusion:rate:2ndstep:2b}
\begin{split}
{}^{(n)} \hspace{-3pt}\Big\lgroup  
  &\bigl\vvvert \bigl( X^{\bullet} - \overline X^{\bullet} \bigr) (\omega) \bigr\vvvert_{[0,T],\widehat w^{\bullet,n},p'}
  \hspace{-1pt} \Big\rgroup_{8}
\\
&\leq
 \Psi^n_{T}(\omega) \,
  {}^{(n)} \hspace{-3pt} \Big\lgroup
  \bigl\vvvert \bigl( X^{\bullet} - \overline X^{\bullet} \bigr) (\omega) \bigr\vvvert_{[0,T],\widehat w^{\bullet,n},p'}
  \hspace{-1pt} \Big\rgroup_{8} + \Theta_{T}^n(\omega).
\end{split}
\end{equation}
{\it Here comes the key point.} The variable $\omega$ being frozen, assume that we are given a deterministic time horizon $T$ small enough and a deterministic $L\geq L_0$ large enough, such that 
$\gamma/(4L) \leq 1/2$, 
$\Psi^n_{T}(\omega) \leq 1/2$, and \eqref{eq:NL0:coupling} and 
\eqref{eq:NL0:coupling:supple} hold true. Then,
\begin{equation*}
 {}^{(n)} \hspace{-3pt} \Big\lgroup
  \bigl\vvvert \bigl( X^{\bullet} - \overline X^{\bullet} \bigr) (\omega) \bigr\vvvert_{[0,T],\widehat w^{\bullet,n},p'}
  \hspace{-1pt} \Big\rgroup_{8}
\leq 
2 \Theta_{{T}}^n(\omega).
\end{equation*}
The above inequality sounds really close to the desired result, but it is on a small interval $[0,T]$ only. 
The purpose is thus to iterate it in order to cover any given time interval. 
The explicit construction of $T$ and $L$ being postponed to Step 5. 

\medskip

\textbf{\textsf{Step 4.}} In order to iterate in a proper way, we change our notation. While we keep the notation $T$ for the deterministic {global} time horizon given in the statement (without any further requirement that $T \leq 1$), we use the letter $\uptau$ instead of $T$ in the previous analysis. Put differently, $\uptau$ will stand for  {a deterministic} time horizon less than 1 such that $\Psi_{\uptau}$ is small enough. {And then}, we let $\uptau_{0}=\uptau$ and consider a {deterministic} dissection $0=\uptau_{0}<\uptau_{1} < \cdots < \uptau_{M} = T$ of the interval $[0,T]$ into $M$ subintervals.  {\textit{The goal of this step is to 
explain how the error on the interval $[\uptau_{\ell},\uptau_{\ell+1}]$, $\ell=1,\cdots,M-1$
can be controlled in terms of 
the error on the preceding  interval $[0,\uptau_{\ell}]$, the construction of the dissection being achieved in Step 5.}}
\smallskip

To do so, we need to revisit the statement of Proposition \ref{theorem:fixed:1}. 
Assume indeed that we have a bound for 
\begin{equation*}
{\mathcal E}^{i,n}_{\uptau_{\ell}}(\omega)
:= 
\Bigl( 1+ \widehat w^{i,n}_{p'}(0,T,\omega)^{1/p'} \Bigr) \, 
\big\vvvert  
\bigl( X^i - \overline X^i \bigr)(\omega)
\big\vvvert_{[0,\uptau_{\ell}],\widehat w^{i,n},p'},
\end{equation*}
for some $\ell \leq M$. Then, in order to duplicate the  previous two steps, we must consider a new dissection $\uptau_{\ell} = t_{0} < t_{1} < \cdots < t_{K}=\uptau_{\ell+1}$ of the interval $[\uptau_{\ell},\uptau_{\ell+1}]$ with the property that  $K= {2} \widehat N^{i,n}\big([\uptau_{\ell},\uptau_{\ell+1}],\omega,1/(4L)\big) + 1$ and that $\widehat w^{i,n}_{p'}(t_{k},t_{k+1},\omega)  {\leq} 1/(4L)$ if ${k} < K$. The key point is to apply  the first inequality in \eqref{eq:3.16:BCD1} on $[t_{k},t_{k+1}]$ with $(X(\omega),Y(\cdot)) = (X^i(\omega),X^i(\cdot))$ and $(X'(\omega),Y'(\cdot)) = (\overline X^i(\omega),\overline X^i(\cdot))$, but with $\uptau_{\ell}$ instead of $0$ as initial time. {Upper bounding the second line in \eqref{eq:3.16:BCD1}
by ${\mathcal E}^{i,n}_{\uptau_{\ell}}(\omega) + 
{}^{(n)} \hspace{-1pt} \big\lgroup  {\mathcal E}^{\bullet,n}_{\uptau_{\ell}}(\omega)
\big\rgroup_{8}$, we obtain}
\begin{equation*}
\begin{split}
&\biggl\vvvert \int_{t_{k}}^{\cdot} \textrm{F}\bigl(X_{r}^i(\omega),\mu^n_{r}(\omega)
\bigr) d {\boldsymbol W}_{r}^i(\omega) - \int_{t_{k}}^{\cdot} \textrm{F}\bigl(\overline X_{r} ^i(\omega),
\overline \mu^n_{r}(\omega)\bigr) d {\boldsymbol W}_{r}^i(\omega) \biggr\vvvert_{[t_{k},t_{k+1}],\widehat w^{i,n},p'}   
\\
&\leq 
\gamma \, 
\widehat w^{i,n}_{p'}(\uptau_{\ell},\uptau_{\ell+1},\omega)^{1/p'} 
\biggl\{ \big\vvvert  
\bigl( X^i - \overline X^i \bigr)(\omega)
\big\vvvert_{[\uptau_{\ell},t_{k}],\widehat w^{i,n},p'} 
\\
&\hspace{100pt} + 
 {}^{(n)} \hspace{-3pt} \Big\lgroup
  \bigl\vvvert \bigl( X^{\bullet} - \overline X^{\bullet} \bigr) (\omega) \bigr\vvvert_{[\uptau_{\ell},\uptau_{\ell+1}],\widehat w^{\bullet,n},p'}
  \hspace{-1pt} \Big\rgroup_{8}
 \biggr\}   
\\
&\quad+ 
 {\frac{\gamma}{4L}}
  \biggl\{
\big\vvvert \bigl( X^i - \overline X^i \bigr) (\omega)\big\vvvert_{[t_{k},t_{k+1}],\widehat w^{i,n},p'}  
\\
&\hspace{100pt}+ 
 {}^{(n)} \hspace{-3pt}\Big\lgroup 
  \bigl\vvvert \bigl( X^{\bullet} - \overline X^{\bullet} \bigr) (\omega) \bigr\vvvert_{[t_{k},t_{k+1}],\widehat w^{\bullet,n},p'}
  \hspace{-1pt} \Big\rgroup_{8}
 \biggr\}
 \\
 &\quad+ \gamma 
  \biggl[ {\mathcal E}_{\uptau_{\ell}}^{i,n}(\omega) +
 {}^{(n)} 
  \hspace{-1pt}\big\lgroup  
 {\mathcal E}_{\uptau_{\ell}}^{\bullet,n}(\omega) 
  \hspace{-1pt}   \big\rgroup_{8} 
  \biggr],
\end{split}
\end{equation*}
provided the analogues of \eqref{eq:fixed:1} and \eqref{eq:fixed:2} hold true. We may argue as in the second step to check the latter two: They are consequences of Theorem \ref{main:theorem:existence:small:time}, if 
 {$\uptau_{\ell+1}-\uptau_{\ell} \leq 1$ and 
the analogues of \eqref{eq:NL0:coupling}
and
\eqref{eq:NL0:coupling:supple} hold true, namely}
\begin{align}
\label{eq:N:Lambda}
&{}^{(n)} \hspace{-1pt}\Big\lgroup  \widehat N^{\bullet,n}\big(\textcolor{black}{[\uptau_{\ell},\uptau_{\ell+1}]},\omega,1/(4L_{0})\big)    \Big\rgroup_{8} \leq 1,
\\
 &{{}^{(n)} \hspace{-1pt} \biggl\lgroup \Bigl[ \gamma_{0} \Bigl( 1 + \widehat w_{p'}^{\bullet,n}(\uptau_{\ell},\uptau_{\ell+1},\cdot)^{1/p'} \Bigr)\Bigr]^{\widehat N^{\bullet,n}([\uptau_{\ell},\uptau_{\ell+1}],\cdot,1/(4L_{0}))} \biggr\rgroup_{32} \textcolor{black}{\leq \eta_{0}}}.
\label{eq:N:Lambda:supple}
\end{align}
Then, proceeding as in the second step,
\begin{equation*}
\begin{split}
&\bigl\vvvert  \bigl( X^i - \overline X^i \bigr)(\omega) \bigr\vvvert_{[t_{k},t_{k+1}],\widehat w^{i,n},p'}   
\\
&\leq c \, 
\Bigl(  {\frac1L} + 
\widehat w^{i,n}_{p'} (\uptau_{\ell},\uptau_{\ell+1},\omega)^{1/p'} \Bigr) 
\,
\biggl\{\big\vvvert  
\bigl( X^i - \overline X^i \bigr)(\omega)
\big\vvvert_{[\uptau_{\ell},t_{k}],\widehat w^{i,n},p'} 
\\
&\hspace{100pt}+ 
 {}^{(n)} \hspace{-3pt}\Big\lgroup  
  \bigl\vvvert \bigl( X^{\bullet} - \overline X^{\bullet} \bigr) (\omega) \bigr\vvvert_{[\uptau_{\ell},\uptau_{\ell+1}],\widehat w^{\bullet,n},p'}
  \hspace{-1pt} \Big\rgroup_{8}
 \biggr\}   
 \\
 &\hspace{30pt}
 +  {c} \, \biggl\{ {\mathcal E}_{\uptau_{\ell}}^{i,n}(\omega) + {}^{(n)} 
  \hspace{-1pt}\big\lgroup  
 {\mathcal E}_{\uptau_{\ell}}^{\bullet,n}(\omega) 
  \hspace{-1pt}   \big\rgroup_{8} 
 +  \theta^{i,n}(\omega) \biggr\}.
\end{split}
\end{equation*}
In the end, we are in the same situation as in Step 2, but with new $\zeta^{i,n}_{T}$ and 
$\widehat N_{T}^{i,n}$. Here, we let (pay attention that, to be consistent with the notations $\zeta^{i,n}_{T}$ and 
$\widehat N_{T}^{i,n}$, we should use $[\uptau_{\ell},\uptau_{\ell+1}]$ instead of 
$\ell$ as subscript below, but, for simplicity, we prefer to use $\ell$ only)
\begin{equation*}
\begin{split}
&\zeta^{i,n}_{{\ell}}(\omega) :=    {\frac1L} + \widehat w_{p'}^{i,n}(\uptau_{\ell},\uptau_{\ell+1},\omega)^{1/p'},   \quad
 \widehat N^{i,n}_{\ell}(\omega) := \widehat N^{i,n}\left([\uptau_{\ell},\uptau_{\ell+1}],\omega,\frac{1}{4L}\right).
\end{split}
\end{equation*}
Following
\eqref{eq:passer:i:j}, we obtain
\begin{align}
&\bigl\vvvert  \bigl( X^i - \overline X^i \bigr)(\omega) \bigr\vvvert_{[\uptau_{\ell},\uptau_{\ell+1}],\widehat w^{i,n},p'}   
\nonumber
\\
&\leq c \, \bigl[ c  \bigl( 1+ \zeta^{i,n}_{\ell}(\omega) \bigr) \bigr]^{{2}\widehat N^{i,n}_{\ell}(\omega)+1}
\nonumber
\\
&\hspace{15pt} \times 
 \biggl\{ 
 \zeta^{i,n}_{\ell}(\omega) \,
 {}^{(n)} \hspace{-3pt}\Big\lgroup  
  \bigl\vvvert \bigl( X^{\bullet} - \overline X^{\bullet} \bigr) (\omega) \bigr\vvvert_{[\uptau_{\ell},\uptau_{\ell+1}],\widehat w^{\bullet,n},p'}
  \hspace{-1pt} \Big\rgroup_{8} \label{eq:passer:i:j:67}
  \\
 &\hspace{100pt} +   \theta^{i,n}(\omega)
+  {\mathcal E}_{\uptau_{\ell}}^{i,n}
+
{}^{(n)} 
  \hspace{-1pt}\big\lgroup   {\mathcal E}_{\uptau_{\ell}}^{\bullet,n}(\omega) 
  \hspace{-1pt} \big\rgroup_{8} 
\biggr\}.
\nonumber
\end{align}
Hence,
\begin{equation*}
\begin{split}
& {}^{(n)} \hspace{-3pt} \Big\lgroup
  \bigl\vvvert \bigl( X^{\bullet} - \overline X^{\bullet} \bigr) (\omega) \bigr\vvvert_{[\uptau_{\ell},\uptau_{\ell+1}],\widehat w^{\bullet,n},p'}
  \hspace{-1pt} \Big\rgroup_{8}  
\\
&\hspace{15pt} \leq \Psi^n_{\ell}(\omega) \times 
{}^{(n)} \hspace{-3pt}\Big\lgroup 
  \bigl\vvvert \bigl( X^{\bullet} - \overline X^{\bullet} \bigr) (\omega) \bigr\vvvert_{[\uptau_{\ell},\uptau_{\ell+1}],\widehat w^{\bullet,n},p'}
  \hspace{-1pt} \Big\rgroup_{8}  
  + \Theta^n_{\ell}(\omega),
\end{split}
\end{equation*}
with
\begin{equation*}
\begin{split}
&\Psi^n_{\ell}(\omega)
:=
 {}^{(n)} \hspace{-3pt}\Big\lgroup  
 \bigl[ c^2 \,  \bigl( 1+ \zeta_{{\ell}}^{\bullet,n}(\omega) \bigr) \bigr]^{{2} \widehat N^{\bullet,n}_{\ell}(\omega)+1}
  \zeta_{\ell}^{\bullet,n}(\omega)  
  \hspace{-1pt} \Big\rgroup_{8},
  \\
&\Theta^n_{\ell}(\omega)
\\
&\hspace{5pt} :=  {}^{(n)} \hspace{-3pt}\Big\lgroup  
 \bigl[ c^2 \,  \bigl( 1+ \zeta_{\ell}^{\bullet,n}(\omega) \bigr) \bigr]^{{2} \widehat N^{\bullet,n}_{\ell}(\omega)+1}
 \Bigl(
  \theta^{\bullet,n}(\omega)  
  +  {\mathcal E}_{\uptau_{\ell}}^{\bullet,n}(\omega)
+ 
  {}^{(n)} 
  \hspace{-1pt}\big\lgroup
 {\mathcal E}_{\uptau_{\ell}}^{\bullet,n}(\omega) 
  \hspace{-1pt}   \big\rgroup_{8} 
\Bigr)  
  \hspace{-1pt} \Big\rgroup_{8}.
\end{split}
\end{equation*}
{Following 
\eqref{eq:conclusion:rate:2ndstep:2b}}, if we can choose 
{$L$ large enough and then 
$\uptau_{\ell+1}- \uptau_{\ell}$ small enough} such that $\Psi^n_{\ell}(\omega) \leq 1/2$, then we get
\begin{equation*}
{}^{(n)} \hspace{-3pt}\Big\lgroup  
\bigl\vvvert  \bigl( X^{\bullet} - \overline X^{\bullet} \bigr)(\omega) \bigr\vvvert_{[\uptau_{\ell},\uptau_{\ell+1}],\widehat w^{\bullet,n},p'}
  \hspace{-1pt} \Big\rgroup_{8}
\leq 
2 \, 
 \Theta^n_{\ell}(\omega).
\end{equation*}
Eventually, returning to 
\eqref{eq:passer:i:j:67} and modifying the value of the constant $c$, we deduce
\begin{equation*}
\begin{split}
&\bigl\vvvert  \bigl( X^i - \overline X^i \bigr)(\omega) \bigr\vvvert_{[\uptau_{\ell},\uptau_{\ell+1}],\widehat w^{i,n},p'}   
\\
&\hspace{15pt} \leq c \, \bigl[ c  \bigl( 1+ \zeta^{i,n}_{\ell}(\omega) \bigr) \bigr]^{{2} \widehat N^{i,n}_{\ell}(\omega)+1}
\\
&\hspace{30pt}  
 \times
 \biggl( 
 \zeta^{i,n}_{\ell}(\omega) \,
\Theta^n_{\ell}(\omega)
  +   \theta^{i,n}(\omega)
+  {\mathcal E}_{\uptau_{\ell}}^{i,n}
+
{}^{(n)} 
  \hspace{-1pt}\big\lgroup  
 {\mathcal E}_{\uptau_{\ell}}^{\bullet,n}(\omega) 
  \hspace{-1pt}   \big\rgroup_{8} 
\biggr),
\end{split}
\end{equation*}
and then
\begin{equation*}
\begin{split}
&{\mathcal E}_{\uptau_{\ell+1}}^{i,n}(\omega)
 \leq  \kappa^{i,n}_{\ell}(\omega) \biggl( 
 \zeta^{i,n}_{\ell}(\omega) \,
\Theta^n_{\ell}(\omega)
  +   \theta^{i,n}(\omega)
+  {\mathcal E}_{\uptau_{\ell}}^{i,n}(\omega)
+
{}^{(n)} 
  \hspace{-1pt} \big\lgroup
 {\mathcal E}_{\uptau_{\ell}}^{\bullet,n}(\omega) 
  \hspace{-1pt}  \big\rgroup_{8} 
\biggr),
\end{split}
\end{equation*}
with
\begin{equation}
\label{eq:kappa:i:n:ell:omega}
\kappa^{i,n}_{\ell}(\omega) := c^2 \,\Bigl( 1+ \widehat w^{i,n}_{p'}(0,T,\omega)^{1/p'} \Bigr) \, \Bigl[ c^2  \bigl( 1+ \zeta_{\ell}^{i,n}(\omega) \bigr) \Bigr]^{{2} \widehat N^{i,n}_{\ell}(\omega)+1},
\end{equation}
using the fact that $c \geq 1$. By induction, we get the following global bound:
\begin{equation}
\label{eq:estimate:Ein}
\begin{split}
& {\mathcal E}^{i,n}_{\uptau_{\ell+1}}(\omega) 
\leq 
\sum_{k=0}^{\ell}
 {\mathcal K}^{i,n}_{k,\ell}(\omega)   \,
  \biggl[
 \zeta^{i,n}_{k}(\omega) 
 \Theta^n_{k}(\omega)
  +   \theta^{i,n}(\omega)
+
{}^{(n)} 
  \hspace{-1pt} \big\lgroup
 {\mathcal E}_{\uptau_{k}}^{\bullet,n}(\omega) 
  \hspace{-1pt}   \big\rgroup_{8} 
  \biggr],
\end{split}
\end{equation}
with
\begin{equation}
\label{eq:mathcal:kappa:i,n:k,ell}
{\mathcal K}^{i,n}_{k,\ell}(\omega) := 
\prod_{j=k}^{\ell } \kappa^{i,n}_{j}(\omega).
\end{equation}
We deduce that for any $r > 8$, we can find a constant $q(r)$ ({which has nothing to do with $q$ in the assumption}) such that 
\begin{equation*}
\begin{split}
{}^{(n)} 
  \hspace{-1pt}\big\lgroup  
 {\mathcal E}_{\uptau_{\ell+1}}^{\bullet,n}(\omega) 
  \hspace{-1pt} \big\rgroup_{r}
\leq 
\sum_{k=0}^{\ell}
&\biggl\{  {}^{(n)} 
\hspace{-1pt}\big\lgroup  
 {\mathcal K}^{\bullet,n}_{k,\ell} \,  \hspace{-1pt} \big\rgroup_{q(r)}
 \times 
\Bigl(
 1+  
{}^{(n)} 
  \hspace{-1pt}\big\lgroup  
 \widehat w^{\bullet,n} (0,T,\omega)^{1/p'}
  \hspace{-1pt}  \big\rgroup_{q(r)} 
\Bigr)
\\
&\hspace{15pt} \times \Bigl( 1+
{}^{(n)} \hspace{-3pt}\Big\lgroup 
 \bigl[ c^2 \,  \bigl( 1+ \zeta_{k}^{\bullet,n}(\omega) \bigr) \bigr]^{{2} \widehat N^{\bullet,n}_{k}(\omega)+1}
  \hspace{-1pt} \Big\rgroup_{q(r)}
  \Bigr)
  \\
&\hspace{15pt}  \times
\Bigl(
{}^{(n)} 
  \hspace{-1pt}\big\lgroup  
  \theta^{\bullet,n}(\omega)  
  \hspace{-1pt}  \big\rgroup_{q(r)} 
+
{}^{(n)} 
  \hspace{-1pt}\big\lgroup  
 {\mathcal E}_{\uptau_{k}}^{\bullet,n}(\omega) 
  \hspace{-1pt}   \big\rgroup_{r} 
  \Bigr) \biggr\}.
\end{split}
\end{equation*}
{Using the fact that} 
\begin{equation*}
\begin{split}
&{}^{(n)} 
  \hspace{-1pt} \big\lgroup  
 {\mathcal K}^{\bullet,n}_{k,\ell} \,  \hspace{-1pt} \big\rgroup_{q(r)}
\\
&\geq \max \Bigl( 1, 
{}^{(n)} 
  \hspace{-1pt}\big\lgroup  
 \widehat w^{\bullet,n} (0,T,\omega)^{1/p'}
  \hspace{-1pt}  \big\rgroup_{q(r)},{}^{(n)} 
  \hspace{-3pt} \Big\lgroup 
 [ c^2 \,  ( 1+ \zeta_{k}^{\bullet,n}(\omega) ) ]^{{2} \widehat N^{\bullet,n}_{k}(\omega)+1}
  \hspace{-1pt} \Big\rgroup_{q(r)} \Bigr),
\end{split}
\end{equation*}
{we obtain a bound of the form}
$a_{\ell+1} \leq  \sum_{k=0}^{\ell} g_{k,\ell} \bigl( 
b
+a_k 
\bigr)$,
with
\begin{equation*}
\begin{split}
&a_{\ell} := 
{}^{(n)} 
  \hspace{-1pt}\big\lgroup   {\mathcal E}_{\uptau_{\ell}}^{\bullet,n}(\omega) 
  \hspace{-1pt}  \big\rgroup_{r},
\quad g_{k,\ell}:=  
4 \times \Bigl( {}^{(n)} 
  \hspace{-1pt}\big\lgroup  
 {\mathcal K}^{\bullet,n}_{k,\ell} \,
  \hspace{-1pt}   \big\rgroup_{q(r)} \Bigr)^3,
\quad b :=  
{}^{(n)} 
  \hspace{-1pt}\big\lgroup  
  \theta^{\bullet,n}(\omega)  
  \hspace{-1pt}   \big\rgroup_{q(r)}.
\end{split}
\end{equation*}
Hence,
\begin{equation}
\label{eq:bound:for:aell}
a_{\ell}
\leq  b
\sum_{j=1}^{\ell}
\sum_{0 \leq k_{1} \leq \cdots \leq k_{j} \leq k_{j+1} = \ell}
\prod_{h=1}^{j} g_{k_{h},k_{h+1}}.
\end{equation}
Back to \eqref{eq:mathcal:kappa:i,n:k,ell}, we will use below the bound
\begin{equation}
\label{eq:mathcal:K}
\begin{split}
&{\mathcal K}^{i,n}_{k,\ell}(\omega) 
\\
&\leq c^{2(\ell+1- k)}
\prod_{j=k}^{\ell}
\Bigl\{
\bigl( 1+  \widehat w^{i,n}(0,T,\omega)^{1/p'}
\bigr)
 \bigl[ c^2  \bigl( 1+ \zeta^{i,n}_{j}(\omega) \bigr) \bigr]^{{2}\widehat N^{i,n}_{j}(\omega)+1}
 \Bigr\}
 \\
 &\leq c^{4(\ell+1- k) + {4}\widehat N^{i,n}_{k,\ell+1}(\omega)}
 \bigl( 1+ 
\widehat w^{i,n}(0,T,\omega)^{1/p'}
\bigr)^{\ell+1- k + {2} \widehat N^{i,n}_{k,\ell+1}(\omega)},
\end{split}
\end{equation}
with the shortened notation 
$
\widehat N^{i,n}_{k,\ell}(\omega) := \widehat N^{i,n}\big([\uptau_{k},\uptau_{\ell}],\omega,1/(4L)\big)$, 
 {where we have used the fact that 
$\sum_{j=k}^\ell \widehat{N}_{j}^{i,n}(\omega) \leq 
\widehat N^{i,n}_{k,\ell+1}(\omega)$.}

\medskip

\textbf{\textsf{Step 5.}} We now recall that the parameter $L$ and the sequence of  {deterministic} times $0 = \uptau_{0} < \uptau_{1} < \cdots < \uptau_{M}$ must satisfy
 {$\gamma/(4L) \leq 1/2$}, 
 \eqref{eq:N:Lambda},
\eqref{eq:N:Lambda:supple}
and 
{$\Psi^n_{\ell}(\omega) \leq 1/2$,  together with $\uptau_{\ell+1}- \uptau_{\ell} \leq 1$, for $\ell \in \{0,\cdots,M-1\}$}. 

In order to proceed, we let
\begin{equation*}
\begin{split}
M_{1} &= 
\Bigl\langle
\bigl\|
W(\cdot)
\bigr\|_{[0,T],(1/{p'})-\textrm{\rm H}}^{{p'}}
\Bigr\rangle_{q}
+
\Bigl\langle
\bigl\|{\mathbb W}(\cdot) \bigr\|_{[0,T],(2/p')-\textrm{\rm H}}^{p'/2}
\Bigr\rangle_{q}
\\
&\hspace{15pt}
+
\Bigl\langle
\bigl\|
{\mathbb W}^{\indep}(\cdot,\cdot) \bigr\|_{[0,T],(2/p')-\textrm{\rm H}}^{p'/2}
\Bigr\rangle_{q},
\end{split}
\end{equation*}
and we consider the events 
\begin{equation*}
\begin{split}
A^{n}_{1} &= \biggl\{ 
\omega \in \Omega : 
{}^{(n)} \hspace{-1pt} \bigl\lgroup 
\bigl\|
W^{\bullet}(\omega) \bigr\|_{[0,T],(1/p')-\textrm{\rm H}}^{p'}
 \bigr\rgroup_{q}  
 + 
{}^{(n)} \hspace{-1pt} \bigl\lgroup 
\bigl\|
 {\mathbb W}^{\bullet}(\omega) \bigr\|_{[0,T],(2/p')-\textrm{\rm H}}^{p'/2}
   \bigr\rgroup_{q}
   \\
&\hspace{30pt}
 +
 {}^{(n)} \hspace{-1pt}\big\lgroup \hspace{-2pt}\big\lgroup
 \bigl\|
 {\mathbb W}^{\bullet,\bullet}(\omega) \bigr\|_{[0,T],(2/p')-\textrm{\rm H}}^{p'/2}   
   \big\rgroup \hspace{-2pt} \big\rgroup_{q}
\leq M_{1}+1 \biggr\},
\\
A^{n}_{2} &= \biggl\{ 
\omega \in \Omega :  
\forall i \in \{1,\dots,n\}, \ 
\forall (s,t) \in {\mathcal S}_{2}^T, 
\\
&\hspace{30pt} {{}^{(n)} \hspace{-1pt} \big\lgroup} \WW^{i,\bullet}(\omega) \big\rgroup_{q;[s,t],p'/2-\textrm{\rm v}}
 +
 {{}^{(n)} \hspace{-1pt} \big\lgroup} \WW^{\bullet,i}(\omega) \big\rgroup_{q;[s,t],p'/2-\textrm{\rm v}}
    \\
&\hspace{30pt}\leq  
2c_{p,p',q} (t-s)^{2/p'}
\\
&\hspace{45pt} + \big\langle \WW^{i,\indep}(\omega,\cdot) \big\rangle_{q ; [s,t],{p'}/2-\textrm{\rm v}}
+ \big\langle \WW^{i,\indep}(\cdot,\omega) \big\rangle_{q ; [s,t],{p'}/2-\textrm{\rm v}}
\biggr\},
\end{split}
\end{equation*}
{with 
$c_{p,p',q}$, 
as in  \eqref{eq:conclusion:supplementaire}}.
On the event $A^{n}_{1} \cap A^{n}_{2}$, we have (compare with 
\eqref{eq:v:N}
and 
\eqref{eq:v:p:i:N}) for $s,t \in [0,T]^2$, $s < t$, 
\begin{equation*}
\begin{split}
v^{i,n}_{p'}(s,t,\omega) 
&\leq 
\big\| W^i(\omega) \big\|_{[s,t],p'-\textrm{\rm v}}^{p'} 
 + \big\| \WW^i(\omega) \big\|_{[s,t],p'/2-\textrm{\rm v}}^{p'/2} 
 \\
&\hspace{15pt} + 
3^{p'/2-1}
 \biggl( 
  \big\langle \WW^{i,\indep}(\omega,\cdot) \big\rangle_{q ; [s,t],{p'}/2-\textrm{\rm v}}^{p'/2}
+ \big\langle \WW^{i,\indep}(\cdot,\omega) \big\rangle_{q ; [s,t],{p'}/2-\textrm{\rm v}}^{p'/2}
\biggr)
\\ 
&\hspace{15pt}
+
 \bigl( c_{p,p',q}'  +  M_{1} 
 \bigr)(t-s),
\end{split}
\end{equation*}
for a new constant $c_{p,p',q}'$ only depending on $p$, $p'$ and $q$. 
Therefore, introducing the new event
\begin{equation*}
\begin{split}
A_{3}^n &= \biggl\{ {}^{(n)}\hspace{-3pt} \Big\lgroup
\Bigl\langle
\bigl\|
{\mathbb W}^{\bullet,\indep}(\omega,\cdot) \bigr\|_{[0,T],(2/p')-\textrm{\rm H}}^{p'/2}
\Bigr\rangle_{q} 
\hspace{-1pt} \Big\rgroup_{q}
\\
&\hspace{15pt} +
{}^{(n)} \hspace{-3pt} \Big\lgroup
\Bigl\langle
\bigl\| {\mathbb W}^{\bullet,\indep}(\cdot,\omega) \bigr\|_{[0,T],(2/p')-\textrm{\rm H}}^{p'/2}
\Bigr\rangle_{q}
\hspace{-1pt} \Big\rgroup_{q} \leq 2 \bigl( 1+ M_{1} \bigr)
\biggr\},
\end{split}
\end{equation*}
we get, on 
$A^{n}_{1}  \cap A_{2}^n \cap A^n_{3}$,
\begin{equation*}
\begin{split}
 {}^{(n)} \hspace{-1pt} \big\lgroup  v_{p'}^{\bullet,n}(\omega) \big\rgroup_{q ; [s,t],1-\textrm{\rm v}} \leq  
 c_{p,p',q,M_{1}}'
(t-s),
\end{split}
\end{equation*}
for a new constant $c_{p,p',q,M_{1}}'$ only depending on $p$, $p'$, $q$ and $M_{1}$. 

Recall
now the definition of $\widehat{v}_{p'}^{i,n}$ in 
 \eqref{eq:w:widehat:w:v:widehat:v}. We have
\begin{equation*}
\begin{split}
&\widehat{v}_{p'}^{i,n}(s,t,\omega) 
\\
&=   \big\langle \WW^{i,\indep}(\omega,\cdot) \big\rangle_{q ; [s,t],p'/2-\textrm{\rm v}}^{p'/2}   
 + \big\langle \WW^{i,\indep}(\cdot,\omega) \big\rangle_{q ; [s,t],p'/2-\textrm{\rm v}}^{p'/2}
 \\
 &\leq 
\Bigl[ \Bigl\langle 
 \bigl\|
{\mathbb W}^{i,\indep}(\omega,\cdot) \bigr\|_{[0,T],(2/p')-\textrm{\rm H}}^{p'/2}
\Bigr\rangle_{q}
+
\Bigl\langle 
 \bigl\|
{\mathbb W}^{i,\indep}(\cdot,\omega) \bigr\|_{[0,T],(2/p')-\textrm{\rm H}}^{p'/2}
\Bigr\rangle_{q}
\Bigr] (t-s).
\end{split}
\end{equation*}
And then,
\begin{equation*}
\begin{split}
 {}^{(n)} \hspace{-1pt} \big\lgroup  \widehat{v}_{p'}^{\bullet,n}(\omega) \big\rgroup_{q ; [s,t],1-\textrm{\rm v}}
&\leq  
\biggl(
{}^{(n)} \hspace{-3pt} \Big\lgroup
\Bigl\langle
\bigl\|
{\mathbb W}^{\bullet,\indep}(\omega,\cdot) \bigr\|_{[0,T],(2/p')-\textrm{\rm H}}^{p'/2}
\Bigr\rangle_{q} 
\hspace{-1pt} \Big\rgroup_{q}
\\
&\hspace{15pt} +
{}^{(n)} \hspace{-3pt} \Big\lgroup
\Bigl\langle
\bigl\| {\mathbb W}^{\bullet,\indep}(\cdot,\omega) \bigr\|_{[0,T],(2/p')-\textrm{\rm H}}^{p'/2}
\Bigr\rangle_{q}
\hspace{-1pt} \Big\rgroup_{q}
\biggr) (t-s).
\end{split}
\end{equation*}
Therefore, on the event $ A^{n}_{1}  \cap A_{2}^n \cap A^n_{3}$, we have 
$$ 
{}^{(n)} \hspace{-1pt} \big\lgroup  \widehat v_{p'}^{\bullet,n}(\omega) \big\rgroup_{q ; [s,t],1-\textrm{\rm v}}
\leq 2(1+M_{1}) (t-s).
$$ 
Using the same notations as in  \eqref{eq:w:widehat:w:v:widehat:v}, we end-up with 
 \begin{equation}
 \label{eq:widetile:w}
 \begin{split}
 \widehat{w}^{i,n}_{p'}(s,t,\omega) 
 &\leq 
\widetilde{w}^{i,n}_{p'}(s,t,\omega),
\end{split}
\end{equation}
for $\omega \in 
A^{n}_{1}  \cap A_{2}^n \cap A^n_{3}$, 
where we let (using the fact that $3^{p'/2-1} \leq \sqrt{3} \leq 2$)
 \begin{equation*}
 \begin{split}
\widetilde{w}^{i,n}_{p'}(s,t,\omega)
&:= 
 \big\| W^i(\omega) \big\|_{[s,t],p'-\textrm{\rm v}}^{p'} 
 + \big\| \WW^i(\omega) \big\|_{[s,t],p'/2-\textrm{\rm v}}^{p'/2}
 + C_{p,p',q,M_{1}} (t-s)
\\
&\hspace{15pt}
+ 3 \big\langle \WW^{i,\indep}(\omega,\cdot) \big\rangle_{q ; [s,t],p'/2-\textrm{\rm v}}^{p'/2}   
 + 3 \big\langle \WW^{i,\indep}(\cdot,\omega) \big\rangle_{q ; [s,t],p'/2-\textrm{\rm v}}^{p'/2},
 \end{split} 
 \end{equation*}
with 
$C_{p,p',q,M_{1}} :=c_{p,p',q,M_{1}}'+c_{p,p',q}'+3 M_{1}+3$.
Using the notation \eqref{eq:N:s:t:omega}, we also let 
$
\widetilde{N}^{i,n}([\uptau,\uptau'],\omega,\alpha) := N_{\varpi}([\uptau,\uptau'],\alpha),
$
with $\varpi := (\widetilde w_{p'}^{i,n}(\omega))^{1/p'}$. By \eqref{eq:widetile:w}, 
\begin{equation}
\label{eq:widetildeNgeqwidehatN}
\widehat{N}^{i,n}([\uptau,\uptau'],\omega,\alpha) \leq \widetilde{N}^{i,n}([\uptau,\uptau'],\omega,\alpha),
\end{equation}
for $\omega \in A^{n}_{1}  \cap  A^n_{2} \cap A^n_{3}$. The good point here is that the variables  $\bigl(\widetilde{w}^{i,n}_{p'} \bigr)_{1 \leq i \leq n}$ are independent whilst the variables $\bigl( \widehat{w}^{i,n}_{p'} \bigr)_{1 \leq i \leq n}$
are not. Similarly, whenever $\uptau$ and $\uptau'$ are deterministic, the variables $\bigl(\widetilde{N}^{i,n}([\uptau,\uptau'],\cdot,\alpha)
 \bigr)_{1 \leq i \leq n}$ are independent. Moreover, it is not difficult to see that 
\begin{equation}
\label{eq:domination:widetildewin:widehatwin}
\widetilde{w}^{i,n}_{p'}(s,t,\omega) \leq 3 \widehat{w}^{i,n}_{p'}(s,t,\omega) + 
C_{p,p',q,M_{1}} (t-s),
\end{equation}
from which we deduce, see for instance \eqref{eq:N1:N2}, that, for any $\alpha >0$,  
\begin{equation}
\label{eq:domination:widetildewin:widehatwin:2}
\widetilde{N}^{i,n}\bigl([\uptau,\uptau'],\omega,\alpha\bigr) \leq  \widehat{N}^{i,n}\Bigl([\uptau,\uptau'],\omega,\frac{\alpha}{2 \cdot 3^{1/p'}} \Bigr) +  {C_{\alpha} \bigl( \uptau' - \uptau \bigr)},
\end{equation}
for a constant $C_{\alpha}$ only depending on $\alpha$ and on $p$, $p'$, $q$ and $M_{1}$ (we feel easier not to indicate the dependence on $p$, $p'$, $q$ and $M_{1}$). In particular, we can easily replace $\widehat{N}^{i,n}$ by $\widetilde{N}^{i,n}$ in the third item of the assumption of the statement. Moreover, by 
{\eqref{eq:exp:integrability}}, we deduce that each $\widetilde{w}^{i,n}_{p'}$ satisfies the first bound in \eqref{EqTailAssumptions}, uniformly in $i$ and $n$. 

\smallskip

We now claim that we can choose $L$ and the sequence $0=\uptau_{0} < \cdots < \uptau_{M}=T$, independently of $n$, such that 
\begin{align}
&\Bigl\langle \widetilde{N}^{1,n} \bigl([\uptau_{\ell},\uptau_{\ell+1}],\cdot,1/(4L_{0}) \bigr) \Bigr\rangle_{8} \leq \frac{1}{2}, \label{eq:dissection:1stbound:tocheck}
\\
&\biggl\langle \Bigl[ \gamma_{0} \Bigl( 1 + \widetilde w^{1,n}_{p'}(0,T,\cdot)^{1/{p'}} \Bigr)\Bigr]^{\widetilde N^{1,n}([\uptau_{\ell},\uptau_{\ell+1}],\cdot,1/(4L_{0}))} \biggr\rangle_{32} \leq \frac{1+\eta_{0}}2,\label{eq:dissection:2ndbound:tocheck}
\\
&\biggl\langle  
 \Bigl[ c^2 \,  \Bigl( 1 + \widetilde w^{1,n}_{p'}(0,T,\cdot)^{1/{p'}} \Bigr) \Bigr]^{{2} \widetilde N^{1,n}([\uptau_{\ell},\uptau_{\ell+1}],\cdot,1/(4L_{0}))+1} \nonumber
 \\
 &\hspace{45pt} \times
\Bigl( \frac1L + \widetilde w^{1,n}_{p'}(\uptau_{\ell},\uptau_{\ell+1},\cdot)^{1/{p'}}
\Bigr)   
 \biggr\rangle_{8} \leq \frac14,
 \label{eq:dissection:3rdbound:tocheck}
\end{align} 
for $\gamma_{0}$, $\eta_{0}$ and $c$ as in Steps 2 and 4 (in particular, $c$ is independent of $n$ and $L$).
{We then make use of the third item in the assumption of the statement, see 
\eqref{eq:third:item}; basically, it says that, in all the three constraints, we should normalize $\widetilde{N}^{1,n}\bigl([\uptau_{\ell},\uptau_{\ell+1}],\cdot,1/(4L_{0})\bigr)$ by the root of $\uptau_{\ell+1}-\uptau_{\ell}$. This indeed makes sense since inequality  
\eqref{eq:domination:widetildewin:widehatwin}
insures that 
$\widetilde{N}^{1,n}\bigl([\uptau_{\ell},\uptau_{\ell+1}],\cdot,1/(4L_{0})\bigr)$ satisfies a similar 
estimate as
$\widehat{N}^{1,n}\bigl([\uptau_{\ell},\uptau_{\ell+1}],\cdot,1/(4L_{0})\bigr)$
in 
\eqref{eq:third:item}. 
As for 
\eqref{eq:dissection:1stbound:tocheck}, 
\eqref{eq:third:item}
says that the left-hand side can be bounded by $C \sqrt{\uptau_{\ell+1} - \uptau_{\ell}}$, for a constant $C$ independent of $n$, which makes it possible to choose $\uptau_{\ell+1}-\uptau_{\ell}$ small enough (independently of $n$) such that 
the constraint 
\eqref{eq:dissection:1stbound:tocheck}
is indeed satisfied. 
Recalling that $\uptau_{\ell+1}-\uptau_{\ell}$ is less than 1
and invoking Cauchy-Schwarz inequality, the left-hand side in
\eqref{eq:dissection:2ndbound:tocheck}
can be bounded by $C^{\sqrt{\uptau_{\ell+1}-\uptau_{\ell}}}$; 
the way the constant $C$ here shows up is made clear in 
\eqref{eq:wibull:exp:00} below, with $\delta_{\ell}'$ therein 
being here understood as $\uptau_{\ell+1}-\uptau_{\ell}$ 
and the function $f$ being lower bounded by the identity. 
Importantly, the application of 
\eqref{eq:wibull:exp:00} is made possible by the upper bounds we have for 
$\widetilde w^{1,n}_{p'}$ and 
$\widetilde N^{1,n}$ in terms 
of $\widehat w^{1,n}_{p'}$
and 
$\widehat N^{1,n}$, see 
\eqref{eq:domination:widetildewin:widehatwin} and 
\eqref{eq:domination:widetildewin:widehatwin:2}.  
As for 
\eqref{eq:dissection:3rdbound:tocheck}, it may be bounded, using the same argument together with an additional Cauchy-Schwarz argument, by a product of the form $C^{\sqrt{\uptau_{\ell+1}-\uptau_{\ell}}}  \times 
\bigl( \frac1L 
+ \bigl\langle  \widetilde w^{1,n}_{p'}(\uptau_{\ell},\uptau_{\ell+1},\cdot)^{1/{p'}}
 \bigr\rangle_{16} \bigr)$. The first factor 
 $C^{\sqrt{\uptau_{\ell+1}-\uptau_{\ell}}}$ can be made smaller than $2$ by 
choosing $\uptau_{\ell+1}-\uptau_{\ell}$ small enough. Then, we can take $L \geq 16$ so that 
$C^{\sqrt{\uptau_{\ell+1}-\uptau_{\ell}}} /L$ is less than $1/8$. It then remains to decrease 
$\uptau_{\ell+1}-\uptau_{\ell}$ if necessary to render 
$\bigl\langle  \widetilde w^{1,n}_{p'}(\uptau_{\ell},\uptau_{\ell+1},\cdot)^{1/{p'}}
 \bigr\rangle_{16}$ less than $1/16$; this is possible by using the 
 analogue of 
 \eqref{eq:v:p:i:N} 
but for $\widehat w^{i,n}_{p'}$, as it says that 
$\widetilde w^{1,n}_{p'}(\uptau_{\ell},\uptau_{\ell+1},\cdot)^{1/ {p'}}$ scales like
$(\uptau_{\ell+1}-\uptau_{\ell})^{1/p'}$, the (random) scaling factor having moments of any order that are bounded independently of $n$.}
Importantly, this discussion says that the number $M$ of intervals in the dissection can be chosen independently of $n$. For sure, the index $1$ in the left-hand side
in the three constraints 
\eqref{eq:dissection:1stbound:tocheck},
\eqref{eq:dissection:2ndbound:tocheck}
and
\eqref{eq:dissection:3rdbound:tocheck}
can be replaced by any $i \in \{1,\cdots,n\}$. We then consider the family of events 
\begin{equation*}
\begin{split}
&A_{4}^{\ell,n} = A_{4,1}^{\ell,n} \cap A_{4,2}^{\ell,n} \cap A_{4,3}^{\ell,n},
\quad \ell=0,\cdots,M-1, 
\\
&A_{4,1}^{\ell,n} = \Bigl\{ 
{}^{(n)} \hspace{-3pt} \hspace{-1pt}\Big\lgroup 
\widetilde{N}^{\bullet,n} \bigl([\uptau_{\ell},\uptau_{\ell+1}],\cdot,1/(4L_{0}) \bigr)
\hspace{-1pt}
\Big\rgroup_{8}
 \leq 1
 \Bigr\}, 
 \\
 &A_{4,2}^{\ell,n} = \biggl\{ 
{}^{(n)} \hspace{-3pt} \hspace{-1pt}\bigg\lgroup
  \Bigl[ \gamma_{0} \Bigl( 1 + \widetilde w_{p'}^{\bullet,n}(0,T,\cdot)^{1/ {p'}} \Bigr)\Bigr]^{\widetilde N^{\bullet,n}([\uptau_{\ell},\uptau_{\ell+1}],\cdot,1/(4L_{0}))} 
\hspace{-1pt}
\bigg\rgroup_{32} \leq 
1+\eta_{0}
 \biggr\}, 
 \\
 &A_{4,3}^{\ell,n} = 
  \biggl\{ 
{}^{(n)} \hspace{-3pt} \hspace{-1pt}\bigg\lgroup
 \Bigl[ c^2 \,  \Bigl( 1 + \widetilde w_{p'}^{\bullet,n}(0,T,\cdot)^{1/ {p'}} \Bigr) \Bigr]^{{2} \widetilde N^{\bullet,n}([\uptau_{\ell},\uptau_{\ell+1}],\cdot,1/(4L_{0}))+1} \nonumber
 \\
 &\hspace{145pt} \times
\Bigl( \frac1L + \widetilde w_{p'}^{\bullet,n}(\uptau_{\ell},\uptau_{\ell+1},\cdot)^{1/ {p'}}
\Bigr)   
\hspace{-1pt}
\bigg\rgroup_{32} \leq 
\frac12
 \biggr\}.
 \end{split}
 \end{equation*}
By 
 \eqref{eq:widetile:w}
and
\eqref{eq:widetildeNgeqwidehatN}, 
on $A^{n}_{1} \cap   A^{n}_{2} \cap A^n_{3} \cap \bigl( \bigcap_{\ell=0}^{M-1} A^{\ell,n}_{4}\bigr)$, 
the upper bounds \eqref{eq:N:Lambda}, 
\eqref{eq:N:Lambda:supple}
and
$\Psi^n_{\ell} \leq 1/2$
are satisfied and then the conclusion of the fourth step holds true. 

Following \eqref{eq:kappa:i:n:ell:omega}, this prompts us to set:
\begin{equation*}
\begin{split}
\widetilde \kappa^{i,n}_{\ell}(\omega) &:= c^2 \,\Bigl( 1+ \widetilde w^{i,n}_{p'}(0,T,\omega)^{1/p'} \Bigr) 
\\
&\hspace{15pt} \times \Bigl[ c^2  \Bigl( 1+ {\frac1L} + \widetilde w^{i,n}_{p'}(\uptau_{\ell},\uptau_{\ell+1},\omega)^{1/p'} \Bigr) \Bigr]^{{2} \widetilde N^{i,n}_{\ell}(\omega)+1},
\end{split}
\end{equation*}
and then
$\widetilde {\mathcal K}^{i,n}_{k,\ell}(\omega) := 
\prod_{j=k}^{\ell } \widetilde\kappa^{i,n}_{j}(\omega)$. 
Returning to the conclusion of the fourth step, we get, for $
\omega \in A^n:=  A^{n}_{1}  \cap A^{n}_{2} \cap A^n_{3} \cap  \bigl( \bigcap_{\ell=0}^{M-1} A^{\ell,n}_{4}\bigr)$, 
\begin{equation*}
\begin{split}
&{}^{(n)} 
  \hspace{-1pt}\big\lgroup   {\mathcal E}_{\uptau_{\ell}}^{\bullet,n}(\omega) 
  \hspace{-1pt}  \big\rgroup_{r}
\\
&\leq  {}^{(n)} 
  \hspace{-1pt}\big\lgroup  
  \theta^{\bullet,n}(\omega)  
  \hspace{-1pt}   \big\rgroup_{q(r)}
\sum_{j=1}^{\ell}
\sum_{0 \leq k_{1} \leq \cdots \leq k_{j} \leq k_{j+1} = \ell}
\prod_{h=1}^{j}
4 \times \Bigl( {}^{(n)} 
  \hspace{-1pt}\big\lgroup  
 \widetilde {\mathcal K}^{\bullet,n}_{k_{h},k_{h+1}} (\omega) \,
  \hspace{-1pt}   \big\rgroup_{q(r)} \Bigr)^3
 \\ 
  &\leq \ell 2^{2\ell+1} \times 4^{\ell} \times {}^{(n)} 
 \hspace{-1pt}\big\lgroup  
  \theta^{\bullet,n}(\omega)  
  \hspace{-1pt}   \big\rgroup_{q(r)}
\times \Bigl( {}^{(n)} 
  \hspace{-1pt}\big\lgroup  
 \widetilde {\mathcal K}^{\bullet,n}_{0,\ell} (\omega) \,
  \hspace{-1pt}   \big\rgroup_{q(r)} \Bigr)^{3\ell}. 
   \end{split}
\end{equation*}
The key fact here is that $\widetilde {\mathcal K}^{i,n}_{0,M} (\omega)$, for any $i \in \{1,\cdots,n\}$, has finite moments of any order, independently of $i$ and $n$. The proof follows from \eqref{eq:mathcal:K}, from \eqref{eq:exp:integrability} and  from the 
third item in the assumption of the statement of Theorem \ref{theorem:rate:cv}, the last two properties implying that 
$\bigl( 1+ \widetilde w^{i,n}_{p'}(\uptau_{\ell},\uptau_{\ell+1},\omega)^{1/p'} \bigr)^{\widetilde N^{i,n}_{\ell}(\omega)}$
has finite moments of any order, independently of $i$ and $n$, see for instance \eqref{eq:wibull:exp:22}
 below. Hence, for a constant $C$, independent of $n$ but possibly depending on $M$, we get 
 (using an obvious exchangeability argument)
\begin{equation*}
\begin{split}
\Bigl\langle {\mathbf 1}_{A^n}(\cdot) \, {}^{(n)} 
  \hspace{-1pt}\big\lgroup   {\mathcal E}_{T}^{\bullet,n}(\cdot) 
  \hspace{-1pt}  \big\rgroup_{r}
  \Bigr\rangle_{r}
&\leq C  
 \Bigl\langle {}^{(n)} \hspace{-1pt}\big\lgroup  
  \theta^{\bullet,n}(\cdot)  
  \hspace{-1pt}   \big\rgroup_{q(r)}
  \Bigr\rangle_{2r} \leq C  
  \bigl\langle \theta^{1,n}(\cdot) \bigr\rangle_{q(r)},
   \end{split}
\end{equation*}
where we took, without any loss of generality, $q(r) \geq 2r$. Taking $\varrho = q(r)$ in \eqref{eq:controls:empirical}, we get that, for a constant $C$ independent of $n$, but depending on $r$, 
\begin{equation}
\label{eq:conclusion:5th}
\sup_{1 \leq i \leq n}
\Bigl\langle 
{\mathbf 1}_{A^n}(\cdot)
\big\vert   
\bigl( X^i - \overline X^i \bigr)(\cdot)
\bigr\vert
\Big\rangle_{r}  \leq C \varsigma_{n}.
\end{equation}

\textbf{\textsf{Step 6.}} From the law of large of numbers and from \eqref{eq:exp:integrability},  we claim that ${\mathbb P}((A_{1}^n)^{\complement})$ decays faster than any $n^{-s}$, for $s>0$. The first step of the proof is to notice that 
\begin{equation*}
\begin{split}
&{\mathbb P}\Bigl(\bigl(A_{1}^n\bigr)^{\complement}\Bigr)
\\
&\leq {\mathbb P} \Bigl(  
\omega : 
{}^{(n)} \hspace{-1pt} \bigl\lgroup 
\bigl\|
W^{\bullet}(\omega) \bigr\|_{[0,T],(1/p')-\textrm{\rm H}}^{p'}
 \bigr\rgroup_{q}  
 - 
\Bigl\langle 
\bigl\|
W(\cdot) \bigr\|_{[0,T],(1/p')-\textrm{\rm H}}^{p'}
 \Bigr\rangle_{q}
 \geq \frac13 \Bigr)
 \\
 &\hspace{5pt} + 
{\mathbb P} \Bigl( \omega : 
{}^{(n)} \hspace{-1pt} \bigl\lgroup 
\bigl\|
 {\mathbb W}^{\bullet}(\omega) \bigr\|_{[0,T],(2/p')-\textrm{\rm H}}^{p'/2}
   \bigr\rgroup_{q}
   -
\Bigl\langle 
\bigl\|
 {\mathbb W}(\cdot) \bigr\|_{[0,T],(2/p')-\textrm{\rm H}}^{p'/2}
\Bigr\rangle_{q}
 \geq \frac13 \Bigr)
 \\
 &\hspace{5pt} + {\mathbb P} \Bigl(\omega : 
  {}^{(n)} \hspace{-1pt}\big\lgroup \hspace{-2pt}\big\lgroup
 \bigl\|
 {\mathbb W}^{\bullet,\bullet}(\omega) \bigr\|_{[0,T],(2/p')-\textrm{\rm H}}^{p'/2}   
   \big\rgroup \hspace{-2pt} \big\rgroup_{q}
-
\Bigl\langle  
 \bigl\|
 {\mathbb W}^{\indep}(\cdot,\cdot) \bigr\|_{[0,T],(2/p')-\textrm{\rm H}}^{p'/2}   
\Bigr\rangle_{q}
 \geq \frac13 \Bigr)
 \\
 &=: \pi_{1,1}^n + \pi_{1,2}^n + \pi_{1,3}^n.
\end{split}
\end{equation*}
Since the most difficult term is the last one, we just explain how to handle it. The other two terms may be treated in the same way. Since $q \geq 1$, we first observe that 
\begin{equation*}
\begin{split}
&\pi_{1,3}^n 
\\
&\leq {\mathbb P} \Bigl(\omega :
  {}^{(n)} \hspace{-1pt}\big\lgroup \hspace{-2pt}\big\lgroup
 \bigl\|
 {\mathbb W}^{\bullet,\bullet}(\omega) \bigr\|_{[0,T],(2/p')-\textrm{\rm H}}^{p'/2}   
   \big\rgroup \hspace{-2pt} \big\rgroup_{q}^q
\geq 
\Bigl\langle  
 \bigl\|
 {\mathbb W}^{\indep}(\cdot,\cdot) \bigr\|_{[0,T],(2/p')-\textrm{\rm H}}^{p'/2}   
\Bigr\rangle_{q}^q
 + \frac1{3^q} \Bigr)
\\
 &\leq {\mathbb P} \biggl( \omega : 
\frac1{n^2} \sum_{i \not= j}   
\Bigl(  
 \bigl\|
 {\mathbb W}^{i,j}(\omega) \bigr\|_{[0,T],(2/p')-\textrm{\rm H}}^{qp'/2}   
\hspace{-2pt} -
\Bigl\langle  
 \bigl\|
 {\mathbb W}^{\indep}(\cdot,\cdot) \bigr\|_{[0,T],(2/p')-\textrm{\rm H}}^{qp'/2}   
\Bigr\rangle_{1}
\Bigr)
\geq  {\frac1{3^{q+1}}} \hspace{-2pt} \biggr)
 \\
 &\hspace{15pt} + 
 {\mathbb P} \biggl( \omega : 
\frac1{n^2} \sum_{i=1}^n    
 \bigl\|
 {\mathbb W}^{i}(\omega) \bigr\|_{[0,T],(2/p')-\textrm{\rm H}}^{qp'/2}   
\geq 
 \frac1{3^{q+1}} \biggr).
\end{split}
\end{equation*}
By \eqref{eq:exp:integrability},  the last term in the right-hand side is easily handled. As for the first one, 
Markov's inequality yields, for any $s>1$, 
\begin{equation*}
\begin{split}
&{\mathbb P} \biggl( \omega : 
\frac1{n^2} \sum_{j \not = i}  
\Bigl(  
 \bigl\|
 {\mathbb W}^{i,j}(\omega) \bigr\|_{[0,T],(2/p')-\textrm{\rm H}}^{qp'/2}   
-
\Bigl\langle  
 \bigl\|
 {\mathbb W}^{\indep}(\cdot,\cdot) \bigr\|_{[0,T],(2/p')-\textrm{\rm H}}^{qp'/2}   
\Bigr\rangle_{1}
\Bigr)
\geq {\frac1{3^{q+1}}} \biggr)
\\
&\leq {\frac{3^{s(q+1)}}{n^{s+1}}} \sum_{i=1}^n 
{\mathbb E}
\biggl[ \biggl\vert \sum_{{j : j \not = i}}
\Bigl(
 \bigl\|
 {\mathbb W}^{i,j}(\cdot) \bigr\|_{[0,T],(2/p')-\textrm{\rm H}}^{qp'/2}   
\hspace{-2pt} -
\Bigl\langle  
 \bigl\|
 {\mathbb W}^{\indep}(\cdot,\cdot) \bigr\|_{[0,T],(2/p')-\textrm{\rm H}}^{qp'/2}   
\Bigr\rangle_{1}
\Bigr) \biggr\vert^s \biggr].
\end{split}
\end{equation*}
By \eqref{eq:exp:integrability} again and by  Rosenthal's inequality, see \cite{rosenthal1972}, we deduce that the right-hand side is less than $C n^{-s/2}$, for a constant $C$ independent of $n$. This completes the proof of our claim. 

\smallskip

The same result holds for ${\mathbb P}\bigl((A_{3}^n)^{\complement}\bigr)$. Also, since $\bigl(\widetilde{N}^{i,n}([\uptau_{\ell},\uptau_{\ell+1}],\cdot,\alpha)
 \bigr)_{1 \leq i \leq n}$, are independent for any $\ell=0,\cdots,M-1$ and have finite moments of any order that can be bounded independently of $n$, we also have that ${\mathbb P}\bigl((A_{4,1}^{\ell,n})^{\complement}\bigr)$ decays faster than any $n^{-s}$, for any $\ell =0,\cdots,M-1$.  
 Invoking once again 
 \eqref{eq:wibull:exp}, the same holds for 
 ${\mathbb P}\bigl((A_{4,2}^{\ell,n})^{\complement}\bigr)$
and
 ${\mathbb P}\bigl((A_{4,3}^{\ell,n})^{\complement} \bigr)$ and hence
 for
  ${\mathbb P}\bigl((A_{4}^{\ell,n})^{\complement} \bigr)$.
  Since $M$ is finite,  
  ${\mathbb P}\bigl((A_{4}^{n})^{\complement}\bigr)$
  also decays faster than any $n^{-s}$. 
\smallskip

We finally check that the same is true for $A_{2}^{n}$. In fact, this
is a consequence 
of \eqref{eq:conclusion:supplementaire}, choosing therein 
$\delta$ first in terms of $p$, $p'$ and $q$, and then 
$a$ in terms of $\delta$ such that $a \delta/4=s$.  
{In the end, 
$C_{a}$ 
in 
\eqref{eq:conclusion:supplementaire}
depends on $p$, $p'$, $q$ and $s$.}

\smallskip

All in all, back to the definition of $A^n$ at the end of the fifth step, see \eqref{eq:conclusion:5th}, we deduce that, for any $s>0$, 
${\mathbb P}\bigl((A^{n})^{\complement}\bigr) \leq C n^{-s}$. Therefore, in order to conclude, it suffices to prove that, for any $r \geq 1$, we can choose $q(r) \geq 8$ such that, if $X_{0}(\cdot)$ is in $\LL^{q(r)}$, then 
\begin{equation}
\label{eq:moment:n:lp}
\sup_{1 \leq i \leq n} {\mathbb E} \Bigl[ \sup_{0 \leq t \leq T} \vert X_{t}^{i} \vert^r 
\Bigr] \leq C(r),
\end{equation}
for a constant $C(r)$ depending on $r$ and on $\langle X_{0}(\cdot) \rangle_{q(r)}$ but independent of $n$. 
{Obviously, 
\eqref{eq:moment:n:lp}
implies the same inequality but for 
$\overline X^i$
by (say for $i=1$)
letting $n$ tend to $\infty$ and then invoking 
Theorem \ref{theorem:prop:of:chaos}}.

The proof of \eqref{eq:moment:n:lp} relies on the final estimate in the statement of 
Theorem \ref{main:theorem:existence:small:time}. 
To make it clear, we consider a new random dissection $0=\uptau_{0} < \uptau_{1} < \cdots < \uptau_{M}
=T$ of $[0,T]$ (for simplicity, we use the same notation as in the previous step, but the new dissection has in fact nothing to do with the first one; in particular, it is random) such that 
\begin{equation}
\label{eq:constraint:3:b}
 \begin{split} 
&\Gamma_{1}^{(n)}\bigl(\omega,[\uptau_{\ell},\uptau_{\ell+1}] \bigr)
:= {}^{(n)} \hspace{-3pt} \Big\lgroup
\widehat N^{\bullet,n}\bigl(\textcolor{black}{[\uptau_{\ell},\uptau_{\ell+1}]},\omega,1/(4L_{0})\bigr)
 \Big\rgroup_{8} \leq  {1},
\\
&\Gamma_{2}^{(n)}\bigl(\omega,[\uptau_{\ell},\uptau_{\ell+1}] \bigr)
\\
&:={}^{(n)} \hspace{-3pt}\Big\lgroup  
 \bigl[ \gamma_{0}^2 \,  \bigl( 1+ \widehat w^{\bullet,n}(0,T,\omega)^{1/p'} \bigr) \bigr]^{\widehat N^{\bullet,n}\bigl(\textcolor{black}{[\uptau_{\ell},\uptau_{\ell+1}]},\omega,1/(4L)\bigr)
}
  \hspace{-1pt}\Big\rgroup_{32} \leq \eta_{0}, 
  \end{split}
  \end{equation}
 for the same constants as in the statement of Theorem  \ref{main:theorem:existence:small:time}. We deduce from Theorem  \ref{main:theorem:existence:small:time} that  there exists a constant $C$ (independent of $n$) such that, for any $i \in \{1,\cdots,n\}$ and $\ell \in \{0,\cdots,M-1\}$,
\begin{equation}
\label{eq:vvvertXivvvert}
\begin{split}
&\vvvert X^{i}(\omega) \vvvert_{[\uptau_{\ell},\uptau_{\ell+1}],\widehat w^{i,n},p'} 
\\
&\hspace{15pt} \leq \Bigl[C 
 \Bigl( 1 + \widehat w^{i,n}_{p'}(0,T,\omega)^{1/p'} \Bigr)\Bigr]^{2 \widehat N^{i,n}([0,T],\omega,1/(4L))}.
 \end{split}
 \end{equation}
 Observe now that, for any $i \in \{1,\cdots,n\}$,
 \begin{equation*}
 \begin{split}
\sup_{0 \le t \le T} \vert X_{t}^{i} - X_{0}^i \vert &\leq \sum_{\ell=0}^{M-1} \Bigl( \vvvert X^i(\omega)\vvvert_{[\uptau_{\ell},\uptau_{\ell+1}],\widehat w^{i,n},p'}
 \widehat w^{i,n}_{p'}(\uptau_{\ell},\uptau_{\ell+1},\omega)^{1/p'}\Bigr)
 \\
 &\leq  M \Bigl[C 
 \Bigl( 1 + \widehat w^{i,n}_{p'}(0,T,\omega)^{1/p'} \Bigr)\Bigr]^{2 \widehat N^{i,n}([0,T],\omega,1/(4L))+1}.
 \end{split} 
 \end{equation*}
The second factor in the right-hand side has finite moments of any order, see \eqref{eq:wibull:exp:22} below, replacing therein $\widehat N^{i,n}_{\ell}/\sqrt{\delta_{\ell}}$ by 
$
N([0,T],\omega,1/(4L))/\sqrt{T}.$ 
Moreover, we prove below that $M$ has sub-exponential tails, \textit{i.e.,} ${\mathbb P}(M > a) \leq c \exp(-a^{\varepsilon})$, for $c,\varepsilon >0$. This suffices to prove \eqref{eq:moment:n:lp}.

\smallskip

We now prove that {$(\uptau_{\ell})_{\ell=0,\cdots,M}$ in \eqref{eq:constraint:3:b} may be constructed in such a way that $M$ has} indeed sub-exponential tails. Obviously, see for instance \eqref{eq:N1:N2}, 
it suffices to construct, for each constraint 
in 
\eqref{eq:constraint:3:b}, 
a subdivision {$(\uptau_{\ell})_{\ell=0,\cdots,M}$ 
of $[0,T]$, for which the corresponding constraint in \eqref{eq:constraint:3:b} (and only this one) holds true 
and the number of points $M$ has} sub-exponential tails. 

We start with the second constraint in \eqref{eq:constraint:3:b}. By induction, 
we define the sequence $(\uptau_{\ell}')_{\ell=0,\cdots,M'}$, letting 
$\uptau_{0}':=0$ and $\uptau_{\ell+1}':= \inf \{ t \geq \uptau_{\ell}' :  \Gamma_{2}^{(n)}(\omega,[\uptau_{\ell}',t]) \geq \eta_{0}
\} \wedge T$\footnote{\label{foo:scs}The reader may compare with 
\eqref{eq:stopping:times}, paying attention to the fact that, here, 
$t \mapsto \Gamma_{2}^{(n)}(\omega,[\uptau_{\ell}',t]) $ is not continuous but just right upper semi-continuous,
namely $\lim_{\varepsilon \searrow 0, \varepsilon >0}
\Gamma_{2}^{(n)}(\omega,[\uptau_{\ell}',t+\varepsilon]) \leq 
\Gamma_{2}^{(n)}(\omega,[\uptau_{\ell}',t])$ for any $t \in [\uptau_{\ell}',T]$.
In order to check the latter, it suffices to prove that 
$t \mapsto \widehat N^{i,n}([\uptau_{\ell}',t],\omega,1/(4L))$ is also right upper semi-continuous, for any 
$i \in \{1,\cdots,n\}$. 
Assume indeed that, for an index $i \in \{1,\cdots,n\}$, for a time $t \geq \uptau_{\ell}'$ and for an integer
$\ell \geq 0$, it holds that 
 $\widehat N^{i,n}([\uptau_{\ell}',t+\varepsilon],\omega,1/(4L)) \geq \ell$
 for any $\varepsilon >0$. Then,
 for any $\varepsilon>0$, we can find 
$\ell+1$ reals $\uptau_{\ell}'=:t_{0}^{(\varepsilon)}< 
t_{1}^{(\varepsilon)}<\cdots<t_{\ell}^{(\varepsilon)}\leq t+\varepsilon$
such that  $\widehat w^{i,n}_{p'}(t_{j}^{(\varepsilon)},t_{j+1}^{(\varepsilon)},\omega)^{1/p'}
\geq 1/(4L)$. By an obvious compactness argument and by continuity of 
$\widehat w^{i,n}_{p'}$, we deduce that there exists $\ell+1$ points 
$\uptau_{\ell}'=:t_{0}^{(0)}< 
t_{1}^{(0)}<\cdots<t_{\ell}^{(0)}\leq t$
such that  $\widehat w^{i,n}_{p'}(t_{j}^{(0)},t_{j+1}^{(0)},\omega)^{1/p'}
\geq 1/(4L)$, which in turn implies that 
 $\widehat N^{i,n}([\uptau_{\ell}',t],\omega,1/(4L)) \geq \ell$.  } (we recall that $\eta_{0}>1$), 
with $M' := \inf_{\ell \in {\mathbb N}} \{ \ell \in {\mathbb N} : \uptau_{\ell}' = T\}$.
We claim that we can choose $M =2M'$. Indeed, 
since the counter $\widehat{N}^{i,n}$ appearing in \eqref{eq:constraint:3:b} is the local accumulation of a continuous function on ${\mathcal S}_{2}^T$, there exists $\delta_{L}>0$ such that, for any $t \in [0,T]$ and any 
$i \in \{1,\cdots,n\}$, $\widehat{N}^{i,n}([t,(t+\delta_L)\wedge T],\omega,1/(4L))=0$. (Of course, 
$\delta_{L}$ depends on $n$ and $\omega$, but this is not a problem in the rest of the proof.)
Then, for any point $t \in [\uptau_{\ell}',\uptau_{\ell+1}')$, we have, by definition of 
$\uptau_{\ell+1}'$, $ \Gamma_{2}^{(n)}(\omega,[\uptau_{\ell}',t]) < \eta$. 
Moreover, if $\vert \uptau_{\ell+1}' - t \vert \leq \delta_{L}$, then 
$\Gamma_{2}^{(n)}(\omega,[t,\uptau_{\ell+1}']) =1 \leq \eta$. Therefore, 
we may choose $\uptau_{2\ell}= \uptau_{\ell}'$ for $\ell \in \{0,\cdots,M\}$ and 
then $\vert \uptau_{2\ell+1} - \uptau_{\ell+1}' \vert \leq \delta_{L}$. 
The sequence $(\uptau_{\ell})_{\ell=0,\cdots,2M}$ satisfies the second constraint in \eqref{eq:constraint:3:b}.

We now prove that $M'$ has sub-exponential tails (which implies that $M=2M'$ also has sub-exponential tails). 
Letting $\delta_{\ell}':=\uptau_{\ell+1}'-\uptau_{\ell}'$, for any $\ell \in {\mathbb N}$, we have, 
for any {$A>1$ (recalling $\gamma_{0},\eta_{0}>1$)},
\begin{equation*}
\begin{split}
\pi' &:={\mathbb P} \left( \delta_{\ell}' < \frac1A, \, \ell < M'-1 \right) 
\\
&\leq 
{\mathbb P}
\left(   {}^{(n)} \hspace{-3pt}\Big\lgroup  
 \bigl[ \gamma^2 \,  \bigl( 1+ \widehat w_{p'}^{\bullet,n}(0,T,\omega)^{1/p'} \bigr) \bigr]^{\widehat N^{\bullet,n}_{\ell}(\omega)/\sqrt{\delta_{\ell}'}}
  \hspace{-1pt} \Big\rgroup_{32}^{1/\sqrt{A}}
  \geq \eta_{0}
\right)
\\
&= {\mathbb P}
\left( {}^{(n)} \hspace{-3pt} \Big\lgroup
 \bigl[ \gamma^2 \,  \bigl( 1+ \widehat w_{p'}^{\bullet,n}(0,T,\omega)^{1/p'} \bigr) \bigr]^{\widehat N^{\bullet,n}_{\ell}(\omega)/\sqrt{\delta_{\ell}'}}
  \hspace{-1pt} \Big\rgroup_{32}
  \geq \eta_{0}^{\sqrt{A}}
\right),
\end{split}
\end{equation*}
with 
the shorten notation 
$\widehat N^{\bullet,n}_{\ell}(\omega)=
\widehat N^{\bullet,n}\bigl(\textcolor{black}{[\uptau_{\ell}',\uptau_{\ell+1}']},\omega,1/(4L)\bigr)$; 
 {in the second line, we used the fact that 
${}^{(n)} \hspace{-1pt}\big\lgroup  
 \bigl[ \gamma^2 \,  \bigl( 1+ \widehat w_{p'}^{\bullet,n}(0,T,\omega)^{1/p'} \bigr) \bigr]^{\widehat N^{\bullet,n}_{\ell}(\omega)}
  \hspace{-1pt} \big\rgroup_{32} \geq \eta_{0}$, see footnote \eqref{foo:scs}}. 
We now introduce the function
$f(x) = \exp \bigl( \ln(x)^{1+\epsilon} \bigr)$, $x > 1$; 
it is non-decreasing on $[1,\infty)$ and convex on $[e,\infty)$. By Markov inequality,
\begin{equation*}
\begin{split}
\pi' &\leq 
e^{ - \bigl( \ln [\eta_{0}^{32 \sqrt{A}}] \bigr)^{1+\varepsilon}
}\, {\mathbb E} \biggl[ f \biggl( \frac1n \sum_{i=1}^n e \Bigl[ {\gamma}^2 \Bigl(1+ \widehat w^{i,n}_{p'}(0,T,\cdot)^{1/p'} \Bigr) \Bigr]^{32 \widehat N^{i,n}_{\ell}/\sqrt{\delta_{\ell}'}} \biggr) \biggr]
\\
&\leq e^{ - \bigl( \ln [\eta_{0}^{32 \sqrt{A}}] \bigr)^{1+\varepsilon} } \, \frac1n \sum_{i=1}^n
  {\mathbb E}
\biggl[ f \biggl(
e \Bigl[ {\gamma}^2 \Bigl( 
 1+
 \widehat w^{i,n}_{p'}(0,T,\cdot)^{1/p'} 
  \Bigr) \Bigr]^{32 \widehat N^{i,n}_{\ell}/\sqrt{\delta_{\ell}'}}
  \biggr) 
\biggr],
\end{split}
\end{equation*}
with $e=\exp(1)$. We prove in \eqref{eq:wibull:exp} below that, for $\varepsilon$ small enough 
 {(independently of $n$)},  
\begin{equation}
\label{eq:wibull:exp:00}
\sup_{i=1,\cdots,n}  {\mathbb E}
\biggl[ f \biggl(
e \Bigl[ {\gamma}^2 \Bigl( 
 1+
 \widehat w_{p'}^{i,n}(0,T,\cdot)^{1/p'} 
  \Bigr) \Bigr]^{32 \widehat N^{i,n}_{\ell}/\sqrt{\delta_{\ell}'}}
  \biggr) 
\biggr] \leq C,
\end{equation}
for $C$ independent of $n$. As a result,  
$\pi'
\leq  C \exp \bigl( - \bigl( 32 \ln (\eta_{0}) \bigr)^{1+\varepsilon} A^{(1+\varepsilon)/2} \bigr)$,
and then,
\begin{equation*}
\begin{split}
{\mathbb P} \bigl( M' > \ell +1 \bigr) &= {\mathbb P} \big( \delta_{1}' + \cdots + \delta_{\ell}'  < T, \ell+1 < M' \big)  
\\
&\leq \sum_{i=1}^{\ell}  {\mathbb P} \left( \delta_{i}' < \frac{T}{\ell}, \, i + 1 < M' \right)   
\leq C \ell e^{ - \bigl( 32 \ln (\eta_{0}) \bigr)^{1+\varepsilon} (\ell/T)^{(1+\varepsilon)/2} },
\end{split}
\end{equation*}
which shows that $M'$ has sub-exponential tails. 

\smallskip

We now check what happens when handling the first constraint in \eqref{eq:constraint:3:b}. 
We may define $M'$ as before, 
that is
$M' := \inf_{\ell \in {\mathbb N}} \{ \ell \in {\mathbb N} : \uptau_{\ell}' = T\}$ 
with $\uptau_{0}':=0$ and $\uptau_{\ell+1}':= \inf \{ t \geq \uptau_{\ell}' :  \Gamma_{1}^{(n)}(\omega,[\uptau_{\ell}',t]) \geq 1
\} \wedge T$.
%
%
Then, we can repeat the same proof as above by using the fact that 
\begin{equation*}
\Bigl\{ \delta_{\ell}' < \frac1{A},
\, \ell < M'-1
\Bigr\}
\subset \biggl\{ {}^{(n)} \hspace{-3pt} \Big\lgroup \frac{\widehat N^{\bullet,n}\bigl(\textcolor{black}{[\uptau_{\ell}',\uptau_{\ell+1}']},\omega,1/(4L_{0})\bigr)}{\sqrt{\delta_{\ell}'}} \hspace{-1pt} \Big\rgroup_{8}
\geq    \sqrt{A} \biggr\}
\end{equation*}
and by recalling that
\begin{equation*}
{}^{(n)} \hspace{-3pt} \Big\lgroup \frac{\widehat N^{\bullet,n}\bigl(\textcolor{black}{[\uptau_{\ell}',\uptau_{\ell+1}']},\omega,1/(4L_{0})\bigr)}{\sqrt{\delta_{\ell}'}} \hspace{-1pt} \Big\rgroup_{8}
\end{equation*}
has Weibull tails with shape parameter strictly greater than $1$\footnote{\label{foot:100}Recall that a positive random variable $A$ has a Weibull tail with shape parameter $2/\varrho$ if $A^{1/\rho}$ has a Gaussian tail.}, uniformly in the choice of the dissection $0=\uptau_{0}<\cdots<\uptau_{M'}=T$, which follows from the third item in the assumption of Theorem \ref{theorem:rate:cv} together with the convexity of the function 
$[0,+\infty) \ni x \mapsto \exp (x^{1+\varepsilon})$, for $\varepsilon \geq 0$. This permits to provide an upper bound for $\PP(\delta_{\ell}' < 1/A, \ell < M'-1)$ and then to deduce as before that $M'$ has sub-exponential tails.  

\smallskip

It now remains to prove \eqref{eq:wibull:exp:00}. By \eqref{eq:empirical:exp:v:000} and \eqref{eq:empirical:exp:v:001}, we can find a real $\varepsilon_{1}>0$, independent of $n$, such that 
$
\sup_{i=1,\cdots,n} {\mathbb E}\bigl[ \exp \bigl( \widehat w^{i,n}_{p'}(0,T,\cdot)^{\varepsilon_{1}} \bigr) \bigr] \leq C,
$ 
for $C$ independent of $n$. Hence, combining with the third item in the assumption of the statement,
we get, for any $n \geq 1$, $i \in \{1,\cdots,n\}$, $a>1$ and $K>0$, 
\begin{equation}
\label{eq:wibull:exp}
\begin{split}
&\PP \biggl( \Bigl(     1+ 
\widehat w_{p'}^{i,n} (0,T,\cdot)^{1/p'}
\Bigr)^{\widehat N^{i,n}_{{\ell}}/\sqrt{{\delta_{\ell}'}}} \geq a   \biggr) 
\\
&\leq \PP \biggl( 
\frac{\widehat N^{i,n}_{\ell}}{\sqrt{\delta_{\ell}'}}
\geq K\biggr) + \PP \biggl(        1+ 
\widehat w_{p'}^{i,n} (0,T,\cdot)^{1/p'}
  \geq a^{1/K} \biggr)   
\\
&\leq \textcolor{black}{c} e^{ - K^{1+\varepsilon_2} }+ c e^{ -  a^{\varepsilon_1 p'/K}},
\end{split}
\end{equation}
for a  {new} constant $c$ independent of $n$ and $i$.
Choosing $K= (\ln a)^{1/(1+\textcolor{black}{\varepsilon_{2}}/2)}$,  
we deduce that 
there exist a constant $c>1$ and an exponent $\varepsilon >0$ such that, for any $a>0$, 
\begin{equation}
\label{eq:wibull:exp:22}
{\mathbb P}
   \Bigl(  \bigl(1+ 
\widehat w_{p'}^{i,n} (0,T,\cdot)^{1/p'}
\bigr)^{\widehat N^{i,n}_{\ell}/\sqrt{\delta_{\ell}'}}
\geq  a \Bigr) \leq c e^{- c^{-1} \ln(a)^{1+2\varepsilon} },
\end{equation}
from which we obtain \eqref{eq:wibull:exp:00}. 
\end{proof}

\appendix

\section{Integrability and Auxiliary Estimates}
\label{SectionIntegrability}

We prove in this appendix auxiliary results that we left aside {in the body of the text} to keep focused on the main problems at hand.  {In Appendix \ref{subse:appendix:2}, we show that assumption (c) in Theorem  
\ref{theoremConvergenceRate} holds true for
 interacting particle system driven by Gaussian rough paths satisfying Example 
 \ref{Gaussian}, see Remark \ref{rem:5:1}}.
Appendix \ref{AppendixAuxiliary} is dedicated to proving a crucial moment estimate for some quantity of interest in Step 1 of the proof of Theorem \ref{theorem:rate:cv}. \textit{This is where the convergence rate $\varsigma_n$ appears}, see for instance \eqref{eq:I:Delta}.  {In the last
Appendix \ref{subse:lln}, we elaborate on the versions of law of large numbers used in the text.}

\subsection{Gaussian Case}
\label{subse:appendix:2}   \label{AppendixIntegrability}

 {Remark \ref{rem:5:1}} asserts that the assumptions of Theorem \ref{theorem:prop:of:chaos} are satisfied in the Gaussian framework specified in Example \ref{Gaussian}.  {Since  
the derivation of 
\eqref{eq:exp:integrability} is already justified in the latter example,  
we only prove here that we can control the empirical local accumulation as in the requirement $(c)$ of Theorem 
\ref{theoremConvergenceRate}} with $p'=p$ therein. Following the proof of \cite[Theorem 2.4]{BCD1}, we may focus on the local accumulation of each of the various terms in \eqref{eq:w:widehat:w:v:widehat:v}. To make it clear, we have the following property: For a given threshold $\alpha >0$ and for any two continuous functions $v_{1} : {\mathcal S}_{2}^T \rightarrow \RR_{+}$ and $v_{2}: {\mathcal S}^T_{2} \rightarrow \RR_{+}$, set 
$
N_{i}(\alpha) := N_{v_{i}}\bigl([0,T],\alpha\bigr)$, for $1\leq i\leq 2$, and 
$N(\alpha) := N_{v_{1}+v_{2}}\bigl([0,T],\alpha\bigr)$, see \eqref{eq:N:s:t:omega} for the original definition, then 
\begin{equation}
\label{eq:N1:N2}
\max \Bigl( N_{1}\left(\frac{\alpha}{2}\Bigr),N_{2}\Bigl(\frac{\alpha}{2}\Bigr) \right) \geq
 {N(\alpha)}.
\end{equation}

Throughout the proof, we choose $\Omega$ as the  space ${\mathcal W}= {\mathcal C}([0,T];\RR^d)$, equipped with the 
law ${\mathbb P}$ of the Gaussian process addressed in Example \ref{Gaussian}. We call ${\mathcal H}$ the corresponding Cameron-Martin space and we regard $({\mathcal W},{\mathcal H},{\mathbb P})$ as an abstract Wiener space. We then regard $(W^1,\cdots,W^n)$ as the canonical process on $\Omega^n$ equipped with the product measure ${\mathbb P}^{\otimes n}$. We recall from \cite[Theorem 10.4]{FrizHairer} that the processes $({\mathbb W}^i)_{1 \leq i \leq n}$ and $({\mathbb W}^{i,j})_{1 \leq i,j \leq n}$ may be regarded as random variables on $\Omega^n$. 
We first perform the proof when $[\uptau,\uptau']$ in 
the requirement $(c)$ of Theorem 
\ref{theoremConvergenceRate}
is the interval $[0,T]$ itself; we explain in the last step of the proof why this may be generalized to any 
(possibly random) subinterval $[\uptau,\uptau']$ of $[0,T]$.

\smallskip

\textbf{\textsf{Step 1.}} The first step is to consider, for a given $\alpha >0$, the accumulation $\widetilde N^{i}\big([0,T],\omega,\alpha\big)$ associated with 
$\big\| W^i(\omega)\big\|_{[s,t],p-\textrm{\rm v}}^p + \big\| {\mathbb W}^i(\omega) \big\|_{[s,t],p/2-\textrm{\rm v}}^{p/2}$, see \eqref{eq:v:N}, 
namely 
\begin{equation*}
\widetilde N^{i}([0,T],\omega,\alpha) := N_{\varpi}\big([0,T],\alpha\big), 
\end{equation*}
when
\begin{equation*}
\varpi(s,t) ^p = \big\| W^i(\omega) \big\|_{[s,t],p-\textrm{\rm v}}^p + \big\| {\mathbb W}^i(\omega) \big\|_{[s,t],p/2-\textrm{\rm v}}^{p/2},
\end{equation*} 
{but this follows from  \cite[Theorem 2.4]{BCD1} and from an obvious exchangeability argument.
The term 
$\widehat{v}_{p'}^{i,n}(s,t,\omega)$ in \eqref{eq:w:widehat:w:v:widehat:v} is handled in the same way.} 
%
%
 
\medskip

\textbf{\textsf{Step 2.}} We now focus on the local accumulation of the fourth and fifth terms in \eqref{eq:v:N}. For simplicity, we just explain what happens for the fourth term. The fifth term may be handled in the same way. 

\smallskip

We use the same notation as in Subsection \ref{subse:empirical} and proceed as in the proof of \cite[Theorem 2.4]{BCD1}. The Gaussian process $(W^1,\cdots,W^n)$ has $\big({\mathcal W}^n,{\mathcal H}^{\oplus n},{\mathbb P}^{\otimes n}\big)$ as abstract Wiener space. For $\omega=(\omega_{i})_{i=1}^n \in \Omega^n$ and for ${\boldsymbol h}= \oplus_{i=1}^n h_{i} \in {\mathcal H}^{\oplus n}$, we let 
$$
T_{\boldsymbol h} {\boldsymbol W}^{(n)}(\omega) = T_{\oplus_{i=1}^n h_{i}} {\boldsymbol W}^{(n)}(\omega)
$$ 
for the translated rough path along ${\boldsymbol h}$ (see \cite[(11.5)]{FrizHairer}).  {By \cite[Lemma 11.4]{FrizHairer}} and by Young's inequality, with probability 1 under ${\mathbb P}^{\otimes n}$, for all ${\boldsymbol h} \in {\mathcal H}^{\oplus n}$, 
\begin{equation*}
\begin{split}
\big\| {\mathbb W}^{i,j}(\omega) \big\|_{[s,t],(p/2)-\textrm{\rm v}}^{p/2}
&\leq
c \, 
\Bigl( \big\| (T_{\boldsymbol h}{\mathbb W})^{i,j}(\omega) \big\|_{[s,t],(p/2)-\textrm{\rm v}}^{p/2}
+ \big\| (T_{\boldsymbol h} W)^i(\omega) \big\|_{[s,t],p-\textrm{\rm v}}^p 
\\
&\hspace{15pt} + \big\| (T_{\boldsymbol h} W)^j(\omega) \big\|_{[s,t],p-\textrm{\rm v}}^p   
+
\| h_{i} \|_{[s,t],\varrho-\textrm{\rm v}}^p
+
\| h_{j} \|_{[s,t],\varrho-\textrm{\rm v}}^p
\Bigr).
\end{split}
\end{equation*}
Importantly, the constant $c$ is independent of $n$. Below, it is allowed to increase from line to line as long as it remains independent of $n$. So,
\begin{equation}
\label{eq:Th:n:0}
\begin{split}
&{}^{(n)} \hspace{-1pt}\Big\lgroup 
\big\| {\mathbb W}^{i,\bullet}(\omega) \big\|_{[s,t],(p/2)-\textrm{\rm v}}^{p/2}
 \Big\rgroup_{q}   \\
&\leq
c \, \biggl\{
{}^{(n)} \hspace{-3pt}\Big\lgroup 
\big\| (T_{\boldsymbol h}{\mathbb W})^{i,\bullet}(\omega) \big\|_{[s,t],(p/2)-\textrm{\rm v}}^{p/2}
 \Big\rgroup_{q}
+
{}^{(n)} \hspace{-1pt}\Big\lgroup 
 \big\| (T_{\boldsymbol h} W)^{\bullet}(\omega) \big\|_{[s,t],p-\textrm{\rm v}}^p
 \Big\rgroup_{q}   \\
&\hspace{15pt} + \big\| (T_{\boldsymbol h} W)^i(\omega) \big\|_{[s,t],p-\textrm{\rm v}}^p + \big\| h_{i} \big\|_{[s,t],\varrho-\textrm{\rm v}}^p
+
{}^{(n)} \hspace{-1pt}\Big\lgroup \big\| h_{\bullet} \big\|_{[s,t],\varrho-\textrm{\rm v}}^p \Big\rgroup_{q}
\biggr\}
\\
&\leq
c \, \biggl\{
{}^{(n)} \hspace{-3pt}\Big\lgroup \talloblong\hspace{-2pt} (T_{\boldsymbol h}{\boldsymbol W})^{i,\bullet}(\omega) 
\hspace{-2pt} \talloblong_{[0,T],(1/p)-\textrm{\rm H}}^p \Big\rgroup_{q}
(t-s)
+ \big\| h_{i} \big\|_{[s,t],\varrho-\textrm{\rm v}}^p
\\
&\hspace{15pt} +
{}^{(n)} \hspace{-1pt}\Big\lgroup \big\| h_{\bullet} \big\|_{[s,t],\varrho-\textrm{\rm v}}^p \Big\rgroup_{q}
\biggr\}, 
\end{split}
\end{equation}
where for any $i,j \in \{1,\cdots,n\}$, we let 
\begin{equation*}
\talloblong {\boldsymbol W}^{i,j}(\omega) \talloblong_{[s,t],(1/p)-\textrm{\rm H}}
:= \| (W^i,W ^j)(\omega) \|_{[s,t],(1/p)-\textrm{\rm H}}
+ \sqrt{
\| {\mathbb W}^{i,j}(\omega) \|_{[s,t],(2/p)-\textrm{\rm H}}},
\end{equation*}
and similarly 
for
$\talloblong (T_{\boldsymbol h}{\boldsymbol W})^{i,j}(\omega) \talloblong_{[0,T],(1/p)-\textrm{\rm H}}$.

The tricky term in \eqref{eq:Th:n:0} is the last one on the last line. 
The key point is to notice that, for a given $\epsilon \in (0,2-\rho)$, 
\begin{equation*}
\begin{split}
&{}^{(n)} \hspace{-1pt}\Big\lgroup \big\| h_{\bullet} \big\|_{[s,t],\varrho-\textrm{\rm v}}^p \Big\rgroup_{q}
= \biggl[ \frac1n \sum_{j=1}^n \big\| h_{j} \big\|_{[s,t],\varrho-\textrm{\rm v}}^{p q}
\biggr]^{1/q}
\\
&= n^{-1/q}\biggl\{ \biggl[   \sum_{j=1}^n \big\| h_{j}\big\|_{[s,t],\varrho-\textrm{\rm v}}^{p q}
\biggr]^{(2-\epsilon)/(pq)} \biggr\}^{p/(2-\epsilon)}
\leq n^{-1/q}  
\biggl[ \sum_{j=1}^n \big\| h_{j} \big\|_{[s,t],\varrho-\textrm{\rm v}}^{2-\epsilon}
\biggr]^{p/(2-\epsilon)},
\end{split}
\end{equation*}
where we used the fact that $2-\epsilon < pq$.
Observe in particular that, whenever 
$\sum_{j=1}^n \big\| h_{j} \big\|_{[s,t],\varrho-\textrm{\rm v}}^{2-\epsilon}
 \leq n^{(2-\epsilon)/(pq)}$, it holds 
 \begin{equation*}
\begin{split}
&{}^{(n)} \hspace{-1pt}\Big\lgroup \big\| h_{\bullet} \big\|_{[s,t],\varrho-\textrm{\rm v}}^p \Big\rgroup_{q}
\leq n^{-1/q}  
\biggl[ \sum_{j=1}^n \big\| h_{j} \big\|_{[s,t],\varrho-\textrm{\rm v}}^{2-\epsilon}
\biggr]^{p/(2-\epsilon)}
\\
&\leq n^{-1/q} \bigl( n^{(2-\epsilon)/(pq)} \bigr)^{p/(2-\epsilon)-1}
 \sum_{j=1}^n \big\| h_{j} \big\|_{[s,t],\varrho-\textrm{\rm v}}^{2-\epsilon}
 = n^{-(2-\epsilon)/(pq)}  \sum_{j=1}^n \big\| h_{j} \big\|_{[s,t],\varrho-\textrm{\rm v}}^{2-\epsilon},
\end{split}
 \end{equation*}
 where, in the second line, we used the fact that $p/(2-\epsilon)>1$.
Returning to \eqref{eq:Th:n:0}, we deduce that, whenever 
$\| h_i \|_{[s,t],\varrho-\textrm{\rm v}} \leq 1$
and 
$\sum_{j=1}^n \big\| h_{j} \big\|_{[s,t],\varrho-\textrm{\rm v}}^{2-\epsilon}
 \leq n^{(2-\epsilon)/(pq)}$, 
\begin{align}
{}^{(n)} \hspace{-1pt}\Big\lgroup 
\big\| {\mathbb W}^{i,\bullet}(\omega) \big\|_{[s,t],(p/2)-\textrm{\rm v}}^{p/2}
 \Big\rgroup_{q}   
&\leq
c \, \biggl\{{}^{(n)} \hspace{-3pt}\Big\lgroup \talloblong\hspace{-2pt} (T_{\boldsymbol h}{\boldsymbol W})^{i,\bullet}(\omega) 
\hspace{-2pt} \talloblong_{[0,T],(1/p)-\textrm{\rm H}}^p \Big\rgroup_{q}
(t-s) \nonumber
\\
&\hspace{15pt} + \big\| h_{i} \big\|_{[s,t],\varrho-\textrm{\rm v}}^{2-\epsilon}
+
 n^{-(2-\epsilon)/(pq)}  \sum_{j=1}^n \big\| h_{j} \big\|_{[s,t],\varrho-\textrm{\rm v}}^{2-\epsilon}
\biggr\}. \label{eq:Th:n:0:bbb}
\end{align}
When the left-hand side is less than or equal to $\alpha^p$, we can modify the constant $c$ in such a way that the inequality remains true when $\| h_{i} \|_{[s,t],\varrho-\textrm{\rm v}} \geq 1$ or $\sum_{j=1}^n \big\| h_{j} \big\|_{[s,t],\varrho-\textrm{\rm v}}^{2-\epsilon}
\geq n^{(2-\epsilon)/(pq)}$.  Noticing that $2-\epsilon > \rho$,  \eqref{eq:Th:n:0:bbb} remains true with 
${}^{(n)} \lgroup  {\mathbb W}^{i,\bullet}(\omega) \rgroup_{q;[s,t],(p/2)-\textrm{\rm v}}^{p/2}   $ in the left-hand side. 
 
 Define now 
$N^{i,n,\indep}\big([0,T],\omega,\alpha\big) := N_{\varpi}\big([0,T],\alpha\big)$,
when
\begin{equation*}
\varpi(s,t)^p = {}^{(n)} \hspace{-1pt}\big\lgroup {\mathbb W}^{i,\bullet}(\omega) \big\rgroup_{q;[s,t],(p/2)-\textrm{\rm v}}^{p/2}.
\end{equation*}
Then, 
 \eqref{eq:Th:n:0:bbb} (together with $2-\epsilon > \rho$) yields 
\begin{equation*}
\begin{split}
N^{i,n,\indep}\big([0,T],\omega,\alpha\big) \alpha^p 
&\leq 
c \, \biggl\{{}^{(n)} \hspace{-3pt}\Big\lgroup \talloblong\hspace{-2pt} (T_{\boldsymbol h}{\boldsymbol W})^{i,\bullet}(\omega) 
\hspace{-2pt} \talloblong_{[0,T],(1/p)-\textrm{\rm H}}^p \Big\rgroup_{q}
T
 \\
&\hspace{15pt}+ \big\| h_{i}\big\|_{[0,T],\varrho-\textrm{\rm v}}^{2-\epsilon}
 +
 n^{-(2-\epsilon)/(pq)}  \sum_{j=1}^n \big\| h_{j} \big\|_{[0,T],\varrho-\textrm{\rm v}}^{2-\epsilon}
\biggr\}
\\
&\hspace{0pt}\leq 
c \, \biggl\{{}^{(n)} \hspace{-3pt}\Big\lgroup \hspace{-3pt}\talloblong\hspace{-2pt} (T_{\boldsymbol h}{\boldsymbol W})^{i,\bullet}(\omega) 
\hspace{-2pt} \talloblong_{[0,T],(1/p)-\textrm{\rm H}}^p \hspace{-3pt}\Big\rgroup_{q}
\hspace{-5pt} T
 + \big\| h_{i} \big\|_{[0,T],\varrho-\textrm{\rm v}}^{2-\epsilon}
\\
&\hspace{30pt} +
 n^{-(2-\epsilon)/(pq)+\epsilon/2}  \biggl[ \sum_{j=1}^n \big\| h_{j} \big\|_{[0,T],\varrho-\textrm{\rm v}}^{2} \biggr]^{(2-\epsilon)/2}
\biggr\},
\end{split}
 \end{equation*}
where we applied H\"older's inequality to handle the last term. 
By choosing $\varepsilon$ small enough such that $(2-\varepsilon)/(pq) - \varepsilon/2>0$ and by applying Proposition 11.2 in \cite{FrizHairer}, we get, for a possibly new value of the constant $c$,
\begin{equation}
\label{eq:Th:n}
\begin{split}
&N^{i,n,\indep}\big([0,T],\omega,\alpha\big) \alpha^p 
\\
&\hspace{15pt}\leq
c \, \biggl\{{}^{(n)} \hspace{-3pt}\Big\lgroup \talloblong\hspace{-2pt} (T_{\boldsymbol h}{\boldsymbol W})^{i,\bullet}(\omega) 
\hspace{-2pt} \talloblong_{[0,T],(1/p)-\textrm{\rm H}}^p \Big\rgroup_{q}
T
+  
\| {\boldsymbol h} \|_{{\mathcal H}^{\oplus n}}^{2-\epsilon}  
T^{(2-\epsilon)/(2\rho)}
\biggr\},
\end{split}
\end{equation}
with 
$\| {\boldsymbol h} \|_{{\mathcal H}^{\oplus n}}^2 = \sum_{i=1}^n \| h_{i} \|_{{\mathcal H}}^2$. We then notice that $(2-\varepsilon)/(2 \varrho) > 1/2$ since $2-\varepsilon > \varrho$. We deduce that $T^{(2-\varepsilon)/(2 \varrho)} \leq c T^{1/2}$ for a possibly new value of the constant $c$. We then apply Theorems 11.5 and 11.7 in \cite{FrizHairer} but on the space $({\mathcal W}^{\otimes n},{\mathcal H}^{\oplus n},{\mathbb P}^{\otimes n})$. Importantly, we observe that 
\begin{equation*}
{\mathbb E} \Bigl[ {}^{(n)} \hspace{-3pt}\Big\lgroup \talloblong \hspace{-2pt}  {{\boldsymbol W}^{i,\bullet}}(\omega) \hspace{-2pt} \talloblong_{[0,T],(1/p)-\textrm{\rm H}}^p  \Big\rgroup_{q} \Bigr]
\end{equation*}
is bounded by a constant $c$, independent of $i$ and $n$, which proves that  $N^{i,n,\indep}([0,T],\cdot,\alpha)/\sqrt{T}$ has a Weibull distribution with shape parameter $2/(2-\epsilon)$, independently of $n$.  

\medskip

\textbf{\textsf{Step 3.}} 
We now turn to the local accumulation of the sixth term in \eqref{eq:v:N}. 
Taking the norm   
${}^{(n)} \hspace{-1pt} \lgroup \, \cdot \, \rgroup_{q}$ in 
\eqref{eq:Th:n:0}, we get, 
with probability 1 under ${\mathbb P}^{\otimes n}$, for all ${\boldsymbol h} \in {\mathcal H}^{\oplus n}$, 
\begin{equation*}
\begin{split}
&{}^{(n)} \hspace{-3pt}
\Big\lgroup \hspace{-6pt}
\Big\lgroup 
\big\| {\mathbb W}^{\bullet,\bullet}(\omega) \big\|_{[s,t],(p/2)-\textrm{\rm v}}^{p/2}
\Big\rgroup
\hspace{-6pt}
\Big\rgroup_{q} 
\\
&\hspace{15pt} \leq
c \, \biggl\{
{}^{(n)} 
 \hspace{-3pt}
\Big\lgroup \hspace{-6pt}
\Big\lgroup
\talloblong\hspace{-2pt} (T_{\boldsymbol h}{\boldsymbol W})^{\bullet,\bullet}(\omega) 
\hspace{-2pt} \talloblong_{[0,T],(1/p)-\textrm{\rm H}}^p 
\Big\rgroup
\hspace{-6pt}
\Big\rgroup_{q}
(t-s)
+
{}^{(n)} \hspace{-1pt}\Big\lgroup \big\| h_{\bullet} \big\|_{[s,t],\varrho-\textrm{\rm v}}^p \Big\rgroup_{q}
\biggr\}.
\end{split}
\end{equation*}
Following the proof of \eqref{eq:Th:n:0:bbb}, we deduce that
\begin{align*}
{}^{(n)} \hspace{-1pt}\big\lgroup \hspace{-2pt}\big\lgroup
  \WW^{\bullet,\bullet}(\omega) \big\rgroup \hspace{-2pt} \big\rgroup_{q ; [s,t],p/2-\textrm{\rm v}}^{p/2}
 &\leq
c \, \biggl\{
{}^{(n)} 
 \hspace{-3pt}
\Big\lgroup \hspace{-6pt}
\Big\lgroup
\talloblong\hspace{-2pt} (T_{\boldsymbol h}{\boldsymbol W})^{\bullet,\bullet}(\omega) 
\hspace{-2pt} \talloblong_{[0,T],(1/p)-\textrm{\rm H}}^p 
\Big\rgroup
\hspace{-6pt}
\Big\rgroup_{q}
(t-s)
\\
&\hspace{30pt}+
 n^{-(2-\epsilon)/(pq)}  \sum_{j=1}^n \big\| h_{j} \big\|_{[s,t],\varrho-\textrm{\rm v}}^{2-\epsilon}
\biggr\},
\nonumber
\end{align*}
at least when the left-hand side is less than or equal to $\alpha^p$. Importantly, there is no need to distinguish the coordinate $i$ of ${\boldsymbol h}$ from the other coordinates $j \not = i$ since the coefficient in front of any $\| h_{j} \|_{[s,t],\varrho-\textrm{\rm v}}$, $j=1,\cdots,n$, 
has the same power decay  
as $n$ tends to $\infty$.  {So, the context is simpler than in the previous step
and we may conclude in the same way.}

 {Local accumulations associated to 
 the second term in 
\eqref{eq:v:N} and to 
   $(s,t) \mapsto {}^{(n)} \hspace{-1pt}\big\lgroup  
 v^{\bullet,n}_{p'}(\omega)  \big\rgroup_{q ; [s,t],1-\textrm{\rm v}}$
and  $(s,t) \mapsto {}^{(n)} \hspace{-1pt}\big\lgroup  
\widehat v^{\bullet,n}_{p'}(\omega)  \big\rgroup_{q ; [s,t],1-\textrm{\rm v}}$
in 
\eqref{eq:w:widehat:w:v:widehat:v}
are handled in the same way.} (As for the latter one, the reader may refer to 
the proof of \cite[Theorem 2.4]{BCD1}.)
\smallskip

\textbf{\textsf{Step 4.}} 
 {The proof has been here achieved on the interval $[0,T]$. Importantly, the fact that $T$ is deterministic does not play any role in the proof. It is
in particular quite 
easy to see that  
the interval $[0,T]$ can be replaced by any (random) sub-interval $[\uptau,\uptau'] \subset [0,T]$ as in
the requirement $(c)$ of Theorem 
\ref{theoremConvergenceRate}}. \qed
%

\bigskip

\subsection{An Auxiliary Estimate}
\label{AppendixAuxiliary}

We prove in this appendix some auxiliary estimates that were used in Step 1 of the proof of Theorem \ref{theorem:rate:cv}. This is where the convergence rate $\varsigma_n$ 
in Theorem \ref{theoremConvergenceRate} appears.  {Recall we set $\varsigma_n =  {n^{-1/2}}$ if $d=1$, and $\varsigma_n = n^{-1/2}\ln(1+n)$, if $d=2$, and $\varsigma_n = n^{-1/d}$, if $d\geq 3$. Recall also} definitions \eqref{eq:barFti:Ftin}, \eqref{eq:delta:barFti}, \eqref{eq:delta:Ftin} and \eqref{eq:I:i,n,Delta}.

\medskip

\begin{lem}
\label{le:sewing}
Fix $\varrho \geq 8$. There exists an exponent $\varrho'$ 
such that, whenever $X_{0}(\cdot) \in \LL^{\varrho'}$, 
we can find another constant $C$, depending on $\langle X_{0}(\cdot)\rangle_{\varrho'}$ and satisfying, for any integers $1\leq i\leq n$ and  any $0 \leq r \leq s \leq t \leq T$, 
\begin{equation*}
\begin{split}
&\bigl\langle 
\bigl[ F^{i,n}(\cdot)
-\overline{F}^i(\cdot)  
\bigr]_{s,t}
\bigr\rangle_{\varrho}
\leq C \varsigma_{n} \big\llangle w^+(s,t,\cdot,\cdot)\big\rrangle_{\varrho'}^{1/p},
\\
&\Bigl\langle
{\mathcal I}^{i,n,\partial}_{\{s,t\}}(\cdot)
- 
\overline {\mathcal I}^{i,\partial}_{\{s,t\}}(\cdot)
 \Bigr\rangle_{\varrho}
 \leq C \, \varsigma_{n} \,
 \Bigl( 
 \big\llangle w^+(s,t,\cdot,\cdot)\big\rrangle_{\varrho'}^{1/p}
 +
 \big\llangle w^+(s,t,\cdot,\cdot)\big\rrangle_{\varrho'}^{2/p} \Bigr),
 \\
&\biggl( \int_{\Omega} \Bigl\vert  
 \frac1n \sum_{j=1}^n
\delta_{\mu} F_{s}^{i,j,n}(\omega) {\mathbb W}_{s,t}^{j,i}(\omega)
-
\EE \bigl[
\delta_{\mu} \overline F_{s}^i(\omega,\cdot) {\mathbb W}_{s,t}^{i,\indep}(\cdot,\omega)
\bigr]
\Bigr\vert^{\rho} d {\mathbb P}(\omega) \biggr)^{1/\rho}
\\
&\hspace{15pt} +
\Bigl\langle
\bigl( \delta_{x} F_{s}^{i,n}(\cdot)
-
\delta_{x} \overline F_{s}^{i}(\cdot)
\bigr) {\mathbb W}^i_{s,t}(\cdot) 
\Bigr\rangle_{\rho} \leq  C \, \varsigma_{n} \,\big\llangle w^+(s,t,\cdot,\cdot)\big\rrangle_{\varrho'}^{2/p},
\\
&\Bigl\langle
\Bigl\{
{\mathcal I}^{i,n,\partial}_{\{r,s\}}(\cdot)
+
{\mathcal I}^{i,n,\partial}_{\{s,t\}}(\cdot)
-
{\mathcal I}^{i,n,\partial}_{\{r,t\}}(\cdot)
\Bigr\} 
- 
\Bigl\{
\overline {\mathcal I}^{i,\partial}_{\{r,s\}}(\cdot)
+
\overline {\mathcal I}^{i,\partial}_{\{s,t\}}(\cdot)
-
\overline {\mathcal I}^{i,\partial}_{\{r,t\}}(\cdot)
\Bigr\}
 \Bigr\rangle_{\varrho}
 \\
&\hspace{15pt} 
 \leq C \, \varsigma_{n} \,\big\llangle w^+(r,t,\cdot,\cdot)\big\rrangle_{\varrho'}^{3/p},
\end{split}
\end{equation*}
where 
$
w^+(r,t,\omega,\omega') := w(r,t,\omega) + \| {\mathbb W}^{\indep}(\omega,\omega') \|_{[r,t],p/2-\textrm{\rm v}}^{p/2}$,  {with 
$w$ as in 
\eqref{eq:w:s:t:omega} for the same parameters $p$ and $q$ as therein}. 
\end{lem}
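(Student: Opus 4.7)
The proof hinges on a single fact: since $\overline{X}^1,\dots,\overline{X}^n$ are i.i.d.\ copies of the limiting McKean--Vlasov solution $X$, the measure $\overline{\mu}^n_t$ is precisely the empirical measure of $n$ i.i.d.\ samples from $\mu_t:=\mathcal{L}(X_t)$. My plan is first to ensure, via the final estimate of Theorem \ref{main:theorem:existence:small:time} together with the exponential tail bounds assumed on the rough set-up, that $\sup_{0\leq t\leq T}|\overline{X}^i_t|$ has arbitrary $\LL^r$-moments as soon as $X_0(\cdot)\in\LL^{\varrho'}$ for $\varrho'$ large enough. The classical convergence rates of \cite{FournierGuillin} (cf.\ Lemma \ref{lem:W1:cv:empirical}) then deliver, uniformly in $t\in[0,T]$ and in $n$, the key bound
\begin{equation*}
\bigl\langle\mathbf{d}_1(\overline{\mu}^n_t,\mu_t)^{\varrho}\bigr\rangle^{1/\varrho}\leq C\varsigma_n.
\end{equation*}
In view of hypothesis (b), which makes $\textrm{F}$, $\partial_x\textrm{F}$, $D_\mu\textrm{F}$ and $D_\mu^2\textrm{F}$ all Lipschitz in the measure argument with respect to $\mathbf{d}_1$, this inequality will be the \emph{only} source of the prefactor $\varsigma_n$ in the four estimates; the remaining work is bookkeeping with the controlled-path expansions of $F^{i,n}$ and $\overline{F}^i$ supplied by Proposition \ref{prop:chaining}.

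For the first estimate I would decompose the increment by inserting the auxiliary quantity $\textrm{F}(\overline{X}^i_s,\overline{\mu}^n_t)-\textrm{F}(\overline{X}^i_s,\mu_t)$ and treat the two resulting pieces by a first-order Taylor expansion -- in $x$ for one and via Lions' calculus along a suitable coupling of the $\overline{X}^j$'s for the other -- so that every leading term is the product of a factor controlled by $\mathbf{d}_1(\overline{\mu}^n_r,\mu_r)$ (of $\LL^\varrho$-norm $\leq C\varsigma_n$, uniformly in $r$) with an increment of $\overline{X}^i$ or $\overline{X}^j$ of order $\llangle w^+\rrangle^{1/p}$; the second-order remainders will inherit the factor $\mathbf{d}_1(\overline{\mu}^n,\mu)$ through the Lipschitz assumption on $\partial_x^2\textrm{F}$ and $D_\mu^2\textrm{F}$. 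For the second and third estimates I would split the Riemann summand $\mathcal{I}^{i,n,\partial}_{\{s,t\}}$ along the three pieces of \eqref{eq:I:i,n,Delta}: the contributions $(F^{i,n}_s-\overline{F}^i_s)W^i_{s,t}$ and $(\delta_xF^{i,n}_s-\delta_x\overline{F}^i_s)\mathbb{W}^i_{s,t}$, which between them yield the third estimate, follow from the same Lipschitz-in-$\mathbf{d}_1$ mechanism combined with the $p$- and $p/2$-variation of $W^i$ and $\mathbb{W}^i$. The remaining piece,
\begin{equation*}
\frac{1}{n}\sum_{j=1}^n\delta_\mu F^{i,j,n}_s\,\mathbb{W}^{j,i}_{s,t}-\mathbb{E}\bigl[\delta_\mu\overline{F}^i_s(\omega,\cdot)\,\mathbb{W}^{i,\indep}_{s,t}(\cdot,\omega)\bigr],
\end{equation*}
will be the \textbf{main obstacle}: after exchanging $\overline{\mu}^n_s$ with $\mu_s$ inside $D_\mu\textrm{F}$ at cost $\varsigma_n$, I will observe that \emph{conditionally on $W^i(\omega)$} the summands indexed by $j\neq i$ are i.i.d.\ with conditional mean precisely $\mathbb{E}'[\delta_\mu\overline{F}^i_s(\omega,\cdot)\mathbb{W}^{i,\indep}_{s,t}(\cdot,\omega)]$; this conditional i.i.d.\ structure is exactly what the strong rough set-up identity $\mathbb{W}^{j,i}=\mathcal{I}(W^j,W^i)$ affords, the processes $\overline{X}^j$ being themselves measurable functions of the sole $W^j$. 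Each summand has conditional $\LL^\varrho$-norm of order $(t-s)^{2/p}$, so Rosenthal's inequality applied conditionally on $W^i$ yields an $n^{-1/2}\leq\varsigma_n$ fluctuation and hence the $\varsigma_n\llangle w^+\rrangle^{2/p}$ bound.

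The fourth estimate is a discrete sewing identity. I would use Chen's relations \eqref{eq:chen} together with the controlled expansion $F^{i,n}_{r,s}=\delta_xF^{i,n}_r W^i_{r,s}+n^{-1}\sum_j\delta_\mu F^{i,j,n}_r W^j_{r,s}+R^{F^{i,n}}_{r,s}$ and its analogue for $\overline{F}^i$ to rewrite $\mathcal{I}^{i,n,\partial}_{\{r,s\}}+\mathcal{I}^{i,n,\partial}_{\{s,t\}}-\mathcal{I}^{i,n,\partial}_{\{r,t\}}$ (and its $\overline{\mathcal{I}}^{i,\partial}$-counterpart) as a linear combination of the remainders $R^{F^{i,n}}_{r,s}W^i_{s,t}$, $(\delta_xF^{i,n})_{r,s}\mathbb{W}^i_{s,t}$ and $n^{-1}\sum_j(\delta_\mu F^{i,j,n})_{r,s}\mathbb{W}^{j,i}_{s,t}$, all at the $(t-r)^{3/p}$ scale. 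Upon subtracting, each surviving contribution will involve one of the differences already handled above -- $F^{i,n}-\overline{F}^i$, $\delta_xF^{i,n}-\delta_x\overline{F}^i$, or the empirical-versus-expected Lions-derivative gap -- each of size $\varsigma_n$ at its own order; pairing with the appropriate $(t-r)^{k/p}$ factor from $W^i$, $\mathbb{W}^i$ or $\mathbb{W}^{j,i}$ yields the desired $\varsigma_n\llangle w^+\rrangle^{3/p}$ rate. Finally, choosing $\varrho'$ sufficiently large in terms of $\varrho$ to absorb, via H\"older's inequality, the various Lipschitz and Rosenthal constants completes the argument.
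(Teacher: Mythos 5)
Your proposal takes essentially the same route as the paper: the two sources of the rate $\varsigma_n$ are exactly identified (the Fournier--Guillin bound on $\mathbf{d}_1(\overline{\mu}^n_t,\mu_t)$, and a conditional Rosenthal inequality for the centred empirical fluctuations, using that the $\overline{X}^j$, $\mathbb{W}^{j,i}$, $j\neq i$, are conditionally i.i.d.\ given $(X_0^i,W^i)$ because the rough set-up is strong), the integrability of $\sup_t|\overline X^i_t|$ and $\vvvert X\vvvert_{[0,T],w,p}$ is secured by Theorem \ref{main:theorem:existence:small:time}, and the fourth estimate is handled via the same Chen-relation decomposition into $R^{F^{i,n}}W^i$, $(\delta_x F^{i,n})_{r,s}\mathbb{W}^i$, and $n^{-1}\sum_j(\delta_\mu F^{i,j,n})_{r,s}\mathbb{W}^{j,i}$, each paired against its $\overline{\cdot}$ analogue. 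The only cosmetic difference is that the paper carries out the Taylor expansions in the disguise of the auxiliary derivative functions $G_x$, $G_\mu$ together with the auxiliary empirical measures $\overline\nu^{n,(\lambda)}$, whereas you phrase the same steps directly; the substance is the same.
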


 {The reason the appearance of the quantity $w^+$ instead of $w$, in the above upper bounds, will appear at the beginning of \textbf{\textsf{Step 2}} in the proof.}

\begin{proof}
We directly prove the last inequality in the statement; the first three inequalities follow from similar computations. Throughout the proof, we use the following notations. For each $i \in \{1,\cdots,n\}$, we call $\overline w^i$ the control associated with $\overline {\boldsymbol W}^i(\cdot)$ through identity \eqref{eq:w:s:t:omega}. For $j \in \{1,\cdots,n\}$, we also let 
\begin{equation*}
\overline w^{i,j}(s,t,\omega) :=
\bigl\|  {{\mathbb W}^{i,j}}(\omega) \bigr\|_{[s,t],p-\textrm{\rm v}}^p.
\end{equation*}
We make {in the course of the proof} an intense use of Lemma \ref{lem:W1:cv:empirical} below, giving the convergence rate of the empirical measure of a sample of independent and identically distributed random variables towards their common law. 
{In this regard, a key fact is that the theoretical distribution driving the empirical one must be sufficiently integrable. By
a variant of 
\eqref{eq:moment:n:lp}, we already know that, for any $\rho \geq 1$,
there exists $\rho' \geq 8$ such that  
$\sup_{0 \leq t \leq T} \big\vert X_{t}(\cdot) \big\vert$ is in $\LL^{\rho}$ as soon as $X_{0}(\cdot)$ is in $\LL^{\rho'}$.
The proof of this variant is in fact simpler than the proof of \eqref{eq:moment:n:lp} itself, since 
we can directly invoke Theorem 
\ref{main:theorem:existence:small:time}
instead 
of 
\eqref{eq:vvvertXivvvert}, noticing that the analogue of $M$ 
in 
\eqref{eq:vvvertXivvvert} then becomes deterministic, see for instance footnote \eqref{foo:concatenation}. 
Importantly, the same holds true with $\big\vvvert X(\cdot) \big\vvvert_{[0,T],w,p}$:
it belongs to $\LL^{\rho}$ if $X_{0}(\cdot)$ is in $\LL^{\rho'}$, for a well-chosen $\rho'$.
The proof 
also follows from Theorem 
\ref{main:theorem:existence:small:time}, by concatenating a  deterministic finite number of intervals of the form 
$[S_{1},S_{2}]$, see \cite[footnote (5)]{BCD1} for some details about concatenation.} We then compute
\begin{equation*}
\begin{split}
&\Bigl\{
{\mathcal I}^{i,n,\partial}_{\{r,s\}}(\omega)
+
{\mathcal I}^{i,n,\partial}_{\{s,t\}}(\omega)
-
{\mathcal I}^{i,n,\partial}_{\{r,t\}}(\omega)
\Bigr\} 
- 
\Bigl\{
\overline {\mathcal I}^{i,\partial}_{\{r,s\}}(\omega)
+
\overline {\mathcal I}^{i,\partial}_{\{s,t\}}(\omega)
-
\overline {\mathcal I}^{i,\partial}_{\{r,t\}}(\omega)
\Bigr\}
\\
&= \Bigl( R_{r,s}^{F^{i,n}}(\omega) - R_{r,s}^{\overline F^i}(\omega) \Bigr) W_{s,t}^i(\omega)
+ 
\Bigl( \delta_{x} F_{r,s}^{i,n}(\omega) - \delta_{x} \overline F_{r,s}^i(\omega) \Bigr) {\mathbb W}^i_{s,t}(\omega)
 \\
&\hspace{15pt}+ 
\left( \frac1n \sum_{j=1}^n \delta_{\mu} F_{r,s}^{i,j,n}(\omega) {\mathbb W}^{j,i}_{s,t}(\omega) - {\mathbb E} \Bigl[ \delta_{\mu} \overline F_{r,s}^{i}(\omega,\cdot) {\mathbb W}_{s,t}^{i,\indep}(\cdot,\omega)\Bigr] \right),
\end{split}
\end{equation*}
where
\begin{equation}
\label{eq:R:Fin:RbarFi}
\begin{split}
&R_{r,s}^{F^{i,n}}(\omega) := F_{s}^{i,n}(\omega) - F_{r}^{i,n}(\omega) - \delta_{x} F_{r}^{i,n}(\omega) W_{r,s}^i(\omega) 
\\
&\hspace{30pt} - 
 \frac1n \sum_{j=1}^n \delta_{\mu} F_{r}^{i,j,n}(\omega)  W_{r,s}^{j}(\omega),   \\
&R_{r,s}^{\overline F^{i}}(\omega) := \overline F_{s}^{i}(\omega) - \overline F_{r}^{i}(\omega) - \delta_{x} \overline F_{r}^{i}(\omega) W_{r,s}^i(\omega) 
\\
&\hspace{30pt} 
-  \EE \Bigl[ \delta_{\mu} \overline F_{r}^{i}(\omega,\cdot)  W_{r,s}^i(\cdot) \Bigr].
\end{split}
\end{equation}
Following \eqref{eq:delta:barFti} and \eqref{eq:delta:Ftin}, we define differentiable functions $G_x$ and $G_\mu$ of their arguments setting 
\begin{equation*}
\begin{split}
&\delta_{x} F_{t}^{i,n}(\omega) =: G_{x}\bigl(\overline X_{t}^i(\omega),\overline \mu^n_{t}(\omega) \bigr),   \quad
\delta_{x} \overline F_{t}^{i}(\omega) =: G_{x}\bigl(\overline X_{t}^i(\omega),{\mathcal L}(X_{t}) \bigr),
\\
&\delta_{\mu} F_{t}^{i,j,n}(\omega) =: G_{\mu}\bigl(\overline X_{t}^i(\omega),\overline \mu^n_{t}(\omega) \bigr) \bigl( \overline X_{t}^j(\omega)  \bigr),   \\
&\delta_{\mu} \overline F_{t}^{i}(\omega,\cdot) =: G_{\mu}\bigl(\overline X_{t}^i(\omega),{\mathcal L}(X_{t}) \bigr) \bigl( \overline X^i_{t}(\cdot) \bigr).
\end{split}
\end{equation*}
Finally, we can write the whole difference in the form
\begin{equation}
 \label{eq:summand:analysis}
\begin{split}
&\Bigl\{
{\mathcal I}^{\partial}_{\{r,s\}}(\omega)
+
{\mathcal I}^{\partial}_{\{s,t\}}(\omega)
-
{\mathcal I}^{\partial}_{\{r,t\}}(\omega)
\Bigr\}
- 
\Bigl\{
\overline {\mathcal I}^{\partial}_{\{r,s\}}(\omega)
+
\overline {\mathcal I}^{\partial}_{\{s,t\}}(\omega)
-
\overline {\mathcal I}^{\partial}_{\{r,t\}}(\omega)
\Bigr\}  
\\
&= \bigl( R_{r,s}^{F^{i,n}}(\omega)
-R_{r,s}^{\overline F^i}(\omega)
\bigr) 
 W_{s,t}^i(\omega) 
 \\
&
+ 
\Bigl[ 
G_{x} \bigl(\overline X^i(\omega),\overline \mu^n(\omega) \bigr) 
-
G_{x} \bigl(\overline X^i(\omega),{\mathcal L}(X) \bigr) 
\Bigr]_{r,s}
 {\mathbb W}^i_{s,t}(\omega) 
  \\
&+
\frac1n \sum_{j=1}^n 
\Bigl[ 
G_{\mu} \bigl(\overline X^i(\omega),\overline \mu^n(\omega) \bigr)\bigl(\overline X^j(\omega)\bigr) 
-
G_{\mu} \bigl(\overline X^i(\omega),{\mathcal L}(X) \bigr) \bigl(\overline X^j(\omega)\bigr)
\Bigr]_{r,s}
{\mathbb W}^{j,i}_{s,t}(\omega)  
 \\
&+ \frac1n \sum_{j=1}^n  \Bigl[G_{\mu} \bigl(\overline X^i(\omega),{\mathcal L}(X) \bigr) \bigl(\overline X^j(\omega)\bigr)
\Bigr]_{r,s} {\mathbb W}^{j,i}_{s,t}(\omega) - {\mathbb E} \Bigl[ \delta_{\mu} \overline F_{r,s}^{i}(\omega,\cdot) {\mathbb W}_{s,t}^{i,\indep}(\cdot,\omega) \Bigr].  
\end{split}
\end{equation}
A key fact is that $G_{x}$ and $G_{\mu}$ are Lipschitz continuous in all the entries, the Lipschitz property 
in $\mu$ being understood with respect to ${\mathbf d}_{1}$. Moreover, similar to F itself, they are jointly continuously differentiable in 
all the arguments and the derivatives are Lipschitz continuous, the Lipschitz property in 
$\mu$ being again understood with respect to ${\mathbf d}_{1}$. 
\smallskip

\textbf{\textsf{Step 1.}} Observe that
\begin{equation}
\label{eq:Gx:xi:mun}
\begin{split}
&\Bigl[ 
G_{x} \bigl(\overline X^i(\omega),\overline \mu^n(\omega) \bigr) 
\Bigr]_{r,s}
\\
&= \int_{0}^1 \partial_{x} G_{x} \Bigl( \overline X_{r;(r,s)}^{i,(\lambda)}(\omega),
\overline \mu_{r;(r,s)}^{n,\lambda}(\omega)
\Bigr) \overline X_{r,s}^{i}(\omega) 
 d\lambda
 \\
 &\hspace{15pt}
 + 
 \frac1n \sum_{j=1}^n
 \int_{0}^1 D_{\mu} G_{x}
 \Bigl( \overline X_{r;(r,s)}^{i,(\lambda)}(\omega),
 \overline \mu_{r;(r,s)}^{n,\lambda}(\omega)\Bigr) 
\bigl( 
\overline X_{r;(r,s)}^{j,(\lambda)}(\omega)
\bigr)
\overline X_{r,s}^{j}(\omega) d \lambda
\\
&= \int_{0}^1 \partial_{x} G_{x} \Bigl(\overline X_{r;(r,s)}^{i,(\lambda)}(\omega),
\overline \mu_{r;(r,s)}^{n,\lambda}(\omega)
\Bigr) \overline X_{r,s}^{i}(\omega) 
 d\lambda
 \\
 &\hspace{15pt}
 + 
 \int_{\RR^{2d}} 
 \biggl[ \int_{0}^1
 D_{\mu} G_{x}
 \Bigl( \overline X_{r;(r,s)}^{i,(\lambda)}(\omega),
 \overline \mu_{r;(r,s)}^{n,\lambda}(\omega)\Bigr) 
(y)
 z d \lambda
 \biggr]
 d \overline \nu_{r;(r,s)}^{n,\lambda}(\omega;y,z)
\end{split}
\end{equation}
where
\begin{equation*}
 \overline \mu_{r;(r,s)}^{n,(\lambda)}(\omega) := \frac1n \sum_{j=1}^n \delta_{\overline X_{r;(r,s)}^{j,(\lambda)}(\omega)}, 
 \quad 
 \overline \nu_{s;(s,t)}^{n,(\lambda)}(\omega) := \frac1n \sum_{j=1}^n \delta_{\bigl(\overline X_{r;(r,s)}^{j,(\lambda)}(\omega),\overline X_{r,s}^{j}(\omega)\bigr)}, 
\end{equation*}
with
\begin{equation*}
\overline X_{r;(r,s)}^{j,(\lambda)}(\omega) := \overline X_{r}^j(\omega) + \lambda \overline X_{r,s}^j(\omega).
\end{equation*}
Proceeding similarly with $\big[ G_{x} \big(\overline X^i(\omega) , {\mathcal L}(X) \big) \big]_{r,s}$, we get
\begin{equation*}
\begin{split}
&\Bigl[ 
G_{x} \bigl( \overline X^i(\omega),\overline \mu^n(\omega) \bigr) 
-
G_{x} \bigl( \overline X^i(\omega) , {\mathcal L}(X) \bigr)
\Bigr]_{r,s}
\\
&=
\int_{0}^1 
\Bigl[
\partial_{x} G_{x} \Bigl( \overline X_{r;(r,s)}^{i,(\lambda)}(\omega),
\overline \mu_{r;(r,s)}^{n,(\lambda)}(\omega)
\Bigr)
\\
&\hspace{100pt}-\partial_{x} G_{x} \Bigl( \overline X_{r;(r,s)}^{i,(\lambda)}(\omega),
{\mathcal L}
\bigl(X_{r;(r,s)}^{(\lambda)}\bigr)
\Bigr)
\Bigr]
\, \overline  X_{r,s}^{i}(\omega) \,  d\lambda
 \\
 &\hspace{15pt}
 +
  \int_{\RR^{2d}} 
 \biggl[ \int_{0}^1
 D_{\mu} G_{x}
 \Bigl( \overline X_{r;(r,s)}^{i,(\lambda)}(\omega),
 \overline \mu_{r;(r,s)}^{n,(\lambda)}(\omega)\Bigr) 
(y)
 z d \lambda
 \biggr] \,
 d \overline \nu_{r;(r,s)}^{n,(\lambda)}(\omega;y,z)
 \\
 &\hspace{15pt}
 -
\int_{\RR^{2d}}
 \biggl[ \int_{0}^1
D_{\mu} G_{x}
 \Bigl( \overline X_{r;(r,s)}^{i,(\lambda)}(\omega),
{\mathcal L}\bigl(  X_{r;(r,s)}^{(\lambda)}
\bigr)
\Bigr) 
(y)
 z d \lambda
 \biggr] \, d {\mathcal L}\bigl( X_{r;(r,s)}^{(\lambda)},X_{r,s}\bigr) (y,z),
\end{split}
\end{equation*}
where, as before,
$X_{r;(r,s)}^{(\lambda)}(\omega)
=  X_{r}(\omega) + \lambda X_{r,s}(\omega)$. 
Splitting the last two terms in the above expansion into
\begin{equation*}
\begin{split}
 & \int_{\RR^{2d}} 
 \biggl[ \int_{0}^1
 D_{\mu} G_{x}
 \Bigl( \overline X_{r;(r,s)}^{i,(\lambda)}(\omega),
 \overline \mu_{r;(r,s)}^{n,(\lambda)}(\omega)\Bigr) 
(y)
 z d \lambda
 \biggr] \,
 d \overline \nu_{r;(r,s)}^{n,(\lambda)}(\omega;y,z)
 \\
 &\hspace{10pt}
 -
\int_{\RR^{2d}}
 \biggl[ \int_{0}^1
 D_{\mu} G_{x}
 \Bigl( \overline X_{r;(r,s)}^{i,(\lambda)}(\omega),
{\mathcal L}\bigl(X_{r;(r,s)}^{(\lambda)}
\bigr)
\Bigr) 
(y)
 z d \lambda
 \biggr] \, d  \overline \nu_{r;(r,s)}^{n,(\lambda)}(\omega;y,z)
 \\
 &\hspace{10pt}+ 
\int_{\RR^{2d}}
 \biggl[ \int_{0}^1
 D_{\mu} G_{x}
 \Bigl( \overline X_{r;(r,s)}^{i,(\lambda)}(\omega),
{\mathcal L}\bigl(X_{r;(r,s)}^{(\lambda)}
\bigr)
\Bigr) 
(y)
 z d \lambda
 \biggr] \, d\overline \nu_{r;(r,s)}^{n,(\lambda)}(\omega;y,z)
 \\
&\hspace{10pt}
  -   \int_{\RR^{2d}} \biggl[ \int_{0}^1  D_{\mu} G_{x}  \Bigl( \overline X_{r;(r,s)}^{i,(\lambda)}(\omega), {\mathcal L}\bigl(X_{r;(r,s)}^{(\lambda)} \bigr) \Bigr)  (y) z d \lambda \biggr] \, d {\mathcal L}\bigl(X_{r;(r,s)}^{(\lambda)},X_{r,s}\bigr) (y,z),
\end{split}
\end{equation*}
we get
\begin{equation*}
\begin{split}
&\Bigl\vert \Bigl[ 
G_{x} \bigl(\overline X^i(\omega),\overline \mu^n(\omega) \bigr) 
-
G_{x} \bigl( \overline X^i(\omega) , {\mathcal L}(X) \bigr)
\Bigr]_{r,s} \Bigr\vert
\\
&\leq 
c \int_{0}^1  {{\mathbf d}_{1}}
\Bigl( \overline \mu^{n,(\lambda)}_{r;(r,s)}(\omega),{\mathcal L}
\bigl( X_{r;(r,s)}^{(\lambda)}\bigr)
\Bigr) d\lambda
\\
&\hspace{5pt} \times \biggl( \vvvert \overline X^i(\omega) \vvvert_{[0,T],\overline w^i,p} \overline w^{i}(r,s,\omega)^{1/p} + \frac1n \sum_{k=1}^n \vvvert \overline X^k (\omega) \vvvert_{[0,T],\overline w^k,p} \overline w^k(r,s,\omega)^{1/p} \biggr)   \\
&\hspace{1pt} + c \, \Bigl\vert {\mathcal S}^{i,n}_{r,s}\bigl(\omega, \vert \overline X_{r,s}^\bullet(\omega) \vert\bigr)
\Bigr\vert,
\end{split}
\end{equation*}
where ${\mathcal S}^{i,n}_{r,s}\bigl(\omega, \vert \overline X_{r,s}^\bullet(\omega) \vert\bigr)$ is the $n$-empirical mean of 
$n$  variables that 
are dominated by $\big(\vert \overline X_{r,s}^j(\omega) \vert + \bigl\langle 
X_{r,s}(\cdot)
\bigr\rangle_{1}
\big)_{j=1,\cdots,n}$ 
and $n-1$ of which are conditionally centred and conditionally independent given
the realization of the path $(\overline{X}^i,W^i,{\mathbb W}^i)$.  Allowing the value of the constant $c$ to increase from line to line, we obtain
\begin{equation*}
\begin{split}
&\Bigl\vert \Bigl[ 
G_{x} \Bigl(\overline X^i(\omega),\overline \mu^n(\omega) \Bigr) 
-
G_{x} \Bigl( \overline X^i(\omega) , {\mathcal L}(X) \Bigr)
\Bigr]_{r,s} {\mathbb W}_{s,t}^i(\omega) \Bigr\vert   \\
&\leq c \int_{0}^1  {{\mathbf d}_{1}} \Bigl( 
\overline \mu_{r;(r,s)}^{n,(\lambda)}(\omega), 
{\mathcal L}\bigl(X_{r;(r,s)}^{(\lambda)}\bigr)
\Bigr) d\lambda
\\
&\hspace{30pt} \times \biggl[ \vvvert \overline X^i(\omega) \vvvert_{[0,T],\overline w^i,p} + \biggl( \frac1n \sum_{k=1}^n \vvvert \overline X^k(\omega) \vvvert_{[0,T],\overline w^k,p}^2 \biggr)^{1/2}\biggr]   \\
&\hspace{30pt} \times \biggl[ \overline w^i(r,t,\omega)^{3/p} +  \frac1n \sum_{k=1}^n \overline w^k(r,t,\omega)^{3/p} \biggr] 
\\
&\hspace{15pt} + c \, 
\Bigl\vert {\mathcal S}^{i,n}_{r,s}\bigl(\omega,\vert \overline X_{r,s}^\bullet(\omega) \vert\bigr) 
\Bigr\vert
\, \overline w^i(r,t,\omega)^{2/p}.
\end{split}
\end{equation*}
In order to conclude for the second term in the right-hand side of \eqref{eq:summand:analysis}, it suffices to recall from Rosenthal's inequality (applied under the conditional probability given 
the realization of the path $(\overline{X}^i,W^i,{\mathbb W}^i)$)
 that
\begin{equation*}
\begin{split}
\Bigl\langle {\mathcal S}^{i,n}_{r,s}\bigl(\cdot,\vert \overline X_{r,s}^\bullet(\cdot) \vert\bigr) \Bigr\rangle_{3 \varrho/2}
&\leq c\, n^{-1/2} \,\Bigl\langle \vvvert  X(\cdot) \vvvert_{[0,T],w,p} w(r,s,\cdot)^{1/p} \Big\rangle_{3\varrho/2}   \\
&\leq c \, n^{-1/2} \, \big\langle \vvvert  X(\cdot) \vvvert_{[0,T],w,p} \bigr\rangle_{{\chi}\varrho} \bigl\langle w(r,t,\cdot) \bigr\rangle_{{\chi}\varrho}^{1/p},
\end{split}
\end{equation*}
{where $\chi \geq 1$ is a universal constant whose value may change from line to line (as long as it remains universal)}.  
If $\rho$ is large enough, we deduce from Lemma 
\ref{lem:W1:cv:empirical} that
\begin{equation*}
\begin{split}
&\Bigl\langle  
\Bigl[
G_{x} \bigl(\overline X^i(\cdot),\overline \mu^n(\cdot) \bigr) 
-
G_{x} \bigl( \overline X^i(\cdot) , {\mathcal L}(X) \bigr)
\Bigr]_{r,s} {\mathbb W}_{s,t}^i(\cdot) 
\Bigr\rangle_{\varrho}
\\
&\leq 
c \, \biggl( \int_{0}^1 \Bigl\langle 
{{\mathbf d}_{1}} \Bigl( 
\overline \mu_{r;(r,s)}^{n,(\lambda)}(\cdot), 
{\mathcal L}\bigl(X_{r;(r,s)}^{(\lambda)}\bigr)
\Bigr)
\Bigr\rangle_{{\chi}\varrho}
d \lambda
\biggr) \,
\bigl\langle 
\vvvert X(\cdot) \vvvert_{[0,T],w,p}
\bigr\rangle_{{\chi} \varrho} \, 
\\
&\hspace{200pt} \times \bigl\langle 
w(r,t,\cdot)
\bigr\rangle_{{\chi} \varrho}^{3/p}
\\
&\hspace{10pt}
+ c \,n^{-1/2} \, \big\langle \vvvert  X(\cdot) \vvvert_{[0,T],w,p} \bigr\rangle_{{\chi}\varrho} \, \bigl\langle w(r,t,\cdot) \bigr\rangle_{{\chi}\varrho}^{3/p}
\\
&\leq 
c \, \varsigma_{n} \,
 \Bigl(1
 +
\bigl\langle
\sup_{0\leq u \leq T}
\vert X_{u}(\cdot) \vert
\bigr\rangle_{{\chi} \varrho}
\Bigr)
 \bigl\langle 
\vvvert X(\cdot) \vvvert_{[0,T],w,p}
\bigr\rangle_{{\chi} \varrho}
\,
\bigl\llangle 
w^+(r,t,\cdot,\cdot)
\bigr\rrangle_{{\chi} \varrho}^{3/p}.
\end{split}
\end{equation*}

\smallskip

\textbf{\textsf{Step 2.}} By the same argument, we have
\begin{equation*}
\begin{split}
&\Bigl\vert \Bigl[ 
G_{\mu} \Bigl(\overline X^i(\omega),\overline \mu^n(\omega) \Bigr)\bigl(\overline X^j(\omega)\bigr)  -  G_{\mu} \Bigl( \overline X^i(\omega) , {\mathcal L}(X) \Bigr) \bigl(\overline X^j(\omega)\bigr) \Bigr]_{r,s} {\mathbb W}_{s,t}^{j,i}(\omega) \Bigr\vert   \\
&\leq c \, \biggl( \int_{0}^1 {{\mathbf d}_{1}} \Bigl( \overline \mu_{r;(r,s)}^{n,(\lambda)}(\omega), {\mathcal L}\bigl(X_{r;(r,s)}^{(\lambda)}\bigr)\Bigr) d\lambda \biggr) \, \overline w^{j,i}(s,t,\omega)^{2/p}   
\\
&\hspace{5pt} \times \biggl[ \big\vvvert \overline X^i(\omega) \big\vvvert_{[0,T],\overline w^i,p} + \big\vvvert \overline X^j(\omega) \big\vvvert_{[0,T],\overline w^j,p} + \biggl( \frac1n \sum_{k=1}^n \vvvert \overline X^k(\omega) \vvvert_{[0,T],\overline w^k,p}^2 \biggr)^{1/2} \biggr]   
\\
&\hspace{5pt}
\times \biggl[ \overline w^i(r,s,\omega)^{1/p} + \overline w^j(r,s,\omega)^{1/p} + \biggl( \frac1n \sum_{k=1}^n
\overline w^k(r,s,\omega)^{2/p} \biggr)^{1/2} \biggr]    \\
&\hspace{5pt}+ c \, \Bigl\vert {\mathcal S}^{i,j,n}_{r,s} \bigl(\omega, \vert \overline X_{r,s}^\bullet(\omega) \vert \bigr) 
\Bigr\vert
\, \overline w^{j,i}(s,t,\omega)^{2/p},
\end{split}
\end{equation*}
where
\begin{equation*}
\begin{split}
\Bigl\langle {\mathcal S}^{i,j,n}_{r,s} 
\bigl(\cdot,
\vert \overline X_{r,s}^\bullet(\cdot) \vert
\bigr)
\Bigr\rangle_{3\varrho/2}
&\leq c \,n^{-1/2} \,\big\langle \vvvert  X \vvvert_{[0,T],w,p} w(r,s,\cdot)^{1/p} \big\rangle_{3\varrho/2}
\\
&\leq c \,n^{-1/2}\, \big\langle \vvvert  X \vvvert_{[0,T],w,p} \rangle_{{\chi}\varrho} \, \big\langle w(r,t,\cdot) \rangle_{{\chi}\varrho}^{1/p}.
\end{split}
\end{equation*}
Observing that $\langle \overline w^{j,i}(s,t,\cdot)^{2/p} \rangle_{{\chi} \varrho} \leq  \llangle  w^+(r,t,\cdot,\cdot) \rrangle_{{\chi} \varrho}^{2/p}$ -- this is the rationale for introducing $w^+$, and taking expectation,  we get
\begin{equation*}
\begin{split}
&\Bigl\langle  
\Bigl[ 
G_{\mu} \bigl(\overline X^i(\cdot),\overline \mu^n(\cdot) \bigr)\bigl(\overline X^j(\cdot)\bigr) 
-
G_{\mu} \bigl( \overline X^i(\cdot) , {\mathcal L}(X) \bigr)
\bigl(\overline X^j(\cdot)\bigr)
\Bigr]_{r,s} {\mathbb W}_{s,t}^{j,i}(\omega) 
\Bigr\rangle_{\varrho}
\\
&\leq 
c \ \biggl( \int_{0}^1 \Bigl\langle 
{{\mathbf d}_{1}} \Bigl( 
\overline \mu_{r;(r,s)}^{n,(\lambda)}(\cdot), 
{\mathcal L}\bigl(X_{r;(r,s)}^{(\lambda)}\bigr)
\Bigr)
\Bigr\rangle_{{\chi}\varrho}
d\lambda \biggr) \,
\bigl\langle 
\vvvert X(\cdot) \vvvert_{[0,T],w,p}
\bigr\rangle_{{\chi} \varrho} \,
\\ 
&\hspace{150pt} \times \bigl\llangle 
w^+(r,t,\cdot,\cdot)
\bigr\rrangle_{{\chi} \varrho}^{3/p}
\\
&\hspace{10pt}
+ c \,n^{-1/2}\, \big\langle \vvvert  X(\cdot) \vvvert_{[0,T],w,p} \bigr\rangle_{{\chi}\varrho} \, \bigl\llangle w^+(r,t,\cdot,\cdot) \bigr\rrangle_{{\chi}\varrho}^{3/p}.
\end{split}
\end{equation*}
Taking the mean over $j$, we obtain as upper bound for the third term in the right-hand side of \eqref{eq:summand:analysis} the quantity
\begin{equation*}
\begin{split}
&\left\langle  
\frac1n \sum_{j=1}^n
\Bigl[ 
G_{\mu} \bigl(\overline X^i(\cdot),\overline \mu^n(\cdot) \bigr)\bigl(\overline X^j(\cdot)\bigr) 
-
G_{\mu} \bigl( \overline X^i(\cdot) , {\mathcal L}(X) \bigr)
\bigl(\overline X^j(\cdot)\bigr)
\Bigr]_{r,s} {\mathbb W}_{s,t}^{j,i}(\omega) 
\right\rangle_{\varrho}
\\
&\leq 
c \ \biggl( \int_{0}^1 \Bigl\langle 
{{\mathbf d}_{1}} \Bigl( 
\overline \mu_{r;(r,s)}^{n,(\lambda)}(\cdot), 
{\mathcal L}\bigl(X_{r;(r,s)}^{(\lambda)}\bigr)
\Bigr)
\Bigr\rangle_{ {\chi}\varrho}
d\lambda \biggr) \,
\bigl\langle 
\vvvert X(\cdot) \vvvert_{[0,T],w,p}
\bigr\rangle_{{\chi} \varrho} 
\\
&\hspace{150pt} \times
\bigl\llangle 
w^+(r,t,\cdot,\cdot)
\bigr\rrangle_{{\chi} \varrho}^{3/p}
\\
&\hspace{10pt}
+ c \,n^{-1/2}\, \big\langle \vvvert  X(\cdot) \vvvert_{[0,T],w,p} \bigr\rangle_{{\chi}\varrho} \, \bigl\llangle w^+(r,t,\cdot,\cdot) \bigr\rrangle_{{\chi}\varrho}^{3/p}.
\end{split}
\end{equation*}
By Lemma \ref{lem:W1:cv:empirical}, we get the same bound as in the first step. 

\medskip

\textbf{\textsf{Step 3.}} We now turn to the last term in the right-hand side of \eqref{eq:summand:analysis}. It reads as the empirical mean of 
$n$ random variables, $n-1$ of which are conditionally centred and conditionally independent given
the realization of the paths $(\overline{X}^i,W^i,{\mathbb W}^i)$, namely
\begin{equation*}
\frac1n \sum_{j=1}^n 
\bigl[G_{\mu} \bigl(\overline X^i(\omega),{\mathcal L}(X) \bigr) \bigl(\overline X^j(\omega)\bigr)
\bigr]_{r,s}
{\mathbb W}^{j,i}_{s,t}(\omega)
-
{\mathbb E}
\bigl[ \delta_{\mu} \overline F_{r,s}^{i}(\omega,\cdot) {\mathbb W}_{s,t}^{i,\indep}(\cdot,\omega)
\bigr].
\end{equation*}
We can handle 
the above term by
invoking Rosenthal's inequality once again (in a conditional form). To do so, it suffices to compute the $\LL^\varrho$ norm of 
$
\bigl[G_{\mu} \bigl(\overline X^i(\omega),{\mathcal L}(X) \bigr) \bigl(\overline X^j(\omega)\bigr) \bigr]_{r,s} {\mathbb W}^{j,i}_{s,t}(\omega).$
Doing as before (see \eqref{eq:Gx:xi:mun}), it is less than $c \,\big\langle \vvvert \overline X(\cdot) \big\vvvert_{[0,T],w,p} \bigr\rangle_{{\chi}\varrho}\,\big\llangle  w^+(r,t,\cdot,\cdot) \big\rrangle_{{\chi}\varrho}^{3/p}$. So, 
\begin{equation*}
\begin{split}
&\left\langle \frac1n \sum_{j=1}^n \Bigl[G_{\mu} \bigl(\overline X^i(\omega),{\mathcal L}(X) \bigr) \bigl(\overline X^j(\omega)\bigr) \Bigr]_{r,s} {\mathbb W}^{j,i}_{s,t}(\omega)  -  {\mathbb E} \Bigl[ \delta_{\mu}\overline F_{r,s}^{i}(\omega,\cdot) {\mathbb W}_{s,t}^{i,\indep}(\cdot,\omega) \Bigr] \right\rangle_{\varrho}   \\
&\leq c \,n^{-1/2}\, \big\langle \vvvert  X(\cdot) \vvvert_{[0,T],w,p} \bigr\rangle_{{\chi}\varrho} \, 
\bigl\llangle w^+(r,t,\cdot,\cdot) \bigr\rrangle_{{\chi}\varrho}^{3/p},
\end{split}
\end{equation*}
which suffices to conclude. 

\medskip

\textbf{\textsf{Step 4.}} We now handle the remainders in \eqref{eq:summand:analysis}. By expanding \eqref{eq:R:Fin:RbarFi} and by using similar notations for the remainders in the expansion of each $\big(\overline X^j\big)_{j=1,\cdots,n}$, we have
{(see for instance the proof of \cite[Proposition 3.5]{BCD1} and in particular \cite[(3.9)]{BCD1})}
\begin{equation}
\label{eq:expansion:RFin}
\begin{split}
&R^{F^{i,n}}_{r,s}(\omega)
= \partial_{x}\textrm{F} \Bigl( \overline X_{r}^{i}(\omega),
\overline \mu_{r}^{n}(\omega)\Bigr) R_{r,s}^{\overline X^i}(\omega)
\\
&\hspace{-2pt}
+ \frac1n \sum_{j=1}^n 
D_{\mu}\textrm{F} \Bigl( \overline X_{r}^{i}(\omega), \overline \mu_{r}^{n}(\omega) \Bigr) \bigl( \overline X_{r}^j(\omega)\bigr) R_{r,s}^{\overline X^j}(\omega)   \\
&\hspace{-2pt}
+ \int_{0}^1 
\Bigl[ \partial_{x} \textrm{F} \Bigl( \overline X_{r;(r,s)}^{i,(\lambda)}(\omega),
\overline \mu_{r;(r,s)}^{n,(\lambda)}(\omega)
\Bigr) 
\hspace{-2pt}
-
\partial_{x} \textrm{F} \Bigl( \overline X_{r}^{i}(\omega),
\overline \mu_{r}^{n}(\omega)
\Bigr)  \Bigr] \, \overline X_{r,s}^i(\omega) \, d \lambda
\\
&\hspace{-2pt}
+ \frac1n \sum_{j=1}^n \int_{0}^1 \left[ D_{\mu} \textrm{F} \Bigl( \overline X_{r;(r,s)}^{i,(\lambda)}(\omega), \overline \mu_{r;(r,s)}^{n,(\lambda)} \Bigr)\Bigl( \overline X_{r;(r,s)}^{j,(\lambda)}(\omega) \Bigr)
 \right. \\
&\hspace{80pt}- \left.D_{\mu} \textrm{F} \Bigl( \overline X_{r}^{i}(\omega), \overline \mu_{r}^{n} \Bigr)\bigl(\overline X_{r}^{j}(\omega) \bigr)  \right] \, \overline X_{r,s}^j(\omega) \,d \lambda.
\end{split}
\end{equation}
Expanding $R^{\overline F^i}_{r,s}(\omega)$ in a similar way, we have to investigate four difference terms in order to estimate the difference $ R^{F^{i,n}}_{r,s}(\omega) - R^{\overline F^i}_{r,s}(\omega)$. The first difference term corresponds to the first term in the right-hand side of \eqref{eq:expansion:RFin}
\begin{equation*}
\begin{split}
&\left\vert \Bigl[ \partial_{x} {\textrm{F}}\Bigl(\overline X_{r}^i(\omega), \overline\mu^n_{r}(\omega) \Bigr)  -  \partial_{x} {\textrm{F}}\Bigl(\overline X_{r}^i(\omega), {\mathcal L}(X_{r}) \Bigr) \Bigr] \, R_{r,s}^{\overline X^i }(\omega) \right\vert   \\
&\leq c \,  {{\mathbf d}_{1}}\Bigl( \overline \mu^n_{r}(\omega),{\mathcal L}(X_{r}) \Bigr) 
\,
\vvvert \overline X^i (\cdot) \big\vvvert_{[0,T],\overline w^i,p}
\, \overline w^i (r,s,\omega)^{2/p}.
\end{split}
\end{equation*}
Then, we must recall that, in the first line of the right-hand side in \eqref{eq:summand:analysis}, the difference $ R^{F^{i,n}}_{r,s}(\omega) - R^{\overline F^i}_{r,s}(\omega)$ is multiplied by $W_{s,t}^i(\omega)$, which is less than $\overline w^i(s,t,\omega)^{1/p}$. In other words, we must multiply both sides in the above inequality by $\overline w^i(r,t,\omega)^{1/p}$. By Cauchy Schwarz inequality, the ${\mathbb L}^{\varrho}$ norm of the resulting bound is less than  
$$
c \, \big\langle  {{\mathbf d}_{1}}(\overline{\mu}^n_{r}(\cdot),{\mathcal L}(X_{r}) \big\rangle_{{\chi}\varrho} \, \big\langle \vvvert  X (\cdot) \big\vvvert_{[0,T],w,p} \big\rangle_{{\chi} \varrho} \, \big\langle w(r,t,\cdot) \big\rangle^{3/p}_{{\chi} \varrho}.
$$ 

\smallskip

The second difference term
that we have to handle
 corresponds to the second term in the right-hand side of \eqref{eq:expansion:RFin}. With an obvious definition for $R^{X}(\cdot)$, it reads
\begin{equation*}
\begin{split}
&\biggl\vert \frac1n \sum_{j=1}^n 
D_{\mu} \textrm{F}\Bigl( \overline X_{r}^i(\omega),\overline \mu^n_{r}(\omega) 
\Bigr) \bigl( \overline X_{r}^j(\omega) \bigr)
R_{r,s}^{\overline X^j}(\omega)
\\
&\hspace{15pt} - \Bigl\langle
D_{\mu} \textrm{F}\Bigl(\overline X_{r}^i(\omega), 
{\mathcal L}(X_{r}) 
\Bigr) \bigl( X_{r}(\cdot) \bigr)
R_{r,s}^{X}(\cdot)
\Bigr\rangle \biggr\vert.
\end{split}
\end{equation*}
Proceeding exactly as in the first step, the latter is bounded by 
\begin{equation*}
\begin{split}
&c \, {{\mathbf d}_{1}} \Bigl( \overline \mu^{n}_{r}(\omega) , {\mathcal L}( X_{r} )
\Bigr) \, 
 \left( \frac1n \sum_{j=1}^n
\bigl\vert 
R_{r,s}^{\overline{X}^j}
(\omega)
\bigr\vert  \right)
+ c 
\Bigl\vert {\mathcal S}^{i,n}_{r,s} \Bigl(\omega, \bigl\vert R_{r,s}^{\overline X^{\bullet}}(\omega) \bigr\vert \Bigr)
\Bigr\vert,
\end{split}
\end{equation*}
where ${\mathcal S}^{i,n}_{r,s} \bigl(\omega, \vert R_{r,s}^{\overline X^{\bullet}}(\omega) \vert \big)$
 is the $n$-empirical mean of 
$n$  variables that 
are dominated by $\big(\vert  R_{r,s}^{\overline X^j}(\omega) \vert
+\big\langle R_{r,s}^{X}(\cdot) \big\rangle_{1}
\big)_{j=1,\cdots,n}$ 
and $n-1$ of which are conditionally centred and conditionally independent given
the realization of the path $(\overline{X}^i,W^i,{\mathbb W}^i)$. Hence, the $\LL^\varrho$ norm of the right-hand side, after multiplication as before by $\overline w^i(s,t,\omega)^{1/p}$, is less than
$$
c \, \Bigl( \Bigl\langle  {{\mathbf d}_{1}}\Bigl(\overline{\mu}^n_{r}(\cdot),{\mathcal L}(X_{r})\Bigr) \Bigr\rangle_{{\chi} \varrho}
+ n^{-1/2}
\Bigr)
\,
 \bigl\langle \vvvert  X(\cdot) \vvvert_{[0,T],w,p} \bigr\rangle_{{\chi} \varrho} \, \bigl\langle w(r,t,\cdot) 
 \bigr\rangle_{{\chi} \varrho}^{3/p}.
 $$

\smallskip

As for the third term in the right-hand side of \eqref{eq:expansion:RFin}, it fits, up to the additional factor 
$\overline X_{r,s}^i(\omega)$
and 
 {for each value of $\lambda$}, the term studied in the first step. So we get
 as an upper bound for its $\LL^\varrho$ norm, after multiplication by $\overline w^i(s,t,\omega)^{1/p}$, 
 the quantity
\begin{equation*}
\begin{split}
&c \, \biggl( \int_{0}^1 \int_{0}^1 \Bigl\langle 
{{\mathbf d}_{1}} \Bigl( 
\overline \mu_{r;(r,s)}^{n,(\lambda \lambda')}(\cdot), 
{\mathcal L}\bigl(X_{r;(r,s)}^{(\lambda \lambda')}\bigr)
\Bigr)
\Bigr\rangle_{ {\chi} \varrho} \, 
d \lambda d\lambda'
\biggr) 
\\
&\hspace{150pt} \times \bigl\langle 
\vvvert X(\cdot) \vvvert_{[0,T],w,p}
\bigr\rangle_{{\chi} \varrho}^2 \, 
\bigl\langle 
w(r,t,\cdot)
\bigr\rangle_{{\chi} \varrho}^{3/p}
\\
&\hspace{10pt}
+ c \,n^{-1/2} \,\big\langle \vvvert  X(\cdot) \vvvert_{[0,T],w,p} \bigr\rangle_{{\chi}\varrho}^2 \, \bigl\langle w(r,t,\cdot) \bigr\rangle_{{\chi}\varrho}^{3/p}.
\end{split}
\end{equation*} 
Following Step 2, we get exactly a similar bound for the fourth term in the right-hand side of \eqref{eq:expansion:RFin}.
\end{proof}

\subsection{About Law of Large Numbers}
\label{subse:lln}

\begin{lem}
\label{lem:W1:cv:empirical}
There exists a real $\rho_{d} \geq 1$ such that, for any $\rho \geq \rho_{d}$ and any probability measure $\mu$ on $\RR^d$ satisfying $M_{\rho}(\mu) := \big(\int_{\RR^d} \vert x \vert^{\rho} \mu(dx)\big)^{1/\rho} < \infty$, it holds
\begin{equation*}
{\mathbb E} \left[ {\mathbf d}_{1}\bigl(\mu^n(\cdot),\mu\bigr)^{\rho/3} \right]^{3/\rho}
\leq c_{\rho,d} \, M_{\rho}(\mu)\, \varsigma_{n},
\end{equation*}
for a constant $c_{\rho,d}$, only depending on $\rho$ and $d$,  where $\mu^n(\cdot)$ is the empirical distribution of $n$ independent identically distributed random variables
and with $\varsigma_{n}$ as in the introduction of Subsection 
\ref{AppendixAuxiliary}.
\end{lem}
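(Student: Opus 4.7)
The plan is to deduce the statement from the main convergence result of Fournier and Guillin \cite{FournierGuillin}, reducing to their estimates by a simple scaling argument followed by an integration of their concentration bound.

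First, I would perform a scaling reduction. Pushing $\mu$ forward through the dilation $x \mapsto x/M_{\rho}(\mu)$ commutes with the empirical measure construction and scales the $1$-Wasserstein distance by the same multiplicative factor, so it suffices to prove the inequality under the normalization $M_{\rho}(\widetilde\mu) = 1$; the factor $M_{\rho}(\mu)$ in the right-hand side arises precisely from undoing this rescaling.

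Next, under the normalization $M_{\rho}(\widetilde\mu) = 1$, I would invoke \cite[Theorem 2]{FournierGuillin}, which provides a concentration estimate for ${\mathbf d}_{1}(\widetilde\mu^n, \widetilde\mu)$ with tail bounds depending explicitly on $n$, $d$ and the moment exponent $\rho$. Integrating this tail against $u^{\rho/3 - 1} du$ and using Fubini yields a control of $\EE[{\mathbf d}_{1}(\widetilde\mu^n, \widetilde\mu)^{\rho/3}]$ by a constant times $\varsigma_{n}^{\rho/3}$; taking the $(3/\rho)$-th power and undoing the dilation then produces exactly the claimed inequality.

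The main, and only, technical point is the bookkeeping of exponents: one must choose $\rho_{d}$ large enough, as a function of $d$, so that the polynomial tail appearing in the Fournier-Guillin concentration estimate is integrable against $u^{\rho/3 - 1} du$ and so that the dominating rate in their bound is precisely $\varsigma_{n}$ rather than a worse polynomial in $n$. Both constraints follow directly from the explicit form of their estimate and fix $\rho_{d}$ purely in terms of $d$.
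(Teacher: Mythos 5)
Your proposal is correct and follows essentially the same route as the paper: normalize via the dilation $x\mapsto x/M_\rho(\mu)$, invoke \cite[Theorem 2]{FournierGuillin}, and integrate the resulting concentration bound in a layer-cake fashion to obtain the moment estimate, choosing $\rho_d$ large enough so the polynomial tail is integrable. The only detail worth flagging under your "bookkeeping" remark is that the $d=2$ case needs a separate short computation because the Fournier–Guillin exponential term carries a logarithmic factor $\ln(2+A^{-1}\varsigma_n^{-1})^{-2}$ in the exponent, and one must use the specific choice $\varsigma_n = n^{-1/2}\ln(1+n)$ to show this is still absorbed into a bound of the form $C\exp(-cA^2)$ uniformly in $n$.
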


\begin{proof}
Without any loss of generality, we can assume that $M_{\rho}(\mu)=1$, see the argument in \cite[Chapter 5, Theorem 5.8]{CarmonaDelarue_book_I}. Then Theorem 2 in \cite{FournierGuillin} gives us the following results. For $d \geq 3$, we have
(for $\rho \geq 2$)
\begin{equation*}
{\mathbb P} \Bigl( {\mathbf d}_{1}\bigl(\mu^n(\cdot),\mu\bigr) \geq A \varsigma_{n} \Bigr) \leq C \exp \Bigl( -c n  \varsigma_{n}^{d}A^{d} \Bigr) + C n \big(n A \varsigma_{n}\big)^{-\rho/2},
\end{equation*}
in which case the result easily follows. When $d=1$, we have 
\begin{equation*}
{\mathbb P} \Bigl( {\mathbf d}_{1}\bigl(\mu^n(\cdot),\mu\bigr) \geq A \varsigma_{n} \Bigr) \leq C \exp \Big( -c n  \varsigma_{n}^{2} A^{2}\Big) + C n \big(n A \varsigma_{n}\big)^{-\rho/2},
\end{equation*}
and the result follows as well by our choice of $\varsigma_{n}$. Finally, when $d=2$,
\begin{equation*}
{\mathbb P} \Bigl( {\mathbf d}_{1}\bigl(\mu^n(\cdot),\mu\bigr) \geq A \varsigma_{n} \Bigr) \leq C \exp \left( - \frac{c n  \varsigma_{n}^2 A^{2}}{(\ln(2+A^{-1} \varsigma_{n}^{-1}))^2} \right) + C  n (n A \varsigma_{n})^{-\rho/2}.
\end{equation*}
Assuming without any loss of generality that $A \geq 1$, we have 
$$
\ln(2+A^{-1} \varsigma_{n}^{-1}) \leq \ln(2+\varsigma_{n}^{-1}) = \ln (1+ 2 \varsigma_{n}) - \ln(\varsigma_{n}), 
$$
which is less than $-2\ln(\varsigma_{n})$ for $n$ large enough. Given our choice of $\varsigma_{n}$, we have $-\ln(\varsigma_{n}) = \ln(n)/2 - \ln(\ln(1+n))$, which is less than $\ln(n)/2$. Hence, modifying the value of the constant $c$, we get, for $A \geq 1$ and for $n$ large enough, independently of the value of $A$, we get the bound
\begin{equation*}
{\mathbb P} \Bigl( {\mathbf d}_{1}\bigl(\mu^n(\cdot),\mu\bigr) \geq A \varsigma_{n} \Bigr) \leq C \exp \left( - \frac{c  A^{2} \ln(1+n)^2}{\ln(n)^2} \right)+ C n (n A \varsigma_{n})^{-\rho/2},
\end{equation*}
which suffices to complete the proof. 
\end{proof}

{\begin{lem}
\label{lem:LLN:2nd order}
Let $(X_{n})_{n \geq 1}$ be a collection of independent and identically distributed random variables with values in a Polish space
$S$ and 
let $f$ be a real-valued Borel function on $S^2$ such that ${\mathbb E}[ \vert f(X_{1},X_{2}) \vert]$ and 
${\mathbb E}[ \vert f(X_{1},X_{1}) \vert]$ are both finite. Then, with probability 1, 
\begin{equation*}
\lim_{n \rightarrow \infty}
\frac1{n^2} \sum_{i,j=1}^n f(X_{i},X_{j}) = {\mathbb E} \bigl[ f(X_{1},X_{2}) \bigr]. 
\end{equation*}
\end{lem}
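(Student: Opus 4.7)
The plan is to split the empirical double sum into its diagonal and off-diagonal contributions:
\begin{equation*}
\frac{1}{n^{2}} \sum_{i,j=1}^{n} f(X_{i},X_{j}) \;=\; \frac{1}{n^{2}} \sum_{i=1}^{n} f(X_{i},X_{i}) \;+\; \frac{1}{n^{2}} \sum_{1\leq i\neq j\leq n} f(X_{i},X_{j}).
\end{equation*}
The diagonal term is harmless: the sequence $(f(X_{i},X_{i}))_{i\geq 1}$ is i.i.d.\ and integrable by hypothesis, hence by the classical Kolmogorov strong law of large numbers $\frac{1}{n}\sum_{i=1}^{n} f(X_{i},X_{i})$ converges almost surely. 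Multiplying by the extra $\frac{1}{n}$, the diagonal contribution tends to $0$ almost surely.

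For the off-diagonal term I would symmetrize by setting $h(x,y):=\tfrac{1}{2}\bigl(f(x,y)+f(y,x)\bigr)$, so that $h$ is symmetric and satisfies $\mathbb{E}[|h(X_{1},X_{2})|]\leq \mathbb{E}[|f(X_{1},X_{2})|]<\infty$. One then writes
\begin{equation*}
\frac{1}{n^{2}} \sum_{i\neq j} f(X_{i},X_{j}) \;=\; \frac{n(n-1)}{n^{2}}\, U_{n},\qquad U_{n}:=\binom{n}{2}^{-1}\sum_{1\leq i<j\leq n} h(X_{i},X_{j}),
\end{equation*}
and applies Hoeffding's strong law of large numbers for U-statistics with integrable symmetric kernel, which yields $U_{n}\to \mathbb{E}[h(X_{1},X_{2})]=\mathbb{E}[f(X_{1},X_{2})]$ almost surely. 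Since $n(n-1)/n^{2}\to 1$, combining this with the vanishing of the diagonal term finishes the proof.

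The one substantive step is the U-statistic SLLN under the mere integrability of the kernel; this is a classical result that can either be cited directly from Hoeffding's work, or, if a self-contained argument is preferred, be obtained by viewing $(U_{n})_{n\geq 2}$ as a reverse martingale with respect to the decreasing filtration of symmetric $\sigma$-fields generated by $(X_{i})_{i\geq 1}$ and invoking the reverse martingale convergence theorem together with the Hewitt--Savage $0$--$1$ law to identify the limit as $\mathbb{E}[h(X_{1},X_{2})]$. No further integrability beyond the two standing assumptions is needed.
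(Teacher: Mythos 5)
Your proof is correct and, in its self-contained variant, coincides with the paper's argument: the paper also splits off the diagonal, observes by exchangeability that $\mathbb{E}[f(X_i,X_j)\mid S_n]=S_n/(n^2-n)$ for all $i\neq j$, and invokes L\'evy's downward theorem (reverse martingale convergence) together with a zero--one law to identify the limit. The only cosmetic differences are that you symmetrize the kernel before forming the U-statistic -- which is unnecessary since the paper works with the off-diagonal sum directly and exchangeability already gives that the conditional expectation does not depend on the pair $(i,j)$ -- and that you offer the citation of Hoeffding's SLLN for integrable U-statistics as a shortcut; either route is fine.
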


\begin{proof}
By the standard version of the law of large numbers, it suffices to prove that, with probability 1,
\begin{equation*}
\lim_{n \rightarrow \infty}
\frac1{n^2} \sum_{i,j =1, i \not = j}^n f(X_{i},X_{j}) = {\mathbb E} \bigl[ f(X_{1},X_{2}) \bigr]. 
\end{equation*} 
Letting $S_{n} = \sum_{i,j=1, i \not =  j}^n f(X_{i},X_{j})$, for $n \geq 1$, we then define the 
$\sigma$-field 
${\mathcal G}_{n} = \sigma(S_{k},k \geq n)$. By independence of the variables $(X_{k})_{k \geq 1}$, 
we have, for any $(i,j) \in \{1,\cdots,n\}^2$ with $i \not =j$, 
${\mathbb E}[ f(X_{i},X_{j}) \vert {\mathcal G}_{n}]
= {\mathbb E}[ f(X_{i},X_{j}) \vert S_{n}]$. By exchangeability, this is also equal to 
${\mathbb E}[f(X_{1},X_{2}) \vert S_{n}]$.  We get
\begin{equation*}
{\mathbb E}[f(X_{1},X_{2}) \vert {\mathcal G}_{n}] = \frac{1}{n^2-n} \sum_{i,j=1,i \not = j}^n 
{\mathbb E} \bigl[
f(X_{i},X_{j}) \vert S_{n}
\bigr]  = \frac{S_{n}}{n^2-n}. 
\end{equation*}
By L\'evy's downward theorem and by Kolmogorov's zero-one law, the left-hand side converges almost surely 
to ${\mathbb E}[f(X_{1},X_{2})]$. 
\end{proof}
}


\begin{thebibliography}{AAA}

\bibitem{AKR1}
Albeverio, S., Kondratiev, Y.G., R\"ockner, M.,
\newblock Differential geometry of Poisson spaces. 
\newblock {\em C. R. Acad. Sci. Paris S\'er. I Math.}, {\bf 323}(10):1129--1134, 1996.  

\bibitem{AKR2}
Albeverio, S., Kondratiev, Y.G., R\"ockner, M.,
\newblock
Analysis and geometry on configuration spaces. 
\newblock 
{\em J. Funct. Anal.}, {\bf 154}(2):444--500, 1998.

\bibitem{AmbrosioGigliSavare}
Ambrosio, L., Gigli, N., Savar\'e, G.,
\newblock {\em Gradient Flows in Metric Spaces and the Wasserstein Spaces of Probability Measures}, 
\newblock Lectures in Mathematics, ETH Z\"urich, Birkh\"auser, 2005.

\bibitem{BailleulRMI}
Bailleul, I.,
\newblock Flows driven by rough paths.
\newblock {\em Revista Mat. Iberoamericana}, {\bf 31}(3):901--934, 2015.

\bibitem{BCD1}
Bailleul, I., Catellier, R., Delarue, F., 
\newblock {Solving mean field rough differential equations}.
\newblock {\em Electron. J. Probab.}, {\bf 25}, pno. 21, 51 pp., 2020.

\bibitem{BCDArxiv}
Bailleul, I., Catellier, R., Delarue, F.,
\newblock Mean field rough differential equations.
\newblock arXiv:1802.05882, 2018.

\bibitem{BailleulRiedel}
Bailleul, I. and Riedel, S.,
\newblock Rough flows.
\newblock {\em J. Math. Soc. Japan}, {\bf 71}(3):915--978, 2019.

\bibitem{BarbuRockner}
Barbu, V. and R\"ockner, M.,
\newblock From nonlinear Fokker-Planck equations to solutions of distribution dependent SDE. 
\newblock {\em arXiv:1808.10706}, 2018.

\bibitem{billing}
Billingsley, P.,
\newblock Convergence of probability measures.
\newblock {\em Second Edition. Wiley Series in Probability and Statistics: Probability and Statistics}, {John Wiley \& Sons Inc.}, 1999.

\bibitem{BudhirajaDupuisFischer}
Budhiraja, A. and Dupuis, P. and Fischer, M.,
\newblock Large deviation properties of weakly interacting processes via weak convergence methods.
\newblock {\em Ann. Probab.}, {\bf 40}(1):74--102, 2012.

\bibitem{BudhirajaWu}
Budhiraja, A. and Wu, R.,
\newblock Moderate Deviation Principles for Weakly Interacting Particle Systems.
\newblock {\em Probab. Th. Rel. Fields}, {\bf 168}(2-4):721--771, 2017.

\bibitem{LionsCardialiaguet}
Cardaliaguet, P.,
\newblock Notes on mean field games.
\newblock https://www.ceremade.dauphine.fr/ cardaliaguet/MFG20130420.pdf, 2013.

\bibitem{CarmonaDelarue}
Carmona, R. and Delarue, F.,
\newblock Forward-backward Stochastic Differential Equations and Controlled McKean Vlasov Dynamics. 
\newblock {\em Ann. Probab.}, {\bf 44}(6):3740--3803, 2016.

\bibitem{CarmonaDelarue_book_I}
Carmona, R. and Delarue, F.
\newblock {\em Probabilistic Theory of Mean Field Games: vol. I, Mean Field FBSDEs, Control, and Games.}
\newblock Probability Theory and Stochastic Modelling, Springer Verlag, 2018.	

\bibitem{CarmonaDelarue_book_II}
Carmona, R. and Delarue, F.
\newblock {\em Probabilistic Theory of Mean Field Games: vol. II, Mean Field Games with Common Noise and Master Equations.}
\newblock Probability Theory and Stochastic Modelling, Springer Verlag, 2018.	

\bibitem{CassdosReisSalkeld}
Cass, T. and dos Reis, G. and Slakeld, W.,
\newblock The Support of McKean Vlasov Equations driven by Brownian Motion.
\newblock {\em arXiv:1911.01992}, 2019.

\bibitem{CassLittererLyons}
Cass, T., Litterer, C. and Lyons, T.,
\newblock Integrability and tail estimates for Gaussian rough differential equations.
\newblock {\em Ann. Probab.}, {\bf 41}(4):3026--3050, 2013.

\bibitem{CassLyons}
Cass, T. and Lyons, T.,
\newblock Evolving communities with individual preferences.
\newblock {\em Proc. Lond. Math. Soc. (3)}, {\bf 110}(1):83--107, 2015.

\bibitem{CoghiDeuschelFrizMaurelli}
Coghi, M., Deuschel, J.-D., Friz, P. and Maurelli, M.,
\newblock Pathwise McKean-Vlasov theory.
\newblock arXiv:1812.11773, 2018.   

\bibitem{CoghiGess}
Coghi, M. and Gess, B.,
\newblock Stochastic nonlinear Fokker-Planck equations.
\newblock {\em Nonlinear Anal.}, {\bf 187}:259--278, 2019.

\bibitem{CoghiFlandoli}
Coghi, M. and Flandoli, F.,
\newblock Propagation of chaos for interacting particles subject to environmental noise.
\newblock {\em Ann. Appl. Probab.}, {\bf 26}(3):1407--1442, 2016.

\bibitem{CoghiNilssen}
Coghi, M. and Nilssen, T.,
\newblock Rough nonlocal diffusions.
\newblock {\em arXiv:1905.07270}, 2019.

\bibitem{DawsonGartner}
Dawson, D. and G\"artner, J.,
\newblock Large deviations from the McKean-Vtasov limit for weakly interacting diffusions.
\newblock {\em Stochastics}, {\bf 20}:247--308, 1987.

\bibitem{DawsonVaillancourt}
Dawson, D. and Vaillancourt, J.,
\newblock Stochastic McKean-Vlasov equations.
\newblock {\em Nonlinear Diff. Eq. Appl.}, {\bf 2},199--229, 1995.

\bibitem{FrizMaurelli}
Deuschel, J.-D., Friz, P., Maurelli, M. and Slowik, M.,
\newblock The enhanced Sanov theorem and propagation of chaos.
\newblock {\em Stoch. Proc. Appl.}, {\bf 128}(7):2228--2269, 2018.

\bibitem{FournierGuillin}
Fournier, N. and Guillin, A.,
\newblock On the rate of convergence in the {W}asserstein distance of the empirical measure.
\newblock
{\em Probab. Theory Related Fields},
{\bf 162}:707-738, 2015.

\bibitem{FrizHairer}
Friz, P. and Hairer, M.,
\newblock A course on rough paths, with an introduction to regularity structures.
\newblock {\em Universitext, Springer}, 2014.

\bibitem{FrizVictoirBook}
Friz, P. and Victoir, N.,
\newblock Multidimensional stochastic processes as rough paths.
\newblock {\em Cambridge studies in advanced Mathematics}, {\bf 120}, 2010.

\bibitem{FrizVictoirGaussian}
Friz, P. and Victoir, N.,
\newblock Differential equations driven by Gaussian signals.
\newblock {\em Ann. Inst. H. Poincar\'e, Probab. Statist}, {\bf 46}(2):369--413, 2010.

\bibitem{GangboTudorascu}
Gangbo, W. and Tudorascu, A.,
\newblock On differentiability in the Wasserstein space and well-posedness for Hamilton--Jacobi equations.
\newblock {\em  J. Math. Pures Appl.}, {\bf 125}(9):119--174, 2019. 

\bibitem{Gartner}
G\"artner, J.,
\newblock On the McKean-Vlasov limit for interacting diffusions.
\newblock {\em Math. Nachr.}, {\bf 137}:197--248, 1988.

\bibitem{Gubinelli}
Gubinelli, M.,
\newblock Controlling rough paths.
\newblock {\em J. Funct. Anal.}, {\bf 216}(1):86--140, 2004.

\bibitem{JabinWang}
Jabin, P.-E. and Wang, Z.,
\newblock Quantitative estimates of propagation of chaos for stochastic systems with $W^{-1,\infty}$ kernels.
\newblock {\em Invent. Math.}, {\bf 214}(1):523--591, 2018.

\bibitem{JKO}
Jordan, R., Kinderlehrer, D., Otto, F.,
\newblock
The variational formulation of the Fokker-Planck equation.
\newblock
{\em SIAM J. Math. Anal.}, {\bf 29}(1), 1--17, 1998.  


\bibitem{jourdain-meleard}
Jourdain, B., and M\'el\'eard, S., 
\newblock Propagation of chaos and fluctuations for a moderate model with smooth initial data,
\newblock{\em Ann. I.H.P. (B)}, {\bf 34}(6):727--766, 1998.

\bibitem{Kac}
Kac, M.,
\newblock Foundations of kinetic theory.
\newblock {\em Third Berkeley Symp. on Math. Stat. and Probab.}, {\bf 3}:171--197, 1956. 

\bibitem{KolokoltsovTroeva}
Kolokoltsov, V.N., and Troeva, M.,
\newblock On the mean field games with common noise and the McKean-Vlasov SPDEs.
\newblock arXiv:1506.04594, 2015.

\bibitem{Lions}
Lions, P.-L.,
\newblock Th\'eorie des jeux \`a champs moyen et applications.
\newblock {\em Lectures at the {C}oll\`ege de {F}rance}. http://www.college-de-france.fr/default/EN/all/equ\_der/cours\_et\_seminaires.htm", 2007-2008

\bibitem{Lott}
Lott, J.,
\newblock Some geometric calculations on Wasserstein space. 
\newblock {\em Comm. Math. Phys.}, {\bf 277}(2):423--437, 2008. 

\bibitem{Lyons98}
Lyons, T.,
\newblock Differential equations driven by rough paths.
\newblock {\em Rev. Mat. Iberoamericana}, {\bf 14}(2):215--310, 1998.

\bibitem{McKean}
McKean, H.P.,
\newblock A class of markov processes associated with nonlinear parabolic equations.
\newblock {\em Prov. Nat. Acad. Sci.}, {\bf 56}:1907--1911, 1966.

\bibitem{meleard}
M{\'e}l{\'e}ard, S., \emph{Asymptotic behaviour of some interacting particle
  systems; {M}c{K}ean-{V}lasov and {B}oltzmann models}, Probabilistic models
  for nonlinear partial differential equations ({M}ontecatini {T}erme, 1995),
  Lecture Notes in Math., vol. 1627, Springer, 1996, pp.~42--95.
  
  
\bibitem{ORS}
Overbeck, L., R\"ockner, M., Schmuland, B.,
\newblock An analytic approach to Fleming-Viot processes with interactive selection. 
\newblock {\em Ann. Probab.}, \textbf{23}(1):1--36, 1995.

  
\bibitem{Otto}
 Otto, F.,
\newblock
The geometry of dissipative evolution equations: the porous medium equation.
\newblock
{\em Comm. Partial Differential Equations}, {\bf 26}:101--174, 2001 

\bibitem{RenWang}
Ren, P., Wang, F.-Y.,
\newblock 
Derivative formulas in measure on
Riemannian manifold.
\newblock arXiv:1908.03711, 2019. 


\bibitem{rosenthal1972}
Rosenthal, H.P.,
\newblock \emph{On the span in $L^p$ of sequences of independent random variables. II.}
\newblock Proceedings of the Sixth Berkeley Symposium on Mathematical Statistics and Probability, Volume 2: Probability Theory,
University of California Press, 1972, pp. 149--167.


\bibitem{Sznitman}
Sznitman, A.-S.,
\newblock Topics in propagation of chaos.
\newblock {\em Lect. Notes Math.}, {\bf 1464}, 1991.

\bibitem{Tanaka}
Tanaka, H.,
\newblock Probabilistic treatment of the Boltzman equation of Maxwellian molecules.
\newblock {\em Probab. Th. Rel. Fields}, {\bf 46}:67--105, 1978.

\bibitem{TanakaTrick}
Tanaka, H.,
\newblock Limit theorems for certain diffusion processes with interaction. 
\newblock {\em Stochastic analysis (Katata/Kyoto, 1982)}:469--488, North-Holland Math. Library, 32, North-Holland, Amsterdam, 1984.   

\bibitem{Villani}
Villani, C.,
\newblock {\em Topics in Optimal Transportation}. 
\newblock Graduate Studies in Mathematics 58, Providence, RI: Amer. Math. Soc., 2003. 

\bibitem{WuZhang}
Wu, C. and Zhang, J.,
\newblock An elementary proof for the structure of derivatives in probability measures.
\newblock arXiv:1705.08046, 2017.

\end{thebibliography}
\end{document}